\documentclass[12pt,thmsa, reqno]{amsart}

\let\oldtocsection=\tocsection

\let\oldtocsubsection=\tocsubsection

\renewcommand{\tocsection}[2]{\hspace{0em}\oldtocsection{#1}{#2}}
\renewcommand{\tocsubsection}[2]{\hspace{2em}\oldtocsubsection{#1}{#2}}
%\renewcommand{\tocsubsubsection}[2]{\hspace{2em}\oldtocsubsubsection{#1}{#2}}

%\setcounter{secnumdepth}{5}

%\setcounter{tocdepth}{5}

%%%%%%%%%%%%%%%%%%%%%%%%%%%%
%%%%%%%%%%%%%%%%%%%%%

%\usepackage{etoc}

%\usepackage{blindtext}

\usepackage{setspace}

%%%%%%%%%%%%%%%%%%%%%%%%%%%%
%%%%%%%%%%%%%%%%%%%%%

%\documentclass[preprint,12pt,authoryear]{elsarticle}

\usepackage{amssymb, euscript,  mathrsfs}
\usepackage[dvips]{graphics}
\usepackage[title]{appendix}

\input{amssym.def}

\input{amssym.tex}

\usepackage{lineno}

\usepackage{tikz}
\usepackage[utf8]{inputenc}

\usetikzlibrary{matrix,arrows,decorations.pathmorphing}

\usepackage[all,cmtip]{xy}

\usepackage{amsmath}
\let\oldAA\AA
\renewcommand{\AA}{\text{\normalfont\oldAA}}

\def\Cal{\mathcal}

\def\C{{\Cal C}}

\def\S{{\Cal S}}

\def\L{{\Cal L}}

\def\bbr{{\Bbb R}}

\def\bbh{{\Bbb H}}

\def\bbc{{\Bbb C}}

\def\bbe{{\Bbb E}}
\def\bbs{{\Bbb S}}

\def\rnn{\bbr^{n+1}}

\def\det{{\hbox{\rm det}}}

\def\Pr{{\hbox{\rm Pr}}}

\def\rn{\bbr^n}

\def\part{\partial}
\def\intl{\int\limits}
\def\b{\beta}

\def\Gam{\Gamma}
\def\Om{\Omega}
\def\a{\alpha}
\def\om{\omega}

\def\Del{\Delta}
\def\del{\delta}
\def\vp{\varphi}

\def\gam{\gamma}

\def\Gam{\Gamma}

\def\sig{\sigma}
\def\lam{\lambda}
\def\z{\zeta}
\def\th{\theta}
\def\e{\varepsilon}
\def\t{\tau}

\def\chi{{\bf 1}}

\def\snm1{\bbs^{n-1}}

%Gothic letters
\font\frak=eufm10

\def\fr#1{\hbox{\frak #1}}

\def\intl{\int\limits}

%% Higher rank HYP %%%%%%%%%%%%%%%%%%%%%%%%%%%%%%%

\def\hn{\bbh^n}
\def\ch{\mathrm{cosh}}
\def\sh{\mathrm{sinh}}
\def\th{\mathrm{tanh}}

%%%%%%%%%%%%%%%%%%%%%%%%%%%%%%%%%%%%%%%

\def\Cs{\mathscr{C}}% c 1234}} Changed

\def\Cs{\mathscr{C c 1234}}

\def\cd{\stackrel{*}{\C}\!{}_{m, k}^\lam}
\def\sd{\stackrel{*}{\S}\!{}_{m, k}^\lam}
\def\cd0{\stackrel{*}{\C}\!{}_{m, k}^\lam}
\def\sd0{\stackrel{*}{\S}\!{}_{m, k}^\lam}

\def\ncd0{\stackrel{*}{\Cs}\!{}_{m, k}^\lam}

%\def\ncb{\stackrel{*}{\Ss}\!{}_{m, k}^\b}
%\def\nck{\stackrel{*}{\S}\!{}_{m, k}^{\b+n-k-m}}

%% NEW %%%%%%%%%%

\newtheorem{theorem}{Theorem}[section]
\newtheorem{lemma}[theorem]{Lemma}

\theoremstyle{definition}
\newtheorem{definition}[theorem]{Definition}
\newtheorem{example}[theorem]{Example}

\theoremstyle{remark}
\newtheorem{remark}[theorem]{Remark}

\numberwithin{equation}{section}

\theoremstyle{corollary}
\newtheorem{corollary}[theorem]{Corollary}
\newtheorem{proposition}[theorem]{Proposition}
\newtheorem{conjecture}[theorem]{Conjecture}

\numberwithin{equation}{section}

%    Absolute value notation

\newcommand{\be}{\begin{equation}}
\newcommand{\ee}{\end{equation}}

\newcommand{\bea}{\begin{eqnarray}}
\newcommand{\eea}{\end{eqnarray}}
\newcommand{\Bea}{\begin{eqnarray*}}
\newcommand{\Eea}{\end{eqnarray*}}

\def\sideremark#1{\ifvmode\leavevmode\fi\vadjust{\vbox to0pt{\vss% the remark
 \hbox to 0pt{\hskip\hsize\hskip1em%                          will appear only
\vbox{\hsize2cm\tiny\raggedright\pretolerance10000%          on the side
 \noindent #1\hfill}\hss}\vbox to8pt{\vfil}\vss}}}%
                                                   %          in 2cm

                                                   %          wide box
                                                   %

%    Blank box placeholder for figures (to avoid requiring any
%    particular graphics capabilities for printing this document).

\begin{document}

\title[Higher-Rank  Radon
Transforms]{ Higher-Rank  Radon
Transforms on Constant Curvature Spaces}

\author{ B. Rubin}

\address{Department of Mathematics, Louisiana State University, Baton Rouge,
Louisiana 70803, USA}
\email{borisr@lsu.edu}

\subjclass[2010]{Primary 44A12; Secondary 47G10, 43A85}

\dedicatory{To the memory of Professor Nikolai Karapetovich Karapetyants
 on the occasion of his 80th birthday}

%\date{June 12, 2020 }

\keywords{Radon transforms,  Grassmann manifolds, constant curvature spaces, the real hyperbolic space.}

\begin{abstract}
We study higher-rank Radon transforms of the form $f(\t) \to \int_{\t \subset \z} f(\t)$, where $\t$ is a $j$-dimensional totally geodesic submanifold in the $n$-dimensional real constant curvature space and $\z$ is a similar submanifold of dimension $k >j$.
 The corresponding dual transforms are also considered. The transforms are explored the Euclidean case (affine Grassmannian bundles), the elliptic case (compact Grassmannians), and the hyperbolic case (the hyperboloid model, the Beltrami-Klein model, and the projective model). The main objectives are sharp conditions for the existence and injectivity of the Radon transforms
in  Lebesgue spaces,   transition  from one model to another,
support theorems,  and   inversion formulas. Conjectures and open problems are discussed.
 \end{abstract}

\maketitle

%\setcounter{tocdepth}{1}

%\setcounter{tocdepth}{4}

%\newpage

\tableofcontents

%\newpage

\section{Introduction }

%%%%%%%%%%%%  See text un Cos_revisited-5.tex

\setcounter{equation}{0}
\noindent

Let $X$ be a real $n$-dimensional complete Riemannian manifold of constant sectional curvature $\kappa$.  One can realize  $X$ as the Euclidean space $\rn$  ($\kappa=0$), the unit sphere $S^n$ in $\bbr^{n+1}$ ($\kappa=1$), or the
 $n$-dimensional real hyperbolic space $\hn$ ($\kappa=-1$); see, e.g.,  \cite [p. 151]{H00}.     Let  $\Gam_X(n,j)$ and  $\Gam_X(n,k)$, $0 \le j<k\le n-1$,  be the families of totally geodesic submanifolds of $X$ of dimension $j$ and $k$, respectively ($j$-geodesics and $k$-geodesics, for short). The sets $\Gam_X(n,j)$ and  $\Gam_X(n,k)$  are homogeneous  spaces of the relevant Lie groups of motions and have the  structure of smooth manifolds; see, e.g., \cite [Chapter I, Sections 1 and 3]{H00}.
In the present paper, we consider a dual pair of
 Radon  transforms
 \be\label {jsewq}
 (R_X f)(\z)=\intl_{\t \subset  \z} f (\t), \qquad (R^*_X \vp)(\t)=\intl_{\z \supset  \t} \vp (\z),\ee
 where $f$ and $ \vp$ are complex-valued functions on $\Gam_X (n,j)$ and $\Gam_X (n,k)$, respectively,
 $ \t \in \Gam_X (n,j)$, $\z \in \Gam_X (n,k)$, and the
 integration is performed with respect to the corresponding canonical measures.

These transforms  are well-known for
 $j=0$ when $0$-geodesics are  points of $X$; see bibliographical notes  in
 \cite {H11, Ru15}.
 In the general case $0 \le j<k \le n-1$, similar higher-rank transforms were studied
 in the Euclidean and elliptic cases, i.e., for affine and compact Grassmannians,  by
 I.M. Gelfand,  S.G. Gindikin,  F. Gonzalez, M.I. Graev,  E.L. Grinberg, S. Helgason,  T.  Kakehi, G. Olafsson, E.E. Petrov,  B. Rubin, S. Sahi, Z.Ya. Shapiro,
R. Strichartz, Yingzhan Wang, Genkai Zhang, to mention a few. The  references  can be found  in  \cite{GK06, GR, Ru04d}.

The study of the higher-rank transforms (\ref{jsewq}) in the hyperbolic setting is pretty new. To the best of my knowledge, the only publications on this subject are due to Ishikawa \cite{I20, I21}. The  paper \cite{I20}
contains the description of the range of the operators (\ref{jsewq}) on the space of compactly supported smooth functions. The next paper \cite{I21}
 deals with injectivity and support theorems for more general Radon transforms on semisimple symmetric spaces. The results of \cite{I20, I21} are obtained  in the Lie-theoretic language in the framework of the Helgason double fibration theory.

 It is also worth noting that most of the authors mentioned above were  dealing  with $C^\infty$ compactly supported or Schwartz functions, while the properties of $R_X$ and $R^*_X$  on general Lebesgue spaces remained unexplored. On the other hand, these properties are very important for understanding the analytic nature of the operators $R_X$ and $R^*_X$ having a geometric (or group-theoretic) origin.

In the present paper, we develop a functional-theoretic (or coordinate) approach to the operators (\ref{jsewq}). This approach  differs essentially from that in \cite{I20, I21} (the hyperbolic case) and in 
\cite {GGR, GK03, GK04, GK06, GW, Gr85, Gr86, Ka, P1, Str86} (the Euclidean and elliptic cases).
We obtain explicit analytic formulas for $R_X$ and $R^*_X$ and establish sharp conditions under which these integrals  exist in the Lebesgue sense.
We also prove support theorems and present explicit inversion formulas in the cases when such formulas are not too complicated. For instance, simple inversion formulas are available on radial (or zonal) functions when the operators are essentially one-dimensional and expressed through
Abel type fractional integrals. Another simple case deals with functions $f$ belonging to the range of the classical totally geodesic
transform corresponding to $j=0$.

Some comments related to the hyperbolic case are in order. There exist several different models of $\hn$; see, e.g., \cite{CFKP, Ra}. 
The same results in different models look differently and are mutually beneficial.
 The  result in one model can sometimes be more transparent and  easier to obtain than its counterpart in another model. There exist simple formulas connecting points in different models; see, e.g., \cite [p. 71] {CFKP}.
 The corresponding formulas for totally geodesic submanifolds and the relevant Radon transforms require substantial  technical work related to computation of Jacobians;
 cf. \cite{BCK, Ku94}, \cite[pp. 412-416]{Ru15}.  In the present paper, we consider the hyperboloid model of $\hn$ as the basic one. Here we  follow
 \cite{GGV, Ru15, VK}. However, to obtain the main results, we also invoke the Beltrami--Klein  model (sometimes called the Klein  model, 
 the Cayley--Klein model, or the projective disk model). The  Beltrami--Klein  model is realized as  the interior of the unit ball with the relevant metric,
and the totally geodesic submanifolds  are represented by  chords of the corresponding dimension; see, e.g., \cite [pp. 7, 188] {Ra}.\footnote{See also Wikipedia, Beltrami--Klein model, \\ https://en.wikipedia.org/wiki/Beltrami\%E2\%80\%93Klein${}_{-}$model. } 

The main topics of the paper can be seen in the Contents. Explicit transition formulas (\ref{sform1}), (\ref{trou7}), (\ref{lity2}), (\ref{lity2q}), and (\ref{EPrtrou7}) from one model or setting
to another play a key role in our work. They pave the way to other new statements and can be viewed  as the main results of the paper.

 Throughout the paper, the letter $X$ in the above notation for the ambient space will be replaced by another one, depending on the context.

\subsubsection{ Some Notation} \label{Prel}

\noindent

In the following, $\rn$ is the usual $n$-dimensional real Euclidean space; $B_n$ is the open unit ball in $\rn$;  $S^{n -1}$  is the unit sphere  in $\rn$ with the surface area
$\sigma_{n-1}=2\pi^{n/2}/\Gamma(n/2)$;   $SO (n)$ stands for the special orthogonal group of $\rn$ with the relevant  Haar probability measure, so that
\be\label {sph}\intl_{S^{n-1}}f(\om)\,d\omega=\sigma_{n-1} \intl_{SO(n)}
f(\gamma \omega_0)\,d\gamma\ee
for any point $\omega_0 \in S^{n -1}$.

The letter $c$, sometimes with subscripts, is used for a constant that can be different at each occurrence. All functions are assumed complex-valued, unless otherwise  stated. Given a real-valued function $f$ and
 a complex number  $\lam$,  we set $f_+^\lam = f^\lam$ if $f>0$ and $f_+^\lam =0$, otherwise.

\section {The Euclidean Case and  Affine  Grassmannians}

In this section we recall some known facts from \cite{GK03, GK04, GK06, Ru04d} and prove new statements.  These results will be  extended later to the elliptic and hyperbolic cases.

\subsection {Definitions, Duality, Radial Functions}\label {deta}

Let $G_{n,d}$ will be the  Grassmann manifold
 of  $d$-dimensional linear subspaces  of $\rn$, $n\ge 2$. The corresponding Grassmannian bundle  (or affine Grassmannian)
 of  $d$-dimensional affine planes in $\rn$ will be denoted by ${\rm Gr} (n,d)$. We recall that
\be\label {ewhich} \dim {\rm Gr} (n,d)=(d+1)(n-d),\ee
 and  assume
 $0\le d \le n-1$,  where the case $d=0$ corresponds to points in $\rn$.

% Following \cite [p. 522]{GK06} , we also set
%   \be\label {ewhich1}
%  \rank \, {\rm Gr} (n,d)=\min (d+1, n-d).\ee

The space ${\rm Gr} (n,d)$ is a homogeneous manifold of the group $E(n)$ of isometries of $\rn$.
Local coordinates on ${\rm Gr} (n,d)$ are described in \cite{Ri}.
Elements of  ${\rm Gr} (n,d)$ can be parametrized in different ways. For example, each plane
  $\t \in {\rm Gr} (n,d)$ can be   associated with a pair
$(\xi, u)$, where $\xi \in G_{n,d}$ and $ u \in \xi^\perp$, the
orthogonal complement to $\xi $ in $\rn$. Abusing notation, we denote by $ |\t|$ the
 Euclidean distance from $\t \equiv \t( \xi, u)$ to the origin $o$ of $\rn$.
Clearly, $|\t|=|u|$, the Euclidean norm of $u$. We denote
\be\label {dfwhich}
\tilde {\rm Gr} (n,d)=\{ \t \in {\rm Gr} (n,d): |\t| > 0\}.\ee
A function $f$ on ${\rm Gr} (n,d)$  is called   radial
if it is  $SO(n)$-invariant. Every such function has the form $f(\t) \!= \!f_0 (|\t|)$ for some single-variable function $f_0$.

The manifold
${\rm Gr} (n,d)$ will be endowed with the product measure $d\t=d\xi du$,
where $d\xi$ is the
 $SO(n)$-invariant probability measure  on $G_{n,d}$  and $du$ denotes the usual volume element in $\xi^\perp$. If $f(\t) = f_0 (|\t|)$, then, passing to polar coordinates in $\xi^\perp$, we obtain
 \be\label {ewadch}
 \intl_{{\rm Gr} (n,d)}\!\!\! f(\t) \, d\t= \sig_{n-d-1}\intl_0^\infty f_0 (r) \, r^{n-d-1}\, dr.\ee

 The spaces $C({\rm Gr} (n,d))$  and $L^p({\rm Gr} (n,d))$ of continuous and $L^p$ functions are defined in a standard way;
 $C_c^\infty({\rm Gr} (n,d))$ is the space of compactly supported $C^\infty$ functions on  ${\rm Gr} (n,d)$.
 The notation $S({\rm Gr} (n,d))$ will be used for the Schwartz space of rapidly decreasing smooth functions defined  in
  \cite{Ri}; see also \cite {Go90}. We will also  deal with weighted Lebesgue spaces
\be \label {eads}
L^1_\lam
({\rm Gr} (n,d)) =\Bigg \{ \; f \, : ||f||=\intl_{ {\rm Gr} (n,d)} \frac {|f(\t)| \;
 d\t}{(1+|\t|)^\lam} < \infty  \; \Bigg \}. \ee

%\be \label {1eads}
%L^\infty_\lam
%({\rm Gr} (n,d)) =\Bigg \{ \; f(\t) \, : ||f||={\rm ess} \!\!\!\!\sup\limits_{ \t \in {\rm Gr} (n,d)} |\t|^\lam |f(\t)|  < \infty  \; \Bigg \}. \ee

 Given  a pair of  affine Grassmannians  ${\rm Gr} (n,j)$ and ${\rm Gr} (n,k)$, $0\le j<k\le n-1$,
  we set
  \be\label {which} \t= \t( \xi, u)\in {\rm Gr} (n,j), \qquad \xi \in G_{n,j}, \quad u \in \xi^\perp; \ee
\be\label {which1}  \z= \z(\eta, v) \in {\rm Gr} (n,k), \qquad \eta \in G_{n,k}, \quad v \in \eta^\perp. \ee
The Radon transform  of a function $f: {\rm Gr} (n,j) \to \bbc$ is a
function $R_A f: {\rm Gr} (n,k) \to \bbc$ defined by
\be\label {lasz} (R_A f)(\z)  = \intl_{\t
\subset \zeta} \! f(\t)\, d_{\z}\t \equiv \intl_{\xi \subset \eta} d_\eta \xi
\intl_{\xi^\perp \cap  \eta} f(\xi, v+y)\, dy. \ee
Here $d_\eta \xi$
denotes the standard probability measure on the Grassmannian of
all $j$-dimensional linear subspaces  of $\eta$, and the right-hand
side gives precise meaning to the integral $\int_{\t
\subset \zeta} f(\t)\, d_{\z}\t$. The case $j=0$  yields
 the Radon-John $k$-plane transform which integrates $f\equiv f (x)$, $x\in \rn$, over $k$-dimensional planes in $\rn$;
 cf. \cite [Chapter 2]{GGG}, \cite{Go90}, \cite [Chapter I, Section 6]{H11}, \cite{Ke, Ri, Ru04b}, to mention a few.

For the following, it is convenient to split the coordinate unit vectors $e_1, \ldots, e_{n}$ in three groups
\be\label {krtg}
\{e_1, \ldots, e_{n-k}\}, \quad \{e_{n-k+1}, \ldots, e_{n-j}\}, \quad \{e_{n-j+1}, \ldots, e_{n}\},\ee
and establish notation for the corresponding coordinate planes:
\be \label {krtg1}
\bbr^j =\bbr e_{n-j+1} \oplus\cdots\oplus \bbr e_{n}, \qquad \bbr^k =\bbr e_{n-k+1}\oplus\cdots\oplus  \bbr e_{n},\ee
\be\label {krtg2}
\bbr^{n-j}=\bbr e_1\oplus\cdots\oplus  \bbr e_{n-j}, \qquad \bbr^{n-k}=\bbr e_1\oplus\cdots\oplus  \bbr e_{n-k},\ee
\be\label {aakrtg2}
 \bbr^{k -j}= \bbr e_{n-k+1}\oplus\cdots\oplus  \bbr e_{n-j}.\ee
In the cases $j=0$, $k=n-1$, and $j=k-1$ the changes  are obvious.

If $g\in  SO(n)$ is a rotation satisfying
\[ g: \bbr^k \to \eta, \qquad g: e_{n-k}
\to v/|v|,  \]
then, denoting $f_g(\t)=f(g\t)$, one can write (\ref{lasz}) as
    \bea (R_A f)(\eta, v)&=&\intl_{G_{k,j}} d\sig \intl_{\sig^\perp \cap \bbr^k}
  f_g (\sig, |v|e_{n-k}+y) \, dy \nonumber\\
\label {lasz1} &=&\intl_{SO(k)} d\gam \intl_{\bbr^{k -j}}
  f_g (\gam (\bbr^{j}+ |v|e_{n-k}+z)) \, dz. \quad \eea

The dual Radon transform  of a function $\vp(\z) \equiv \vp(\eta,
v)$ on ${\rm Gr} (n,k)$ is a function $(R^*_A \vp)(\t)\equiv (R^*_A \vp)(\xi, u) $  on ${\rm Gr} (n,j)$,
defined by
\be\label {lasz2}  (R^*_A \vp)(\t)
= \intl_{\z \supset \t} \vp(\z)\,  d_\t \z= \intl_{\eta  \supset \xi}
\vp(\eta +u) d_\xi \eta=\intl_{\eta \supset \xi}
\vp(\eta,\Pr_{\eta^\perp} u) d_\xi \eta. \ee
Here
$\Pr_{\eta^\perp} u$ denotes the orthogonal projection of $u \;
(\in \xi^\perp)$ onto $\eta^\perp (\subset \xi^\perp)$, $d_\xi
\eta$ is the relevant normalized measure.  This transform
integrates  $\vp$ over all $k$-planes $\z$   containing  the
$j$-plane $\t$. To give (\ref{lasz2}) precise meaning,
 we choose
a rotation $g_\xi \in  SO(n)$ satisfying $g_\xi \bbr^j=\xi$, and let
$SO(n-j)$ be the subgroup of rotations in the coordinate plane
$\bbr^{n-j}$.  Then (\ref{lasz2}) means \be \label {laszcv} (R^*_A \vp)(\t) \equiv  (R^*_A \vp)(\xi, u)=
\intl_{SO(n-j)} \vp(g_\xi \rho \bbr^k +u) \, d\rho, \ee
where the integral on the right-hand side does not depend on the afore-mentioned choice of $g_\xi$.

\begin{proposition} {\rm \cite[Lemma 2.1]{Ru04d},    \cite [p. 69]{H11}} The equality
\be \label {wbcdaff} \intl_{{\rm Gr} (n,k)} (R_A f)(\z) \,\vp(\z)\, d \z=\intl_{{\rm Gr} (n,j)} f(\t)\, (R^*_A \vp)(\t)\, d\t \ee
holds provided
 that  either side of it is finite when  $f$ and $\vp$ are replaced by $|f|$ and $|\vp|$,
respectively.
\end{proposition}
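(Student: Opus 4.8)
The plan is to realize both sides of (\ref{wbcdaff}) as the integral of the single function $(\t,\z)\mapsto f(\t)\,\vp(\z)$ over the incidence set $\{(\t,\z):\t\subset\z\}$, evaluated in two different orders, and to pass from one order to the other by the Fubini--Tonelli theorem together with the invariance of the canonical measures on Grassmannians. The convergence hypothesis is exactly the Tonelli condition that legitimizes each interchange below, so it suffices to treat the case $f,\vp\ge 0$ and then pass to the signed case.

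Let $I$ denote the left-hand side of (\ref{wbcdaff}). Writing it in the coordinates (\ref{which}), (\ref{which1}), using the defining formula (\ref{lasz}) together with $d\z=d\eta\,dv$, and recalling that $\t(\xi,u)\subset\z(\eta,v)$ forces $\xi\subseteq\eta$ and $u=v+y$ with $y\in\eta\cap\xi^\perp$, I obtain
\[
I=\intl_{G_{n,k}} d\eta \intl_{\eta^\perp} dv \intl_{\xi\subset\eta} d_\eta\xi \intl_{\eta\cap\xi^\perp} f(\xi,v+y)\,\vp(\eta,v)\,dy.
\]
Next, fixing $\eta$ and $\xi\subseteq\eta$, I perform the inner substitution $u=v+y$. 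Since $\xi\subseteq\eta$ gives $\eta^\perp\subseteq\xi^\perp$, the complement splits as the orthogonal direct sum
\[
\xi^\perp=\eta^\perp\oplus(\eta\cap\xi^\perp),\qquad \dim\xi^\perp=(n-k)+(k-j)=n-j,
\]
so $(v,y)\mapsto u=v+y$ is a measure-preserving linear isomorphism of $\eta^\perp\oplus(\eta\cap\xi^\perp)$ onto $\xi^\perp$ with $v=\Pr_{\eta^\perp}u$ and $dv\,dy=du$. This turns $I$ into
\[
I=\intl_{G_{n,k}} d\eta \intl_{\xi\subset\eta} d_\eta\xi \intl_{\xi^\perp} f(\xi,u)\,\vp(\eta,\Pr_{\eta^\perp}u)\,du.
\]

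Finally, I interchange the integrations over the pair $(\xi,\eta)$: both $\int_{G_{n,k}}d\eta\int_{\xi\subset\eta}d_\eta\xi$ and $\int_{G_{n,j}}d\xi\int_{\eta\supset\xi}d_\xi\eta$ compute the integral over the flag manifold $\{(\xi,\eta):\xi\subset\eta\}$ against its unique $SO(n)$-invariant probability measure, by the standard disintegration identity for invariant measures on Grassmannians. Applying this and then recognizing the inner integral through (\ref{laszcv}) (equivalently (\ref{lasz2})) yields
\[
I=\intl_{G_{n,j}} d\xi \intl_{\xi^\perp} f(\xi,u)\,du \intl_{\eta\supset\xi} \vp(\eta,\Pr_{\eta^\perp}u)\,d_\xi\eta =\intl_{{\rm Gr}(n,j)} f(\t)\,(R^*_A\vp)(\t)\,d\t,
\]
which is the right-hand side of (\ref{wbcdaff}).

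The only genuinely delicate points are the two interchanges. The orthogonality of $\xi^\perp=\eta^\perp\oplus(\eta\cap\xi^\perp)$, which is where $\xi\subseteq\eta$ is used, is what forces the Jacobian in the middle step to equal $1$ and identifies $v$ with $\Pr_{\eta^\perp}u$; and the flag-measure swap in the last step is the one place the argument relies on the invariance and correct normalization of $d_\eta\xi$, $d_\xi\eta$, $d\xi$, and $d\eta$. I expect verifying this normalization compatibility to be the main obstacle, though it reduces at once to the homogeneity of the flag manifold under $SO(n)$. Under the stated hypothesis all integrands may be taken nonnegative, so Tonelli's theorem secures finiteness and permits every reordering, and the signed case then follows by Fubini.
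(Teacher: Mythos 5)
Your proof is correct, and it is the standard argument: the paper itself does not prove this proposition but cites \cite[Lemma 2.1]{Ru04d} and \cite[p.~69]{H11}, where the duality is established in essentially the same way — writing both sides as an integral over the incidence (flag) manifold, using the orthogonal splitting $\xi^\perp=\eta^\perp\oplus(\eta\cap\xi^\perp)$, the uniqueness of the $SO(n)$-invariant probability measure on $\{(\xi,\eta):\xi\subset\eta\}$ to swap $d\eta\,d_\eta\xi$ for $d\xi\,d_\xi\eta$, and Tonelli--Fubini under the stated finiteness hypothesis. No gaps.
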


Because the Radon transforms $R_A$ and $R^*_A$  commute with rotations, the functions $R_A f$ and $R^*_A \vp$ with $f$ and $\vp$ radial  (i.e. $SO(n)$-invariant)
 have a simple representation in terms of Erd\'{e}lyi--Kober
type fractional integrals. Information about such integrals can be found in Appendix (see Section \ref{kuku}).
\begin{proposition}\label {nsfo5} {\rm \cite[Lemma 2.3]{Ru04d}} For $\t \in {\rm Gr} (n,j)$ and $\z \in {\rm Gr} (n,k)$,
let $f(\t)=f_0 (|\t|)$, $\vp(\z)= \vp_0 (|\z|)$. Then $(R_A f)(\z)= F_0(|\z|)$, $(R^*_A \vp)(\t)= \Phi_0(|\t|)$, where
\be \label{besg} F_0(s)= \sig_{k-j-1}\intl_s^\infty f_0(r) (r^2 -s^2)^{(k-j)/2
-1} r dr, \quad s=|\z|,\ee
\be \label{besg1} \Phi_0(r)= \frac{\sig_{k-j-1} \,
\sig_{n-k-1}}{\sig_{n-j-1} \, r^{n-j-2}}\intl_0^r \vp_0 (s) (r^2
-s^2)^{(k-j)/2 -1} s^{n-k-1} ds, \quad r=|\t|,\ee
provided that the corresponding
integrals exist in the Lebesgue sense.
\end{proposition}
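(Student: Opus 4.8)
The plan is to obtain $R_A$ by a direct geometric reduction and then deduce $R_A^*$ from the duality relation (\ref{wbcdaff}), which lets the constant take care of itself.

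For the Radon transform, fix a $k$-plane $\z$ with $|\z|=s$ and let $p$ be the foot of the perpendicular dropped from the origin $o$ onto $\z$, so that $|op|=s$ and $o-p$ is orthogonal to the direction space of $\z$. For any point $x$ lying in $\z$ one then has $|o-x|^2=s^2+|p-x|^2$ by the Pythagorean theorem, and minimizing over $x\in\t$ for a $j$-plane $\t\subset\z$ shows that
\be |\t|^2=s^2+\rho^2, \qquad \rho:=\dist(p,\t), \ee
where $\rho$ is the distance, computed inside the $k$-dimensional flat $\z$, from $p$ to $\t$. Because the measure $d_\z\t=d_\eta\xi\,dy$ from (\ref{lasz}) is the $E(k)$-invariant measure on the family of $j$-planes of $\z$, it is translation invariant, and the radial integration formula (\ref{ewadch}), applied inside $\z$ with $n$ replaced by $k$ and the center taken at $p$, gives
\be F_0(s)=\intl_{\t\subset\z}f_0\Big(\sqrt{s^2+\rho^2}\,\Big)\,d_\z\t=\sig_{k-j-1}\intl_0^\infty f_0\Big(\sqrt{s^2+\rho^2}\,\Big)\,\rho^{k-j-1}\,d\rho. \ee
The substitution $r=\sqrt{s^2+\rho^2}$, for which $\rho\,d\rho=r\,dr$ and $\rho^{k-j-1}\,d\rho=(r^2-s^2)^{(k-j)/2-1}r\,dr$, turns this into (\ref{besg}).

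For the dual transform I would avoid a second geometric computation and instead exploit (\ref{wbcdaff}). Insert $f(\t)=f_0(|\t|)$, $\vp(\z)=\vp_0(|\z|)$, use $(R_Af)(\z)=F_0(|\z|)$ just found, and evaluate both sides of (\ref{wbcdaff}) by (\ref{ewadch}). After inserting (\ref{besg}) and interchanging the order of integration over the region $0<s<r$, the left-hand side becomes
\be \sig_{n-k-1}\sig_{k-j-1}\intl_0^\infty f_0(r)\,r\Big[\intl_0^r\vp_0(s)(r^2-s^2)^{(k-j)/2-1}s^{n-k-1}\,ds\Big]dr, \ee
while the right-hand side equals $\sig_{n-j-1}\int_0^\infty f_0(r)\,\Phi_0(r)\,r^{n-j-1}\,dr$. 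Since $f_0$ is arbitrary, the two integrands coincide for a.e.\ $r$, and as both expressions for $\Phi_0$ are given by continuous integrals they agree wherever defined; solving for $\Phi_0(r)$ yields exactly (\ref{besg1}), with the announced constant $\sig_{k-j-1}\sig_{n-k-1}/\sig_{n-j-1}$ appearing automatically.

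The delicate points are bookkeeping ones. One must justify the Fubini interchange and the passage from the integrated identity to the pointwise formula; this is where the Lebesgue hypothesis enters, the interchange being legitimate once (\ref{wbcdaff}) holds with $|f|,|\vp|$ in place of $f,\vp$. It is also worth recording the alternative direct route to (\ref{besg1}): writing $(R_A^*\vp)(\xi,u)=\int_{\eta\supset\xi}\vp_0(|\Pr_{\eta^\perp}u|)\,d_\xi\eta$ from (\ref{lasz2}) and decomposing $\eta=\xi\oplus\eta'$ with $\eta'\in G_{n-j,k-j}$ a subspace of $\xi^\perp$, one has $|\Pr_{\eta^\perp}u|=|\Pr_{(\eta')^\perp}u|$, so that $\Phi_0(r)=\int_{G_{n-j,k-j}}\vp_0(|\Pr_{L^\perp}u|)\,dL$ with $|u|=r$; the squared projection length of a fixed vector onto a uniformly random $(k-j)$-subspace of $\bbr^{n-j}$ follows a Beta law, and the change of variable $s=|\Pr_{L^\perp}u|$ reproduces (\ref{besg1}). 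I expect the only real obstacle to be this constant-tracking, namely matching $\sig_{k-j-1}\sig_{n-k-1}/\sig_{n-j-1}$ against the Beta normalization $\Gamma((n-j)/2)/[\Gamma((k-j)/2)\,\Gamma((n-k)/2)]$; the duality route sidesteps it entirely.
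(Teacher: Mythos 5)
Your proof is correct. Note that the paper does not prove this proposition at all: it is quoted verbatim from \cite[Lemma 2.3]{Ru04d}, so there is no in-paper argument to match, and your write-up is a legitimate self-contained derivation. The computation of $(R_Af)(\z)$ is the standard one: with $\z=\z(\eta,v)$ and $\t=\xi+v+y$, $y\in\xi^\perp\cap\eta$, one has $p=v$, $\rho=|y|$, $|\t|^2=s^2+|y|^2$, and polar coordinates in the $(k-j)$-dimensional fibre give (\ref{besg}); your substitution $\rho^{k-j-1}d\rho=(r^2-s^2)^{(k-j)/2-1}r\,dr$ is right. Deriving (\ref{besg1}) from the duality (\ref{wbcdaff}) is an efficient alternative to the direct projection computation, with two small points worth making explicit: first, you must observe that $(R_A^*\vp)(\t)$ depends only on $|\t|$ (this follows from the $SO(n)$-equivariance visible in (\ref{laszcv}) together with the radiality of $\vp$) before you may write the right-hand side as $\sig_{n-j-1}\int_0^\infty f_0(r)\Phi_0(r)r^{n-j-1}dr$; second, varying $f_0$ only identifies the integrands for almost every $r$, which is all the statement claims under the ``provided the integrals exist'' hypothesis. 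Your sanity check on the constant in the alternative direct route also works out: since $\sig_{m-1}=2\pi^{m/2}/\Gam(m/2)$, one has $\sig_{k-j-1}\sig_{n-k-1}/\sig_{n-j-1}=2\,\Gam((n-j)/2)/[\Gam((k-j)/2)\Gam((n-k)/2)]$, which is exactly the Beta normalization after the change of variable $s=r\sqrt{t}$.
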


\begin{remark}\label {nusfo}
 The integral (\ref{besg}) is finite for almost all $s>0$ provided that
\be\label {for10zf} \intl_a^\infty |f_0 (r)|\, r^{k-j-1} dr <\infty\ee
 for all $a>0$. If $f$ is non-negative and (\ref{for10zf}) fails for some $a>0$, then $F_0 (s)=\infty$ for every $s\!\ge \!0$. The proof of these facts can be found in  \cite [Lemma 2.42]{Ru15}.
\end{remark}

\begin{remark} Formulas (\ref{besg}) and (\ref{besg1}) reveal a remarkable fact, which is obvious for  all Abel-type
fractional integrals. Specifically, to provide the existence of $F_0(s)$ for $s>0$, we need to take
care of the behavior of $f_0 (r)$ at infinity and may ignore its behaviour as $r\to 0$. On the other hand, in (\ref{besg1}),
the behavior of $\vp_0 (s)$ as $s\to 0$ is crucial, while no restriction is needed when $s\to \infty$. We say that these
integrals have a {\it one-sided structure}, {\it the right-sided} for (\ref{besg}) and {\it the left-sided}   for (\ref{besg1}).
Owing to the $SO(n)$-equivariance, this one-sided structure  still holds for  $R_A f$ and $R^*_A \vp$  with arbitrary, not necessarily radial $f$ and $\vp$, as can be seen below in the corresponding existence conditions and the support theorems. This important phenomenon will be taken into account in our study.
\end{remark}

\subsection {Existence in the Lebesgue Sense}

Our next concern is convergence of the integrals $R_A f$ and $R^*_A \vp$.
We begin with some known facts.
\begin{proposition} \label {thers} {\rm \cite [p. 254]{GK03}}   Let  $ 0 \le j<k\le n-1$.  The Radon transform
$R_A$ acts from $S({\rm Gr} (n,j))$ to $S({\rm Gr} (n,k))$ and from $C_c^\infty({\rm Gr} (n,j))$ to $C_c^\infty({\rm Gr} (n,k))$.
\end{proposition}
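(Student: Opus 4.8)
The plan is to work directly with the invariant integral representation (\ref{lasz}), which I prefer over (\ref{lasz1}) because it is free of the apparent singularity at $v=0$ coming from the rotation $g$ with $ge_{n-k}=v/|v|$. Writing $\z=\z(\eta,v)$ and using that $\xi\subset\eta$ forces $\eta^\perp\subset\xi^\perp$ and $\xi^\perp\cap\eta\subset\eta$, the vector $v+y$ with $v\in\eta^\perp$, $y\in\xi^\perp\cap\eta$ lies in $\xi^\perp$, so the integrand $f(\xi,v+y)$ makes sense and depends smoothly on $(\eta,v)$ and on the inner variables $\xi,y$. Thus $R_A$ is the composition of an integration over the compact fiber $\{\xi\subset\eta\}\cong G_{k,j}$ with an integration over the $(k-j)$-dimensional Euclidean fiber $\xi^\perp\cap\eta$. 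Throughout I will use the elementary geometric fact that $\t\subset\z$ implies $|\z|\le|\t|$, since enlarging a plane can only move it closer to the origin, together with the orthogonal decomposition $|\t(\xi,v+y)|^2=|v|^2+|y|^2$, valid because $v\perp y$.

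For the first assertion I would first settle the compact support. If $\supp f\subset\{\t:|\t|\le M\}$, then for $\z$ with $(R_Af)(\z)\ne 0$ there is some $\t\subset\z$ with $f(\t)\ne 0$, whence $|\z|\le|\t|\le M$; thus $\supp R_Af\subset\{\z:|\z|\le M\}$, a compact subset of ${\rm Gr}(n,k)$. For smoothness I would fix a local chart on ${\rm Gr}(n,k)$, note that on the support the $y$-integration in (\ref{lasz}) runs over a bounded set, and differentiate under the integral sign; each coordinate derivative falls on the smooth integrand $f(\xi,v+y)$, producing again a continuous, compactly supported integrand, so $R_Af\in C^\infty$. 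Hence $R_A:C_c^\infty({\rm Gr}(n,j))\to C_c^\infty({\rm Gr}(n,k))$.

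For the Schwartz statement the heart of the matter is rapid decay in $|\z|=|v|$. Since rotations preserve distance to the origin, the Schwartz bound $|f(\t)|\le c_N(1+|\t|)^{-N}$ gives, via $|\t(\xi,v+y)|=\sqrt{|v|^2+|y|^2}$,
\[
|(R_Af)(\eta,v)|\le c_N\intl_{\xi^\perp\cap\eta}\big(1+\sqrt{|v|^2+|y|^2}\,\big)^{-N}\,dy.
\]
Splitting $N=N_1+N_2$ and using $1+\sqrt{|v|^2+|y|^2}\ge\max\{1+|v|,\,1+|y|\}$, I bound the integrand by $(1+|v|)^{-N_1}(1+|y|)^{-N_2}$; choosing $N_2>k-j$ makes the $y$-integral finite and yields $|(R_Af)(\eta,v)|\le c'_{N_1}(1+|v|)^{-N_1}$ with $N_1$ arbitrary. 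This is the desired rapid decrease of $R_Af$ itself.

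It remains to control all the Schwartz seminorms, and here I would exploit equivariance rather than differentiate the explicit formula. The transform $R_A$ commutes with the action of the isometry group $E(n)$, so for any fundamental vector field $\tilde D$ on ${\rm Gr}(n,k)$ arising from the $E(n)$-action one has $\tilde D(R_Af)=R_A(Df)$, where $D$ is the corresponding field on ${\rm Gr}(n,j)$ and $Df$ is again Schwartz. Since these fundamental fields (rotations together with translations) span the tangent space at every point, the seminorms $\sup(1+|\z|)^N|\tilde D_1\cdots\tilde D_m R_Af|$ form a defining system for $S({\rm Gr}(n,k))$ (cf. \cite{Ri, Go90}); each such expression equals $(1+|\z|)^N|R_A(D_1\cdots D_m f)|$ and is therefore finite by the decay estimate of the previous paragraph applied to the Schwartz function $D_1\cdots D_m f$. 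The main obstacle, and the point requiring the most care, is precisely this reduction: verifying that the $E(n)$-generated vector fields furnish an admissible system of seminorms for the Schwartz space of the affine Grassmannian as defined in \cite{Ri}, and checking that no unbounded coefficients intrude when a local-coordinate derivative is re-expressed through them. Once this is in place, the estimate closes the argument and shows $R_A:S({\rm Gr}(n,j))\to S({\rm Gr}(n,k))$.
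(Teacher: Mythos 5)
The paper offers no proof of this proposition --- it is quoted verbatim from Gonzalez--Kakehi \cite[p.~254]{GK03} --- so there is no in-text argument to compare yours against; judged on its own terms, your proof is essentially correct. The support statement is right ($\t\subset\z$ forces $|\z|\le|\t|$, and $\{\z:|\z|\le M\}$ is compact in ${\rm Gr}(n,k)$), and the decay estimate is the correct one: the orthogonal splitting $|v+y|^2=|v|^2+|y|^2$, the bound $1+\sqrt{|v|^2+|y|^2}\ge\max\{1+|v|,1+|y|\}$, and the choice $N_2>k-j$ give arbitrary polynomial decay of $R_Af$, uniformly in $\xi$ because $d_\eta\xi$ is a probability measure. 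The one gap you flag yourself --- that the $E(n)$-fundamental vector fields with polynomial weights form a defining system of seminorms for $S({\rm Gr}(n,k))$ --- is genuine but closable by a standard argument: the translation fields already span the fiber directions $\eta^\perp$ at every point with constant coefficients, and subtracting their contribution from the rotation fields yields the remaining $G_{n,k}$-directions with coefficients polynomially bounded in $|\z|$ in both directions of the change of frame; such factors are absorbed by the weights $(1+|\z|)^N$, so the two seminorm systems are equivalent (this is also why $S({\rm Gr}(n,d))$ is $E(n)$-invariant to begin with). One small point you should make explicit in the $C_c^\infty$ part: to differentiate under the integral sign, fix a local smooth section $\eta\mapsto g_\eta$ of $SO(n)\to G_{n,k}$ and write $v=g_\eta w$ with $w\in\bbr^{n-k}$; this realizes the local chart you invoke while avoiding the normalization $v/|v|$ of (\ref{lasz1}), so smoothness at $v=0$ is automatic.
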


To extend this statement to non-smooth functions, we invoke the Lebesgue weighted spaces (\ref {eads}).

\begin{proposition} \label {ther} {\rm \cite [Corollary 2.9]{Ru04d}} Let  $ 0 \le j<k\le n-1$, $\lam =n-k$. The Radon transform $R_A$ is
a linear bounded operator from $L^1_\lam ({\rm Gr} (n,j))$ to
$L^1_{\lam +\del} ({\rm Gr} (n,k))$ for all $\del >0$.
The value
$\lam =n-k$ is best possible in the sense that for any $\e>0$ there exists $f \in
L^1_{n-k+\e} ({\rm Gr} (n,j))$ such that $(R_A f)(\z) \equiv \infty$.
\end{proposition}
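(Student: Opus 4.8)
The plan is to derive boundedness from the duality identity (\ref{wbcdaff}) and then to confirm optimality with a single explicit radial function. Since $R_A$ is linear and monotone, the representation (\ref{lasz}) gives $|R_A f|\le R_A|f|$, so it suffices to bound $R_A|f|$. Testing against the radial weight $\varphi(\z)=(1+|\z|)^{-(\lam+\del)}$ and applying (\ref{wbcdaff}) yields
\be
\|R_A f\|_{L^1_{\lam+\del}({\rm Gr}(n,k))}\le \intl_{{\rm Gr}(n,k)}(R_A|f|)(\z)\,\varphi(\z)\,d\z=\intl_{{\rm Gr}(n,j)}|f(\tau)|\,(R^*_A\varphi)(\tau)\,d\tau.
\ee
Because $\varphi$ is radial, Proposition \ref{nsfo5} evaluates $(R^*_A\varphi)(\tau)=\Phi_0(|\tau|)$ through the explicit fractional integral (\ref{besg1}). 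The whole inequality then collapses to the single pointwise bound
\be
\Phi_0(r)\le C\,(1+r)^{-(n-k)},\qquad r>0,
\ee
since with $\lam=n-k$ the last integral above is dominated by $C\int_{{\rm Gr}(n,j)}|f(\tau)|(1+|\tau|)^{-(n-k)}\,d\tau=C\|f\|_{L^1_\lam}$, which is exactly the asserted estimate.

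Establishing this pointwise bound is the heart of the matter and the step I expect to be most delicate. Inserting $\varphi_0(s)=(1+s)^{-(\lam+\del)}$ into (\ref{besg1}), I would split into the ranges $r\le 1$ and $r\ge 1$. For $r\le 1$ the scaling $s=rt$ makes every power of $r$ cancel, leaving a convergent Beta-type integral, so $\Phi_0$ is bounded there, matching $(1+r)^{-(n-k)}\asymp 1$. For $r\ge 1$ I would cut the $s$-integral at $r/2$. On $[0,r/2]$ one has $(r^2-s^2)^{(k-j)/2-1}\asymp r^{k-j-2}$, which factors out and leaves $\int_0^{r/2}(1+s)^{-(\lam+\del)}s^{n-k-1}\,ds$; this is precisely the integral whose convergence on $[0,\infty)$ demands $\lam+\del>n-k$, i.e. $\del>0$ when $\lam=n-k$, and after division by $r^{n-j-2}$ it produces the sharp decay $r^{-(n-k)}$. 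On $[r/2,r]$ one extracts $(1+s)^{-(\lam+\del)}\le Cr^{-(\lam+\del)}$ and $s^{n-k-1}\asymp r^{n-k-1}$ and uses $r^2-s^2\asymp r(r-s)$ to obtain $\int_{r/2}^r(r^2-s^2)^{(k-j)/2-1}\,ds\asymp r^{k-j-1}$, yielding the even faster decay $r^{-(n-k+\del)}$. The subtle points are the endpoint singularity at $s=r$ when $k-j<2$ and the exact bookkeeping of exponents; the hypothesis $\del>0$ enters exactly to tame the tail of the weight.

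For optimality I would exhibit a radial function that is admissible for every exponent strictly above $n-k$ yet whose transform is infinite everywhere. Take $f(\tau)=(1+|\tau|)^{j-k}$, so $f_0(r)=(1+r)^{j-k}\ge 0$. By (\ref{ewadch}) its weighted norm equals $\sig_{n-j-1}\int_0^\infty(1+r)^{j-n-\e}r^{n-j-1}\,dr$, whose integrand behaves like $r^{-1-\e}$ at infinity and is locally integrable, so $f\in L^1_{n-k+\e}({\rm Gr}(n,j))$ for every $\e>0$. At the same time $\int_a^\infty f_0(r)\,r^{k-j-1}\,dr\asymp\int_a^\infty r^{-1}\,dr=\infty$, so Remark \ref{nusfo} forces $F_0(s)=\infty$ for all $s\ge 0$, that is $(R_A f)(\z)\equiv\infty$. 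Hence the domain weight cannot be weakened past $\lam=n-k$, which is the claimed sharpness.
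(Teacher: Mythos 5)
Your argument is correct and complete. Note that the paper itself offers no proof of this proposition --- it is quoted verbatim from \cite[Corollary 2.9]{Ru04d} --- so there is nothing internal to compare against; your route (bounding $\|R_A f\|_{L^1_{\lam+\del}}$ by duality (\ref{wbcdaff}) against the radial weight $(1+|\z|)^{-(\lam+\del)}$, reducing to the pointwise estimate $(R^*_A\vp)(\t)\le C(1+|\t|)^{-(n-k)}$ via (\ref{besg1}), and then invoking Remark \ref{nusfo} for the divergent radial example $f_0(r)=(1+r)^{j-k}$) is exactly the natural self-contained reconstruction of the cited result, and all the exponent bookkeeping in your two-range analysis of $\Phi_0$ (including the integrable endpoint singularity at $s=r$ when $k-j=1$, and the appearance of $\del>0$ only in the tail $\int^{\infty}(1+s)^{-(n-k+\del)}s^{n-k-1}\,ds$) checks out.
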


\begin{proposition}\label{L2} {\rm \cite[Corollary 2.6]{Ru04d}}
 If
 \[ f\in L^p ({\rm Gr} (n,j)), \qquad 1 \le p < (n-j)/(k - j),\] then
$(R_A f)(\z)$ is finite for almost all $\z \in {\rm Gr} (n,k)$.
 If $p \ge (n-j)/(k -j)$ and \[f(\t)=(2+|\t|)^{(j-n)/p} (\log (2+|\t|))^{-1} \quad (\in L^p ({\rm Gr} (n,j))),\]
 then $(R_A f)(\z) \equiv \infty$. In particular, if $f \in C ({\rm Gr} (n,j))$ satisfies
 $f(\t)=O(|\t|^{-\lam})$ as $|\t| \to \infty$, then $(R_A f)(\z)$ is finite for all $\z \in {\rm Gr} (n,k)$  provided $\lam>k-j$, and can be identically infinite if $\lam \le
 k-j$.
\end{proposition}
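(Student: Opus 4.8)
The plan is to treat the three assertions separately: I will use duality together with Hölder's inequality for the positive (a.e.\ finiteness) statement, and the explicit radial formula (\ref{besg}) together with the nonnegative case of Remark \ref{nusfo} for both the sharpness example and the continuous case. Throughout, the governing heuristic is that the convergence of $R_A$ is controlled only by the behavior of the integrand at infinity (the ``right-sided structure''), so the whole matter reduces to matching a single exponent.

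For the a.e.\ finiteness when $1\le p<(n-j)/(k-j)$, I first reduce to $f\ge 0$ by replacing $f$ with $|f|$, and then test $R_A f$ against a nonnegative radial function. Fix $N>0$ and let $\vp^{(N)}$ be the radial function on ${\rm Gr}(n,k)$ with profile $\vp_0=\chi_{[0,N]}$, so that $\vp^{(N)}=\chi_{\{|\z|\le N\}}$. By the duality relation (\ref{wbcdaff}) and Hölder's inequality,
\be
\intl_{{\rm Gr}(n,k)} (R_A f)(\z)\,\vp^{(N)}(\z)\,d\z=\intl_{{\rm Gr}(n,j)} f(\t)\,(R^*_A \vp^{(N)})(\t)\,d\t\le \|f\|_{p}\,\|R^*_A \vp^{(N)}\|_{p'},
\ee
so it suffices to show $R^*_A \vp^{(N)}\in L^{p'}({\rm Gr}(n,j))$. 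Since $\vp^{(N)}$ is radial, so is $R^*_A \vp^{(N)}$, with profile $\Phi_0$ given by (\ref{besg1}); the substitution $s=r\sigma$ shows that $\Phi_0$ is constant on $[0,N]$ and that $\Phi_0(r)=O(r^{k-n})$ as $r\to\infty$. Using the radial integration formula (\ref{ewadch}) with $d=j$, the finiteness of $\|R^*_A \vp^{(N)}\|_{p'}^{p'}=\sig_{n-j-1}\intl_0^\infty |\Phi_0(r)|^{p'}r^{n-j-1}\,dr$ reduces to convergence at infinity of $\intl^\infty r^{(k-n)p'}r^{n-j-1}\,dr$, i.e.\ to $p'>(n-j)/(n-k)$. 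The elementary computation $p(n-k)>(p-1)(n-j)\iff p(k-j)<n-j$ shows this is exactly equivalent to the hypothesis $p<(n-j)/(k-j)$. Hence $\int (R_A f)\vp^{(N)}<\infty$, so $(R_A f)(\z)<\infty$ for a.e.\ $\z$ with $|\z|\le N$; letting $N$ run through $\bbn$ and taking a countable union gives a.e.\ finiteness everywhere.

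For sharpness I use the radial profile $f_0(r)=(2+r)^{(j-n)/p}(\log(2+r))^{-1}$. By (\ref{ewadch}) its $L^p$ tail integrand is $\sim r^{-1}(\log r)^{-p}$, integrable precisely because $p\ge (n-j)/(k-j)>1$, so $f\in L^p$. On the other hand the integrand of $\intl_a^\infty f_0(r)\,r^{k-j-1}\,dr$ is $\sim r^{(j-n)/p+k-j-1}(\log r)^{-1}$, which diverges exactly when $p\ge (n-j)/(k-j)$; by the nonnegative case of Remark \ref{nusfo} this forces $F_0(s)=\infty$ for every $s$, i.e.\ $(R_A f)(\z)\equiv\infty$. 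For the continuous case, if $f\in C$ and $|f(\t)|\le C(1+|\t|)^{-\lam}$, then by positivity and monotonicity of $R_A$ one has $|(R_A f)(\z)|\le (R_A g)(\z)=G_0(|\z|)$, where $g$ is the radial majorant with profile $g_0(r)=C(1+r)^{-\lam}$ and $G_0$ is given by (\ref{besg}). The lower endpoint $r=s$ is always harmless, since $(r^2-s^2)^{(k-j)/2-1}$ is locally integrable there for every $k-j\ge 1$, so $G_0(s)<\infty$ for all $s$ iff $\intl^\infty(1+r)^{-\lam}r^{k-j-1}\,dr<\infty$, i.e.\ iff $\lam>k-j$, yielding finiteness of $(R_A f)(\z)$ for all $\z$. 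When $\lam\le k-j$, the same $g_0$ is nonnegative and violates (\ref{for10zf}), so Remark \ref{nusfo} gives $(R_A g)(\z)\equiv\infty$, exhibiting a continuous $f=O(|\t|^{-\lam})$ with $R_A f\equiv\infty$.

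The main obstacle is the positive statement: one must choose the test function so that $R^*_A\vp$ is simultaneously explicitly computable and carries the borderline decay $r^{k-n}$ at infinity, and then verify that the resulting integrability threshold $p'>(n-j)/(n-k)$ coincides with the stated threshold $p<(n-j)/(k-j)$. The remaining work is bookkeeping: controlling the lower endpoint in (\ref{besg})--(\ref{besg1}) for small values of $k-j$, and passing from a.e.\ finiteness on each ball $\{|\z|\le N\}$ to a.e.\ finiteness on all of ${\rm Gr}(n,k)$ via a countable union.
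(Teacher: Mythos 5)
Your proof is correct. The paper does not reprove this proposition (it cites \cite[Corollary 2.6]{Ru04d}), but your argument --- duality (\ref{wbcdaff}) tested against $\chi_{\{|\z|\le N\}}$ with H\"older for the a.e.\ finiteness, and the radial formula (\ref{besg}) combined with Remark \ref{nusfo} for the two counterexamples --- is exactly the method of that source and of the analogous arguments the paper does spell out (Lemma \ref{lrrb1}, Proposition \ref{lrrbx}), and your exponent bookkeeping $p'>(n-j)/(n-k)\iff p<(n-j)/(k-j)$ is correct.
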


By Proposition \ref {ther},  if $f\in L^1_{n-k} ({\rm Gr} (n,j))$, then   $(R_A f)(\z)$ is finite for almost all $\z \in {\rm Gr} (n,k)$. The assumption $ f=f (\t)\in L^1_{n-k} ({\rm Gr} (n,j))$
is  sharp for $|\t| \to \infty$ but  is redundant for $|\t| \to 0$. The next new statement gives more precise information.

\begin{lemma}\label {lrrb1}   Let  $ 0 \le j<k\le n-1$.   If
\be\label{for10zk} \intl_{|\t|>a} \frac{|f(\t)|}{|\t|^{n-k}}\, d\t <\infty\ee
  for all $a>0$, then $(R_A f)(\z)$ is finite for almost all $\z\!\in \!{\rm Gr} (n,k)$.
If  $f$ is nonnegative, radial,  and (\ref{for10zk}) fails for some $a>0$, then $(R_A f)(\z)\!=\!\infty$ for every $\z\!\in\! {\rm Gr} (n,k)$.
\end{lemma}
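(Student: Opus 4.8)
The plan is to reduce the multi-dimensional integral $(R_A f)(\z)$ to the radial quantities controlled by Proposition \ref{nsfo5} and Remark \ref{nusfo}, exploiting the one-sided structure emphasized in the excerpt. The key observation is that the condition \eqref{for10zk} involves only the weight $|\t|^{-(n-k)}$, which matches the weight $\lam = n-k$ appearing in Proposition \ref{ther}; so the first half should follow by a localization argument that isolates the behaviour of $f$ near $|\t|=0$ (where integrability is automatic) from its behaviour at infinity (where \eqref{for10zk} is the sharp hypothesis).

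First I would split $f = f_0 + f_\infty$, where $f_0 = f\,\chi_{\{|\t| \le a\}}$ and $f_\infty = f\,\chi_{\{|\t|>a\}}$ for a fixed $a>0$. For the tail piece $f_\infty$, hypothesis \eqref{for10zk} says precisely that $|f_\infty| \in L^1_{n-k}({\rm Gr}(n,j))$, so Proposition \ref{ther} gives that $(R_A f_\infty)(\z)$ is finite for almost all $\z$. For the compactly supported piece $f_0$, I would argue that $(R_A f_0)(\z)$ is finite for a.e.\ $\z$ directly: since $f_0$ is supported in a bounded region, the inner integration in \eqref{lasz} over $\xi^\perp \cap \eta$ and the Grassmannian integration in \eqref{lasz1} are over a compact domain, and a Fubini/Tonelli argument (justifying that $R_A|f_0|$ is locally integrable on ${\rm Gr}(n,k)$) yields finiteness a.e. Combining the two pieces by linearity gives finiteness of $(R_A f)(\z)$ a.e., proving the first assertion.

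For the second assertion, I would reduce the nonnegative radial case to the one-variable formula \eqref{besg}. If $f(\t)=f_0(|\t|)$ is nonnegative and radial, then by Proposition \ref{nsfo5} we have $(R_A f)(\z)=F_0(|\z|)$ with $F_0(s)$ given by the right-sided Erdélyi–Kober integral. By Remark \ref{nusfo}, if the one-dimensional integrability condition \eqref{for10zf}, namely $\int_a^\infty |f_0(r)|\,r^{k-j-1}\,dr=\infty$, fails, then $F_0(s)=\infty$ for every $s\ge 0$, hence $(R_A f)(\z)=\infty$ for every $\z$. So it remains to check that the failure of \eqref{for10zk} for the radial $f$ is equivalent to the failure of \eqref{for10zf}. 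This is a direct computation: applying the polar-coordinate formula \eqref{ewadch} to the weighted integral in \eqref{for10zk} converts $\int_{|\t|>a}|f(\t)|\,|\t|^{-(n-k)}\,d\t$ into a constant multiple of $\int_a^\infty |f_0(r)|\,r^{n-j-1}\,r^{-(n-k)}\,dr=\sigma_{n-j-1}\int_a^\infty |f_0(r)|\,r^{k-j-1}\,dr$, so the two conditions coincide up to the positive constant $\sigma_{n-j-1}$.

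The main obstacle I anticipate is the rigorous justification of the $f_0$ (compactly supported) case for \emph{general}, not necessarily radial, $f$: one must verify that $R_A|f_0|$ is finite almost everywhere, i.e.\ that the Tonelli interchange of the $\z$-integration (over a neighborhood in ${\rm Gr}(n,k)$) with the fiber integration in \eqref{lasz} is legitimate and produces a locally integrable function of $\z$. This requires a careful bookkeeping of the canonical measures $d_\eta\xi$, $dy$, and $d\z=d\eta\,dv$, together with the compactness of the support to control the fiber volumes uniformly; the weight $|\t|^{-(n-k)}$ plays no role here since $|\t|$ is bounded on $\supp f_0$, which is exactly why \eqref{for10zk} is ``redundant for $|\t|\to 0$'' as noted before the lemma.
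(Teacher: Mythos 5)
Your second half (the nonnegative radial case) is correct and coincides with the paper's: reduce to $F_0$ via Proposition \ref{nsfo5}, convert (\ref{for10zk}) to (\ref{for10zf}) by polar coordinates, and invoke Remark \ref{nusfo}. The gap is in the first half, in your treatment of $f_0=f\,\chi_{\{|\t|\le a\}}$. You claim that compactness of $\supp f_0$ plus Tonelli shows $R_A|f_0|$ is locally integrable on ${\rm Gr}(n,k)$, hence finite a.e. But hypothesis (\ref{for10zk}) imposes \emph{no} integrability of $f$ near the measure-zero set $\{|\t|=0\}$: it constrains $f$ only on $\{|\t|>a'\}$ for each $a'>0$, and these sets never cover a neighbourhood of $\{|\t|=0\}$. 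For instance, a radial $f$ with $f(\t)=|\t|^{j-n}$ near $|\t|=0$ satisfies (\ref{for10zk}) for every $a>0$ yet $\int_{|\t|\le a}|f(\t)|\,d\t=\infty$; for this $f$ one checks from (\ref{besg}) that $F_0(s)\asymp s^{k-n}$ as $s\to 0$, so $\int_{|\z|<A}(R_A|f_0|)(\z)\,d\z=\infty$ and $R_A|f_0|$ is \emph{not} locally integrable near $\{|\z|=0\}$, even though it is finite for every $|\z|>0$. Compactness of the support controls the fiber volumes but not the size of $f_0$, so the Tonelli step you rely on cannot be carried out; this is precisely what the remark preceding the lemma means by the condition being ``redundant for $|\t|\to 0$'' --- nothing at all is assumed there.

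The repair is short and makes your splitting unnecessary. If $\t\subset\z$ then $|\t|\ge|\z|$ (this follows from the parametrization (\ref{which})--(\ref{which1}): $u=v+y$ with $y\perp v$, so $|u|\ge|v|$; the paper records this in the proof of Lemma \ref{actu}). Hence for $|\z|>a'$ one has $(R_A f)(\z)=(R_A(f\,\chi_{\{|\t|>a'\}}))(\z)$, and your tail argument via Proposition \ref{ther} applied to $f\,\chi_{\{|\t|>a'\}}\in L^1_{n-k}({\rm Gr}(n,j))$ gives finiteness a.e.\ on $\{|\z|>a'\}$; letting $a'\downarrow 0$ along a sequence finishes the proof. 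The paper takes a different but equally valid route: it averages $|f|$ over $SO(n)$, reduces to the radial formula (\ref{besg}), and shows by a direct Fubini computation that $\int_{|\z|>a}(R_A f_{ave})(\z)\,|\z|^{k-n-\e}\,d\z$ is dominated by the integral in (\ref{for10zk}); there the restriction of the inner integral to $r>s>a$ automatically encodes the same geometric fact.
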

\begin{proof} It suffices to assume $f\ge 0$. By  the $SO(n)$-invariance of the measure $d\z$, for any $\e>0$ we have
\[
I\equiv \intl_{|\z|>a}\frac{(R_A f)(\z)}{|\z|^{n-k+\e}} \, d\z=    \intl_{SO(n)} d\gam \intl_{|\z|>a}\frac{(R_A f)(\gam \z)}{|\z|^{n-k+\e}}\,d\z=\intl_{|\z|>a}\frac{(R_A f_{ave})(\z)}{|\z|^{n-k+\e}}\,d\z\] where
$f_{ave} (\t)=\int_{SO(n)} f(\gam \t)\, d\gam \equiv  f_0 (|\t|)$ is the $SO(n)$-average of $f$. Hence, by (\ref{besg}) and (\ref{ewadch}),
\bea
I\!\!\! &=&\!\! \! c \intl_a^\infty \frac{ds}{s^{1+\e}} \intl_s^\infty \!f_0(r) (r^2 \!-\!s^2)^{(k-j)/2-1} r \,dr\nonumber\\
\!\!\! &=&\!\!\! c \intl_a^\infty \! f_0(r) r \,dr \!\intl_a^r  (r^2 \!-\!s^2)^{(k-j)/2-1} \frac{ds}{s^{1+\e}}\nonumber\\
\!\!\! &=&\!\! \! c \intl_a^\infty \! f_0(r) r^{k-j-1-\e}\psi (r)\, dr, \; \psi (r)\! =\!\intl_{a/r}^1  (1\! -\!t^2)^{(k-j)/2-1}\,\frac{dt}{t^{1+\e}} =O(r^\e),\; r \to \infty.\nonumber\eea
Then
\[
I\le c_1 \intl_a^\infty \! f_0(r) r^{k-j-1}\,dr=c_2 \intl_{|\t|>a} \frac{|f_0(|\t|)|}{|\t|^{n-k}}\, d\t=c_2 \intl_{|\t|>a}  \frac{|f(\t)|}{|\t|^{n-k}}\, d\t.\]
The last integral is finite by the assumption. Hence, because $a>0$ is arbitrary, it follows that $(R_A f)(\z)$ is finite for almost all $\z\!\in \!{\rm Gr} (n,k)$.

To prove that  (\ref{for10zk}) is sharp, we observe that if it fails for a  nonnegative  radial function  $f(\t)\equiv f_0 (|\t|)$ and  some $a>0$, then
\[\intl_a^\infty f_0 (r)\, r^{k-j-1} dr =\infty.\]
 Hence  the fractional integral  $F_0 (s)$ in (\ref{besg}) diverges  for all $s>0$ (see \cite [Lemma 2.42]{Ru15}), and therefore $(R_A f)(\z)\!=\!\infty$ for all $\z\!\in\! {\rm Gr} (n,k)$.
\end{proof}

The next statement gives a simple sufficient condition of the pointwise convergence of the integral   $(R_A f)(\z)$.

\begin{proposition} \label{Arges} If
\be\label {rges}A \equiv {\rm ess} \!\!\!\!\sup\limits_{ \t \in {\rm Gr} (n,j)} |\t|^\lam |f(\t)|<\infty\quad \text {for some $\; \lam > k-j$},\ee
 then  $(R_A f)(\z)$ is finite for  all $|\z|>0$. The condition  $\lam > k-j $  is sharp.
\end{proposition}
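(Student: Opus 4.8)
The plan is to dominate $(R_A f)(\z)$ by the Radon transform of a radial majorant and then reduce the question to the convergence of a single Abel-type integral. First I would use the hypothesis (\ref{rges}) to fix a representative of $f$ satisfying the pointwise bound $|f(\t)| \le A\,|\t|^{-\lam}$ for every $\t \in \tilde{\rm Gr}(n,j)$; since this only alters $f$ on a set of $d\t$-measure zero, it does not change $f$ as an a.e.-defined function. Because $R_A$ is a positive (monotone) operator, this gives
\be
|(R_A f)(\z)| \le (R_A |f|)(\z) \le A\,(R_A h)(\z), \qquad h(\t) := |\t|^{-\lam},
\ee
and $h$ is radial with $h(\t) = h_0(|\t|)$, $h_0(r) = r^{-\lam}$. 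Applying Proposition \ref{nsfo5} (formula (\ref{besg})) to $h$, I obtain, with $s = |\z| > 0$,
\be
(R_A h)(\z) = \sig_{k-j-1}\intl_s^\infty r^{-\lam}\,(r^2 - s^2)^{(k-j)/2 - 1}\,r\,dr .
\ee

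Next I would analyze the two endpoints of this integral. Near the lower limit $r = s$ the factor $(r^2 - s^2)^{(k-j)/2 - 1}$ behaves like $(r-s)^{(k-j)/2 - 1}$, which is integrable because $k > j$ forces the exponent to exceed $-1$; here the assumption $s = |\z| > 0$ is essential, since it keeps the remaining factor $r^{1-\lam}$ bounded at the lower limit. (For $s = 0$ the integral reduces to $\intl_0^\infty r^{k-j-1-\lam}\,dr$, which diverges at the origin when $\lam > k-j$, explaining the restriction to $|\z| > 0$.) At the upper limit the integrand is comparable to $r^{k-j-1-\lam}$, so convergence at infinity holds precisely when $\lam > k - j$. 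Hence under (\ref{rges}) the majorant $(R_A h)(\z)$ is finite for all $|\z| > 0$, and therefore so is $(R_A f)(\z)$.

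Finally, for sharpness I would exhibit the borderline radial function $f(\t) = |\t|^{-(k-j)}$, which satisfies (\ref{rges}) with $\lam = k - j$ (indeed $A = 1$). Its Radon transform is again given by (\ref{besg}) with $f_0(r) = r^{-(k-j)}$, and the integrand is then comparable to $r^{-1}$ as $r \to \infty$, so the integral diverges and $(R_A f)(\z) = \infty$ for every $\z$ with $|\z| > 0$; thus $\lam = k-j$ cannot be allowed. The only genuine subtlety is the passage from the essential-sup bound to a bound on the measure-zero fiber over a fixed $\z$: this is exactly what the radial majorant together with the positivity of $R_A$ circumvents, after which the argument is the same one-sided (right-sided) Abel integral analysis already used for Lemma \ref{lrrb1} and recorded in \cite[Lemma 2.42]{Ru15}.
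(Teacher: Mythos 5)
Your proposal is correct and follows essentially the same route as the paper: dominate $f$ by the radial majorant $A\,|\t|^{-\lam}$ and evaluate $(R_A f_1)(\z)$ via (\ref{besg}), where the paper simply computes the integral in closed form as $c\,|\z|^{k-j-\lam}$ rather than analyzing the two endpoints separately. Your explicit sharpness example $f(\t)=|\t|^{-(k-j)}$ and the remark about choosing a good representative are small additions the paper leaves implicit, but they do not change the argument.
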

\begin{proof} Let $f_1 (\t) =|\t|^{-\lam}$. Then, by (\ref{besg}),
\bea |(R_A f)(\z)|&\le& A (R_A f_1)(\z)= A\sig_{k-j-1}\intl_{|\z|}^\infty \!r^{-\lam} (r^2 - |\z|^2)^{(k-j)/2-1} r dr\nonumber\\
&=& c\, |\z|^{k-j-\lam}<\infty,\nonumber\eea
as desired.
\end{proof}

\begin{proposition}\label {lrrbx} Let  $ 0 \le j<k\le n-1$. The dual transform  $(R^*_A \vp)(\t)$ is finite a.e.
 for every locally integrable function $\vp $ on ${\rm Gr} (n,k)$ and represents a locally integrable function on ${\rm Gr} (n,j)$. Further,
  if
 \be\label {rgesB} B\equiv{\rm ess} \!\!\!\!\sup\limits_{ \z \in {\rm Gr} (n,k)} |\z|^\del |\vp(\z)|<\infty \quad \text {for some $\;\del < n- k$},\ee
 then  $(R^*_A \vp)(\t )$ is finite for  all $|\t|>0$. The condition  $\del < n- k$  is sharp.
 \end{proposition}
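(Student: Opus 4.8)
The plan is to deduce all three assertions from the radial formulas of Proposition \ref{nsfo5} together with the duality relation (\ref{wbcdaff}), thereby reducing everything to one-dimensional integrals of Abel type whose convergence is transparent.

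First I would prove the local integrability, which in particular gives finiteness almost everywhere. Since $|(R^*_A\vp)(\t)|\le (R^*_A|\vp|)(\t)$, it suffices to take $\vp\ge 0$ and to bound $\intl_{|\t|<a}(R^*_A\vp)(\t)\,d\t$ for each $a>0$; the sets $\{|\t|<a\}$ exhaust ${\rm Gr} (n,j)$ and contain every compact subset, so this yields $R^*_A\vp\in L^1_{loc}$. Applying (\ref{wbcdaff}) with $f=\chi_{\{|\t|<a\}}$, a nonnegative radial function (the identity holds in $[0,\infty]$ by Tonelli's theorem), gives
\[ \intl_{|\t|<a}(R^*_A\vp)(\t)\,d\t=\intl_{{\rm Gr} (n,k)}(R_A\chi_{\{|\t|<a\}})(\z)\,\vp(\z)\,d\z. \]
By (\ref{besg}) the factor $(R_A\chi_{\{|\t|<a\}})(\z)$ equals $F_0(|\z|)$ with $f_0=\chi_{[0,a)}$, namely $F_0(s)=\sig_{k-j-1}(a^2-s^2)^{(k-j)/2}/(k-j)$ for $s<a$ and $F_0(s)=0$ otherwise; as $k>j$ this is bounded by $\sig_{k-j-1}a^{k-j}/(k-j)$ and supported in $\{|\z|\le a\}$. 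Hence the right-hand side is at most $c\,a^{k-j}\intl_{|\z|<a}\vp(\z)\,d\z$, which is finite because $\vp$ is locally integrable.

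Next I would treat the pointwise finiteness under (\ref{rgesB}). Fixing the representative of $\vp$ for which $|\vp(\z)|\le B|\z|^{-\del}$ holds for every $\z$, I dominate $(R^*_A|\vp|)(\t)\le B\,(R^*_A\vp_1)(\t)$ with the radial majorant $\vp_1(\z)=|\z|^{-\del}$. By (\ref{besg1}) and the substitution $s=|\t|\,t$,
\[ (R^*_A\vp_1)(\t)=\frac{\sig_{k-j-1}\,\sig_{n-k-1}}{\sig_{n-j-1}}\,|\t|^{-\del}\intl_0^1 t^{\,n-k-1-\del}(1-t^2)^{(k-j)/2-1}\,dt. \]
The Beta-type integral converges at $t=1$ since $k>j$, and at $t=0$ precisely when $n-k-\del>0$; under (\ref{rgesB}) this is exactly the hypothesis $\del<n-k$, so $(R^*_A\vp_1)(\t)=c\,|\t|^{-\del}<\infty$ for all $|\t|>0$, and this bound passes to $R^*_A\vp$.

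For sharpness I would take $\vp_1(\z)=|\z|^{-(n-k)}$, which is radial, nonnegative, and satisfies (\ref{rgesB}) at the endpoint $\del=n-k$ with $|\z|^{n-k}\vp_1(\z)\equiv 1$. The exponent in the last display then becomes $n-k-1-\del=-1$, so the integrand behaves like $t^{-1}$ near $t=0$ and diverges, whence $(R^*_A\vp_1)(\t)=\infty$ for every $|\t|>0$; thus $\del<n-k$ cannot be weakened to $\del\le n-k$. The one delicate point is the passage, in the second part, from an almost-everywhere bound on $\vp$ to an estimate valid for every fixed $\t$: the fiber $\{\z\supset\t\}$ is a null set in ${\rm Gr} (n,k)$, so an a.e.\ bound on $\vp$ does not by itself control the fiber integral $(R^*_A\vp)(\t)$. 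I expect this to be the main obstacle, and I would dispose of it exactly as above, by selecting the representative obeying the weighted bound everywhere --- legitimate because hypothesis (\ref{rgesB}) is insensitive to modification of $\vp$ on null sets.
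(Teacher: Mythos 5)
Your proposal is correct and follows essentially the same route as the paper: local integrability via the identity $\intl_{|\t|<a}(R^*_A\vp)\,d\t = c\intl_{|\z|<a}\vp(\z)(a^2-|\z|^2)^{(k-j)/2}d\z$ (which the paper cites from \cite{Ru04d} and you rederive from duality plus (\ref{besg}), with the correct constant), and pointwise finiteness plus sharpness via the radial majorant $|\z|^{-\del}$ in (\ref{besg1}). Your explicit handling of the a.e.-representative issue is a careful touch the paper leaves implicit, but it does not change the argument.
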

\begin{proof}
The first statement follows from the equality
\be\label{lab8}
\intl_{|\t|<a} (R^*_A \vp)(\t) \, d\t =
\frac{\pi^{(k-j)/2}}{\Gam (1+(k -j)/2)}
\intl_{|\z|<a} \vp(\z) \,(a^{2}   -
|\z|^{2})^{(k - j)/2} \,d \z,
\end{equation}
which is a particular case of \cite [formula (2.19] {Ru04d}. If $\vp$ satisfies  (\ref{rgesB}) and $\vp_1 (\z) =|\z|^{-\del}$, then by (\ref{besg1}),
\bea |(R^*_A \vp)(\t)| &\le& B (R^*_A \vp_1)(\t)= \frac{c\, B}{r^{n-j-2}}\intl_0^{|\t|}  (|\t|^2-s^2)^{(k-j)/2 -1} s^{n-k-\del-1} \,ds\nonumber\\
&=& c_1\, |\t|^{-\del}<\infty\nonumber\eea
 if $\del <n-k$. This  gives the result.
\end{proof}

\subsection {Injectivity}

The following proposition  is due to Gonzalez and Kakehi  \cite [Theorem 6.4]{GK03}, \cite [Lemma 5.1]{GK04}.
\begin{proposition} \label {thgrer1}
Let  $ 0 \le j<k\le n-1$. If $ j+k \le n-1$, then  the Radon transform $R_A$ is an injective operator from $S({\rm Gr} (n,j))$  to $S({\rm Gr} (n,k))$.
If  $ j+k \ge n$, then the dual Radon transform $R^*_A$ is an injective operator from $C^\infty ({\rm Gr} (n,k))$  to $C^\infty ({\rm Gr} (n,j))$.
\end{proposition}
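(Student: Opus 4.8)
The plan is to deduce both assertions from the known injectivity of the Radon transform on \emph{compact} Grassmannians of linear subspaces, for which I take as given the sharp result of Grinberg (\cite{Gr85, Gr86}; see also \cite{GK03, GK04}): the linear Grassmannian transform $\rho_{p,q}\colon C^\infty(G_{m,p})\to C^\infty(G_{m,q})$, $(\rho_{p,q}g)(\eta)=\int_{\xi\subset\eta}g(\xi)\,d_\eta\xi$, is injective iff $p+q\le m$, and its dual $\rho_{p,q}^*$, averaging over $q$-subspaces containing a $p$-subspace, is injective iff $p+q\ge m$. The two parts of the proposition require two different reductions because $R_A$ integrates over a \emph{noncompact} family (the $j$-planes inside $\z$, including all their translates), whereas $R^*_A$ integrates over a \emph{compact} one (the directions $\eta\supset\xi$, all passing through any point of $\t$); this dichotomy is what ultimately produces the two thresholds $n-1$ and $n$.

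For the first statement I would use a Fourier slice argument in the normal variable. Writing $\tilde f(\xi,\omega)=\int_{\xi^\perp}f(\xi,u)\,e^{-i\langle u,\omega\rangle}\,du$ for $\omega\in\xi^\perp$, I feed the representation (\ref{lasz}) into the Fourier transform of $R_Af$ in $v\in\eta^\perp$. The substitution $w=v+y\in\xi^\perp$, using the orthogonal splitting $\eta^\perp\oplus(\xi^\perp\cap\eta)=\xi^\perp$ with unit Jacobian, collapses the inner integrations; since $\omega\in\eta^\perp$ is orthogonal to $\xi^\perp\cap\eta$, no delta factors appear and one obtains the clean slice identity
\[ \widehat{R_Af}(\eta,\omega)=\intl_{\xi\subset\eta}\tilde f(\xi,\omega)\,d_\eta\xi,\qquad \omega\in\eta^\perp. \]
Fixing $\omega\ne0$, the constraint $\omega\in\eta^\perp$ forces $\xi\subset\eta\subset\omega^\perp$, so the right-hand side is exactly $\rho_{j,k}$ applied, inside $\omega^\perp\cong\bbr^{n-1}$, to the function $\xi\mapsto\tilde f(\xi,\omega)$. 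If $j+k\le n-1$ the compact transform on $\bbr^{n-1}$ is injective, whence $\tilde f(\xi,\omega)=0$ for all $\omega\in\xi^\perp$ and all $\xi$; Fourier inversion in $u$ gives $f=0$.

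For the second statement I would instead localize the dual transform to $j$-planes through a fixed point. A $j$-plane through $p\in\rn$ with direction $\xi$ corresponds to $u=\Pr_{\xi^\perp}p$, and since $\eta^\perp\subset\xi^\perp$ one has $\Pr_{\eta^\perp}\Pr_{\xi^\perp}p=\Pr_{\eta^\perp}p$. Hence, setting $\Phi_p(\eta):=\vp(\eta,\Pr_{\eta^\perp}p)$ (the value of $\vp$ on the $k$-plane through $p$ with direction $\eta$), formula (\ref{lasz2}) gives
\[ (R^*_A\vp)(\xi,\Pr_{\xi^\perp}p)=\intl_{\eta\supset\xi}\Phi_p(\eta)\,d_\xi\eta=(\rho_{j,k}^*\Phi_p)(\xi), \]
now with the \emph{full} ambient dimension $n$. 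Thus $R^*_A\vp=0$ implies $\rho_{j,k}^*\Phi_p=0$ on $G_{n,j}$ for every $p\in\rn$. If $j+k\ge n$ the compact dual transform on $\rn$ is injective, so $\Phi_p\equiv0$ for every $p$; since $\Pr_{\eta^\perp}p$ exhausts $\eta^\perp$ as $p$ ranges over $\rn$, this forces $\vp(\eta,v)=0$ for all $\eta$ and all $v\in\eta^\perp$, i.e. $\vp=0$.

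I expect the genuine difficulty to lie entirely in the compact-Grassmannian input rather than in the two reductions, which become routine bookkeeping once the correct localization is chosen. Establishing the sharp thresholds $p+q\le m$ (dually $p+q\ge m$) for $\rho_{p,q}$ rests on the decomposition of $L^2(G_{m,p})$ into $\mathrm{O}(m)$-irreducibles and the identification of the vanishing versus nonvanishing eigenvalues of $\rho_{p,q}$ on each component, which is where the arithmetic condition on $p+q$ originates. A secondary point in the first part is the behavior at $\omega=0$: the slice identity pins down $\tilde f(\xi,\cdot)$ only away from the origin, but as $\{\omega=0\}$ is null and $\tilde f$ is continuous (recall $f\in S$), this gap is harmless. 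No analogous issue arises in the second part, whose argument is pointwise and valid for all $\vp\in C^\infty$.
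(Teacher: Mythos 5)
Your proposal is correct, but note that the paper does not prove Proposition \ref{thgrer1} at all: it is imported verbatim from Gonzalez--Kakehi (\cite[Theorem 6.4]{GK03}, \cite[Lemma 5.1]{GK04}), so the only comparison available is with the cited source and with the paper's related arguments. Both of your reductions check out. The Fourier slice identity $\widehat{R_Af}(\eta,\omega)=\int_{\xi\subset\eta}\tilde f(\xi,\omega)\,d_\eta\xi$ is valid (for $y\in\xi^\perp\cap\eta$ and $\omega\in\eta^\perp$ one has $\langle y,\omega\rangle=0$, so the phase survives the substitution $w=v+y$ intact), and for fixed $\omega\neq0$ it is exactly the compact transform $\rho_{j,k}$ inside $\omega^\perp$, with the threshold $j+k\le(n-1)$ matching the hypothesis; the $\omega=0$ fiber is handled by continuity as you say. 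The dual reduction via $\Phi_p(\eta)=\vp(\eta,\Pr_{\eta^\perp}p)$ rests on the identity $\Pr_{\eta^\perp}\Pr_{\xi^\perp}p=\Pr_{\eta^\perp}p$ for $\xi\subset\eta$, which is correct, and it is worth pointing out that this is essentially the same localization the paper uses in its proof of Theorem \ref{rther1} (there phrased through orthocomplements as a restriction of the Funk--Radon transform $F\psi_u$); your version is arguably cleaner because fixing a point $p$ and letting $\xi$ range over \emph{all} of $G_{n,j}$ produces the full equation $\rho^*_{j,k}\Phi_p=0$ on $G_{n,j}$, so full injectivity of the compact dual transform applies without any restriction to subspaces containing $p$. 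Two small caveats: the proposition also asserts the mapping property $R_A\colon S({\rm Gr}(n,j))\to S({\rm Gr}(n,k))$, which your argument does not address (it is Proposition \ref{thers} in the paper, so you may simply quote it); and the entire weight of the proof sits on the compact-Grassmannian input $p+q\le m$ (resp.\ $p+q\ge m$), which you correctly identify and attribute but do not prove --- that is acceptable here since the paper itself treats that result (Theorem \ref{Ntrou6}) as known.
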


Note that the case $ j+k =n-1$ is not included in the second statement; see Conjecture \ref {don}  below.
%The next proposition follows from  Theorem \ref{Ntrou6} and (\ref{trou7}) in Section \ref{plic}.

\begin{proposition} \label {ther1} {\rm \cite [Theorems 4.2, 4.4]{Ru04d}}
Let  $ 0 \le j<k\le n-1$.  The Radon transform $R_A$ is injective on  $L^1_{n-k} ({\rm Gr} (n,j))$  if and only if $ j+k \le n-1$.
The dual Radon transform $R^*_A$  is
injective on  $L^1_{j+1} ({\rm Gr} (n,k))$   if and only if $ j+k \ge n-1$. Thus, if $ j+k = n-1$, then both  transforms
are injective simultaneously on the corresponding spaces.
\end{proposition}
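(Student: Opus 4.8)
The plan is to split each of the two equivalences into its \emph{sufficiency} (injectivity) and \emph{necessity} (sharpness) halves, and to notice first that the whole question lives on the non-radial part of a function. Indeed, by Proposition \ref{nsfo5} the restrictions of $R_A$ and $R^*_A$ to radial functions are, up to normalization, the one-dimensional Erd\'elyi--Kober integrals (\ref{besg}) and (\ref{besg1}); both are fractional integrals of positive order $(k-j)/2$ and are therefore injective for \emph{every} pair $0\le j<k\le n-1$. Hence any element of the kernel of $R_A$ or $R^*_A$ must carry a nontrivial angular component, and conversely injectivity has to be established one $SO(n)$-type at a time. This already signals that the dichotomy at $j+k=n-1$ is an angular-spectrum phenomenon, invisible to the radial calculus.

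The engine for the sufficiency direction is the $E(n)$-equivariance of the transforms together with a Fourier analysis in the offset variable. Writing $\t=\t(\xi,u)$ with $u\in\xi^\perp$ and $\z=\z(\eta,v)$ with $v\in\eta^\perp$, I would take the Fourier transform in $u$ and $v$; since $R_A$ and $R^*_A$ commute with translations, they become, for each frequency, operator-valued symbols intertwining sections over the compact Grassmannians $G_{n,j}$ and $G_{n,k}$. Decomposing $L^2$ over these Grassmannians into $SO(n)$-irreducibles and applying Schur's lemma reduces each symbol to a scalar given by an explicit Gegenbauer (beta) integral, i.e.\ a ratio of Gamma factors. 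The transform is injective precisely when none of these scalars vanishes for almost every frequency, and the weight $n-k$ (resp.\ $j+1$) is exactly the decay that renders the critical-mode integral convergent. Carrying out this computation gives non-vanishing throughout the range $j+k\le n-1$ for $R_A$ and $j+k\ge n-1$ for $R^*_A$; the smooth injectivity of Gonzalez--Kakehi (Proposition \ref{thgrer1}) is the $C^\infty$ shadow of the same symbol calculation and may be quoted to shorten the argument away from the endpoint, the passage to the weighted $L^1$ spaces being legitimized by the fact that Schwartz test functions decay fast enough to pair, through (\ref{wbcdaff}), against any $f\in L^1_{n-k}$.

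The endpoint $j+k=n-1$ deserves separate care, and here an arithmetic coincidence does the work: when $k=n-1-j$ one has $n-k=j+1$, so the two weighted spaces $L^1_{n-k}({\rm Gr} (n,j))$ and $L^1_{j+1}({\rm Gr} (n,k))$ are exact duals under the bilinear pairing in (\ref{wbcdaff}). Consequently the injectivity of $R_A$ on $L^1_{n-k}$ at the boundary, already obtained above, transfers by duality into the density needed to force injectivity of $R^*_A$ on $L^1_{j+1}$, and conversely; this is why both transforms are injective simultaneously at $j+k=n-1$. I would stress that this boundary value for $R^*_A$ is \emph{not} covered by Proposition \ref{thgrer1} (see the remark following it), so it is genuinely new and is accessible only through this weighted duality.

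For the necessity direction I would construct explicit kernel elements. When $j+k\ge n$ (resp.\ $j+k\le n-2$) the symbol computation of the second paragraph exhibits a nontrivial $SO(n)$-type on which the relevant scalar vanishes; a function supported in that single isotypic component then yields a nonzero $f\in L^1_{n-k}$ with $R_Af\equiv0$ (resp.\ $\vp\in L^1_{j+1}$ with $R^*_A\vp\equiv0$), its membership in the weighted space (\ref{eads}) being checked directly since the angular and radial factors decouple. The main obstacle is precisely this sharp construction at the level of $L^1$: because the radial calculus is always injective, the counterexamples cannot be radial and must be built inside the correct angular mode \emph{and} shown to lie in the exact weighted space, which forces the explicit evaluation of the Gamma-ratio symbols rather than a soft argument; the endpoint $j+k=n-1$ for $R^*_A$, recoverable only via the duality coincidence above, is the other delicate point.
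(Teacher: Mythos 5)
The paper does not actually prove this proposition: it is quoted from \cite[Theorems 4.2, 4.4]{Ru04d}, where the argument runs through the correspondence with compact Grassmannians (the operators $M_0,N_0,P_0,Q_0$ of Proposition \ref{trou6}), the explicit $L^1$ inversion of the Funk--Radon transform $R_0$ from \cite{GR} for the ``if'' part, representation theory of $SO(n+1)$ for the ``only if'' part, and the orthogonality relation (\ref{xte}) for the dual statement (cf.\ Theorem \ref{Ntrou6} and the discussion after it). Your opening observation (radial functions never witness non-injectivity, since (\ref{besg}) and (\ref{besg1}) are Erd\'elyi--Kober integrals of positive order) and your plan to exhibit kernel elements inside a single $SO(n)$-isotypic component are consistent with that scheme. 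But two steps of your argument do not hold up. First, you pass from smooth injectivity (Proposition \ref{thgrer1}) to injectivity on $L^1_{n-k}$ ``by pairing through (\ref{wbcdaff})''. This is backwards: if $R_Af=0$ with $f\in L^1_{n-k}$, then (\ref{wbcdaff}) gives $\int f\,R_A^*\vp=0$ for all test $\vp$, and to conclude $f=0$ you need \emph{density of the range of the dual operator} $R_A^*$ on test functions in the appropriate predual --- a statement essentially as hard as the theorem, and not implied by injectivity of $R_A$ on $S({\rm Gr}(n,j))$. Likewise, an a.e.-nonvanishing Schur symbol gives uniqueness in an $L^2$/Plancherel framework, but for $f\in L^1_{n-k}$ the partial Fourier transform in $u$ is only a distribution, and nonvanishing of symbols a.e.\ does not by itself yield injectivity there; the source closes exactly this gap with an inversion formula valid on $L^1$ \cite{GR}.

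Second, your treatment of the endpoint $j+k=n-1$ is not valid. The spaces $L^1_{n-k}({\rm Gr}(n,j))$ and $L^1_{j+1}({\rm Gr}(n,k))$ are not dual to one another (the dual of a weighted $L^1$ is a weighted $L^\infty$), and even granting a pairing, injectivity of $R_A$ does not dualize to injectivity of $R_A^*$ --- it dualizes to density of range of $R_A^*$. So the ``arithmetic coincidence'' $n-k=j+1$ cannot, by itself, transfer injectivity from $R_A$ to $R_A^*$ at the boundary. The mechanism that actually works is orthogonal complementation on the compact Grassmannian: by (\ref{xte}), $R_0^*\Phi$ is an $R_0$-transform of $\Phi^\perp$ with $(j+1,k+1)$ replaced by $(n-k,n-j)$, which converts the condition $j+k\le n-1$ for $R_0$ into $j+k\ge n-1$ for $R_0^*$ and in particular covers the endpoint without any duality argument. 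Your sharpness construction (a nonzero function concentrated in one isotypic component, with controlled radial profile so that it lies in the weighted space) is the right idea and matches the representation-theoretic argument sketched after Theorem \ref{Ntrou6}, but as written it leans on the same unproved symbol calculus; to make it rigorous you would need the explicit highest-weight computations of \cite{Str75, Str77} or the harmonic analysis of \cite{Str86}.
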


The following useful result does not cover the case $ j+k = n-1$. However, it extends the second statement of Proposition \ref{ther1} to
functions $\vp$ that allow  arbitrary power growth at infinity and eliminates the smoothness assumption in the second statement in Proposition \ref{thgrer1}.

\begin{theorem} \label {rther1} Let $ 0 \le j<k\le n-1$, $j+k\ge n$.
 The dual Radon transform $R^*_A$  is
injective on the set of functions $\vp: {\rm Gr} (n,k) \mapsto \bbc$ satisfying
\be\label {consi}
I=\intl_{G_{n, n-k}}  |\vp(\tilde \eta^\perp,\Pr_{\tilde \eta} u)|\, d\tilde \eta <\infty \quad \text {for each  $u\in \rn \setminus {0}$},\ee
$\Pr_{\tilde \eta} u$ being the orthogonal projection of $u$ onto $\tilde \eta$.  In particular,  (\ref{consi}) holds  for functions $\vp$ satisfying (\ref{rgesB}).
\end{theorem}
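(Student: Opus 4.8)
The plan is to trade the dual transform on affine Grassmannians for the (dual) Radon transform on a compact Grassmannian of one lower dimension, where injectivity is classical. First I would clarify the role of (\ref{consi}): under the complement correspondence $\eta\leftrightarrow\eta^\perp$ the quantity $I(u)$ is the dual $k$-plane transform of $|\vp|$ evaluated at the point $u$, so (\ref{consi}) is precisely the condition making the $j$-plane averages of $\vp$ through each point absolutely convergent, which is what the whole argument needs. I would also dispose of the last assertion at once: (\ref{rgesB}) implies (\ref{consi}) by the standard Grassmann integral $\intl_{G_{n,n-k}}|\Pr_{\tilde\eta}u|^{-\del}\,d\tilde\eta<\infty$, which converges exactly because $\del<n-k=\dim\tilde\eta$.

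The heart of the argument is a Fourier computation in the offset variable. Writing $\z=\z(\eta,v)$, $v\in\eta^\perp$, and taking the Fourier transform $\vp(\eta,\cdot)\mapsto\tilde\vp(\eta,\cdot)$ on $\eta^\perp$, I would substitute the defining formula (\ref{lasz2}) and integrate over the offset of $\t=\t(\xi,u)$. Each plane $\eta\supset\xi$ splits $\xi^\perp$ as $\eta^\perp\oplus(\eta\cap\xi^\perp)$; integrating out the $(\eta\cap\xi^\perp)$-component produces a $\delta$-factor forcing the frequency $\om$ to be orthogonal to $\eta$, i.e. $\eta\subset\om^\perp$. The computation will show that $R^*_A\vp\equiv0$ is equivalent to
\be\label{starplan}
\intl_{\xi\subset\eta\subset\om^\perp}\tilde\vp(\eta,\om)\,d\eta=0\qquad\text{for all }\om\ne0\text{ and all }\xi\in G_{n,j},\ \xi\subset\om^\perp.
\ee
For each fixed frequency $\om\ne0$, set $W=\om^\perp\cong\bbr^{n-1}$ and $g_\om(\eta)=\tilde\vp(\eta,\om)$, a function on the compact Grassmannian $G_{n-1,k}$ of $k$-subspaces of $W$. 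Then (\ref{starplan}) says precisely that the dual Grassmannian Radon transform of $g_\om$ on $W$ (averaging over $k$-subspaces containing a given $j$-subspace) vanishes identically.

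At this point I would invoke the injectivity of the compact Grassmannian Radon transform (Grinberg \cite{Gr86}, Gonzalez--Kakehi \cite{GK06}): on $G_{n-1,k}$ the dual transform is injective once $j+k\ge n-1$, which is guaranteed by the hypothesis $j+k\ge n$. Hence $g_\om\equiv0$ for every $\om\ne0$, i.e. $\tilde\vp(\eta,\om)=0$ for all $k$-planes $\eta$ and all $\om\in\eta^\perp\setminus\{0\}$; Fourier inversion on each $\eta^\perp$ then yields $\vp\equiv0$. This simultaneously removes the smoothness hypothesis from Proposition \ref{thgrer1} and extends the second statement of Proposition \ref{ther1} to functions of arbitrary power growth.

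The main obstacle is analytic rather than structural: under the weak hypothesis (\ref{consi}) the slices $\vp(\eta,\cdot)$ need not lie in $L^1(\eta^\perp)$, so $\tilde\vp(\eta,\cdot)$ and the pointwise-in-$\om$ reading of (\ref{starplan}) must be justified with care. I would handle this by regularization, mollifying $\vp$ through ambient translations of $\rn$, under which $R^*_A$ is equivariant, so that both $R^*_A\vp=0$ and condition (\ref{consi}) are preserved while each slice is smoothed; one then runs the clean argument on the mollified function and passes to the limit. The convergence here is best controlled through the per-direction slicing identity expressing $u\mapsto(R^*_A\vp)(\xi,u)$ as the dual $(k-j)$-plane transform in $\xi^\perp$ of the slice $\vp(\xi\oplus\,\cdot\,,\cdot)$, since integrating these slice transforms over $\xi\in G_{n,j}$ returns exactly $I$. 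Verifying that compact-Grassmannian injectivity may be transferred back to the merely integrable data and controlling the limit is where the margin in $j+k\ge n$ over the bare threshold $j+k\ge n-1$ is spent; the endpoint $j+k=n-1$, at which Proposition \ref{ther1} already shows $L^1_{j+1}$ to be sharp, is expected to fail for growing $\vp$.
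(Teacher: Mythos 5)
Your argument for the final assertion (that (\ref{rgesB}) implies (\ref{consi})) is the same as the paper's: both reduce to the convergence of $\intl_{G_{n,n-k}}|\Pr_{\tilde\eta}u|^{-\del}\,d\tilde\eta$ for $\del<n-k$. For the main injectivity claim, however, you take a genuinely different route from the paper, and that route has a gap you have not closed. The paper does no Fourier analysis at all: passing to orthogonal complements $\tilde\xi=\xi^\perp$, $\tilde\eta=\eta^\perp$, it writes $(R^*_A\vp)(\xi,u)=\intl_{\tilde\eta\subset\tilde\xi}\psi_u(\tilde\eta)\,d_{\tilde\xi}\tilde\eta$ with $\psi_u(\tilde\eta)=\vp(\tilde\eta^\perp,\Pr_{\tilde\eta}u)$, observes that (\ref{consi}) is exactly the statement $\psi_u\in L^1(G_{n,n-k})$, and then, for each fixed $u\ne 0$, invokes the known $L^1$-injectivity of the compact Funk--Radon transform from $G_{n,n-k}$ to $G_{n,n-j}$ (valid precisely when $(n-j)+(n-k)\le n$, i.e. $j+k\ge n$; see \cite{GR}) to conclude $\psi_u=0$ and hence $\vp=0$. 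The hypothesis (\ref{consi}) is tailored to make that single application legitimate; no integrability or decay of the slices $\vp(\eta,\cdot)$ on $\eta^\perp$ is ever needed.

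The gap in your approach is the Fourier step. Under (\ref{consi}), and even under (\ref{rgesB}) with $\del<0$, the function $\vp(\eta,\cdot)$ may grow polynomially in $|\z|$ --- handling exactly such growing $\vp$ is the point of the theorem --- so $\tilde\vp(\eta,\cdot)$ is at best a tempered distribution and the pointwise-in-$\om$ identity on which your reduction to the compact Grassmannian of $\om^\perp$ rests is not defined. Your proposed repair, mollification by ambient translations, smooths the slices but creates no decay at infinity, so it does not cure this; the most a distributional reading can yield is that $\tilde\vp(\eta,\cdot)$ is supported at the origin, i.e. that each slice is a polynomial, after which a further (missing) argument is required. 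There is also a secondary problem: to mollify you need $u\mapsto I(u)$ to be locally integrable, which finiteness of $I(u)$ for each individual $u\ne0$ does not supply. So while your skeleton (reduce to a compact Grassmannian where $L^1$-injectivity is known) matches the paper's in spirit, the particular reduction you chose cannot be carried out in the stated generality; the orthogonal-complementation trick is what makes the weak hypothesis (\ref{consi}) directly usable.
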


\begin{proof}  We set $\tilde \xi=\xi^\perp\in  G_{n, n-j}$,  $\tilde \eta=\eta^\perp \in G_{n, n-k}$,  and
write the last integral in (\ref{lasz2}) as
\[(R^*_A \vp)(\xi, u)\!=\!
\intl_{\eta \supset \xi}\!
\vp(\eta,\Pr_{\eta^\perp} u) d_\xi \eta=\!\!
\intl_{\tilde \eta \subset \tilde \xi}
\!\!\!\psi_u (\tilde \eta) d_{\tilde \xi} \tilde \eta,\qquad \psi_u (\tilde \eta)= \vp(\tilde \eta^\perp,\Pr_{\tilde \eta} u).\]
Suppose that $u\in \rn \setminus {0}$ is fixed and $\xi \in u^\perp$. Then
 $(R^*_A \vp)(\xi, u)$ can be thought of as a restriction of the higher-rank Funk-Radon transform $F \psi_u$ onto the $(n-j)$-dimensional linear subspace
$\tilde \xi=\xi^\perp$ containing $u$. The assumption (\ref{consi}) is equivalent to $\psi_u \in L^1 (G_{n, n-k})$. It is known that  $F$ is injective on $L^1 (G_{n, n-k})$ provided
that $(n-i) + (n-k) \le n$, that is, $j+k\ge n$; see \cite [Theorem 1.2]{GR}. Because $u\in \rn \setminus {0}$ is arbitrary, the first statement follows.

To prove the second statement, it remains to estimate the integral $I$.
For each  $u\in \rn \setminus {0}$,
\[
I\le c_u \intl_{G_{n, n-k}}  |\Pr_{\tilde \eta} u|^{-\del}\, d\tilde\eta= c_u \intl_{{\rm St} (n,n-k)}  |\om\om' u|^{-\del}\, d\om=c_u\, I_0,\]
where $c_u$ is a constant depending on $u$, ${\rm St} (n,n-k)$ is the Stiefel manifold of orthonormal $(n-k)$-frames in $\rn$, $d\om$ is the relevant
properly normalized Haar measure.  Each
$\om \in {\rm St} (n,n-k)$ is an $n\times (n-k)$ real matrix satisfying  $\om'\om =I_{n-k}$, where $\om'$ is the transpose of $\om$ and $I_{n-k}$
is the identity $(n-k)\times (n-k)$ matrix. The column-vectors of $\om$ form an orthonormal basis of $\tilde\eta \in G_{n, n-k}$ and $\Pr_{\tilde \eta}= \om\om'$.
 Because the orthogonal group $O(n)$ acts on  ${\rm St} (n,n-k)$ transitively, setting $\om_0= \left[\begin{array} {cc} 0 \\ I_{n-k}  \end{array} \right]$
 and changing variables, we have
\[
I_0=|u|^{-\del} \!\intl_{O(n)}  \!\! |\om_0\om'_0 \gam e_n|^{-\del}\, d\gam= \frac{|u|^{-\del}}{\sig_{n-1}} \intl_{S^{n-1}} \!\! |E_0 \theta|^{-\del}\, d\theta,\quad  E_0 =\left[\begin{array}{cccc} 0 & 0   \\
                              0 & I_{n-k}   \end{array} \right].\]
Passing to bi-spherical coordinates (see, e.g. \cite [Lemma 1.38]{Ru15})
\be \label {cas6}\theta= \left[\begin{array} {cc} a\,\cos \a \\ b\,\sin \a \end{array} \right], \quad a\in S^{k-1}, \quad b\in S^{n-k-1},\quad
0\le \a\le \pi/2,\ee
 we continue:
\[
I_0=\frac{|u|^{-\del}\, \sig_{k-1}  \sig_{n-k-1}}{\sig_{n-1}} \intl_0^{\pi/2}
\cos^{k-1}\a\,\sin^{n-k-\del-1}\a \,d\a <\infty\]
whenever $\del<n-k$. This completes the proof.
\end{proof}

Let us show that an  analogue of Theorem \ref{rther1} for the Radon transform $R_A$ holds if $j+k\le n-2$. The transition from $R^*_A$ to $R_A$ can be performed by making use of
 the  Kelvin-type
quasi-orthogonal  inversion transformation introduced in \cite[Section 5]{Ru04d}.  Specifically,
for $\t \equiv \t(\xi, u) \in \tilde {\rm Gr} (n,j)$ (see (\ref{dfwhich})), we denote by $\{\t\} \in G_{n, j+1}$
the smallest linear subspace containing $\t$, and set
\[
\tilde{\xi}=\{\t\}^{\perp} \in G_{n, n-j-1}, \quad\tilde{u}= -
\frac{u}{|u|^{2}} \in\tilde{\xi}^{\perp}, \quad\tilde{\t}
\equiv\tilde{\t}(\tilde{\xi},\tilde{u})\in \tilde {\rm Gr} (n, n-j-1).
\]
Consider the Kelvin-type mapping
\be\label {Kelvin} \tilde {\rm Gr} (n,j) \ni\t \xrightarrow{\quad\varkappa\quad}
\tilde{\t}\in \tilde {\rm Gr}(n, n-j-1).
\ee
Clearly, $\varkappa \, (\varkappa \, (\t))=\t$ and $|\t|=|
\tilde{\t}|^{-1}$. In a similar way, for
$\z\equiv\z(\eta, v)\in  \tilde{\rm Gr} (n,k)$, $v\neq0$, we denote
\[
\tilde{\eta}=\{\z\}^{\perp} \in G_{n, n-k-1}, \quad\tilde{v}=
-\frac{v}{|v|^{2}} \in\tilde{\eta}^{\perp}, \quad\tilde{\z}
\equiv\tilde{\z}(\tilde{\eta},\tilde{v})\in \tilde {\rm Gr} (n, n-k-1),
\]
and get
\begin{equation} \tilde{\rm Gr} (n,k) \ni\z\xrightarrow{\quad\varkappa\quad}
\tilde{\z}\in \tilde{\rm Gr} (n, n-k-1).
\end{equation}

\begin{definition}
Let $R_A: f(\t) \mapsto(R_A f)(\z)$ be the Radon transform
 that takes functions on $\tilde {\rm Gr} (n,j)$ to functions on
$\tilde{\rm Gr} (n,k), \; k>j$. If
\[\tilde{\t}=\varkappa\,(\t)\in \tilde{\rm Gr} (n, n-j-1), \qquad \tilde{\z}=\varkappa\,(\z)\in \tilde {\rm Gr} (n, n-k-1),\]
 then the associated
Radon transform $ \tilde R_A: \tilde{f}(\tilde{\z}) \to (\tilde R_A
\tilde{f})(\tilde{\t})$ that integrates $\tilde{f} $ over all
$\tilde{\z} \subset \tilde{\t}$ is called
\textit{quasi-orthogonal} to $R_A$.
\end{definition}

\begin{proposition}\label {ans}
\label{exath} {\rm \cite[Theorem 5.5]{Ru04d}} Let $0\le j<k \le n-1$,
\be\label {adon} (U\Phi)(\t)\!=\!c \,|\t|^{k-n} (\Phi \circ \varkappa)(\t), \quad (V\vp)(\tilde\z)\!=\! |\tilde \z|^{j-n} (\vp\circ  \varkappa^{-1})(\tilde\z),\ee
%\be\label {sform} (U\Phi)(\t)\!=\!c \,|\t|^{k-n} (\Phi \circ \varkappa)(\t), \quad (V\vp)(\z)\!=\! |\z|^{j-n} (\vp\circ  \varkappa^{-1})(\z), \quad  c\!=\!\frac{\sig_{n-k -1}}{\sig_{n-j-1}}.\ee
$c=\sig_{n-k -1}/\sig_{n-j-1}$. Then
\be\label {sform1}
R_A^{*} \vp= U\tilde R_A V\vp,\ee
provided that either side of this equality exists in the Lebesgue sense.
\end{proposition}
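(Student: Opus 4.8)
The plan is to show that the Kelvin-type map $\varkappa$ converts the defining integral of $R^*_A$ into that of $\tilde R_A$ by a fibrewise change of variables, the geometric core being that $\varkappa$ \emph{reverses incidence}. First I would establish: for $\t=\t(\xi,u)\in\tilde{\rm Gr}(n,j)$ and $\z=\z(\eta,v)\in\tilde{\rm Gr}(n,k)$ one has $\t\subset\z \Leftrightarrow \tilde\z\subset\tilde\t$, where $\tilde\t=\varkappa\t$, $\tilde\z=\varkappa\z$. Recall that $\t\subset\z$ means $\xi\subset\eta$ and $v=\Pr_{\eta^\perp}u$, i.e. $u-v\in\eta$, $v\in\eta^\perp$. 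Passing to $\tilde\xi=\{\t\}^\perp$, $\tilde\eta=\{\z\}^\perp$ gives $\tilde\eta\subset\tilde\xi$ at once, while the affine requirement $\tilde v-\tilde u\in\tilde\xi$, with $\tilde u=-u/|u|^2$ and $\tilde v=-v/|v|^2$, is automatic in the $\xi$-directions and, after testing against $u$, is equivalent to the single scalar identity $\langle u,v\rangle=|v|^2$; the latter holds since $\langle u,v\rangle=\langle u-v,v\rangle+|v|^2=|v|^2$. Because $\varkappa\circ\varkappa=\id$, the converse is automatic.

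Granting this lemma, at a fixed $\t$ both sides of (\ref{sform1}) are integrals of $\vp$ over the single fibre $\{\z:\z\supset\t\}$. Indeed $(R^*_A\vp)(\t)=\int_{\z\supset\t}\vp(\z)\,d_\t\z$ by (\ref{lasz2}), while from the definitions of $U,V$ and the incidence bijection $(U\tilde R_A V\vp)(\t)=c\,|\t|^{k-n}\int_{\z\supset\t}|\tilde\z|^{j-n}\vp(\z)\,d_{\tilde\t}\tilde\z$, where $d_{\tilde\t}\tilde\z$ is the fibre measure of $\tilde R_A$ over $\{\tilde\z\subset\tilde\t\}$ transported to $\{\z\supset\t\}$. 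Thus (\ref{sform1}) is equivalent to the fibrewise measure identity $d_\t\z=c\,|\t|^{k-n}|\tilde\z|^{j-n}\,d_{\tilde\t}\tilde\z$. Since $\varkappa$ is $SO(n)$-equivariant and all weights are radial, it suffices to verify this at one standard plane $\t=\t(\bbr^j,|\t|e_{n-j})$ and extend by rotation invariance.

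The main obstacle is the Jacobian of the fibre map $\eta\mapsto(\tilde\eta,\tilde v)$, which sends the Grassmannian of $(k-j)$-planes $\eta\supset\xi$ (carrying $d_\t\z=d_\xi\eta$) to the Grassmannian of $(n-k-1)$-planes $\tilde\eta\subset\tilde\xi$ times the translation space $\tilde\eta^\perp\cap\tilde\xi$ (carrying $d_{\tilde\t}\tilde\z=d_{\tilde\xi}\tilde\eta\,dy$); both have dimension $(k-j)(n-k)$. In coordinates adapted to the splitting $\xi^\perp=\tilde\xi\oplus\bbr u$ this determinant factors through powers of $|v|=|\Pr_{\eta^\perp}u|$, and the constant is produced by the differing normalizations $\sigma_{n-j-1}$, $\sigma_{n-k-1}$ of the two probability measures. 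I would confirm the exponents and the value $c=\sigma_{n-k-1}/\sigma_{n-j-1}$ by an independent test on radial $\vp(\z)=\vp_0(|\z|)$: then $V\vp$ is radial, $\tilde R_A$ acts by the right-sided integral (\ref{besg}), and the substitution $s\mapsto 1/s$ turns $c\,|\t|^{k-n}(\tilde R_A V\vp)(\tilde\t)$ into precisely the left-sided Erd\'elyi--Kober integral (\ref{besg1}) for $R^*_A$, the surface-area factors collapsing to $c$.

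Finally, the proviso is local in $\t$: at each $\t$ the two sides are integrals over the common fibre $\{\z\supset\t\}$ against measures that, by the identity above, differ by the positive density $c\,|\t|^{k-n}|\tilde\z|^{j-n}$. Applying this to $|\vp|$ shows the two fibre integrals converge simultaneously, and whenever one is finite the signed identity follows because the underlying fibre measures coincide; no interchange of integrations beyond this single change of variables is required.
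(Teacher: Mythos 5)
The paper itself gives no proof here---it cites \cite[Theorem 5.5]{Ru04d}, where the identity is obtained by a direct computation---so your argument has to stand on its own, and it very nearly does. The two ingredients you actually carry out are both correct: the incidence-reversal lemma is right (for $\t\subset\z$ one has $v\in\eta^\perp\subset\xi^\perp$ and $\langle u,v\rangle=|v|^2$, whence $\tilde v-\tilde u\perp\xi$ and $\tilde v-\tilde u\perp u$, i.e. $\tilde\z\subset\tilde\t$, and $\varkappa\circ\varkappa=\id$ gives the converse), and the radial computation does reproduce (\ref{besg1}) with the stated constant $c=\sig_{n-k-1}/\sig_{n-j-1}$ after the substitution $\rho=1/s$. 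The one step you assert rather than prove is that the fibrewise Radon--Nikodym density of $d_\t\z$ against the transported measure $d_{\tilde\t}\tilde\z$ depends only on $|\t|$ and $|\z|$ (``the determinant factors through powers of $|v|$''). Without that, the radial test shows only that the two fibre measures have equal pushforwards under $\z\mapsto|\z|$, not that they differ by the claimed pointwise density, so as written the radial check cannot ``confirm'' a Jacobian you never compute.

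The gap is real but closes without any Jacobian: both measures on the fibre $\{\z:\z\supset\t\}$ are invariant under the stabilizer of $\t$ in $SO(n)$ --- the first by the very construction (\ref{laszcv}), the second because $\varkappa$ and the construction of $d_{\tilde\t}\tilde\z$ are $SO(n)$-equivariant --- and that stabilizer acts transitively on each level set $\{\z\supset\t:\ |\z|=s\}$, since the congruence class of an incident pair is determined by $|\t|$ and $|\z|$. Two invariant measures on the fibre with equal radial marginals therefore coincide, and your radial identity becomes the proof rather than a consistency check. With that remark inserted your route is complete and is arguably more structural than the coordinate computation in \cite{Ru04d}. Two minor points deserve a sentence: for $|\t|>0$ the planes $\z\supset\t$ passing through the origin, where $\varkappa$ is undefined, form a $d_\t\z$-null set; and every $\tilde\z\subset\tilde\t$ automatically satisfies $|\tilde\z|\ge|\tilde\t|>0$, so $\varkappa^{-1}$ is defined on the entire dual fibre.
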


One can show (see \cite [Theorem 5.5 (i)]{Ru04d}) that
\be\label{ildes}
\intl_{{\rm Gr} (n,k)}\frac{\vp (\z)\, d\z}{(1+ |\z|^2)^{(j+1)/2}}=\frac{\sig_{n-k-1}}{\sig_k} \intl_{{\rm Gr} (n,n-k-1)}\frac{(V\vp) (\tilde{\z})\, d\tilde{\z}}{(1+ |\tilde{\z}|^2)^{(j+1)/2}}.\ee

From (\ref{sform1}) it follows that $R_A^{*}$ is injective on  the set
\[L^\infty_\del ({\rm Gr} (n,k))= \{ \vp:  B \equiv {\rm ess} \!\!\!\!\sup\limits_{ \t \in {\rm Gr} (n,k)} |\z|^\del |\vp(\z)|<\infty \; \text {for some $\;\del < n- k$}\},\]
(cf. (\ref{rgesB})) if and only if $\tilde R_A$ is
injective on the range $V(L^\infty_\del ({\rm Gr} (n,k)))$. The latter coincides with the similar set
$L^\infty_\lam ({\rm Gr} (n,j_1))$, $\lam >k_1-j_1$, where
\[j_1 =n-k-1, \qquad k_1 =n-j-1, \qquad \lam=n-j-\del,\]
$k_1-j_1=k-j$.
 Changing notation, we conclude that Theorem \ref{rther1} yields the following  ``dual'' statement for the operator $R_A$.

\begin{theorem} \label {Arther1} Let $ 0 \le j<k\le n-1$, $j+k\le n-2$. Then the
 Radon transform $R_A$  is
injective  on the set of functions satisfying
\[A\equiv {\rm ess} \!\!\!\!\sup\limits_{ \t \in {\rm Gr} (n,j)} |\t|^\lam |f(\t)|<\infty  \quad \text {for some $\;\lam > k-j$},\]
where the condition $\lam >k -j$ is sharp.
\end{theorem}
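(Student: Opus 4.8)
The plan is to deduce this from Theorem \ref{rther1} by reading the quasi-orthogonal identity (\ref{sform1}) of Proposition \ref{ans} ``in reverse''. The key observation is that I should apply Proposition \ref{ans} not to the pair $(j,k)$ directly but to the complementary pair
\[ j'=n-k-1, \qquad k'=n-j-1, \]
because for this pair the associated transform $\tilde R_A$ — which integrates over $\tilde\z\subset\tilde\t$ with $\tilde\z$ of dimension $n-k'-1=j$ and $\tilde\t$ of dimension $n-j'-1=k$ — is precisely the forward operator $R_A$ of type $(j,k)$ that we wish to study. The hypothesis needed to invoke Theorem \ref{rther1} for the dual transform $R^*_A$ of type $(j',k')$ is $j'+k'\ge n$; since $j'+k'=2n-j-k-2$, this is equivalent to $j+k\le n-2$, so the two range conditions coincide exactly.

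For the injectivity transfer I would use that, by (\ref{adon}), the operators $U$ and $V$ are pointwise multiplications by nowhere-vanishing weights composed with the involution $\varkappa$, hence injective. Writing (\ref{sform1}) as $R^*_A\vp=U\big(R_A(V\vp)\big)$, injectivity of $U$ gives $R^*_A\vp=0$ if and only if $R_A(V\vp)=0$. Now, given $f$ in the class of the theorem, I set $\vp=V^{-1}f$, so that $f=V\vp$; if $R_Af=0$ then $R^*_A\vp=0$, and Theorem \ref{rther1} — whose hypothesis (\ref{consi}) is supplied by the weighted bound (\ref{rgesB}), as recorded there — forces $\vp=0$, whence $f=0$. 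All integrals exist in the Lebesgue sense by Propositions \ref{Arges} and \ref{lrrbx}, so the identity (\ref{sform1}) is legitimately applicable throughout.

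The technical heart is verifying that $V$ matches the two weighted classes with exactly the right exponents. Writing the point of ${\rm Gr}(n,j)$ as $\t$ and its Kelvin partner as $\z=\varkappa^{-1}(\t)$ with $|\z|=|\t|^{-1}$, the definition $(V\vp)(\t)=|\t|^{\,j'-n}\vp(\z)=|\t|^{-k-1}\vp(\z)$ together with $|\vp(\z)|\le B|\z|^{-\del}=B|\t|^{\,\del}$ yields $|\t|^{\,k+1-\del}\,|(V\vp)(\t)|\le B$. Setting $\lam=k+1-\del$, the admissible range $\del<n-k'=j+1$ becomes precisely $\lam>k-j$, and the map $\del\mapsto\lam=k+1-\del$ is a bijection between these ranges; hence $V$ carries the class $\{\,|\z|^\del|\vp(\z)|<\infty,\ \del<j+1\,\}$ onto $\{\,|\t|^\lam|f(\t)|<\infty,\ \lam>k-j\,\}$. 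This bookkeeping — balancing the weight exponent $j'-n$ against the inversion $|\z|=|\t|^{-1}$ — is the step I expect to demand the most care, since an off-by-one error in $j'$ or $k'$ would spoil the exact alignment of the threshold.

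Finally, for sharpness I would propagate the endpoint through the same correspondence: $\del=n-k'=j+1$ is the critical value at which the hypothesis of Theorem \ref{rther1} (equivalently the finiteness established in Proposition \ref{lrrbx}) degenerates, and under $V$ it maps to $\lam=k-j$. At this endpoint the forward transform $R_Af$ may already fail to be finite, in agreement with the sharpness of the condition $\lam>k-j$ in Proposition \ref{Arges}; consequently the injectivity assertion cannot be extended to $\lam\le k-j$, which is exactly the claimed sharpness.
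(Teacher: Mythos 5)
Your proposal is correct and follows essentially the same route as the paper: injectivity is transferred from Theorem \ref{rther1} through the Kelvin-type identity (\ref{sform1}), with the identical exponent bookkeeping $\lam=k+1-\del$ matching $\del<j+1$ to $\lam>k-j$ and the same identification of sharpness with the existence thresholds of Propositions \ref{Arges} and \ref{lrrbx}. The only (cosmetic) difference is that you apply Proposition \ref{ans} to the complementary pair $(n-k-1,\,n-j-1)$ from the outset, whereas the paper applies it to $(j,k)$ with $j+k\ge n$ and renames the indices at the end.
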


We observe that the injectivity conditions in Theorems \ref{Arther1} and \ref{rther1}  coincide with the existence conditions in Propositions   \ref{Arges}  and \ref{lrrbx}, respectively.

\begin{conjecture}\label {don} {\rm We conjecture that if $j+k= n-1$, then both $R_A$ and $R^*_A$ are non-injective on $C^\infty (\tilde {\rm Gr} (n,j))$ and $C^\infty (\tilde {\rm Gr} (n,k))$,
 respectively. This fact is known for $j=0$, i.e.,
for the  hyperplane Radon transform in $\rn$ and its dual; see} {\rm \cite [Theorems 4.112, 4.113]{Ru15}}.
\end{conjecture}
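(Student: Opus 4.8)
The plan is to exploit the \emph{self-dual} character of the quasi-orthogonal Kelvin transform $\varkappa$ in the critical range to collapse the two assertions into one, and then to produce an explicit nonzero smooth kernel element by diagonalizing $R_A$ into one-dimensional multiplier operators via $SO(n)$-harmonic analysis.

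First I would observe that when $j+k=n-1$ the Kelvin map (\ref{Kelvin}) is self-dual: since $n-j-1=k$ and $n-k-1=j$, $\varkappa$ carries $\tilde {\rm Gr} (n,j)$ onto $\tilde {\rm Gr} (n,k)$ and conversely, so the quasi-orthogonal transform $\tilde R_A$ coincides with $R_A$ itself on the very same pair of Grassmannians. Proposition \ref{ans} then reads $R_A^{*}\vp=U\,\tilde R_A\,V\vp=U\,R_A\,V\vp$, where $U$ and $V$ in (\ref{adon}) are the compositions of the bijections $\varkappa^{\pm1}$ with multiplication by the nowhere-vanishing factors $|\cdot|^{k-n}$ and $|\cdot|^{j-n}$. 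Both $U$ and $V$ are therefore bijections of the relevant spaces of smooth functions on the punctured affine Grassmannians, so (\ref{sform1}) shows that $R_A^{*}\vp=0$ holds iff $R_A(V\vp)=0$; since $V$ is invertible, $\ker R_A^{*}=V^{-1}(\ker R_A)$. Consequently $R_A^{*}$ is non-injective on $C^\infty(\tilde {\rm Gr} (n,k))$ if and only if $R_A$ is non-injective on $C^\infty(\tilde {\rm Gr} (n,j))$, and it suffices to exhibit a single nonzero $f\in C^\infty(\tilde {\rm Gr} (n,j))$ with $R_A f=0$.

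Next I would diagonalize $R_A$ by harmonic analysis. Because $R_A$ commutes with rotations, it acts on each $SO(n)$-isotypic component of $C^\infty(\tilde {\rm Gr} (n,j))$ (a Grassmannian harmonic tensored with a radial profile in $|\t|$) through a one-dimensional radial integral, the higher-rank analogue of the Funk--Hecke formula, whose trivial (zonal) isotype is exactly the injective Abel integral (\ref{besg}) of Proposition \ref{nsfo5}. Hence any kernel element must live in a \emph{nontrivial} isotype. The equality $j+k=n-1$ is precisely the boundary at which the higher-rank Funk transform $F$ on $G_{n,n-k}$, through which $R_A^{*}$ factors in the proof of Theorem \ref{rther1}, loses injectivity: the threshold of \cite[Theorem 1.2]{GR} requires $(n-j)+(n-k)\le n$, i.e. $j+k\ge n$, which just fails here. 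I would invoke the sharpness of that threshold to pick a nontrivial isotype on which the radial multiplier of $R_A$ vanishes, choose a smooth, rapidly decaying radial profile supported there, and tensor it with the corresponding Grassmannian harmonic to obtain a nonzero $f\in C^\infty(\tilde {\rm Gr} (n,j))$ annihilated by $R_A$. For $j=0$ this reduces, via the classical Funk--Hecke theorem and Gegenbauer expansions, to the known construction \cite[Theorems 4.112, 4.113]{Ru15}, which serves both as the base case and as a template for the general multiplier computation.

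The hard part will be the explicit higher-rank multiplier computation together with the transport of the Funk-transform kernel to the affine setting. Upgrading the GR injectivity threshold to a sharp non-injectivity statement carrying a genuinely \emph{smooth} annihilated element, and then lifting it to a global $C^\infty$ function on the noncompact bundle, is delicate because the slices $\psi_u(\tilde\eta)=\vp(\tilde\eta^\perp,\Pr_{\tilde\eta}u)$ appearing in the factorization of $R_A^{*}$ are not independent in $u$: they are coupled through the projection $\Pr_{\tilde\eta}u$, so one cannot prescribe a kernel element of $F$ freely on each fiber. I expect this coupling, along with verifying global smoothness and non-triviality of the chosen profile as $|\t|\to0$ and $|\t|\to\infty$, to be the principal obstacle; the self-dual reduction of the first step is what makes it enough to overcome the difficulty for the single operator $R_A$.
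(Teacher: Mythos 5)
You should first be aware that the paper contains no proof of this statement: it is posed as Conjecture \ref{don} and restated as an open problem in the Conclusion, so your proposal cannot be checked against an existing argument and must stand on its own. Your first step does stand: when $j+k=n-1$ one has $n-k-1=j$ and $n-j-1=k$, so the quasi-orthogonal transform $\tilde R_A$ of Proposition \ref{ans} is again a $j$-to-$k$ plane transform, and since the operators $U$, $V$ of (\ref{adon}) compose the smooth involution $\varkappa$ with multiplication by factors that are smooth and nonvanishing on the punctured bundles, the identity (\ref{sform1}) gives $\ker R_A^{*}=V^{-1}(\ker R_A)$ within the class of functions for which the transforms exist in the Lebesgue sense. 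This correctly collapses the two assertions into one and is consistent with the simultaneity phenomenon at $j+k=n-1$ recorded in Proposition \ref{ther1}.

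The second step, however, is where the entire content of the conjecture lies, and it is not carried out; moreover, as sketched it rests on a structural misconception. The affine transform $R_A$ does not act on an $SO(n)$-isotypic component by a scalar ``radial multiplier'' that could vanish: on a component of the form (Grassmannian harmonic)$\,\otimes\,$(radial profile) it acts as a one-dimensional Abel-type \emph{integral operator} in $|\t|$ (the zonal instance being (\ref{besg})), so producing a kernel element means exhibiting a nonzero smooth, slowly decaying profile annihilated by such an operator --- solving a nontrivial integral equation --- not selecting an isotype killed by a multiplier. Scalar action per isotype is the situation for the compact transform $R_0$, not for $R_A$; and note that at $j+k=n-1$ the compactified transform $R_0$ is \emph{injective} on $L^1(G_{n+1,j+1})$ (Theorem \ref{Ntrou6}), while $R_A$ is injective on $L^1_{n-k}({\rm Gr}(n,j))$ (Proposition \ref{ther1}), so any kernel element must be genuinely non-integrable, exactly as in the known $j=0$ examples of \cite[Theorems 4.112, 4.113]{Ru15}; no compact-picture or $L^1$-based non-injectivity can be transported directly. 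The boundary failure of \cite[Theorem 1.2]{GR} that you invoke concerns the Funk transform $F$ on $G_{n,n-k}$ appearing in the proof of Theorem \ref{rther1}, and there the obstruction you yourself name is fatal to the sketch: the slices $\psi_u(\tilde\eta)=\vp(\tilde\eta^\perp,\Pr_{\tilde\eta}u)$ all arise from a single $\vp$ and cannot be prescribed fiberwise, and in any case $R_A^{*}\vp=0$ only forces $F\psi_u$ to vanish on the sub-Grassmannian of subspaces containing $u$, so $\ker F$ and $\ker R_A^{*}$ are not related in the clean way your plan assumes. Since you explicitly defer this construction (``the hard part,'' ``the principal obstacle''), what you have is a valid reduction of $R_A^{*}$ to $R_A$ plus a program for the remaining half; the conjecture is not proved by it.
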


\subsection {Inversion Formulas}
It is natural to expect that if $R_A$ and $R^*_A$ are injective on some classes of functions then there exist the corresponding inversion formulas.
Such formulas may have different analytic form and depend on the class of functions. We are not familiar with any universal reconstruction
formulas for $R_Af$ or $R^*_A\vp$ that would cover all injectivity cases. Each new formula of such a kind would be of interest.
Most of the known inversion formulas  look pretty complicated and we do not present them here. These formulas can be simplified
if  $f$  or $\vp$  enjoy some additional symmetry (e.g., $f$ and $\vp$ are radial). More information on this subject, including particular cases,  can be found in
 \cite [Theorems 4.2, 4.4]{Ru04d}, \cite{RW17, RW18}.

 \subsection {Support Theorems}

 The following result is due to Gonzalez and Kakehi \cite [Theorem 6.2, Corollary 7.1]{GK06}; see also  \cite [pp. 11, 33]{H11} for the case $j=0$.
\begin{proposition} \label {thrt4}  Let  $0\le j<k \le n-1$, $j+k\le n-1$,
 $\t \in {\rm Gr} (n,j)$,  $\z \in {\rm Gr} (n,k)$, $r>0$. If
 $f\in S({\rm Gr} (n,j))$, then
\be\label{kvv1}
(R_A f)(\z)=0\; \forall \, |\z| >r\; \Longrightarrow \; f(\t)=0\; \forall \, |\t| >r.\ee
\end{proposition}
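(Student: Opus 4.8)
The plan is to combine the $SO(n)$-equivariance of $R_A$ with the explicit right-sided fractional-integral formula of Proposition~\ref{nsfo5}, reducing the support theorem to a one-dimensional Abel-type assertion on each $SO(n)$-isotypic component of $f$.

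I would first dispose of the radial case, which already displays the essential mechanism. If $f(\t)=f_0(|\t|)$, then by (\ref{besg}) we have $(R_A f)(\z)=F_0(|\z|)$ with $F_0(s)=\sig_{k-j-1}\int_s^\infty f_0(r)(r^2-s^2)^{(k-j)/2-1}r\,dr$. The substitution $\rho=s^2$, $t=r^2$ turns this into a constant multiple of the right-sided Riemann--Liouville fractional integral of order $(k-j)/2$ of the function $t\mapsto f_0(\sqrt t)$, evaluated at $\rho$. Since the kernel couples $F_0(s)$ only to values $f_0(r)$ with $r\ge s$, the hypothesis $F_0(s)=0$ for all $s>r$ says that this fractional integral vanishes on $(r^2,\infty)$; Abel inversion on $(r^2,\infty)$ (cf. Remark~\ref{nusfo} and \cite[Lemma~2.42]{Ru15}) then forces $f_0(\rho)=0$ for $\rho>r$. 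This is exactly the ``right-sided''/support-preserving behaviour emphasized after Proposition~\ref{nsfo5}.

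Next I would pass from radial $f$ to general $f\in S({\rm Gr}(n,j))$ by harmonic analysis under $K=SO(n)$. Because $SO(n)$ acts on ${\rm Gr}(n,j)$ and ${\rm Gr}(n,k)$ preserving $|\t|$ and $|\z|$, and $R_A$ commutes with rotations, I decompose $f=\sum_\pi f_\pi$ into $SO(n)$-isotypic components. The isotypic projection commutes with restriction to the $SO(n)$-invariant set $\{|\z|>r\}$, so the hypothesis $(R_Af)(\z)=0$ for $|\z|>r$ descends to every component: $(R_Af_\pi)(\z)=0$ for $|\z|>r$. It therefore suffices to prove the theorem componentwise, the Schwartz regularity of $f$ guaranteeing that the expansion converges and may be handled termwise. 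On a fixed component one separates the radial variable $|\t|$ from the angular (fibre) variables; by equivariance and Schur's lemma $R_A$ then acts on the radial profile by a one-dimensional integral operator of the same right-sided form as (\ref{besg}), now carrying an extra Jacobi--Gegenbauer weight produced by integration over the fibre. Crucially the radial integration still ranges over $r\ge|\z|$ only, so the operator stays triangular (Volterra-type) in the radius and, in the regime $j+k\le n-1$ matching the injectivity range of Proposition~\ref{ther1}, remains injective on $(r,\infty)$; vanishing on $|\z|>r$ thus kills the radial profile of $f_\pi$ for $|\t|>r$, and summing over $\pi$ gives $f(\t)=0$ for all $|\t|>r$.

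The hard part is this last step: making the componentwise operator explicit and verifying that it is simultaneously support-preserving and injective on $(r,\infty)$ for each isotypic type, since the representation theory of the Grassmann bundle ${\rm Gr}(n,j)$ (whose fibres $G_{n,j}$ carry their own $SO(n)$-structure) is considerably heavier than the spherical-harmonic calculus used by Helgason in the base case $j=0$. To avoid computing every component explicitly, I would instead prove the statement first on the dense subclass of $K$-finite Schwartz functions, where only finitely many components occur and each reduces to a genuine one-dimensional Abel equation, and then pass to the limit using the finiteness and boundedness properties of $R_A$ on the weighted spaces of Propositions~\ref{ther} and~\ref{L2}, together with the fact that $\{|\z|>r\}$ is closed; this reduces the whole theorem to the right-sided Abel injectivity already established in the radial case.
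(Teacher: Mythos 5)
Your radial computation is correct and is indeed the one-dimensional heart of the matter, but the passage from the radial case to general $f\in S({\rm Gr} (n,j))$ has a genuine gap at exactly the point you flag as ``the hard part.'' You assert that on each $SO(n)$-isotypic component the transform acts on the radial profile by a right-sided Volterra-type operator which ``remains injective on $(r,\infty)$'' in the regime $j+k\le n-1$, and you justify this by pointing to the injectivity range of Proposition~\ref{ther1}. That proposition is a \emph{global} injectivity statement on a weighted $L^1$ space; it does not imply that the componentwise radial operators are injective on a half-line $(r,\infty)$, which is a strictly stronger, support-theorem-type assertion (global injectivity of $R_A$ by itself never yields (\ref{kvv1})). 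Establishing half-line injectivity requires exhibiting the componentwise kernels explicitly: for $j\ge 1$ the $SO(n)$-orbit $\{|\t|=\const\}$ is a quotient $SO(n)/H$ with $H\cong SO(j)\times SO(n-j-1)$, which is not multiplicity-free, so Schur's lemma produces matrix-valued rather than scalar Volterra operators, and one must verify that the leading (diagonal) matrix coefficient is invertible for every surviving isotypic type. That is precisely the Strichartz-type harmonic analysis you defer, and the hypothesis $j+k\le n-1$ enters only there, through which components are annihilated. Your fallback --- prove it for $K$-finite functions and pass to the limit --- does not circumvent this, since the $K$-finite reduction still needs the half-line injectivity for each individual type.

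For comparison: the paper does not reprove this Proposition (it is quoted from Gonzalez--Kakehi \cite{GK06}, whose argument proceeds via moment conditions and range characterizations rather than isotypic decomposition), but its own new support theorem for $j+k\le n-2$ and continuous $f$ with power decay is proved by a purely geometric slicing. One fixes a $(k+j+1)$-plane $T$ at distance $t>r$ from the origin, observes that the data restricted to $k$-planes inside $T$ is a $j$-to-$k$ Radon transform on ${\rm Gr} (j+k+1,j)$ sitting at the critical dimension $j+k=n_1-1$, and invokes the injectivity theorem (Theorem~\ref{Arther1}) in that lower-dimensional ambient space. That route trades all of the representation theory for a single application of an injectivity result; adapting it would be a more economical way to close your gap, at the cost of the slightly stronger hypothesis $j+k\le n-2$.
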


Our aim is to extend this statement to wider classes of functions and include the dual transform into the consideration.

It is convenient to distinguish the case  $j+k\le n-1$ (or $\dim {\rm Gr} (n,j) \le \dim {\rm Gr} (n,k)$)  and the less general case $j+k\le n-2$ (or $\dim {\rm Gr} (n,j)) < \dim {\rm Gr} (n,k)$).
The following  statement for $j+k\le n-2$ is new. Close results for $j=0$ are due to
Kurusa \cite [Theorem 3.1]{Ku94}  and the author \cite [Theorem 5.2]{Ru21}.

\begin{theorem} \label {thrt4}  Let  $0\le j<k \le n-1$, $j+k\le n-2$.
 Then  (\ref{kvv1})  holds for all  functions  $f\in C({\rm Gr} (n,j))$  satisfying
%\be\label {ota}
\[A\equiv\sup\limits_{ \t \in {\rm Gr} (n,j)} |\t|^\lam |f(\t)|<\infty \quad \text {for some} \quad\lam > k-j;\]
cf. (\ref{rges}).
\end{theorem}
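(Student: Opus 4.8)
The plan is to deduce this exterior support theorem from a support theorem for the \emph{dual} transform, transported through the quasi-orthogonal Kelvin map $\varkappa$ of (\ref{Kelvin}), in exactly the way the injectivity Theorem \ref{Arther1} was obtained from Theorem \ref{rther1} via (\ref{sform1}). The hypothesis $j+k\le n-2$ is precisely what places the conjugate problem in the favorable regime: setting $j_1=n-k-1$ and $k_1=n-j-1$, one has $j_1+k_1=2n-j-k-2\ge n$, so that the forward transform $R_A$ on $(j,k)$ is Kelvin-conjugate to a \emph{dual} transform on $(j_1,k_1)$ of the type treated in Theorem \ref{rther1}. Since $\varkappa$ inverts radii, $|\t|=|\tilde\t|^{-1}$ and $|\z|=|\tilde\z|^{-1}$, the assumption ``$(R_Af)(\z)=0$ for all $|\z|>r$'' and the desired conclusion ``$f(\t)=0$ for all $|\t|>r$'' become \emph{interior} vanishing statements on the balls $\{|\tilde\z|<1/r\}$ and $\{|\tilde\t|<1/r\}$; thus on the dual side the goal is an interior (``hole'') support theorem.

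First I would record that, by Proposition \ref{Arges}, the decay hypothesis $\lambda>k-j$ makes $(R_Af)(\z)$ well defined for every $|\z|>0$, and that under the weights $|\t|^{k-n}$, $|\tilde\z|^{j-n}$ of (\ref{adon}) this growth class corresponds to the integrability class (\ref{consi}) of Theorem \ref{rther1} (the exponents matching through $\lambda=n-j-\delta$, $\delta<n-k$); in particular no convergence is lost in the transfer. Next, invoking (\ref{sform1}) in the form appropriate to the forward transform, I would reduce the claim to a hole theorem for the dual transform $R_A^*$ in the regime $j+k\ge n$. To prove the latter I would follow the argument of Theorem \ref{rther1}: freeze $u\in\rn\setminus\{0\}$, write $(R_A^*\vp)(\xi,u)$ as the restriction to the subspace $\tilde\xi=\xi^\perp\ni u$ of the higher-rank Funk--Radon transform $F\psi_u$, where $\psi_u(\tilde\eta)=\vp(\tilde\eta^\perp,\Pr_{\tilde\eta}u)\in L^1(G_{n,n-k})$, and then appeal to a support theorem for $F$ on the Grassmannian $G_{n,n-k}$ to force $\psi_u$ to vanish on the prescribed subset, for every $u$; undoing $\varkappa$ returns the statement for $f$.

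The crux is this last step: one needs a support theorem for the higher-rank Funk--Radon transform $F$ that matches the geometry created by $\varkappa$ and is uniform in the frozen parameter $u$, together with the bookkeeping that the radius inversion carries the exterior condition $|\z|>r$ to exactly the right family of Funk integrals. I expect this to be the main obstacle, since the Gonzalez--Kakehi support theorem cannot be used directly here: the Kelvin-transported functions lie in a power-growth class rather than in $S(\cdot)$, which is also why the restriction to $j+k\le n-2$ (rather than $j+k\le n-1$) appears. As a consistency check and to isolate the role of $\lambda$, I would first run the radial case, where $R_A$ collapses to the Abel-type integral (\ref{besg}); there the implication is immediate from the right-sided structure of that integral, and Proposition \ref{Arges} shows that the threshold $\lambda>k-j$ cannot be relaxed, as otherwise $R_Af$ need not exist.
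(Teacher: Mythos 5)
Your plan runs the paper's logic in reverse and leaves the decisive step unproved. You Kelvin-transport the exterior vanishing into an interior (``hole'') support theorem for the dual transform on the conjugate pair $(j_1,k_1)=(n-k-1,n-j-1)$ with $j_1+k_1\ge n$, and then propose to obtain that hole theorem by the Funk--Radon restriction device of Theorem \ref{rther1}, ``appealing to a support theorem for $F$ on $G_{n,n-k}$.'' No such support theorem is available at this point: what \cite{GR} supplies, and what Theorem \ref{rther1} actually uses, is \emph{injectivity} of $F$ on $L^1$, not a support theorem, and since for fixed $u$ one only sees $F\psi_u$ restricted to the pencil of subspaces of $\xi^\perp\ni u$, even formulating the required ``hole'' on $G_{n,n-k}$ uniformly in $u$ is delicate. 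You correctly flag this as the crux and the main obstacle, but flagging it does not close it; the argument has a gap exactly where the content of the theorem lies. Moreover, within this paper the dual support theorem (Theorem \ref{Dthrt4}) and the compact-Grassmannian support theorem (Theorem \ref{Cthrt4}) are themselves \emph{deduced from} the present theorem via the Kelvin map and the lift $\mu$, so your route is circular relative to the paper's architecture and would require a genuinely new independent input.

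The paper's proof avoids all of this with a slicing argument. Fix any $(k+j+1)$-dimensional affine plane $T$ with $\dist(T,o)=t>r$; such planes pass through every $j$-plane $\t$ with $|\t|>r$ precisely because $j+k\le n-2$ gives $j+k+1\le n-1$, which is where the dimensional restriction enters. Every $k$-plane $\z\subset T$ has $|\z|\ge t>r$, so $R_Af$ vanishes on all of them; translating $T$ to $\bbr^{j+k+1}+te_n$, this says the $j$-to-$k$ Radon transform of $\tilde f(\t)=f(\rho_T(\t+te_n))$ vanishes identically on ${\rm Gr}(j+k+1,k)$. Since $|\rho_T(\t+te_n)|\ge t>0$, the hypothesis $|f|\le A|\cdot|^{-\lam}$ makes $\tilde f$ bounded with decay $O(|\t|^{-\lam})$, $\lam>k-j$, and the injectivity of the $j$-to-$k$ transform on this class in the ambient dimension $n_1=j+k+1$ forces $\tilde f\equiv 0$, i.e.\ $f=0$ on every $j$-plane in $T$. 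Letting $T$ vary over all such slices gives the claim. Your radial consistency check via (\ref{besg}) is correct but peripheral; it does not substitute for this global reduction.
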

\begin{proof}  Let us
fix any $(k+j+1)$-plane $T$ in $\rn$ at a distance $t>r$ from the origin and let $\rho_T \in SO(n)$ be a rotation satisfying
\[
\rho_T : \, \bbr^{k+j+1} +te_n \mapsto T, \qquad \bbr^{k+j+1} = \bbr e_1 \oplus \cdots \oplus \,\bbr e_{k+j+1}.\]
Suppose that $\z\subset T$ and denote
 $\z_1=\rho_T^{-1} \z \subset \bbr^{k+j+1} +te_n$, $\; f_T (\t)=f(\rho_T \t)$.
By the assumption, $(R_{A} f)(\z)=0$ for all $\z\subset T$, and therefore
$(R_{A} f)(\rho_T \z_1) =(R_{A} f_T)(\z_1)=0$ for all $k$-planes $\z_1$ in $ \bbr^{k+j+1} +te_n$. Every such plane has the form $\z_1=\z_2 +te_n$, where $\z_2$ is a $k$-plane in $\bbr^{k+j+1}$. Hence, for any
$\z_2$  in $ \bbr^{k+j+1}$,
\be\label{kyheu}
0=(R_{A} f_T)(\z_2 +te_n)=\intl_{\z_2} f_T (\t +te_n)\, d\t= (R_{A} \tilde f)(\z_2).\ee
The latter  is the Radon $j$-to-$k$
 transform of the function $\tilde f (\t)= f_T (\t +te_n)$ on the affine Grassmannian ${\rm Gr} (n_1,j)$,  $n_1=j+k+1$.
By the assumption,
\[
  |\t|^\lam |\tilde f (\t)|=  |\t|^\lam |f(\rho_T (\t +te_n))|\le |\rho_T (\t +te_n)|^\lam |f(\rho_T (\t +te_n))| \le A.\]
    Hence, by Theorem \ref{Arther1}, (\ref{kyheu}) implies
$\tilde f (\t)= f_T (\t +te_n)=0$  for all  $\t\subset \bbr^{j+k+1}$ with $|\t|\neq 0$.
 Because $T$ and $t>r$ are arbitrary, the result follows.
\end{proof}

The support theorem for the dual Radon transform $R^*_A \vp$ can be obtained from Theorem  \ref{thrt4} by making use
of  the Kelvin-type mapping (\ref{Kelvin}) and the equality $R_A^{*} \vp= U\tilde R_A V\vp$; see (\ref{sform1}).
\begin{theorem} \label {Dthrt4}  Let  $0\le j<k \le n-1$, $j+k\ge n$,
 $\t \in {\rm Gr} (n,j)$,  $\z \in {\rm Gr} (n,k)$, $r>0$.
 Then the implication
 \be\label{Dkvv1}
(R^*_A \vp)(\t)=0\; \forall \, |\t|\in (0,r)\; \Longrightarrow \; \vp(\z)=0\; \forall \, |\z| \in (0,r)\ee
  holds for all  functions  $\vp\in C(\tilde {\rm Gr} (n,k))$  satisfying
\be\label {otaBa}
B\equiv  \sup\limits_{ \z \in \tilde {\rm Gr} (n,k)} |\z|^\del |\vp(\z)|<\infty \quad \text {for some} \quad\del < n- k;\ee
  cf. (\ref{rgesB}).
\end{theorem}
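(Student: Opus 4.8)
The plan is to deduce this dual support theorem from the already established Theorem~\ref{thrt4} for $R_A$, using the Kelvin-type intertwining relation $R^*_A\vp = U\tilde R_A V\vp$ of Proposition~\ref{ans} (formula (\ref{sform1})) together with the distance-inverting property $|\t|=|\tilde\t|^{-1}$ of the map $\varkappa$. The point is that conjugation by $\varkappa$ turns $R^*_A$, a dual transform on ${\rm Gr}(n,k)$, into the Radon $j_1$-to-$k_1$ transform $\tilde R_A$ with
\[
j_1=n-k-1, \qquad k_1=n-j-1, \qquad k_1-j_1=k-j,
\]
and since $j+k\ge n$ we have $j_1+k_1=2n-j-k-2\le n-2$, which is precisely the range in which Theorem~\ref{thrt4} applies. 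The growth hypothesis (\ref{otaBa}) with $\del<n-k$ should convert, under $V$, into the hypothesis $\lam>k_1-j_1$ of Theorem~\ref{thrt4}, and the vanishing on $|\t|\in(0,r)$ should convert into vanishing on $|\tilde\t|>1/r$; then Theorem~\ref{thrt4} gives the desired conclusion for $V\vp$, which we transport back to $\vp$.

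Carrying this out, first I would write, for $\t\in\tilde{\rm Gr}(n,j)$ with $\tilde\t=\varkappa(\t)$,
\[
(R^*_A\vp)(\t)=(U\tilde R_A V\vp)(\t)=c\,|\t|^{k-n}\,(\tilde R_A V\vp)(\tilde\t),
\]
using (\ref{adon}). Since the scalar factor $c\,|\t|^{k-n}$ is nonzero for $|\t|>0$, the hypothesis $(R^*_A\vp)(\t)=0$ for all $|\t|\in(0,r)$ is equivalent to $(\tilde R_A V\vp)(\tilde\t)=0$ for all $\tilde\t$ with $|\tilde\t|=|\t|^{-1}>1/r$.

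Next I would verify that $V\vp$ lies in the class to which Theorem~\ref{thrt4} applies. Writing $\tilde\z=\varkappa(\z)$, so $|\z|=|\tilde\z|^{-1}$, formula (\ref{adon}) gives $|(V\vp)(\tilde\z)|=|\tilde\z|^{j-n}|\vp(\z)|=|\z|^{n-j}|\vp(\z)|$. Combined with (\ref{otaBa}) this yields
\[
|\tilde\z|^{\lam}\,|(V\vp)(\tilde\z)|\le B\,|\z|^{n-j-\del}\,|\tilde\z|^{\lam}=B\,|\tilde\z|^{\lam-(n-j-\del)},
\]
which is bounded precisely when $\lam\le n-j-\del$. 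Choosing $\lam=n-j-\del$, the inequality $\del<n-k$ is exactly what guarantees $\lam>k-j=k_1-j_1$, the sharp threshold of Theorem~\ref{thrt4}. Applying that theorem to $\tilde R_A V\vp$ on $\tilde{\rm Gr}(n,j_1)$ with radius $1/r$ then gives $(V\vp)(\tilde\z)=0$ for all $|\tilde\z|>1/r$. Since $|\tilde\z|^{j-n}\neq 0$, this means $\vp(\z)=0$ whenever $|\z|=|\tilde\z|^{-1}\in(0,r)$, which is the assertion (\ref{Dkvv1}).

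The main thing to get right is the bookkeeping across $\varkappa$: the inversion of distances (so that ``$|\t|<r$'' becomes ``$|\tilde\t|>1/r$'' and the support conclusion lands on the correct interval) and the matching of the weight exponents, where the multiplicative factors $|\t|^{k-n}$ and $|\tilde\z|^{j-n}$ in $U$ and $V$ conspire to send the sharp condition $\del<n-k$ to the sharp condition $\lam>k-j$. A minor point to check is that $V\vp$ is continuous on $\tilde{\rm Gr}(n,j_1)$, which holds because $\vp\in C(\tilde{\rm Gr}(n,k))$, the map $\varkappa$ is a homeomorphism, and the factor $|\tilde\z|^{j-n}$ is continuous and nonvanishing on the positive-distance part; only distances bounded away from $0$ (namely $|\tilde\z|>1/r$) enter the conclusion, so this suffices for the application of Theorem~\ref{thrt4}.
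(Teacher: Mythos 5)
Your proposal is correct and follows exactly the paper's own route: the paper proves Theorem \ref{Dthrt4} by the chain $(R_A^{*}\vp)(\t)=0$ for $|\t|\in(0,r)$ $\Rightarrow$ $(\tilde R_A V\vp)(\tilde\t)=0$ for $|\tilde\t|>1/r$ $\Rightarrow$ $(V\vp)(\tilde\z)=0$ for $|\tilde\z|>1/r$ $\Rightarrow$ $\vp(\z)=0$ for $|\z|\in(0,r)$, invoking Theorem \ref{thrt4} with $j\to n-k-1$, $k\to n-j-1$, $\lam\to n-j-\del$. You have simply filled in the bookkeeping (the exponent matching $\lam=n-j-\del>k-j\Leftrightarrow\del<n-k$ and the continuity of $V\vp$) that the paper leaves implicit.
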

\begin{proof} We have the following implications:
\bea
(R_A^{*} \vp)(\t)=0  \;\forall \, |\t| \in (0,r) \quad  &\Longrightarrow& \quad (\tilde R_A V\vp)(\tilde \t)=0\; \forall \, |\tilde \t| > 1/r\nonumber\\
 \Longrightarrow \; (V\vp)(\tilde \z)=0\; \forall \, |\tilde \z| > 1/r \quad &\Longrightarrow& \quad \vp(\z)=0\; \forall \, |\z|\in (0,r).\nonumber\eea
The first and the third implications are obvious. The second implication holds by Theorem \ref{thrt4}  with $j$ replaced by $n-k-1$, $k$ by $n-j-1$, and $\lam$ by $n-j-\del$.
\end{proof}

It is interesting to note that  functions $\vp\in C(\tilde {\rm Gr} (n,k))$  satisfying (\ref {otaBa}) with $0<\del < n-k$ may tend to infinity as $|\z|\to 0$.

\subsection{From Grassmannians to Chords} \label{mens}

The chord  transforms in this subsection are higher-rank generalizations of the interior hyperplane Radon transforms corresponding to $j=0$, $k=n-1$. The latter were studied
by a number of authors (e.g.,  Davison  \cite {Dav},  Louis  \cite {Lou},   Maass  \cite {Maa},   Natterer \cite {Na1}, Yuan Xu   \cite {Xu}, to mention a few) in the framework of the $L^2$
theory for the corresponding singular value decompositions (SVD); see also \cite[pp. 236--245,  278]{Ru15}.
Similar SVD theory for  $j> 0$ or/and $k< n-1$, when representation theory comes into play, seems to be unknown. However, some results can be derived from those for $R_A$ and $R^*_A$ in the previous subsections.  These results might be  of interest on their own. They will also be used in Sections 4 and 5  in our study of  the Beltrami-Klein model of the hyperbolic space.

 We denote
\be\label {cte}
\Gam_B (n,d)=\{ \t \in {\rm Gr} (n,d):\, |\t| <1 \}. \ee
 The  elements of  $\Gam_B (n,d)$ can be identified with $d$-dimensional chords  of  $B_n$.
  Given a function $f$ on  $\Gam_B (n,d)$ we  define its extension to all $ \t \in {\rm Gr} (n,d)$ by setting $\tilde f (\t) =f (\t)$ if  $|\t|<1$ and  $\tilde f (\t) =0$  if  $|\t|\ge 1$.
  As in Subsection \ref{deta}, every  $\t \in \Gam_B (n,d)$ can be
   parameterized by $\t=\t (\xi, u)$,  $\xi \in G_{n,d}$, $ u \in \xi^\perp \cap B_n$, and we have
\be\label {funcE} \intl_{\Gam_B (n,d)} \!\!f (\t)\,  d\t\equiv \intl_{{\rm Gr} (n,d)} \!\! \tilde f (\t)\,  d\t =\intl_{G_{n,d}} d\xi \intl_{\xi^\perp\cap B_n} f(\xi, u)\, du.\ee

Let $ 0 \le j<k\le n-1$.    Given a function $f$ on the space $\Gam_B (n,j)$ of $j$-chords and a  function $\vp$ on the space $\Gam_B (n,k)$ of $k$-chords,
we introduce a dual pair of the {\it chord Radon  transforms}
\be\label {lchyasz} (R_B f)(\z)  = \intl_{\t \subset \zeta} f(\t)\, d_{\z} \t\equiv (\rho_B R_A \tilde f)(\z), \quad  \z \in \Gam_B (n,k), \ee
\be\label {lch5yasz} (R^*_B \vp)(\t)  = \intl_{\z \supset \t} \vp(\z) d_{\t} \z\equiv (\rho_B R^*_A \tilde \vp)(\t),\quad  \t \in \Gam_B (n,j), \ee
where $R_A$ and $R^*_A$ are affine Radon transforms  (\ref{lasz}) and (\ref{laszcv}),  $\rho_B$ stands for the restriction map to  planes meeting $B_n$. Clearly, if $\z \cap B_n \neq \emptyset$ and $\z\supset \t$, then
$\t \cap B_n \neq \emptyset$, and therefore
\be\label {funcE1}
(R^*_B \vp)(\t) \equiv (\rho_B R^*_A \tilde \vp)(\t)= (\rho_B R^*_A  \vp)(\t).\ee
 The duality relation (\ref{wbcdaff}) applied to  $\tilde f$ and $ \tilde\vp$ gives
\be\label {ctew}
 \intl_{\Gam_B (n,k)} (R_B f)(\z) \,\vp (\z)\, d \z=\intl_{\Gam_B (n,j)} f (\t)\, (R^*_B \vp)(\t)\, d\t.\ee
 In particular, setting $\vp=1$, we obtain
\be \label {ally} \intl_{\Gam_B (n,k)} \! (R_B f)(\z)\,  d \z=\intl_{\Gam_B (n,j)}  \! f(\t)\, d\t. \ee
  Further,   Proposition \ref {nsfo5} yields the following statement for radial functions.

\begin{proposition} \label{iant} Given $\t \in \Gam_B (n,j)$ and $\z \in \Gam_B (n,k)$,
let $ r=|\t|, \, s=|\z|$. If $f(\t)=f_0 (r)$ and $\vp(\z)= \vp_0
(s)$, then
\be\label {nfty} (R_B f)(\z)= \sig_{k-j-1}\intl_s^1 f_0(r) (r^2 -s^2)^{(k-j)/2
-1} r dr, \ee
\be \label {nfty1} (R^*_B \vp)(\t)= \frac{\sig_{k-j-1} \,
\sig_{n-k-1}}{\sig_{n-j-1} \, r^{n-j-2}}\intl_0^r \vp_0 (s) (r^2
-s^2)^{(k-j)/2 -1} s^{n-k-1} ds, \ee provided that  the corresponding
integrals exist in the Lebesgue sense.
\end{proposition}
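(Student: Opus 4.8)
The plan is to reduce both formulas to Proposition \ref{nsfo5}, which already records the Erd\'{e}lyi--Kober representations (\ref{besg}) and (\ref{besg1}) of the affine transforms $R_A$ and $R_A^*$ on radial functions, and then to track how the zero-extension and the restriction map $\rho_B$ in the defining relations (\ref{lchyasz}), (\ref{lch5yasz}) alter the limits of integration. The basic observation is that zero-extension preserves radiality: if $f(\t)=f_0(r)$ on $\Gam_B(n,j)$, then $\tilde f$ is radial on ${\rm Gr}(n,j)$ with profile $\tilde f_0(r)=f_0(r)$ for $r<1$ and $\tilde f_0(r)=0$ for $r\ge1$, and likewise for $\tilde\vp$. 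Since $R_A$ and $R_A^*$ send radial functions to radial functions, applying $\rho_B$ afterwards simply means recording the output only for $|\z|<1$ (resp.\ $|\t|<1$).

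First I would treat $R_B$. For $|\z|=s<1$, formula (\ref{lchyasz}) and the radial formula (\ref{besg}) applied to $\tilde f$ give
\[
(R_B f)(\z)=(R_A\tilde f)(\z)=\sig_{k-j-1}\intl_s^\infty \tilde f_0(r)\,(r^2-s^2)^{(k-j)/2-1} r\,dr.
\]
Because $\tilde f_0$ vanishes on $[1,\infty)$ and equals $f_0$ on $[s,1)$, the upper limit collapses from $\infty$ to $1$, which is exactly (\ref{nfty}). For the dual transform I would use the identity (\ref{funcE1}), so that for $|\t|=r<1$,
\[
(R^*_B \vp)(\t)=(R^*_A\tilde\vp)(\t)=\frac{\sig_{k-j-1}\,\sig_{n-k-1}}{\sig_{n-j-1}\,r^{n-j-2}}\intl_0^r \tilde\vp_0(s)\,(r^2-s^2)^{(k-j)/2-1} s^{n-k-1}\,ds
\]
by (\ref{besg1}). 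Here the upper limit is already $r<1$, so the entire integration range $[0,r]$ lies in $[0,1)$ where $\tilde\vp_0=\vp_0$; the zero-extension is therefore invisible and (\ref{nfty1}) follows verbatim. In both cases the Lebesgue existence of the integrals is inherited from that of the affine transforms, which is precisely the hypothesis of the proposition.

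The computation is routine once Proposition \ref{nsfo5} is in hand, and the only point needing care is the asymmetric bookkeeping of the truncation: passing to chords cuts the $R_A$-integral at the radius $r=1$ but leaves the $R_A^*$-integral untouched. The hardest part, such as it is, is simply to recognize that this asymmetry is forced by the one-sided (right- versus left-sided) structure of the two kernels noted after Proposition \ref{nsfo5}---the right-sided kernel of $R_A$ feels the cutoff at $r=1$, whereas the left-sided kernel of $R_A^*$ integrates only over $s<r<1$ and so never reaches it.
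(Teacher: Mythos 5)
Your argument is correct and is essentially the paper's own derivation: the paper simply states that Proposition \ref{nsfo5} yields the result, and your proof spells out exactly that reduction, tracking how the zero-extension truncates the right-sided integral (\ref{besg}) at $r=1$ while leaving the left-sided integral (\ref{besg1}) unchanged. Nothing is missing.
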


It is natural that  equalities for the left-sided integrals in Propositions \ref{nsfo5} and \ref{iant} coincide.

Using tables of integrals, Proposition \ref{iant} enables one to compute  $R_B f$ and $R^*_B \vp$ for a number of
elementary functions. For example,
\bea
\label {nding2a}    \frac{(1\! - \! |\t|^2)^{\a/2-1}}   {|\t|^{\a+k-j}} \quad  &\stackrel {R_B} {\longrightarrow }&
\quad  \lam_1\, \frac{(1 \! - \! |\z|^2)^{(\a+k-j)/2 -1}}{|\z|^\a};\qquad \\
\label {nding3}
  (a^2\! - \! |\t|^2)_+^{\a/2-1} \quad  &\stackrel {R_B} {\longrightarrow }&  \quad  \lam_1\,  (a^2 \! - \! |\z|^2)_+^{(\a+k
-j)/2 -1},\; a\in (0,1];\\
\label {nding1}
 |\z|^{\a +k-n} \quad  &\stackrel {R^*_B} {\longrightarrow }&  \quad   \lam_2 \,|\t|^{\a +k-n};\\
\label {nding}
(1-|\z|^2)^{(j-n)/2} \quad  &\stackrel {R^*_B} {\longrightarrow }&  \quad   (1-|\t|^2)^{(k-n)/2}. \eea
Here $Re \,\a> 0$,
\[
\lam_1= \frac{\pi^{(k-j)/2} \Gam (\a/2)}{\Gam ((\a+k -j)/2)},  \qquad   \lam_2=\frac{ \Gam (\a/2) \, \Gam ((n-j)/2)}{\Gam ((\a+k -j)/2) \, \Gam ((n-k)/2)}.\]
Formulas  (\ref{nding3}), (\ref{nding1}) agree with
 \cite [formulas  (2.13), (2.14)]{Ru04d}.
 To prove (\ref{nding2a}), for $f(\t)=   |\t|^{j-k-\a}  (1\! - \! |\t|^2)^{\a/2-1}$   we have
 \bea
 (R_B f)(\z)&=& \sig_{k-j-1}\intl_s^1 \frac{ (1\! - \! r^2)^{\a/2-1} (r^2 \!- \!s^2)^{(k-j)/2-1}}{r^{\a+k-j}}\,  r dr\nonumber\\
 &=& \frac{\sig_{k-j-1}}{2} \frac{ (1 \! - \! s^2)^{(\a+k-j)/2 -1}}{s^{\a}}\, B\left (\frac {\a}{2}, \frac{k-j}{2}\right )\nonumber\eea
(use, e.g., \cite [formula 2.2.6 (2)]{PBM1}), which gives the result.
 To prove (\ref{nding}), for
$\vp(\z)= (1-|\z|^2)^{(j-n)/2}$ we similarly have
\[(R^*_B \vp)(\t)\!=\! \frac{\sig_{k-j-1} \,
\sig_{n-k-1}}{\sig_{n-j-1} \, r^{n-j-2}}
\intl_0^r \frac{(r^2\!-\!s^2)^{(k-j)/2 -1} s^{n-k-1}}{(1-s^2)^{(n-j)/2} } \,ds \!=\! (1-r^2)^{(k-n)/2}.\]

Combining (\ref{nding2a})-(\ref{nding})  with duality  (\ref{ctew}), we obtain
\be \label {waff}
\intl_{\Gam_B (n,k)} \!\!(R_B f)(\z)\,  |\z|^{\a +k-n}\, d\z \stackrel { (\ref{nding1})} {=}\lam_2 \intl_{\Gam_B (n,j)}\!\!
 f(\t) \,|\t|^{\a +k-n} d\t,\ee
\be \label {uwaff}
\intl_{\Gam_B (n,k)}\! \!\! \!(R_B f)(\z)\,  (1\!-\!|\z|^2)^{(j-n)/2}\, d\z   \stackrel {(\ref{nding})}{=}\!\!\intl_{\Gam_B (n,j)}\!\! \!\!
 f(\t) \,(1\!-\!|\t|^2)^{(k-n)/2} d\t.\ee
\be \label {vwaff1}\intl_{\Gam_B (n,j)} \!\!\!\!(R^*_B \vp)(\t) (a^2\! - \! |\t|^2)_+^{\a/2-1} d\t \! \stackrel {(\ref{nding3})}{=}\! \lam_1 \!\!\!\intl_{\Gam_B (n,k)}\!\!\! \!\!\vp (\z) (a^2 \! - \! |\z|^2)_+^{(\a+k
-j)/2 -1} d \z,\ee
\be \label {vwaff1D}\intl_{\Gam_B (n,j)} \!\!(R^*_B \vp)(\t) \frac{(1\! - \! |\t|^2)^{\a/2-1}}   {|\t|^{\a+k-j}}\, d\t \! \stackrel {(\ref{nding2a})}{=}\!
\lam_1 \!\!\intl_{\Gam_B (n,k)}\!\!\!\!\vp (\z)\, \frac{(1 \! - \! |\z|^2)^{(\a+k-j)/2 -1}}{|\z|^\a}\,  d \z,\ee
Many facts for $R_A $ and $R^*_A$ yield the corresponding results for  $R_B $ and $R^*_B$.

\begin{lemma}\label {ctally}  Let $ 0 \le j<k\le n-1$. If
\be\label{ensi} \intl_{\{\t \in \Gam_B (n,j):\,  a<|\t| <1\}} \!\!\!\!\!\! f(\t) \, d\t <\infty \quad \text{for all} \quad a\in (0,1),\ee
then $(R_B f)(\z)$ is finite for almost all $\z\in \Gam_B (n,k)$.
 \end{lemma}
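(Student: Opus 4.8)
The plan is to deduce the lemma directly from its affine counterpart, Lemma \ref{lrrb1}, via the defining relation $(R_B f)(\z)=(\rho_B R_A \tilde f)(\z)$ from (\ref{lchyasz}), where $\tilde f$ is the extension of $f$ by zero outside $B_n$. Since $R_B f$ is merely the restriction of $R_A \tilde f$ to chords $\z$ with $|\z|<1$, it suffices to show that $\tilde f$, viewed as a function on the full affine Grassmannian ${\rm Gr} (n,j)$, satisfies the hypothesis of Lemma \ref{lrrb1}; that lemma will then give finiteness of $(R_A \tilde f)(\z)$ for almost all $\z \in {\rm Gr} (n,k)$, and in particular for almost all $\z \in \Gam_B (n,k)$, the latter being an open subset on which $d\z$ restricts by (\ref{funcE}). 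As is standard in such convergence arguments, I would first reduce to the case $f\ge 0$ by replacing $f$ with $|f|$.

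The core step is to verify
\[
\intl_{|\t|>a} \frac{|\tilde f(\t)|}{|\t|^{n-k}}\,d\t <\infty \qquad \text{for every } a>0,
\]
and here the geometry of the chord domain does all the work. For $a\ge 1$ the integrand vanishes identically, since $\tilde f(\t)=0$ whenever $|\t|\ge 1$, so the condition holds trivially. For $0<a<1$, the extension collapses the domain of integration to the annular region $\{a<|\t|<1\}$, where the weight is harmless: because $n-k\ge 1>0$ and $|\t|>a$, one has $|\t|^{-(n-k)}< a^{-(n-k)}$, whence
\[
\intl_{|\t|>a} \frac{|\tilde f(\t)|}{|\t|^{n-k}}\,d\t =\intl_{a<|\t|<1} \frac{|f(\t)|}{|\t|^{n-k}}\,d\t \le a^{-(n-k)}\intl_{a<|\t|<1} |f(\t)|\,d\t,
\]
which is finite by the hypothesis (\ref{ensi}).

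With the hypothesis of Lemma \ref{lrrb1} confirmed for $\tilde f$, that lemma yields finiteness of $(R_A \tilde f)(\z)$ for almost all $\z\in {\rm Gr} (n,k)$, and restriction to $|\z|<1$ gives the claim for $R_B f$. I do not expect any genuine obstacle. The only point requiring care is the singularity of the weight $|\t|^{-(n-k)}$ at the origin, but this is exactly the behaviour as $|\t|\to 0$ that the cutoff $a$ removes in both the hypothesis and in Lemma \ref{lrrb1}; near the boundary $|\t|\to 1$ the weight stays bounded, so no integrability beyond (\ref{ensi}) is needed there. Finally, transferring the ``almost all'' conclusion from ${\rm Gr} (n,k)$ to its subset $\Gam_B (n,k)$ is immediate, since a null set for $d\z$ meets $\Gam_B (n,k)$ in a null set for the restricted measure.
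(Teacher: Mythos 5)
Your proposal is correct and follows essentially the same route as the paper: reduce to $R_A\tilde f$ via (\ref{lchyasz}) and verify the hypothesis of Lemma \ref{lrrb1} by bounding $|\t|^{-(n-k)}\le a^{-(n-k)}$ on $\{a<|\t|<1\}$, the integrand vanishing for $|\t|\ge 1$. The paper's proof is just a terser version of the same estimate.
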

\begin{proof} By (\ref{lchyasz}), it suffices to show that $(R_A \tilde f)(\z)$ is finite a.e. on  $\Gam_B (n,k)$. To this end, we make use of Lemma \ref{lrrb1}, according to which
\[ \intl_{|\t|>a} \frac{|\tilde f(\t)|}{|\t|^{n-k}}\, d\t \le \frac{1}{a^{n-k}}\intl_{a<|\t| <1} \!\! \!|f(\t)| \, d\t <\infty.\]
Hence, we are done.
\end {proof}

The following statement follows from (\ref{nding3}).

\begin{proposition}\label {taffy}  Let $ 0 \le j<k\le n-1$. If
\be\label {rweges} \tilde A \equiv {\rm ess} \!\!\!\!\sup\limits_{ \t \in \Gam_B (n,j)} (1\! - \! |\t|^2)^{1-\a/2} |f(\t)|<\infty \quad \text{for some $\a > 0$},\ee
 then  $(R_B f)(\z)$ is finite for  all $\z \in \Gam_B (n,k)$. The condition  $\a > 0 $  is sharp.
 \end{proposition}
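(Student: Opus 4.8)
The plan is to dominate $f$ by the explicit radial function whose chord transform is evaluated in closed form in (\ref{nding3}), exactly in the spirit of the proof of Proposition \ref{Arges}. First I would rewrite the hypothesis (\ref{rweges}) as the pointwise majorization
\[ |f(\t)| \le \tilde A\,(1-|\t|^2)^{\a/2-1}, \qquad \t \in \Gam_B (n,j), \]
and set $f_1(\t) = (1-|\t|^2)^{\a/2-1}$. Since $f_1$ is nonnegative and radial, its chord transform can be read off directly from (\ref{nding3}) at $a=1$ (equivalently from (\ref{nfty})):
\[ (R_B f_1)(\z) = \lam_1\,(1-|\z|^2)^{(\a+k-j)/2-1}, \qquad \lam_1 = \frac{\pi^{(k-j)/2}\,\Gam (\a/2)}{\Gam ((\a+k-j)/2)}. \]
Because $\a>0$ the constant $\lam_1$ is finite, and for every $\z \in \Gam_B (n,k)$ one has $|\z|<1$, so the right-hand side is finite. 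Integrating the majorization over the fibre $\{\t \subset \z\}$ and using positivity of the kernel in (\ref{nfty}) (equivalently, passing through $R_B f = \rho_B R_A \tilde f$ as in (\ref{lchyasz})) then gives
\[ |(R_B f)(\z)| \le \tilde A\,(R_B f_1)(\z) = \tilde A\,\lam_1\,(1-|\z|^2)^{(\a+k-j)/2-1} < \infty \]
for all $\z \in \Gam_B (n,k)$, which is the assertion.

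For the sharpness of the threshold $\a>0$ I would exhibit a function satisfying (\ref{rweges}) with $\a=0$ whose transform is identically infinite. The natural candidate is the radial function $f(\t)=(1-|\t|^2)^{-1}$, for which $(1-|\t|^2)\,|f(\t)|\equiv 1$, so (\ref{rweges}) holds with $\a=0$ but fails for every $\a>0$. Evaluating its transform by (\ref{nfty}) gives
\[ (R_B f)(\z) = \sig_{k-j-1}\intl_{|\z|}^1 \frac{(r^2-|\z|^2)^{(k-j)/2-1}}{1-r^2}\,r\,dr; \]
since $k>j$, the factor $(r^2-|\z|^2)^{(k-j)/2-1}$ tends to the positive constant $(1-|\z|^2)^{(k-j)/2-1}$ as $r\to 1^-$, while $(1-r^2)^{-1}$ forces a logarithmic divergence at the endpoint $r=1$, which lies in the range of integration for every $|\z|<1$. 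Hence $(R_B f)(\z)=\infty$ for every $\z\in\Gam_B (n,k)$, and $\a=0$ cannot be admitted.

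I do not anticipate a genuine obstacle, as the argument is a direct domination by a function with an explicitly known transform. The only point demanding care is the passage from the essential-supremum bound on $f$ to a bound on the fibre integral defining $(R_B f)(\z)$ for \emph{every} $\z$: since each fibre $\{\t\subset\z\}$ is null in $\Gam_B (n,j)$, one should either work with the canonical representative furnished by (\ref{rweges}) or, for the weaker ``almost every $\z$'' conclusion, average $|f|$ over $SO(n)$ and apply Lemma \ref{lrrb1}, precisely as in its proof.
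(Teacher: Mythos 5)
Your proof is correct and follows the same route as the paper: domination by $f_1(\t)=(1-|\t|^2)^{\a/2-1}$ and the closed-form evaluation (\ref{nding3}) with $a=1$. You additionally supply an explicit divergent example for the sharpness claim (which the paper only asserts) and a sensible caveat about the essential-supremum hypothesis versus the null fibres; both are sound.
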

 \begin{proof} Let $f_1(\t)= (1\! - \! |\t|^2)^{\a/2 -1}$. Then, by (\ref{nding3}) with $a=1$,
\[
 |(R_B f)(\z)| \le \tilde A (R_B f_1)(\z)= \tilde A \lam_1\,  (1 \! - \! |\z|^2)_+^{(\a+k-j)/2 -1},\]
 which gives the result.
 %Further, given $a \in (0,1)$, the operator $R_B$ takes any integrable function on the set $ \{ |\t|< a} to an integrable function om
\end {proof}

\begin{proposition}\label {ctally1}  Let $ 0 \le j<k\le n-1$, $a\in (0,1]$. If  $\vp$  is integrable on the set
$\{ \z \in \Gam_B (n,k): \; |\z|<a\}$,
 then $(R^*_B \vp)(\t)$ is integrable on the set
$\{ \t \in \Gam_B (n,j): \; |\t|<a\}$.
 \end{proposition}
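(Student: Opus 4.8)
The plan is to transfer the statement to the affine dual transform $R^*_A$ and then lean on the explicit averaging identity (\ref{lab8}). Write $\tilde\vp$ for the extension of $\vp$ by zero outside $\Gam_B(n,k)$, as in Subsection \ref{mens}. Since $a\le 1$, every $\t$ with $|\t|<a$ lies in $\Gam_B(n,j)$, so by (\ref{lch5yasz}) and (\ref{funcE1}) we have $(R^*_B\vp)(\t)=(R^*_A\tilde\vp)(\t)$ on the set $\{|\t|<a\}$. Hence it suffices to prove that $R^*_A\tilde\vp$ is integrable there.

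First I would record the elementary geometric fact that $\z\supset\t$ forces $|\z|\le|\t|$: indeed $\t\subset\z$ gives $\dist(o,\z)\le\dist(o,\t)$. Consequently, in the average (\ref{laszcv}) defining $(R^*_A\tilde\vp)(\t)$ for $|\t|<a$, only values $\tilde\vp(\z)$ with $|\z|<a\le 1$ occur, and for such $\z$ one has $\tilde\vp(\z)=\vp(\z)$. Thus the construction never sees $\vp$ outside the set $\{|\z|<a\}$ on which it is assumed integrable, and no global integrability of $\tilde\vp$ is needed.

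Next I would apply (\ref{lab8}) to the nonnegative function $|\tilde\vp|$ in place of $\vp$; read as a Tonelli identity for a nonnegative integrand it holds with values in $[0,\infty]$. Since the dual transform (\ref{laszcv}) is an average against a probability measure, it is positivity preserving and satisfies $|(R^*_A\tilde\vp)(\t)|\le (R^*_A|\tilde\vp|)(\t)$. Combining these with the observation of the previous paragraph, and bounding the weight by $(a^2-|\z|^2)^{(k-j)/2}\le a^{k-j}$ for $|\z|<a$, I obtain
\[
\intl_{|\t|<a}|(R^*_A\tilde\vp)(\t)|\, d\t \le \intl_{|\t|<a}(R^*_A|\tilde\vp|)(\t)\, d\t = c\intl_{|\z|<a}|\vp(\z)|\,(a^2-|\z|^2)^{(k-j)/2}\, d\z \le c\, a^{k-j}\intl_{|\z|<a}|\vp(\z)|\, d\z,
\]
with $c=\pi^{(k-j)/2}/\Gam(1+(k-j)/2)$. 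The right-hand side is finite by hypothesis, which proves the claim.

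The only point requiring care, and what I regard as the main (though minor) obstacle, is the legitimacy of applying (\ref{lab8}) even though $\tilde\vp$ need not be locally integrable on all of ${\rm Gr}(n,k)$. This is settled by the geometric observation $|\z|\le|\t|$ above: for fixed $a$, both sides of (\ref{lab8}) involve $\vp$ only through its restriction to $\{|\z|<a\}$, so the identity is valid for nonnegative integrands by Tonelli's theorem, irrespective of the behaviour of $\vp$ for $|\z|\ge a$.
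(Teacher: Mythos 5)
Your proof is correct and follows essentially the same route as the paper: the paper applies the weighted duality identity (\ref{vwaff1}) with $\a=2$ (which is just the chord-transform version of (\ref{lab8})) and bounds $(a^2-|\z|^2)^{(k-j)/2}\le a^{k-j}$, exactly as you do. Your additional remarks --- that $\z\supset\t$ forces $|\z|\le|\t|$ so only the restriction of $\vp$ to $\{|\z|<a\}$ is ever seen, and that the identity holds for nonnegative integrands by Tonelli --- are sound justifications of steps the paper leaves implicit.
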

\begin{proof}  Using (\ref{vwaff1}) with $\a=2$, we obtain
\[ \intl_{|\t|<a} \!\!|(R^*_B \vp)(\t)| \, d\t \le \lam_1 a^{k-j}
\!\intl_{|\z|<a}\!\!\! |\vp (\z)| \, d \z<\infty,\]
as desired.
\end {proof}

Further, Proposition \ref{ther1} implies the following result.

\begin{theorem}\label {tally}  Let $ 0 \le j<k\le n-1$.

\vskip 0.1 truecm

\noindent {\rm (i)} If  $j+k\le n-1$, then $R_B$ is injective on $L^1 (\Gam_B (n,j))$.

%Moreover,                    $R_B$ is injective on the class of functions satisfying (\ref{rweges}).

\vskip 0.1 truecm

\noindent {\rm (ii)} If  $j+k\ge n-1$, then $R^*_B$ is injective on $L^1 (\Gam_B (n,k))$.
\end{theorem}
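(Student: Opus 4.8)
The plan is to deduce both parts from Proposition~\ref{ther1} by extending a chord function to all of the affine Grassmannian by zero, exactly as in the definitions (\ref{lchyasz}) and (\ref{lch5yasz}). If $f\in L^1(\Gam_B (n,j))$ and $\vp\in L^1(\Gam_B (n,k))$, their zero extensions $\tilde f$ and $\tilde\vp$ are supported in $\{|\t|<1\}$ and $\{|\z|<1\}$, where the weights $(1+|\t|)^{-(n-k)}$ and $(1+|\z|)^{-(j+1)}$ are bounded below by a positive constant; hence $\tilde f\in L^1_{n-k}({\rm Gr}(n,j))$ and $\tilde\vp\in L^1_{j+1}({\rm Gr}(n,k))$, and the affine injectivity statements of Proposition~\ref{ther1} become available for them.

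For part (i), suppose $R_B f=0$. The one ingredient I need is the monotonicity $\t\subset\z\Rightarrow|\t|\ge|\z|$: passing to a subplane cannot decrease the distance to the origin. Consequently a $k$-plane $\z$ with $|\z|\ge1$ contains only $j$-planes $\t$ with $|\t|\ge|\z|\ge1$, on which $\tilde f$ vanishes, so $(R_A\tilde f)(\z)=0$ \emph{automatically} for all $|\z|\ge1$. Combined with the hypothesis, which by (\ref{lchyasz}) says $R_A\tilde f=0$ on $\{|\z|<1\}$, this gives $R_A\tilde f\equiv0$ on ${\rm Gr}(n,k)$. Since $j+k\le n-1$, the direct half of Proposition~\ref{ther1} applies to $\tilde f\in L^1_{n-k}({\rm Gr}(n,j))$ and yields $\tilde f=0$, i.e. $f=0$.

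For part (ii), suppose $R^*_B\vp=0$; by (\ref{funcE1}) this means $(R^*_A\tilde\vp)(\t)=0$ for all $|\t|<1$. One would like to argue as in (i) and conclude $R^*_A\tilde\vp\equiv0$, then invoke the dual half of Proposition~\ref{ther1}. This is where the main obstacle appears: for the dual transform the containment runs the other way, $\z\supset\t\Rightarrow|\z|\le|\t|$, so a $j$-plane with $|\t|\ge1$ still lies in many $k$-planes $\z$ with $|\z|<1$. Hence $R^*_A\tilde\vp$ need \emph{not} vanish on $\{|\t|\ge1\}$ -- the radial formula (\ref{besg1}) shows it generically does not -- and the automatic-vanishing trick of (i) fails. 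What rescues (ii) is instead the \emph{left-sided} (Abel-type) nature of $R^*_A$ stressed in the Remark after Proposition~\ref{nsfo5}: the data of $R^*_A\tilde\vp$ on the interior region $\{|\t|<1\}$ already determine $\vp$ on $\{|\z|<1\}$, the behaviour at infinity being immaterial.

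Accordingly, for (ii) I would use the dual half of Proposition~\ref{ther1} in its sharp, interior-determining form (the $r=1$ analogue of the support Theorem~\ref{Dthrt4}): for $\tilde\vp\in L^1_{j+1}({\rm Gr}(n,k))$ with $j+k\ge n-1$, vanishing of $R^*_A\tilde\vp$ on $\{0<|\t|<1\}$ forces $\vp=0$ on $\{0<|\z|<1\}$, whence $\vp=0$ in $L^1(\Gam_B(n,k))$. The genuine work, and the step I expect to be delicate, is justifying this interior determination for merely $L^1$ data and at the endpoint $j+k=n-1$ (which lies outside the range $j+k\ge n$ and the continuity/growth hypotheses of Theorem~\ref{Dthrt4}). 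The cleanest way to secure it is to transcribe the proof of the dual statement of Proposition~\ref{ther1} -- built on the one-sided Abel structure of (\ref{besg1}) and the rotation-equivariance of $R^*_A$ -- directly to the half-open region $\{|\t|<1\}$, rather than routing through a global vanishing assertion that, as noted above, is simply false here.
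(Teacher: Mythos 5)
Part (i) of your proposal is correct and coincides with the paper's argument: extend by zero, note that $\t\subset\z$ forces $|\t|\ge|\z|$ so that $(R_A\tilde f)(\z)$ vanishes automatically for $|\z|\ge 1$, check $\tilde f\in L^1_{n-k}({\rm Gr}(n,j))$, and invoke the forward half of Proposition \ref{ther1}. Your explicit justification of the automatic vanishing spells out what the paper's ``and therefore'' leaves implicit.

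Part (ii), however, is not a proof. You correctly diagnose why the scheme of (i) cannot be repeated verbatim (since $\z\supset\t$ gives $|\z|\le|\t|$, the function $R^*_A\tilde\vp$ does not vanish on $\{|\t|\ge1\}$), but you then substitute for the missing step an unproven ``interior--determining'' version of the dual half of Proposition \ref{ther1}: that for $L^1$ data with $j+k\ge n-1$, vanishing of $R^*_A\tilde\vp$ on $\{0<|\t|<1\}$ alone forces $\vp=0$ on $\{0<|\z|<1\}$. No such statement is available in the paper --- Theorem \ref{Dthrt4} requires $j+k\ge n$, continuity, and a growth bound --- and you explicitly defer its proof; that deferred step is precisely the entire content of (ii), so the gap is essential. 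The paper takes a different and more concrete route: it applies the Kelvin-type quasi-orthogonal transformation of Proposition \ref{ans}, writing $R_A^{*}\tilde\vp=U\tilde R_A V\tilde\vp$ via (\ref{sform1}), so that the hypothesis becomes vanishing of the \emph{forward} transform $\tilde R_A V\tilde\vp$ between the complementary Grassmannians of dimensions $j_1=n-k-1<k_1=n-j-1$; the membership $V\tilde\vp\in L^1_{n-k_1}({\rm Gr}(n,j_1))$ is supplied by (\ref{ildes}), the condition $j_1+k_1\le n-1$ is exactly $j+k\ge n-1$, and the forward half of Proposition \ref{ther1} (together with the invertibility of $V$ in (\ref{adon})) finishes the argument. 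To be fair, your structural worry does resurface along that route, since the Kelvin map carries $\{|\t|<1\}$ onto $\{|\tilde\t|>1\}$ and one must still pass from vanishing of $\tilde R_A V\tilde\vp$ on that exterior region to the conclusion, using that $V\tilde\vp$ is supported in $\{|\tilde\z|\ge1\}$; but your proposal offers no replacement for the Kelvin reduction, and without it (or a proved interior support theorem) part (ii) remains open.
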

 \begin{proof}  {\rm (i)}  Suppose $R_B f=0$ a.e. for some $f\in L^1 (\Gam_B (n,k))$. Then, by (\ref{lchyasz}), $(R_A \tilde f)(\z)=0$  for almost all $\z \in \Gam_B (n,k)$,
 and therefore  $(R_A \tilde f)(\z)=0$  for almost all $\z \in  {\rm Gr} (n,k)$. Because the map $f \mapsto \tilde f$ obviously acts from  $L^1 (\Gam_B (n,j))$ to  $L^1_{n-k} ({\rm Gr} (n,j))$,
 by Proposition \ref{ther1} we conclude that $\tilde f(\t)=0$  for almost all $\t \in {\rm Gr} (n,j)$. Hence $ f(\t)=0$  for almost all $\t \in \Gam_B (n,j)$.

%If $f$  satisfies (\ref{rweges}), then  it belongs to $L^1 (\Gam_B (n,j))$, because by (\ref{ewadch}),
%\[
%\intl_{\Gam_B (n,j)} \!\!\!\!|f(\t)|\, d\t\le \tilde A \!\intl_{\Gam_B (n,j)} \!\!\!\! (1\! - \! |\t|^2)^{\a/2 -1} d\t= c\intl_0^1  (1\! - \! r^2)^{\a/2 -1}\, r^{n-j-1}\, dr <\infty.\]
% Hence the injectivity follows.

{\rm (ii)} We make use of the Kelvin-type mapping
(\ref {Kelvin}) and Proposition \ref{ans}. Suppose that $(R^*_B \vp)(\t)=0$ for almost all $\t \in \Gam_B (n,j)$, where  $\vp \in L^1 (\Gam_B (n,k))$. Then, by  (\ref{lch5yasz}) and  (\ref{sform1}),
$ (\tilde R_A V\tilde \vp)(\tilde\t)=0$ for almost all $\tilde\t\in \tilde {\rm Gr} (n,k_1)$, $k_1=n-j-1$. Hence, by  Proposition \ref{ther1}, $V\tilde \vp =0$ a.e. on  ${\rm Gr} (n,j_1)$, $j_1=n-k-1$, provided that
 $V\tilde \vp \in L^1_{n-k_1} ({\rm Gr} (n,j_1))$, $j_1 +k_1 \le n-1$. The latter is equivalent  to  $j+k\ge n-1$. To complete the proof, it remains to note that by (\ref{adon})
the equality $V\tilde \vp =0$ yields   $\vp =0$, and the relation  $V\tilde \vp \in L^1_{n-k_1} ({\rm Gr} (n,j_1))$ is implied by $\vp \in L^1 (\Gam_B (n,j))$, owing to (\ref{ildes}).

% If $\vp$ satisfies (\ref{iyrq}), then $(V\tilde \vp)(\tilde\z)= |\tilde\z|^{j-n} \tilde \vp(\varkappa^{-1} (\tilde\z))$ satisfies
%\[ {\rm ess} \!\!\!\!\sup\limits_{ \z \in {\rm Gr} (n,k_1)} |\tilde\z|^\lam |(V\tilde \vp)(\tilde\z)|<\infty\]
% with $\lam =n-j-\del > k_1 -j_1=k-j$, and therefore, by Proposition \ref{Arther1}, $(V\tilde \vp)(\tilde\z)=0$ for almost all $\tilde \z$. Hence $\vp (\z)=0$ for almost all $\z$.
  \end {proof}

The following support theorem for $R^*_B$  is a consequence of (\ref{funcE1}) and the Theorem \ref {Dthrt4}  for $R^*_A$.

\begin{theorem} \label {Ddt4}  Let  $0\le j<k \le n-1$, $j+k\ge n$,
 $\t \in  \Gam_B (n,j)$,  $\z \in \Gam_B(n,k)$, $r \in (0,1)$.
 Then the implication
 \be\label{Ddkvv1}
(R^*_B \vp)(\t)=0\; \forall \, |\t| \in (0,r)\; \Longrightarrow \; \vp(\z)=0\; \forall \, |\z|  \in (0,r)\ee
  holds for all   functions  $\vp$  which are  continuous on the set $\{ \z: 0<|\z|<r\}$ and satisfy
  \be\label {otaB}
B\equiv \sup\limits_{0<|\z|<r} |\z|^\del |\vp(\z)|<\infty \quad \text {for some} \quad\del < n- k.\ee
\end{theorem}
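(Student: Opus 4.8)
The plan is to deduce the statement from the support theorem for $R^*_A$ (Theorem \ref{Dthrt4}) by transplanting $\vp$ to a globally defined continuous function on $\tilde{\rm Gr}(n,k)$, exploiting the purely metric observation that $\z \supset \t$ forces $|\z| \le |\t|$. Indeed, if $\t \subset \z$ then $\z$ contains $\t$, so $\dist(o,\z) \le \dist(o,\t)$, that is $|\z| \le |\t|$. Consequently, for a $j$-plane $\t$ with $|\t| < r$, every $k$-plane $\z \supset \t$ already satisfies $|\z| < r$; hence, by (\ref{lch5yasz}) and (\ref{funcE1}), the value $(R^*_B \vp)(\t)$ depends only on the restriction of $\vp$ to $\{\z: |\z| < r\}$, which is exactly the region where $\vp$ is assumed continuous and controlled by (\ref{otaB}). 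This localization is what makes the reduction to $R^*_A$ legitimate.

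First I would fix $r' \in (0,r)$ and reduce the claim to showing $\vp(\z)=0$ for $0<|\z|<r'$, since $r'$ may be taken arbitrarily close to $r$. I would then choose $r''$ with $r'<r''<r$ and a continuous cutoff $h:[0,\infty)\to[0,1]$ with $h\equiv 1$ on $[0,r']$ and $h\equiv 0$ on $[r'',\infty)$, and set $\Psi(\z)=\vp(\z)\,h(|\z|)$, understood to vanish wherever $h(|\z|)=0$. Because $h$ is supported in $\{|\z|\le r''\}\subset\{|\z|<r\}$ and $\vp$ is continuous there, the function $\Psi$ is continuous on all of $\tilde{\rm Gr}(n,k)$; moreover $\sup_{\z}|\z|^\delta|\Psi(\z)|\le B<\infty$ with the same $\delta<n-k$, so $\Psi$ meets the hypotheses of Theorem \ref{Dthrt4}.

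Next I would verify the vanishing of the affine dual transform. For $0<|\t|<r'$, every $\z\supset\t$ has $|\z|\le|\t|<r'\le r''$, where $h\equiv 1$ and hence $\Psi=\vp$; combined with (\ref{funcE1}) this gives $(R^*_A\Psi)(\t)=(R^*_B\vp)(\t)=0$. Applying Theorem \ref{Dthrt4} to $\Psi$ (with $r$ replaced by $r'$) yields $\Psi(\z)=0$ for $0<|\z|<r'$, and since $\Psi=\vp$ on that set we obtain $\vp(\z)=0$ for $0<|\z|<r'$. Letting $r'\uparrow r$ then finishes the proof, establishing (\ref{Ddkvv1}).

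The only genuinely delicate point is the continuity requirement in Theorem \ref{Dthrt4}: a naive extension of $\vp$ by zero across $|\z|=r$ would in general be discontinuous there, so it could not be fed directly into the support theorem for $R^*_A$. The cutoff $h$ together with the exhaustion by $r'<r$ circumvents this at no cost, precisely because the support theorem is \emph{local} --- the hypothesis $(R^*_B\vp)(\t)=0$ on $0<|\t|<r$ already encodes all the relevant information about $\vp$ on $\{|\z|<r\}$ through the inequality $|\z|\le|\t|$. No boundary behaviour of $\vp$ near $|\z|=r$ or near $|\z|=1$ is ever invoked.
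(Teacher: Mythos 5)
Your proof is correct and follows the same route the paper indicates: Theorem \ref{Ddt4} is stated there without proof, merely as ``a consequence of (\ref{funcE1}) and Theorem \ref{Dthrt4}'', and your argument is exactly that deduction with the necessary details supplied. The cutoff/exhaustion device correctly bridges the gap between the local hypotheses of Theorem \ref{Ddt4} and the global continuity and boundedness required in Theorem \ref{Dthrt4}, resting on the observation that $\z\supset\t$ forces $|\z|\le|\t|$ --- a fact the paper itself verifies in the proof of Lemma \ref{actu}.
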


%We leave to the interested reader to formulate chord analogues of Propositions   \ref{Arges},  \ref{lrrbx},  \ref{rther1},  \ref{Arther1}  and Theorems  \ref{thrt4},  \ref{Dthrt4}.\edz {check, think}

\section{Transition to the Elliptic Case and  Compact Grassmannians}\label {plic}

On many occasions, it is convenient  to associate $d$-dimensional affine planes in $\rn$ with  $(d+1)$-dimensional linear subspaces of $\bbr^{n+1}$ and thus
reduce the consideration of affine Grassmannian bundles to compact Grassmannians.
 Specifically, following \cite [Section 3]{Ru04d},
to each $d$-plane $\t$ in $\rn$ we assign a
$(d+1)$-dimensional linear subspace $\tau_0$ in $\rnn$ containing
 the ``lifted'' plane $\t +e_{n+1}$. This leads to a
map
\be \label {nra} {\rm Gr} (n,d) \ni \t=(\xi,u) \xrightarrow {\quad \mu \quad }
\tau_0 =\mu (\t)= \xi \oplus \bbr u_0 \in G_{n+1, d+1},\ee
 \be u_0=\frac{u+e_{n+1}}{|u+e_{n+1}|}=\frac{u+e_{n+1}}{\sqrt
{1+|u|^2}} \in S^n. \ee
The map $\mu$ is not one-to-one, but this fact does not cause any trouble; cf. \cite [Remark  3.1] {Ru04d}. Abusing notation, for the sake of simplicity we write
\be \label {nra1} |\tau_0|=d(\tilde o, \tau_0)\ee for  the geodesic
distance (on $S^n$) between the north pole $\tilde o \!=\!(0, \dots, 0,1)$ and the
$d$-dimensional totally geodesic submanifold $S^n \cap \tau_0$. Then
\be \label {nra2} |\t|=|u|=\tan |\tau_0|.\ee
The set
\be\label {mrrtr}                   \{ \tau_0 \in  G_{n+1, d+1}: |\tau_0| <\om\}, \qquad 0\le \om < \pi/2,\ee
 can be thought of as an open ball in  $G_{n+1, d+1}$ of ``radius'' $\om $ with  ``center'' composed by $(d+1)$-subspaces containing the last coordinate axis.
 This center can be identified with the lower-dimensional Grassmannian $G_{n, d}$.

\begin{lemma}\label {anra} {\rm (\cite [Lemma 3.2] {Ru04d})}  Let $\t \in {\rm Gr} (n,d)$, $ \t_0=\mu(\t) \in G_{n+1,d+1}$. Then
\be \label {nra3X} \intl_{{\rm Gr} (n,d)}  f(\t) \,
d\t=\frac{\sig_{n}}{\sig_{d}}\intl_{G_{n+1, d+1}} \frac{(f\circ \mu^{-1})(\t_0) \,
d\t_0}{(\cos \, |\tau_0|)^{n+1}}, \ee
\be  \label {nra3} \intl_{{\rm Gr} (n,d)} \frac{f(\t) \,d\t}{(1+|\t|^2)^{(n+1)/2}}=\frac{\sig_{n}}{\sig_{d}}
\intl_{G_{n+1,d+1}} (f\circ \mu^{-1})(\t_0) \, d\t_0, \ee
where $d\t_0$ stands for the Haar probability measure on $G_{n+1,d+1}$.
\end{lemma}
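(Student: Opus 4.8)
The plan is to establish the two integration formulas (\ref{nra3X}) and (\ref{nra3}) by reducing everything to the radial case and invoking the explicit integration formula (\ref{ewadch}) on the affine Grassmannian. The map $\mu$ sends $\t=(\xi,u)$ to $\t_0=\xi\oplus\bbr u_0$, and under this map the ``radial'' coordinate $|\t|=|u|$ is related to the geodesic distance $|\tau_0|$ by (\ref{nra2}), namely $|u|=\tan|\tau_0|$. Since both sides of each asserted equality are linear functionals of $f$, and since the affine Grassmannian measure $d\t=d\xi\,du$ factors as a product over $G_{n,d}$ and the fiber $\xi^\perp$, I would first reduce to the case where $f$ depends only on $|\t|$; the general case then follows by the usual density/averaging argument because the Haar measure on $G_{n+1,d+1}$ is $SO(n+1)$-invariant and the ball structure (\ref{mrrtr}) is preserved by rotations fixing the last axis.

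First I would record the Jacobian of the change of variables. On the fiber $\xi^\perp$, passing to polar coordinates gives $du=r^{n-d-1}\,dr\,d\omega$ with $r=|u|$, exactly as in (\ref{ewadch}). On the Grassmannian side, I would compute the pushforward under $\mu$ of the measure on the fiber. The key computation is that when $f(\t)=f_0(|\t|)$, the left side of (\ref{nra3X}) equals $\sigma_{n-d-1}\int_0^\infty f_0(r)\,r^{n-d-1}\,dr$ by (\ref{ewadch}), and I must show the right side reproduces this. Setting $r=\tan\psi$ with $\psi=|\tau_0|\in[0,\pi/2)$, so that $dr=\sec^2\psi\,d\psi$ and $\cos|\tau_0|=\cos\psi$, the factor $(\cos|\tau_0|)^{-(n+1)}$ together with $r^{n-d-1}=\tan^{n-d-1}\psi$ should combine to give the correct weight $\sin^{n-d-1}\psi\,\cos^{d}\psi$ that represents the radial part of the Haar measure on $G_{n+1,d+1}$ near its distinguished sub-Grassmannian. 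I would cite or re-derive the standard fact that the $SO(n+1)$-invariant probability measure on $G_{n+1,d+1}$, written in terms of the geodesic distance $\psi$ to the center $G_{n,d}$, has density proportional to $\cos^{d}\psi\,\sin^{n-d-1}\psi$.

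With the radial case in hand, (\ref{nra3X}) follows after matching the normalizing constants: the ratio $\sigma_n/\sigma_d$ and the probability normalization of $d\t_0$ must be checked against the explicit surface-area constants $\sigma_{n-d-1}$ appearing in (\ref{ewadch}). The second formula (\ref{nra3}) is then immediate from (\ref{nra3X}): dividing the integrand of (\ref{nra3X}) by $(1+|\t|^2)^{(n+1)/2}$ and using $1+|\t|^2=1+\tan^2|\tau_0|=(\cos|\tau_0|)^{-2}$, the weight becomes $(\cos|\tau_0|)^{n+1}$, which exactly cancels the $(\cos|\tau_0|)^{-(n+1)}$ in (\ref{nra3X}), leaving the unweighted probability measure $d\t_0$ on the right. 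This cancellation is the conceptual heart of the lemma and explains why the second formula is the ``natural'' one.

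The main obstacle I anticipate is the precise determination of the radial density of the Haar measure on $G_{n+1,d+1}$ with respect to the geodesic distance $\psi$, together with getting all the constant factors exactly right. One must verify that integrating the claimed density $c\,\cos^{d}\psi\,\sin^{n-d-1}\psi$ over $\psi\in[0,\pi/2)$ and over the orbit directions yields total mass one, fixing $c$ in terms of Beta-function values, and then confirm that $c$ combined with $\sigma_n/\sigma_d$ reproduces $\sigma_{n-d-1}$ via the identity $\sigma_{m-1}=2\pi^{m/2}/\Gamma(m/2)$. A secondary subtlety is that $\mu$ is not injective, so I would note (as in the cited \cite[Remark 3.1]{Ru04d}) that the non-injectivity occurs on a lower-dimensional set of measure zero and hence does not affect the integral identities.
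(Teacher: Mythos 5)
The paper does not prove this lemma at all: it is quoted verbatim from \cite[Lemma 3.2]{Ru04d}, so there is no in-paper argument to compare yours against. Judged on its own terms, your proof is correct and is essentially the standard derivation. The radial computation checks out: with $r=\tan\psi$ one has $r^{n-d-1}\,dr=\sin^{n-d-1}\psi\,\cos^{d}\psi\,(\cos\psi)^{-(n+1)}\,d\psi$, the radial density of the Haar probability measure on $G_{n+1,d+1}$ with respect to $\psi=|\tau_0|$ is $\frac{\sig_d\sig_{n-d-1}}{\sig_n}\cos^d\psi\,\sin^{n-d-1}\psi$ (obtainable from the bi-spherical coordinates (\ref{cas6}) that the paper itself uses, with normalization verified by the Beta integral), and the constants collapse to $\frac{\sig_n}{\sig_d}\cdot\frac{\sig_d\sig_{n-d-1}}{\sig_n}=\sig_{n-d-1}$, matching (\ref{ewadch}). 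The passage from (\ref{nra3X}) to (\ref{nra3}) via $1+|\t|^2=(\cos|\tau_0|)^{-2}$ is exactly right. The one step you should state more carefully is the reduction to radial functions: the relevant invariance is not $SO(n+1)$-invariance of the Haar measure but the fact that both sides are $SO(n)$-invariant functionals (for $SO(n)$ the stabilizer of $e_{n+1}$) \emph{and} that $SO(n)$ acts transitively on each level set $\{|\tau_0|=\psi\}$, $0<\psi<\pi/2$ (equivalently on $\{\t:|\t|=r\}$), so that an $SO(n)$-average is genuinely a function of $|\tau_0|$ alone; without that transitivity, testing against radial functions would not determine the measures. Your handling of the measure-zero exceptional set $\{|\tau_0|=\pi/2\}$ and the non-bijectivity of $\mu$ is adequate.
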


The correspondence (\ref{nra}) extends to  Radon
transforms. Let $G_{n+1, j+1}$ and
$G_{n+1, k+1}$ be a pair of  compact
  Grassmann manifolds, $j<k$. For a function $F$ on $G_{n+1,
  j+1}$, we consider the Radon (or Funk-Radon) transform
  \be \label {trou}(R_0 F)(\z_0)=\intl_{\t_0 \subset \z_0} F(\tau_0)\, d_{\z_0} \tau_0, \qquad \z_0 \in G_{n+1,
  k+1}, \ee
  which integrates $F$ over  all
  $(j+1)$-dimensional subspaces $\tau_0$ in $\z_0$ with respect to the corresponding
  probability measure.
The dual Radon transform $(R_0^* \Phi)(\t_0)$ of a
 function $\Phi $ on $G_{n+1, k+1}$ integrates $\Phi$ over the set  of all
  $(k+1)$-dimensional subspaces  $\z_0$ containing $\t_0$, namely,
 \be \label {trou1}(R_0^* \Phi)(\t_0)=\intl_{\z_0 \supset \t_0} \Phi (\z_0)\, d_{\t_0} \z_0, \qquad \t_0 \in G_{n+1,
 j+1}. \ee
See, e.g, \cite [Subsection 3.2] {Ru04d} for precise definitions.

Operators $R_0$ and   $R_0^*$ are expressed one through another by orthogonality. Specifically, given $\t_0 \in G_{n+1,  j+1}$ and $\z_0 \in G_{n+1,  k+1}$, we set
\[ t_0=\t_0^\perp \in G_{n+1, n-j}, \qquad z_0=\z_0^\perp \in G_{n+1, n-k}.\]
For functions $F(\t_0)$ on $G_{n+1, j+1}$ and $\Phi(\z_0)$ on $G_{n+1, k+1}$, we denote
\[ F^\perp (t_0)=F(t_0^\perp), \qquad \Phi^\perp (z_0)=\Phi(z_0^\perp).\]
Then
 \be\label {xte} (R_0\Phi^\perp )(t_0)=(R_0^* \Phi)(\t_0), \qquad (R_0F)(\z_0)=(R_0^*F^\perp)(z_0).\ee

To establish connection between Radon transforms on affine and compact Grassmannians, we set
\[ f\equiv f (\t), \quad \vp\equiv \vp (\z), \quad F\equiv F (\t_0), \quad \Phi\equiv \Phi (\z_0),\]
\[
\t \in {\rm Gr} (n,j), \quad \z\in {\rm Gr} (n,k), \quad \t_0 \in G_{n+1, j+1}, \quad \z_0 \in G_{n+1, k+1}.\]
Denote
 \bea \label {trou2}(M_0 f)(\t_0)&=& \frac{\sig_k}{\sig_j}\, (\cos |\t_0|)^{-k-1} (f\circ \mu^{-1})(\t_0), \\
  \label {trou3} (N_0 \Phi)(\z)&=& (1+|\z|^2)^{-(j+1)/2} (\Phi \circ \mu)(\z), \\
 \label {trou4} (P_0 \vp)(\z_0)&=&(\cos |\z_0|)^{j-n} (\vp\circ \mu^{-1})(\z_0), \\
 \ \label {trou5} (Q_0 F)(\t)&=& (1+|\t|^2)^{(k-n)/2} (F \circ \mu)(\t). \eea
Then the inverse maps have the form
\bea \label {Itrou2}(M_0^{-1} F)(\t)&=& \frac{\sig_j}{\sig_k}\, (1+ |\t|^2)^{-(k+1)/2} (F\circ \mu)(\t), \\
\label {Itrou3} (N_0^{-1} \vp)(\z_0)&=& (\cos |\z_0|)^{-j-1} (\vp \circ \mu^{-1})(\z_0), \\
\label {Itrou4} (P_0^{-1} \Phi)(\z)&=& (1+ |\z|^2)^{(j-n)/2} (\Phi \circ \mu)(\z), \\
 \label {Itrou5} (Q_0^{-1} f)(\t_0)&=&  (\cos |\t_0|)^{k-n}  (f\circ \mu^{-1})(\t_0). \eea

 \begin{proposition} \label {trou6} {\rm (cf. \cite [Theorems  3.4, 3.5] {Ru04d})} We have
 \be \label {trou7}
R_A f = N_0  R_0 M_0 f, \qquad R_{A}^* \vp =Q_0 R_0^* P_0 \vp,\ee
 \be \label {trou77}
R_0 F=N_0^{-1} R_A M_0^{-1}F,   \qquad R_0^*\Phi= Q_0^{-1} R_{A}^* P_0^{-1} \Phi,\ee
provided that  integrals in these equalities exist in the Lebesgue sense.
\end{proposition}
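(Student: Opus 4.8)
The plan is to treat the four weight operators $M_0,N_0,P_0,Q_0$ as elementary and to concentrate on the two identities in (\ref{trou7}); the pair (\ref{trou77}) will then cost nothing.

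First I would check that (\ref{Itrou2})--(\ref{Itrou5}) are genuine two-sided inverses of (\ref{trou2})--(\ref{trou5}). Each of the eight operators is multiplication by an explicit positive weight followed by composition with $\mu$ or $\mu^{-1}$, and by (\ref{nra2}) one has $1+|\tau|^2=(\cos|\tau_0|)^{-2}$. Hence, for instance, the weight $(\cos|\tau_0|)^{-k-1}$ of $M_0$ is the reciprocal of the weight $(1+|\tau|^2)^{-(k+1)/2}$ of $M_0^{-1}$, and the constants $\sigma_k/\sigma_j$, $\sigma_j/\sigma_k$ cancel; the same holds for the pairs $(N_0,N_0^{-1})$, $(P_0,P_0^{-1})$, $(Q_0,Q_0^{-1})$. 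Once this is recorded, the two relations in (\ref{trou77}) follow from those in (\ref{trou7}) by pure algebra: applying $N_0^{-1}$ to $R_Af=N_0R_0M_0f$ and setting $f=M_0^{-1}F$ gives $R_0F=N_0^{-1}R_AM_0^{-1}F$, and applying $Q_0^{-1}$ to $R_A^*\varphi=Q_0R_0^*P_0\varphi$ and setting $\varphi=P_0^{-1}\Phi$ gives $R_0^*\Phi=Q_0^{-1}R_A^*P_0^{-1}\Phi$. So it remains to prove (\ref{trou7}).

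For the first identity, fix $\zeta\in{\rm Gr}(n,k)$ and set $\zeta_0=\mu(\zeta)\in G_{n+1,k+1}$. The map $\mu$ sends the affine $j$-planes $\tau\subset\zeta$ onto almost all $(j+1)$-subspaces $\tau_0\subset\zeta_0$, so I would rewrite the inner integral $\int_{\tau\subset\zeta}f(\tau)\,d_\zeta\tau$ defining $(R_Af)(\zeta)$ as an integral over $\{\tau_0:\tau_0\subset\zeta_0\}$ against the invariant probability measure $d_{\zeta_0}\tau_0$. This is the fiberwise analogue of Lemma~\ref{anra}: with the ambient space taken to be the affine $k$-plane $\zeta$ and the compact Grassmannian taken to be the sub-Grassmannian $\{\tau_0\subset\zeta_0\}\cong G_{k+1,j+1}$, formula (\ref{nra3X}) with $(n,d)\to(k,j)$ supplies the constant $\sigma_k/\sigma_j$ and a Jacobian power of a cosine. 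The output must then be matched against $(N_0R_0M_0f)(\zeta)$, in which $M_0$ contributes $\frac{\sigma_k}{\sigma_j}(\cos|\tau_0|)^{-k-1}$ and $N_0$ contributes $(1+|\zeta|^2)^{-(j+1)/2}$.

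The main obstacle, and the only step that is not bookkeeping, is that the lift used in the fiberwise formula is not the global lift of Lemma~\ref{anra}: the vertical direction relative to $\zeta_0$ is $v_0=(v+e_{n+1})/\sqrt{1+|v|^2}$ rather than $e_{n+1}$, so the cosine in the fiberwise Jacobian is the \emph{relative} geodesic distance of $\tau_0$ from $v_0$ within $S^n\cap\zeta_0$, whereas $M_0$ is written in terms of the \emph{global} distance $|\tau_0|=d(\tilde o,\tau_0)$. Reconciling the two requires the elementary right-spherical identity coming from $|\tau|^2=|\zeta|^2+|\tau|_{\mathrm{rel}}^2$ (valid because, writing $\tau=\xi+v+y$ with $y\in\xi^\perp\cap\eta$, one has $v\perp y$, so $|\tau|=\sqrt{|v|^2+|y|^2}$, $|\zeta|=|v|$, $|\tau|_{\mathrm{rel}}=|y|$), together with (\ref{nra2}); carrying this out pins down exactly the exponents $-k-1$ and $-(j+1)/2$ and turns the fiberwise formula into $(R_Af)(\zeta)=(N_0R_0M_0f)(\zeta)$. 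Finally, the second identity in (\ref{trou7}) I would obtain without redoing this computation: pairing $R_A^*\varphi$ against an arbitrary $f$, invoking the duality (\ref{wbcdaff}) to transfer it onto $R_Af$, transporting all integrals to the compact Grassmannians by Lemma~\ref{anra}, and using the orthogonality relations (\ref{xte}) that express $R_0^*$ through $R_0$, one recovers $R_A^*\varphi=Q_0R_0^*P_0\varphi$, with $P_0$ and $Q_0$ emerging as the adjoint weights of $M_0$ and $N_0$ for these pairings.
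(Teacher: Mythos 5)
The paper does not actually prove Proposition \ref{trou6}; it is imported from \cite[Theorems 3.4, 3.5]{Ru04d}, so there is no internal argument to compare against. Judged on its own terms, your plan is correct and is essentially the direct computation carried out in the cited source. The verification that (\ref{Itrou2})--(\ref{Itrou5}) invert (\ref{trou2})--(\ref{trou5}) via $(\cos|\t_0|)^{-2}=1+|\t|^2$ is right, and the purely algebraic passage from (\ref{trou7}) to (\ref{trou77}) then costs nothing. For the first identity in (\ref{trou7}), the fiberwise application of Lemma \ref{anra} inside $\z_0=\mu(\z)$ is the correct device, and the reconciliation you single out is exactly the right spherical Pythagorean relation $\cos|\t_0|=\cos|\z_0|\,\cos|\t_0|_{\mathrm{rel}}$, equivalently $1+|u|^2=(1+|v|^2)(1+|y|^2)$ for $u=v+w$, $v\perp w$, $y=w/\sqrt{1+|v|^2}$; combined with the Jacobian $(1+|v|^2)^{(k-j)/2}$ of that rescaling in the $(k-j)$-dimensional fibre $\xi^\perp\cap\eta$, it produces precisely the exponents $-k-1$ in $M_0$ and $-(j+1)/2$ in $N_0$. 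This bookkeeping is the entire content of the statement and should be written out, but there is no obstruction in it.

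Two soft spots are worth flagging. First, your duality route to $R_A^*\vp=Q_0R_0^*P_0\vp$ requires the compact-Grassmannian duality $\int_{G_{n+1,k+1}}(R_0F)\,\Phi=\int_{G_{n+1,j+1}}F\,(R_0^*\Phi)$; the relation (\ref{xte}) you cite is the orthogonality $R_0^*\Phi=R_0\Phi^\perp$ on complementary Grassmannians and does not by itself supply that duality (which is, however, standard: it follows from $SO(n+1)$-invariance of all the measures and Fubini). Second, pairing against arbitrary $f$ identifies $R_A^*\vp$ with $Q_0R_0^*P_0\vp$ only almost everywhere, whereas (\ref{trou7}) is asserted wherever the integrals exist; the cleanest repair is to run the same direct fiberwise computation on the dual side, integrating over the $k$-planes through a fixed $\t$ and using the second formula of Lemma \ref{anra} in the fibre. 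Neither point undermines the approach.
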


%Proposition \ref{trou6} paves a simple way to analogues of  formulas (\ref{besg}) and (\ref{besg1})  for zonal functions on compact Grassmannians.  We recall that a

A function $F(\t_0)$ on $G_{n+1, j+1}$ is called {\it zonal} if it is invariant under rotations about the last coordinate axis. Every such function is represented as a single-variable function of the argument $|\t_0|$.

\begin{lemma} \label{huysg} Let $0\le j<k \le n-1$. For $\t_0 \in G_{n+1, j+1}$ and  $\z_0 \in G_{n+1, k+1}$, suppose that
 $F(\t_0)=F_1 (\cos |\t_0|)$,  $\Phi(\z_0)= \Phi_1  (\sin |\z_0|)$.
Then
\[(R_0 F)(\z_0)= F_2 (\cos |\z_0|), \qquad (R_0^* \Phi)(\t_0)=  \Phi_2  (\sin |\t_0|),\]
where
\be \label{hbesg} F_2(s)= \frac{\sig_j \sig_{k-j-1}}{\sig_k\, s^{k-1}} \intl_0^s  F_1(t) (s^2 -t^2)^{(k-j)/2 -1} t^j \,dt, \ee
\be \label{hbesg1} \Phi_2(r)= \frac{\sig_{k-j-1} \,
\sig_{n-k-1}}{\sig_{n-j-1} \, r^{n-j-2}}\intl_0^r \Phi_1 (s) (r^2
-s^2)^{(k-j)/2 -1} s^{n-k-1} ds, \ee
provided that the corresponding
integrals exist in the Lebesgue sense.
\end{lemma}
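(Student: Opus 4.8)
The statement is the zonal (rotation-invariant) version of the Funk–Radon transforms $R_0$ and $R_0^*$ on the compact Grassmannians. My plan is to reduce it to the already-established radial formulas on affine Grassmannians, namely Proposition \ref{nsfo5}, via the transition relations of Proposition \ref{trou6}. The key observation is that the transition map $\mu$ carries zonal functions on $G_{n+1,d+1}$ to radial functions on ${\rm Gr}(n,d)$ and vice versa, because the ``center'' of the ball (\ref{mrrtr}) is precisely the rotation-invariant locus, and $|\t|=\tan|\tau_0|$ couples the two radial variables monotonically. So the whole computation is a matter of substituting the explicit radial kernels from (\ref{besg}), (\ref{besg1}) into the conjugated operators $N_0 R_0 M_0$ and $Q_0 R_0^* P_0$ and untangling the trigonometric change of variable.

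First I would treat $R_0$. Using $R_0 F = N_0^{-1} R_A M_0^{-1} F$ from (\ref{trou77}), I would start from a zonal $F(\t_0)=F_1(\cos|\t_0|)$, apply $M_0^{-1}$ as in (\ref{Itrou2}) to get a radial function $f(\t)=f_0(|\t|)$ on ${\rm Gr}(n,j)$, where $|\t|=\tan|\t_0|$ converts $\cos|\t_0|=(1+|\t|^2)^{-1/2}$; this produces an explicit $f_0$. Then I would apply the radial formula (\ref{besg}) for $(R_A f)(\z)=F_0(|\z|)$, and finally apply $N_0^{-1}$ from (\ref{Itrou3}) to pass back to the Grassmannian with $s=|\z_0|$, $\sin|\z_0|=|\z|/\sqrt{1+|\z|^2}$. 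Carrying out the substitution $r=\tan|\t_0|$ (so $r\,dr=\tan|\t_0|\sec^2|\t_0|\,d|\t_0|$, and $(r^2-s^2)$ becomes a trigonometric difference) should collapse all the Jacobian weights $(\cos|\cdot|)^{\pm}$ and $(1+|\cdot|^2)^{\pm}$ into the clean kernel $(s^2-t^2)^{(k-j)/2-1}t^j$ displayed in (\ref{hbesg}), with $t=\cos|\t_0|$ as the new variable of integration. I would verify the constant $\sig_j\sig_{k-j-1}/\sig_k$ by tracking the normalization factors from $M_0^{-1}$ and $N_0^{-1}$ together with the $\sig_{k-j-1}$ already present in (\ref{besg}).

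For the dual statement I would argue analogously, but here the situation is simpler: the left-sided kernel in (\ref{besg1}) for $R_A^*$ is identical to the kernel claimed for $R_0^*$ in (\ref{hbesg1}), and the paper has already remarked (just after Proposition \ref{iant}) that ``equalities for the left-sided integrals coincide.'' Concretely, I would use $R_0^*\Phi=Q_0^{-1}R_A^* P_0^{-1}\Phi$ from (\ref{trou77}), convert the zonal $\Phi(\z_0)=\Phi_1(\sin|\z_0|)$ into a radial $\vp_0(|\z|)$ via $P_0^{-1}$ (\ref{Itrou4}), apply (\ref{besg1}), and recover the Grassmannian-side answer through $Q_0^{-1}$ (\ref{Itrou5}) with $r=|\t_0|$, $\sin|\t_0|$ the radial argument. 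The weights should cancel even more cleanly than in the $R_0$ case, leaving (\ref{hbesg1}) verbatim; this explains why the $r$-dependence and normalization constant are literally those of (\ref{besg1}).

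The only genuine obstacle I anticipate is the bookkeeping in the first part: the $R_0$ kernel (\ref{hbesg}) is \emph{not} identical to (\ref{besg}) — it carries an extra factor $s^{-(k-1)}t^j$ and a rearranged constant — so the trigonometric substitution there must be done carefully to confirm that the Jacobian factors from $M_0^{-1}$, $N_0^{-1}$, and the change $r=\tan|\t_0|$ reassemble into exactly that form rather than something off by a power of $\cos$. The cleanest way to keep this honest is to change variables once, at the level of the integral (\ref{besg}), writing everything in terms of $t=\cos|\t_0|$ and $s=\cos|\z_0|$, and to match powers term by term; the Lebesgue-existence proviso is then inherited directly from the corresponding provisos in Propositions \ref{nsfo5} and \ref{trou6}, so no separate convergence analysis is required.
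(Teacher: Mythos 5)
Your proposal follows exactly the route the paper takes: it writes $R_0 F=N_0^{-1}R_A M_0^{-1}F$ and $R_0^*\Phi=Q_0^{-1}R_A^*P_0^{-1}\Phi$ from the transition formulas, feeds the resulting radial functions into (\ref{besg}) and (\ref{besg1}), and performs the trigonometric change of variables ($1+r^2=1/t^2$ with $t=\cos|\t_0|$ for the first part, and the analogous substitution with $r=\sin|\t_0|$ for the dual). The computation and the constants work out as you describe, so the proposal is correct and essentially identical to the paper's proof.
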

\begin{proof} We write the first formula in (\ref{trou7}) as $R_0 F=N_0^{-1} R_A M_0^{-1} F$. Set for a while $F(\t_0)=\tilde F  (\tan|\t_0|)$. Then
$
(M_0^{-1} F)(\t)=c\, (1+ |\t|^2)^{-(k+1)/2} \tilde F (|\t|)$,  $c= \sig_j / \sig_k$, and (\ref{besg}) yields
\[(R_0 F)(\z_0)=\frac{c\,\sig_{k-j-1}}{ (\cos |\z_0|)^{j+1}} \!\!\intl_{\tan |\z_0|}^\infty \!\!\!  (r^2 \!-\!\tan^2 |\z_0|)^{(k-j)/2 -1}   (1\!+\! r^2)^{-(k+1)/2} \tilde F (r) \, r \,dr. \]
Changing variables $1+r^2 =1/t^2$, $s=\cos |\z_0|$, and using simple trigonometry, we obtain the result.
The proof of (\ref{hbesg1})  relies on the second formula in (\ref{trou7}), which gives $R^*_0 \Phi=Q_0^{-1} R^*_A P_0^{-1} \Phi$. Specifically, if $\Phi(\z_0)= \tilde \Phi  (\tan |\z_0|)$, then (\ref{besg1}) yields
\bea
(R^*_0 \Phi)(\t_0)&=&\frac{\sig_{k-j-1} \,
\sig_{n-k-1}}{\sig_{n-j-1}} \,\frac{(\cos |\t_0|)^{k-n}}{ (\tan |\t_0|)^{n-j-2}}\nonumber\\
&\times&\intl_0^{\tan |\t_0|} (1+t^2)^{(j-n)/2} \tilde \Phi (t) (\tan^2 |\t_0| -t^2)^{(k-j)/2 -1} t^{n-k-1} dt, \nonumber\eea
Changing variables $1+t^2 =1/(1-s^2)$, $r=\sin|\t_0|$, we arrive to (\ref{hbesg1}).
 \end{proof}

Equalities (\ref{hbesg}) and (\ref{hbesg1}) agree with known formulas in \cite[Lemma 2.4]{Ru02b}   for $j=0$.

The Funk-Radon transforms $R_0 F$ and $R^*_0 \Phi$ are well-defined for any integrable functions $F$ and $\Phi$; see \cite [Corollary 4.4]{GR}.
Below we present simple sufficient conditions which  take into account the one-sided structure of $R_0 F$ and $R^*_0 \Phi$.
Denote
\be\label {tter} \tilde G_{n+1, j+1}=\{  \t_0\in G_{n+1, j+1}: 0< |\t_0|<\pi/2\},\ee
\be\label {Dtter} \tilde G_{n+1, k+1}=\{  \z_0\in G_{n+1, k+1}: 0< |\z_0|<\pi/2\}.\ee

%\bea\label {tterR} C_{0,p}  =&& \!\!\!\!\!\!\!\{ F \in C (\tilde G_{n+1, j+1}): \,F(\t_0)=O( (\cos |\t_0|)^{-p}), \\
%&&  p <j+1, \;\text{\rm as} \; |\t_0| \to \pi/2\}.\nonumber\eea
%\bea\label {tterR1} C^*_{0,q}  =&& \!\!\!\!\!\!\!\{ \Phi \in C (\tilde G_{n+1, k+1}): \,\Phi(\z_0)=O(|\z_0|^{-q}), \\
%&&  q <n-k, \;\text{\rm as} \; |\z_0| \to 0\}.\nonumber\eea

\begin{lemma}\label {abEba}  Let $0\le j<k \le n-1$. If
\be\label {rgeszx} A_0\equiv  {\rm ess} \!\!\!\!\sup\limits_{\t_0\in  G_{n+1, j+1}}  (\cos |\t_0|)^{p} |F(\t_0)|<\infty \quad \text{for some $p< j +1$},\ee
 then   $(R_0 F)(\z_0)$ is finite for all $\z_0\in \tilde G_{n+1, k+1}$.
If
\be\label {Argeszx} B_0\equiv  {\rm ess} \!\!\!\!\sup\limits_{\z_0\in G_{n+1, k+1}}  (\sin |\z_0|)^{q} |\Phi (\z_0)|<\infty \quad \text{for some $ q< n-k$},\ee
 then   $(R^*_0 \Phi)(\t_0)$ is finite for all $\t_0\in \tilde G_{n+1, j+1}$.
 The restrictions $p <j+1$ and $q <n-k$ are sharp.
\end{lemma}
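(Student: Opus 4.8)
The plan is to reduce both assertions to the explicit zonal formulas of Lemma~\ref{huysg} by dominating $|F|$ and $|\Phi|$ by suitable zonal majorants and exploiting the positivity of the Funk--Radon operators $R_0$ and $R_0^*$. For part (i), I would first fix a representative of $F$ for which the hypothesis (\ref{rgeszx}) holds everywhere, so that $|F(\t_0)|\le A_0\,(\cos|\t_0|)^{-p}$ pointwise. Since $R_0$ integrates against a probability measure on each fiber $\{\t_0\subset\z_0\}$, it is monotone, whence $|(R_0 F)(\z_0)|\le A_0\,(R_0 G)(\z_0)$ for the zonal majorant $G(\t_0)=(\cos|\t_0|)^{-p}$. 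The whole question then collapses to evaluating $R_0$ on this single explicit function. The one genuinely delicate point is precisely this passage from an essential-supremum bound to a majorization valid on \emph{every} fiber rather than almost everywhere; fixing the everywhere-bounded representative at the outset disposes of it, after which no further measure-theoretic care is needed.

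Next I would apply Lemma~\ref{huysg} with $F_1(t)=t^{-p}$. Formula (\ref{hbesg}) gives
\[
(R_0 G)(\z_0)=\frac{\sig_j\,\sig_{k-j-1}}{\sig_k\, s^{k-1}}\intl_0^s t^{\,j-p}\,(s^2-t^2)^{(k-j)/2-1}\,dt,\qquad s=\cos|\z_0|,
\]
and the substitution $t^2=s^2u$ turns the inner integral into $\tfrac12\,s^{\,k-p-1}\,B\!\left(\tfrac{j-p+1}{2},\tfrac{k-j}{2}\right)$. This Beta integral is finite precisely when $j-p+1>0$, i.e. $p<j+1$ (the second parameter $\tfrac{k-j}{2}$ being positive automatically). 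Consequently $(R_0 G)(\z_0)=c\,(\cos|\z_0|)^{-p}$ with $c<\infty$, so $(R_0 F)(\z_0)$ is finite for every $\z_0$ with $\cos|\z_0|>0$, which is exactly the range $\z_0\in\tilde G_{n+1,k+1}$.

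Part (ii) I would treat in complete parallel. The bound (\ref{Argeszx}) gives $|\Phi(\z_0)|\le B_0\,(\sin|\z_0|)^{-q}$, and monotonicity of $R_0^*$ yields $|(R_0^*\Phi)(\t_0)|\le B_0\,(R_0^* H)(\t_0)$ for the zonal function $H(\z_0)=(\sin|\z_0|)^{-q}$. Feeding $\Phi_1(s)=s^{-q}$ into formula (\ref{hbesg1}) and substituting $s^2=r^2u$ produces the Beta integral $\tfrac12\,r^{\,n-j-q-2}\,B\!\left(\tfrac{n-k-q}{2},\tfrac{k-j}{2}\right)$, finite exactly when $q<n-k$; hence $(R_0^*H)(\t_0)=c'\,(\sin|\t_0|)^{-q}$ is finite on $\tilde G_{n+1,j+1}$.

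Finally, sharpness is read off from the same two computations. Taking the borderline zonal function $F(\t_0)=(\cos|\t_0|)^{-(j+1)}$ (so that $(\cos|\t_0|)^{j+1}|F|\equiv 1$) makes the first Beta parameter vanish, the integrand in (\ref{hbesg}) behaves like $t^{-1}$ near $t=0$, the integral diverges, and $(R_0 F)(\z_0)\equiv\infty$; symmetrically $\Phi(\z_0)=(\sin|\z_0|)^{-(n-k)}$ forces $(R_0^*\Phi)(\t_0)\equiv\infty$. Thus neither $p<j+1$ nor $q<n-k$ can be relaxed to an equality. Apart from the fiberwise domination noted above, everything reduces to a routine Beta-function evaluation, which reproduces on the sphere the one-sided structure already encountered in Propositions~\ref{Arges} and~\ref{lrrbx}.
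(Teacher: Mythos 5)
Your argument is correct, and it takes a genuinely different route from the paper's. The paper proves both halves by transferring to the affine Grassmannian: it writes $R_0 F=N_0^{-1}R_A M_0^{-1}F$ and $R_0^*\Phi=Q_0^{-1}R_A^*P_0^{-1}\Phi$ via (\ref{trou77}), observes that (\ref{rgeszx}) is exactly the statement that $|\t|^{\lam}|(M_0^{-1}F)(\t)|$ is essentially bounded with $\lam=k+1-p>k-j$ (using $|\t|=\tan|\t_0|$), and then simply invokes Propositions \ref{Arges} and \ref{lrrbx}, inheriting sharpness from those results. You instead stay on the compact Grassmannian, majorize by the zonal functions $(\cos|\t_0|)^{-p}$ and $(\sin|\z_0|)^{-q}$, and evaluate $R_0$ and $R_0^*$ on them directly through Lemma \ref{huysg} and a Beta integral; your borderline functions $(\cos|\t_0|)^{-(j+1)}$ and $(\sin|\z_0|)^{-(n-k)}$ give sharpness explicitly. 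The two proofs rest on the same one-sided Abel-type structure (indeed the paper's Proposition \ref{Arges} is itself proved by the very majorization you perform, just in affine coordinates), but yours is self-contained modulo Lemma \ref{huysg}, produces the explicit constants and the exact power behavior of $R_0G$ and $R_0^*H$, and avoids the detour through $R_A$; the paper's is shorter given that the affine results are already in hand. Your preliminary remark about upgrading the essential-supremum bound to a pointwise one is consistent with how the paper itself treats Propositions \ref{Arges} and \ref{lrrbx}, so it introduces no discrepancy.
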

\begin{proof}
  The first statement is derived from Proposition \ref {Arges} and the first equality in (\ref{trou77}). It suffices to show that
 \[
  A\equiv {\rm ess} \!\!\!\!\sup\limits_{ \t \in  {\rm Gr} (n,j)} |\t|^\lam |(M_0^{-1} F)(\t)|<\infty\quad \text{for some $\lam > k-j$}.\]
  Indeed, by (\ref{nra2}),
\bea
 |\t|^\lam |(M_0^{-1} F)(\t)|&=& |\t|^\lam (1+|\t|^2)^{-(k+1)/2}|(F\circ \mu)(\t)\nonumber\\
 &=&(\sin |\t_0|)^\lam (\cos |\t_0|)^{k+1-\lam} |F(\t_0)| \le (\cos |\t_0|)^{p} |F(\t_0)|, \nonumber\eea
where $p=k+1-\lam  <j+1$. This gives the result for $R_0 F$.
 The result for $R^*_0 \Phi$ similarly follows from Proposition \ref {lrrbx} (with $\del =q$) and the second equality in (\ref{trou77}).
 \end{proof}

% The above proposition means that convergence of the integrals $(R_0 F)(\z_0)$  and $(R^*_0 \Phi)(\t_0)$ does not depend
%on the behavior of $F(\t_0)$ as $|\t_0| \to 0$  (for $(R_0 F)(\z_0)$) and on the behavior of $\Phi(\z_0)$ as  $|\z_0| \to \pi/2$ (for $(R^*_0 \Phi)(\t_0)$).
%On the other hand, the behavior of  $F(\t_0)$ as $|\t_0| \to \pi/2$ and of $\Phi(\z_0)$ as  $|\z_0| \to 0$ is strictly determined.

\begin{theorem} \label {Ntrou6} Let $0\le j<k \le n-1$, The operator $R_0$ is injective on $L^1( G_{n+1, j+1})$ if and only if  $j+k\le n-1$.
The dual transform $R_0^*$ is injective on $L^1( G_{n+1, k+1})$ if and only if  $j+k\ge n-1$.
\end{theorem}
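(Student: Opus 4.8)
The plan is to push the sharp affine injectivity thresholds of Proposition \ref{ther1} through the intertwining relations (\ref{trou77}), $R_0 F = N_0^{-1} R_A M_0^{-1} F$ and $R_0^* \Phi = Q_0^{-1} R_A^* P_0^{-1} \Phi$. Each of $N_0^{-1}$ and $Q_0^{-1}$ multiplies by a factor that is positive almost everywhere and then composes with $\mu^{-1}$; by the change-of-variables content of Lemma \ref{anra}, such an operator sends a function to $0$ a.e. only if the function itself vanishes a.e. Hence $R_0 F = 0$ a.e. is equivalent to $R_A(M_0^{-1} F) = 0$ a.e., and $R_0^* \Phi = 0$ a.e. is equivalent to $R_A^*(P_0^{-1}\Phi) = 0$ a.e. In other words, $R_0$ (resp.\ $R_0^*$) is injective on $L^1$ exactly when $R_A$ (resp.\ $R_A^*$) is injective on the $M_0^{-1}$-image (resp.\ $P_0^{-1}$-image) of $L^1$.

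The crux is to identify these images, and I would show that $M_0^{-1}$ is an isomorphism of $L^1(G_{n+1, j+1})$ onto $L^1_{n-k}({\rm Gr} (n,j))$ while $P_0^{-1}$ is an isomorphism of $L^1(G_{n+1, k+1})$ onto $L^1_{j+1}({\rm Gr} (n,k))$. For the first, put $f = M_0^{-1} F$, so that $|f(\t)| = (\sig_j/\sig_k)\,(1+|\t|^2)^{-(k+1)/2}\,|(F\circ\mu)(\t)|$. Since $(1+|\t|)^{n-k}$ and $(1+|\t|^2)^{(n-k)/2}$ are comparable, the $L^1_{n-k}$-weight merges with the factor $(1+|\t|^2)^{-(k+1)/2}$ into the exponent $-(k+1)/2 - (n-k)/2 = -(n+1)/2$; applying (\ref{nra3}) to $|F\circ\mu|$ then makes $\|f\|_{L^1_{n-k}}$ comparable to $(\sig_n/\sig_k)\,\|F\|_{L^1(G_{n+1,j+1})}$. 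The computation for $P_0^{-1}$ is the same: with $\vp = P_0^{-1}\Phi$ the exponents combine as $(j-n)/2 - (j+1)/2 = -(n+1)/2$, again reproducing the integrand of (\ref{nra3}). These are precisely the weighted spaces on which Proposition \ref{ther1} is formulated.

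With the two isomorphisms in hand the biconditionals follow at once: $R_A$ is injective on $L^1_{n-k}({\rm Gr} (n,j))$ iff $j+k\le n-1$, whence $R_0$ is injective on $L^1(G_{n+1,j+1})$ iff $j+k\le n-1$; and $R_A^*$ is injective on $L^1_{j+1}({\rm Gr} (n,k))$ iff $j+k\ge n-1$, whence $R_0^*$ is injective on $L^1(G_{n+1,k+1})$ iff $j+k\ge n-1$. To legitimately invoke (\ref{trou77}) one needs the right-hand integrals to converge, which holds because $M_0^{-1}F\in L^1_{n-k}$ makes $R_A(M_0^{-1}F)$ finite a.e.\ by Proposition \ref{ther}, while $P_0^{-1}\Phi$ is locally integrable, making $R_A^*(P_0^{-1}\Phi)$ finite a.e.\ by Proposition \ref{lrrbx}. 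I expect the only genuine work to be the exponent bookkeeping in the space identification; that it always lands on $-(n+1)/2$ is forced by the choice of the weights $n-k$ and $j+1$, and this is exactly why the elliptic injectivity thresholds coincide with the affine ones. A shorter alternative for the $R_0^*$ half would avoid $R_A^*$ entirely, using the orthogonality relation (\ref{xte}) to recast $R_0^*$ as a Funk transform $R_0$ of type $(n-k)\to(n-j)$ and then quoting the $R_0$ result with $j,k$ replaced by $n-k-1,\,n-j-1$.
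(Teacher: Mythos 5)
Your argument is correct, but it is not the route the paper takes. The paper does not re-prove Theorem \ref{Ntrou6} internally: it quotes the ``if'' part of the first statement from Grinberg--Rubin \cite[Theorem 1.2]{GR}, only sketches the ``only if'' part via representation theory (an irreducible subspace of $C^\infty(G_{n+1,j+1})$ whose highest weight does not occur in $C^\infty(G_{n+1,k+1})$ is annihilated by $R_0$ when $\dim G_{n+1,j+1}>\dim G_{n+1,k+1}$), and disposes of the dual statement by the orthogonality relation (\ref{xte}) --- which is also your proposed shortcut for the second half. You instead transport Proposition \ref{ther1} through the intertwining relations (\ref{trou77}), and the key computation --- that $M_0^{-1}$ and $P_0^{-1}$ carry $L^1(G_{n+1,j+1})$ and $L^1(G_{n+1,k+1})$ isomorphically onto $L^1_{n-k}({\rm Gr}(n,j))$ and $L^1_{j+1}({\rm Gr}(n,k))$ because the exponents collapse to $-(n+1)/2$ and (\ref{nra3}) applies --- is correct, as is your verification that (\ref{trou77}) is legitimately invocable (Propositions \ref{ther} and \ref{lrrbx}) and that $N_0^{-1}$, $Q_0^{-1}$ preserve vanishing a.e. What your route buys is a clean, self-contained deduction of the elliptic thresholds from the affine ones, both directions of both biconditionals at once, with no representation theory. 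The caveat is that it is only as independent as Proposition \ref{ther1}: in \cite{Ru04d} the affine injectivity thresholds are themselves obtained by passing to the compact Grassmannian and invoking \cite{GR}, so at the level of the literature your derivation runs the historical implication backwards rather than supplying new evidence for the compact case. Within the logical structure of the present paper, where Proposition \ref{ther1} is taken as given, your proof is complete.
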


The second statement follows from the first one by orthogonality; cf. \cite [Section 4.3]{Ru04d}.  The ``if" part of the first
statement together with explicit inversion formulas were obtained by Grinberg and Rubin  \cite [Theorem 1.2]{GR}. This paper
contains references to the previous works, where the operator $R_0$ was studied on $C^\infty$ functions.
The  ``only if" part is well-predictable from the dimensionality  consideration because the condition $j+k\le n-1$ is equivalent to
\be\label {kiot} \dim G_{n+1,j+1}\le \dim G_{n+1,k+1}.\ee
The rigorous proof  of the ``only if" part falls into the scope of representation theory for the rotation group;
see, e.g., \cite{SW} for a brief discussion of this theory or \cite{Bo, We}  for a complete discussion.
The general idea is the following.\footnote{This explanation was kindly sent to the author by Prof. Tomoyuki Kakehi.}
  If (\ref{kiot}) fails, then there exists an irreducible representation subspace $V \subset C^\infty (G_{n+1,j+1})$ whose highest weight  does not occur in $C^\infty (G_{n+1, k+1})$ and therefore, by $SO(n)$-equivariance,  $V$ is mapped by  $ R_0$ to zero. Explicit formulas for the highest weights can be found in \cite{Str75, Str77}. Further details about group-theoretic aspects of the Radon transforms
on Grassmannians can be found in \cite{GW, Gr85, Gr86, Ka}.
The relevant harmonic analysis, which explains the role of  (\ref{kiot}) in the study of injectivity of Radon transforms on Grassmannian bundles,  was developed in \cite[Section 5]{Str86}.

The following injectivity statements, which are  similar to those
in Theorems \ref{Arther1} and \ref{rther1},  can be obtained by making use of the connection formulas
(\ref{trou77}); cf. the  reasoning in Lemma \ref {abEba}.

\begin{theorem} \label {Arcvh}  Let $ 0 \le j<k\le n-1$. If  $j+k\le n-2$, then $R_0$ is injective on functions $F$ satisfying (\ref{rgeszx}).  If  $j+k\ge n$, then $R^*_0$ is injective on functions $\Phi$ satisfying (\ref{Argeszx}).
\end{theorem}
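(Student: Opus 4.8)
<br>

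The plan is to reduce Theorem \ref{Arcvh} to the already-established injectivity results for the affine Radon transforms in Theorems \ref{Arther1} and \ref{rther1}, exactly as the statement's preamble suggests, by transporting the hypotheses through the connection formulas (\ref{trou77}). The two assertions are handled separately but by the same mechanism. For the first assertion (the case $j+k\le n-2$), I would start from the relation $R_0 F = N_0^{-1} R_A M_0^{-1} F$ in (\ref{trou77}). Since $N_0^{-1}$ is a nonvanishing pointwise multiplier (see (\ref{Itrou3})), the equation $R_0 F = 0$ on $G_{n+1,k+1}$ forces $R_A (M_0^{-1}F) = 0$ on ${\rm Gr}(n,k)$. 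Hence injectivity of $R_0$ on the class (\ref{rgeszx}) follows once I verify that $M_0^{-1}F$ lands in the injectivity class of Theorem \ref{Arther1}, namely that $f := M_0^{-1}F$ satisfies $A \equiv {\rm ess}\sup_{\t}|\t|^\lam |f(\t)| < \infty$ for some $\lam > k-j$.

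The computation verifying this is precisely the one already carried out inside the proof of Lemma \ref{abEba}: using (\ref{nra2}), namely $|\t| = \tan|\t_0|$, one finds
\[
|\t|^\lam |(M_0^{-1}F)(\t)| = (\sin|\t_0|)^\lam (\cos|\t_0|)^{k+1-\lam}|F(\t_0)| \le (\cos|\t_0|)^{p}|F(\t_0)|,
\]
with the identification $p = k+1-\lam$. Thus the condition $\lam > k-j$ required by Theorem \ref{Arther1} is equivalent to $p < j+1$, which is exactly hypothesis (\ref{rgeszx}); and since $\sin|\t_0|\le 1$, the displayed inequality gives $A \le A_0 < \infty$. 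Therefore $f$ lies in the injectivity class of Theorem \ref{Arther1}, so $f=0$ a.e., and reversing the nonvanishing multiplier $M_0^{-1}$ recovers $F=0$. The condition $j+k\le n-2$ is precisely what Theorem \ref{Arther1} demands.

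For the second assertion (the case $j+k\ge n$), I would argue symmetrically using $R_0^* \Phi = Q_0^{-1} R_A^* P_0^{-1}\Phi$ from (\ref{trou77}). Again $Q_0^{-1}$ is a nonvanishing multiplier (see (\ref{Itrou5})), so $R_0^*\Phi = 0$ yields $R_A^*(P_0^{-1}\Phi) = 0$ on ${\rm Gr}(n,j)$. I then must check that $\vp := P_0^{-1}\Phi$ satisfies the growth hypothesis (\ref{rgesB}) of Theorem \ref{rther1}, i.e. $B \equiv {\rm ess}\sup_{\z}|\z|^\del|\vp(\z)| < \infty$ for some $\del < n-k$. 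By (\ref{Itrou4}) and (\ref{nra2}) this is again a routine trigonometric substitution converting the factor $(\sin|\z_0|)^{q}$ in (\ref{Argeszx}) into the weight $|\z|^\del$ with $\del = q$, and the constraint $q < n-k$ matching $\del < n-k$ precisely; the condition $j+k\ge n$ is the hypothesis of Theorem \ref{rther1}. Since $P_0^{-1}$ is invertible, $\vp=0$ forces $\Phi=0$, completing the proof.

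The only genuine obstacle I anticipate is purely bookkeeping rather than conceptual: one must confirm that the equalities in (\ref{trou77}) hold in the Lebesgue sense for the functions at hand (the proposition attaches that proviso), which requires checking that the relevant integrals converge for the weighted classes (\ref{rgeszx}) and (\ref{Argeszx}). This is guaranteed by the existence results already invoked in Lemma \ref{abEba} — Propositions \ref{Arges} and \ref{lrrbx} show exactly that $R_A f$ and $R_A^*\vp$ converge pointwise under these weight conditions — so the transition formulas are legitimate and the reduction is valid. Everything else is a direct transcription of the weight computation from Lemma \ref{abEba}, so I expect the proof to be short.
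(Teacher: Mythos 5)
Your proposal is correct and follows exactly the route the paper indicates: reduce to Theorems \ref{Arther1} and \ref{rther1} via the connection formulas (\ref{trou77}), with the weight bookkeeping ($p=k+1-\lam$, $\del=q$, using $|\t|=\tan|\t_0|$) transcribed from Lemma \ref{abEba}. The paper gives only this one-sentence sketch, and your write-up supplies the same argument in full detail, including the correct matching of the sharp exponent ranges $p<j+1\Leftrightarrow\lam>k-j$ and $q<n-k\Leftrightarrow\del<n-k$.
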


Our next objective is the support theorem for the Funk-Radon transform on compact Grassmannians. We will be dealing with functions  satisfying (\ref{rgeszx}) and (\ref{Argeszx}) under the additional assumption that these functions are continuous on the open subsets  $\tilde G_{n+1, j+1}$ and  $\tilde G_{n+1, k+1}$; see (\ref{tter}), (\ref{Dtter}).
As in Theorem \ref {thrt4}, we distinguish the cases  $j+k=n-1$ and $j+k\le n-2$. For the second case, we have the following statement.

\begin{theorem} \label {Cthrt4} Let $0\le j<k \le n-1$, $0<\om <\pi/2$.

\vskip 0.1 truecm

\noindent {\rm (i)}  If  $j+k\le n-2$, then  the implication
\be\label{Ikvv1}
(R_0 F)(\z_0)=0\; \forall \, |\z_0| > \om\; \Longrightarrow \; F(\t_0)=0\; \forall \, |\t_0| > \om\ee
holds for any  function  $F\in C(\tilde G_{n+1, j+1})$ satisfying (\ref{rgeszx}).

\noindent {\rm (ii)}  If $j+k\ge n$, then  the implication
\be\label{Ikvv1}
(R^*_0 \Phi)(\t_0)=0\; \forall \, |\t_0| < \om\; \Longrightarrow \; \Phi(\z_0)=0\; \forall \, |\z_0| < \om\ee
holds  for any  function  $\Phi \in C(\tilde G_{n+1, k+1})$ satisfying (\ref{Argeszx}).
\end{theorem}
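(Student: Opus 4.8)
The plan is to derive both parts by transferring the affine support theorems (Theorems~\ref{thrt4} and~\ref{Dthrt4}) to the Grassmannian via the connection formulas~(\ref{trou77}), following the same mechanism as the proof of Lemma~\ref{abEba}. The operators $N_0^{-1},M_0^{-1}$ (for $R_0$) and $Q_0^{-1},P_0^{-1}$ (for $R_0^*$) are multiplications by smooth nonvanishing weights, so they preserve the location of zeros; the whole matter reduces to converting the support condition and the growth condition through the substitution $|\t|=\tan|\t_0|$, $|\z|=\tan|\z_0|$ of~(\ref{nra2}), under which $|\z_0|>\om$ becomes $|\z|>\tan\om$ and $|\t_0|<\om$ becomes $|\t|<\tan\om$.

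For part~(i), I would write $R_0F=N_0^{-1}R_A\,g$ with $g=M_0^{-1}F$. Since $N_0^{-1}$ is multiplication by $(\cos|\z_0|)^{-j-1}\neq0$, the hypothesis $(R_0F)(\z_0)=0$ for all $|\z_0|>\om$ is equivalent to $(R_Ag)(\z)=0$ for all $|\z|>r$, where $r=\tan\om$. Using the explicit form~(\ref{Itrou2}) of $M_0^{-1}$ together with $(1+\tan^2|\t_0|)^{-(k+1)/2}=(\cos|\t_0|)^{k+1}$, one finds
\[
|\t|^\lam\,|g(\t)|=\frac{\sig_j}{\sig_k}\,(\sin|\t_0|)^\lam(\cos|\t_0|)^{k+1-\lam}\,|F(\t_0)|\le\frac{\sig_j}{\sig_k}\,(\cos|\t_0|)^{k+1-\lam}\,|F(\t_0)|.
\]
Choosing $\lam=k+1-p$ converts~(\ref{rgeszx}) (with $p<j+1$) into the bound $\sup_\t|\t|^\lam|g(\t)|<\infty$ with $\lam>k-j$. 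Since $g$ is continuous on $\tilde{\rm Gr}(n,j)$ and $j+k\le n-2$, Theorem~\ref{thrt4} gives $g(\t)=0$ for all $|\t|>r$; dividing by the nonvanishing factor yields $F(\t_0)=0$ for all $|\t_0|>\om$.

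Part~(ii) is the mirror image built on $R_0^*\Phi=Q_0^{-1}R_A^*\,\psi$ with $\psi=P_0^{-1}\Phi$. As $Q_0^{-1}$ is multiplication by $(\cos|\t_0|)^{k-n}\neq0$, the hypothesis $(R_0^*\Phi)(\t_0)=0$ for $|\t_0|<\om$ is equivalent to $(R_A^*\psi)(\t)=0$ for $|\t|\in(0,r)$, $r=\tan\om$. From~(\ref{Itrou4}) and $(1+\tan^2|\z_0|)^{(j-n)/2}=(\cos|\z_0|)^{n-j}$,
\[
|\z|^\del\,|\psi(\z)|=(\sin|\z_0|)^\del(\cos|\z_0|)^{n-j-\del}\,|\Phi(\z_0)|\le B_0\,(\cos|\z_0|)^{n-j-\del},
\]
where the last inequality takes $\del=q$ and uses~(\ref{Argeszx}). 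Since $q<n-k<n-j$, the residual cosine power is bounded, so $\psi$ obeys~(\ref{otaBa}) with $\del=q<n-k$. As $\psi$ is continuous on $\tilde{\rm Gr}(n,k)$ and $j+k\ge n$, Theorem~\ref{Dthrt4} gives $\psi(\z)=0$ for $|\z|\in(0,r)$, whence $\Phi(\z_0)=0$ for all $|\z_0|<\om$.

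The delicate step, and the one I expect to require the most care, is the exponent bookkeeping in the two displayed estimates: one must verify that the admissible ranges $p<j+1$ and $q<n-k$ of~(\ref{rgeszx}), (\ref{Argeszx}) map onto the ranges $\lam>k-j$ and $\del<n-k$ demanded by the affine theorems, and that every residual trigonometric weight stays bounded up to $\pi/2$ (automatic here from $j<k\le n-1$). A secondary point is the behaviour at the center $|\t_0|=0$: because~(\ref{rgeszx}) and~(\ref{Argeszx}) permit blow-up there, $g$ and $\psi$ need not extend continuously to $|\t|=0$; this is harmless, since both hypothesis and conclusion involve only planes bounded away from the origin (the regions $|\t|>r>0$ and $0<|\t|<r$), where Theorems~\ref{thrt4} and~\ref{Dthrt4} --- whose continuity assumption is used only on $\tilde{\rm Gr}$ --- apply without change.
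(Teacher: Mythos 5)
Your proposal is correct and follows essentially the same route as the paper: both factor through the connection formulas $R_0=N_0^{-1}R_AM_0^{-1}$, $R_0^*=Q_0^{-1}R_A^*P_0^{-1}$, convert the support and growth conditions via $|\t|=\tan|\t_0|$, and invoke the affine support Theorems~\ref{thrt4} and~\ref{Dthrt4}. Your explicit exponent bookkeeping ($\lam=k+1-p$, $\del=q$) and the remark that continuity is needed only on $\tilde{\rm Gr}$ make explicit what the paper leaves to the reader via the computation in Lemma~\ref{abEba}.
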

\begin{proof}
 {\rm (i)} If $(R_0 F)(\z_0)=0$ for all $|\z_0| > \om$, then, by
 Theorem \ref{trou6},
$(R_A M_0^{-1} F)(\z)=0$ for all $ |\z| > r=\tan \om$. Hence, by Theorem \ref {thrt4},
\be\label{swvv1} (M_0^{-1} F)(\t)=0\quad \text{\rm  for all }\quad |\t| > r,\ee
provided that $M_0^{-1} F$ satisfies (\ref{rges}),
 that is, $F$ satisfies (\ref{rgeszx}).  This completes the proof.

{\rm (ii)}
Suppose that $(R^*_0 \Phi)(\t_0)=0$ for all $ |\t_0| < \om$.  Because $R_0^*\Phi= Q_0^{-1} R_{A}^* P_0^{-1} \Phi$,  owing to (\ref{Itrou5}), we have
$(R_{A}^* P_0^{-1} \Phi)(\mu^{-1} (\t_0))=0$  for all $ |\t_0| < \om$, which is equivalent to  $(R_{A}^* P_0^{-1} \Phi)(\t)=0$  for all $ |\t| < \tan\om$. Hence, by Theorem \ref {Dthrt4},
 $(P_0^{-1} \Phi)(\z)=0$  for all $ |\z| < \tan\om$ if $P_0^{-1} \Phi$  satisfies (\ref{otaB})
  or  $\Phi$ satisfies (\ref{Argeszx}).
 \end {proof}

\begin{remark}\label {owl}
To the best of our knowledge,  an analogue of Theorem \ref {Cthrt4} for the case  $j+k= n-1$ with $j>0$ is unknown.\end{remark}

Inversion formulas for Funk-Radon transforms on compact Grassmannians can be found, e.g.,  in \cite{GGR, Gr86, GR, Ka, P1, Ru04d, Ru20, Zha1, Zh}.

\section {The Real Hyperbolic Space. Preliminaries}

Let $\bbe^{n, 1}$, $n\ge 2$, be  the  pseudo-Euclidean $(n+1)$-dimensional real vector space of points
 with the inner product
\be\label {tag 2.1-HYP}[{\bf x}, {\bf y}] = - x_1 y_1 - \ldots -x_n y_n + x_{n +1} y_{n +1}. \ee
We denote by  $  e_1, \ldots, e_{n +1}$  the coordinate unit vectors  in $\bbe^{n,1}$.
 The pseudo-orthogonal group of linear transformations preserving the bilinear form $[{\bf x},  {\bf y}]$ is denoted by $O(n,1)$;
$SO(n,1)=\{ g\in O(n,1): \det (g)=1\}$;
$G=SO_0(n,1)$ is the identity component of $SO(n,1)$.
 The elements of $G$ are called {\it hyperbolic rotations}. The unit hyperboloid
\[\hn = \{{\bf x}\in \bbe^{n,1} :
[ {\bf x}, {\bf x}] = 1, \ x_{n +1} > 0 \},\]
is considered as a model of the $n$-dimensional real hyperbolic space. We will also deal with some other models to be introduced later.
The points of $\hn$  will be denoted by the non-boldfaced letters, unlike the generic points in $\bbe^{n,1}$.

The notation $o=(0, \ldots, 0,0)$ is used for the origin  of  $\bbe^{n,1}$ or $\rn$, depending on the context.
The point ${\tilde o}=(0, \ldots, 0,1)$ serves as the origin of $\hn$ and the north pole of $S^n$, the unit sphere in $\bbr^{n+1}$.
The geodesic distance between  points $x$ and $y$ in $\hn$ is defined by $d(x,y) = \cosh^{-1}[x,y]$.
 The subgroup
\[
K= \left \{ \left[ \begin{array}{cc}
 \gam    & 0           \\
 0               & 1
 \end{array} \right ]:\, \gam \in SO(n)\right \} \]
 is the isotropy subgroup of $\tilde o$ in $G$, and therefore $\hn$ is equivariantly diffeomorphic to the quotient space $G/K$.
The Haar measure $dg$ on $G$ will be normalized by the equality
\be\label {invam} \intl_Gf(g\tilde o)\,dg=\intl_{\hn}
f(x)\, dx,\ee where $dx$ stands for the usual Riemannian measure on $\hn$; see, e.g.,  \cite [p. 367]{Ru15}, \cite [p. 23] {VK}.
This normalization agrees with the more intuitive  condition that the Haar measure of $K$  is one.

 If $d$ is an integer, $ 0 \le d \le n-1$, we set $\bbe^{n, 1} = \bbr^{n-d}\times\bbe^{d,1}$, where
\be \bbr^{n-d} = \bbr e_1 \oplus\cdots\oplus  \bbr e_{n-d},\qquad
\bbe^{d,1} = \bbr e_{n-d+1} \oplus\cdots\oplus  \bbr e_{n+1}.
\ee
This convention will be used everywhere in the sequel. In particular, $\rn = \bbr e_1 \oplus\cdots\oplus  \bbr e_{n}$ is identified with the coordinate subspace of  $\bbe^{n, 1}$.

\subsection{Generalized
Hyperbolic Coordinates}

\begin{lemma} Let $0\le d\le n-1$. Each point $x \in \hn$ can be represented in the generalized
hyperbolic coordinates as
\be\label{23} x =
\eta\cosh r+\zeta\sinh r = \left[ \begin{array}{c}
 \zeta\sinh r          \\
 \eta\cosh r
 \end{array} \right ],\ee
\[\eta\in\bbh^d \subset \bbe^{d,1},\qquad \zeta\in S^{n-d-1}\subset \bbr^{n-d}, \qquad 0 \le  r<\infty,\] so that
\be\label{24} dx=d\eta \, d\zeta \, d\nu(r), \quad
d\nu( r)=(\sinh r)^{n-d-1}(\cosh r)^d \, d r,\ee
$d\eta$ and $d\zeta$ being the Riemannian measures on $\bbh^d$ and
$S^{n-d-1}$, respectively.
\end{lemma}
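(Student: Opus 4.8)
The plan is to realize this as a warped-product coordinate computation on the hyperboloid, parallel to the construction of geodesic polar coordinates on a symmetric space. Write a point of $\bbe^{n,1}=\bbr^{n-d}\times\bbe^{d,1}$ as $x=(x',x'')$ with $x'\in\bbr^{n-d}$, $x''\in\bbe^{d,1}$, and denote by $|\cdot|$ the Euclidean norm on $\bbr^{n-d}$ and by $[\cdot,\cdot]_{d,1}$ the form on $\bbe^{d,1}$. First I would check that $(r,\zeta,\eta)\mapsto x$ is a bijection off a null set. Since $[x,x]=-|x'|^2+[x'',x'']_{d,1}$, the proposed formula gives $[x,x]=\cosh^2 r-\sinh^2 r=1$ and $x_{n+1}=\cosh r\,\eta_{n+1}>0$, so it does land in $\hn$. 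Conversely, given $x\in\hn$ with $x'\neq0$, I set $\sinh r=|x'|$ (hence $r>0$ is determined uniquely), $\zeta=x'/|x'|\in S^{n-d-1}$, and $\eta=x''/\cosh r$; the relation $\cosh^2 r=1+|x'|^2=[x'',x'']_{d,1}$ then forces $[\eta,\eta]_{d,1}=1$ and $\eta_{n+1}>0$, i.e. $\eta\in\bbh^d$. The residual locus $x'=0$ (where $\zeta$ is undefined, as at the origin of polar coordinates) is lower-dimensional and does not affect the measure.

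Next I would compute the Riemannian metric of $\hn$, namely the restriction of $-[\cdot,\cdot]$ to the tangent spaces, in the new coordinates. Differentiation yields
\[
dx'=\zeta\cosh r\,dr+\sinh r\,d\zeta,\qquad dx''=\eta\sinh r\,dr+\cosh r\,d\eta,
\]
and $-[dx,dx]=|dx'|^2-[dx'',dx'']_{d,1}$. Expanding and invoking the constraints $|\zeta|=1$, $\zeta\cdot d\zeta=0$ (from $\zeta\in S^{n-d-1}$) together with $[\eta,\eta]_{d,1}=1$, $[\eta,d\eta]_{d,1}=0$ (from $\eta\in\bbh^d$), every $dr$--$d\zeta$ and $dr$--$d\eta$ cross term cancels, and after $\cosh^2 r-\sinh^2 r=1$ the metric becomes the warped product
\[
-[dx,dx]=dr^2+\sinh^2 r\,|d\zeta|^2-\cosh^2 r\,[d\eta,d\eta]_{d,1},
\]
in which $|d\zeta|^2$ is the round metric of $S^{n-d-1}$ and $-[d\eta,d\eta]_{d,1}$ is exactly the hyperbolic metric of $\bbh^d$.

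Finally, the volume element of a block-diagonal warped product $dr^2+a(r)^2 g_1+b(r)^2 g_2$ on $(0,\infty)\times S^{n-d-1}\times\bbh^d$ is $a(r)^{\dim S^{n-d-1}}b(r)^{\dim\bbh^d}\,dr\,dV_1\,dV_2$; with $a=\sinh r$, $b=\cosh r$ this gives $(\sinh r)^{n-d-1}(\cosh r)^{d}\,dr\,d\zeta\,d\eta=d\nu(r)\,d\zeta\,d\eta$, as claimed. The one genuinely substantive step is the metric computation: everything turns on the vanishing of the cross terms, which is precisely where the orthogonality relations $\zeta\cdot d\zeta=0$ and $[\eta,d\eta]_{d,1}=0$ enter. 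Once the metric is seen to be diagonal in the three blocks $r$, $\zeta$, $\eta$, the Jacobian factorization and the formula for $d\nu(r)$ follow immediately.
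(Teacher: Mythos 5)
Your proof is correct, but it takes a genuinely different route from the paper's. You compute the induced Riemannian metric in the coordinates $(r,\zeta,\eta)$ and exhibit $\hn$ as a doubly warped product $dr^2+\sinh^2 r\,|d\zeta|^2+\cosh^2 r\,g_{\bbh^d}$, after which the volume element $(\sinh r)^{n-d-1}(\cosh r)^d\,dr\,d\zeta\,d\eta$ is read off from the block-diagonal structure; the essential verification is, as you say, the vanishing of the $dr$--$d\zeta$ and $dr$--$d\eta$ cross terms via $\zeta\cdot d\zeta=0$ and $[\eta,d\eta]_{d,1}=0$. The paper instead works entirely with Lebesgue integration in the ambient space $\bbe^{n,1}$: it integrates a test function over the interior of the light cone, decomposes that integral by nested polar coordinates in $\bbr^{n-d}$ and $\bbe^{d,1}$, performs the change of variables $(s,w)\to(t\cosh r,\,t\sinh r)$ with Jacobian $t$, and then separates variables by choosing $F(ty)=g(t)f(y)$ with $\int_0^\infty g(t)t^n\,dt=1$ to peel off the radial factor and isolate the measure on $\hn$. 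Your argument is shorter and more conceptual, making the geometric (warped-product) structure explicit, but it presupposes the identification of the Riemannian measure with the metric volume form and the standard warped-product volume formula. The paper's argument is more elementary and self-contained -- only a multivariable change of variables is used -- which fits its declared coordinate/function-theoretic approach and its normalization of $dx$ via the ambient space. Both are complete; your parametrization step (bijectivity off the null set $x'=0$) agrees with the paper's explicit formulas $\sinh r=|x'|$, $\cosh r=|x''|_H$, $\zeta=x'/|x'|$, $\eta=x''/|x''|_H$.
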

\begin{proof} The equality (\ref{24}) can be found in \cite [pp. 12, 23]{VK} without proof.  For convenience of the reader, we prove it below. We write  $x \in \hn$ as
\[
x= x' +x''= \frac {x'}{|x'|}\, |x'| +  \frac {x''}{|x''|_H}\, |x''|_H,  \qquad    x'\in \bbr^{n-d},  \quad x''\in \bbe^{d,1},\]
where
\[ |x'| \!=\! \sqrt {x_1^2 + \cdots +  x^2_{n-d}}, \qquad  |x''|_H \!=\! \sqrt {-x^2_{n-d+1}- \cdots -x^2_{n} + x^2_{n+1}}.\]
Setting
 \[  \eta=\frac {x''}{|x''|_H}, \qquad \z= \frac {x'}{|x'|}, \qquad \ch \,r=|x''|_H, \qquad \sh \,r=|x'|, \]
 we obtain (\ref{23}). To prove (\ref{24}),  let
\[ \Om_{n+ 1} =\{ {\bf x}\in \bbe^{n,1} : \, - x_1^2  - \ldots -x_n^2 + x^2_{n +1}>0,\; x_{n+1}>0\}\]
be the interior of the light cone in $\bbe^{d,1}$;  $\Om_{d+ 1}=\Om_{n+ 1}\cap \bbe^{d, 1}$. Then for any sufficiently good function $F$ on $\Om_{n+ 1}$,
\[
\intl_{\Om_{n+ 1}} F({\bf x})\, d{\bf x}=\intl_{\Om_{d+ 1}}\, d{\bf x}''\intl_{ |{\bf x}'| < |{\bf x}''|_H }  F({\bf x}'+{\bf x}'' )\,  d{\bf x}''\]
or
\[\intl_0^\infty t^n dt  \intl_{\hn} F(ty)\, dy=\intl_0^\infty s^d ds \intl_0^s w^{n-d-1} dw \intl_{\bbh^d} d\eta \intl_{S^{n-d-1}} F(w \z + s\eta)\, d\z.\]
Changing variables $(s,w) \to (t\,\ch\, r,  t\,\sh \, r)$ (the Jacobian  $=t$), we obtain
\bea
\intl_0^\infty t^n dt  \intl_{\hn} F(ty)\, dy&=&\intl_0^\infty t^n dt \intl_0^\infty (\sinh r)^{n-d-1}(\cosh r)^d \, d r \nonumber\\
&\times&\intl_{\bbh^d} d\eta \intl_{S^{n-d-1}} F(t (\eta\cosh r+\zeta\sinh r))\, d\z. \nonumber\eea
Choose $F$ in the form $ F(ty)= g(t) f(y)$ with $\int_0^\infty g(t) t^n dt =1$. This gives
\bea
\intl_{\hn} \!f(y)\, dy \!\!\!&=&\!\!\!\intl_0^\infty (\sinh r)^{n-d-1}(\cosh r)^d \, d r \intl_{\bbh^d} d\eta \!\!\intl_{S^{n-d-1}}  \!\! \!f(\eta\cosh r+\zeta\sinh r) d\z\nonumber\\
\label {onu}&=&\!\!\intl^\infty_0d\nu( r)\intl_{\bbh^d}
d\eta\intl_{S^{n-d-1}} f(\eta\cosh r+\zeta\sinh r) \, d\zeta,\eea
and the result follows.
\end{proof}

\begin{remark} The integral (\ref{onu}) can be written in a more compact form

\be\label {poi}\intl_{\hn} \!f(y)\, dy=\sigma_{n-d-1}\intl^\infty_0\Big[\intl_{H_d} f(h g_{n-d, n+1} ( r) \,\tilde o )\,dh\Big] \, d\nu( r), \ee
where
\be\label {hipr} g_{n-d, n+1} (r) =
 \left[ \begin{array}{cccc}
 I_{n-d-1}     & 0             & 0        & 0              \\
 0           & \cosh r   & 0        & \sinh r    \\
 0           & 0             & I_{d}    & 0              \\
 0           &\sinh r    & 0        & \cosh r
 \end{array} \right ]
\ee
is the hyperbolic rotation in the coordinate plane $(x_{n-d}, x_{n+1})$,
\be\label {ford}
H_d= \left \{ h=\left[ \begin{array}{cc}
 \a_1    & 0           \\
 0               & \a_2
 \end{array} \right ]:\, \a_1 \in SO(n-d),  \,\a_2 \in SO_0(d,1)\right \}, \ee
\[dh=d\a_1 d \a_2.\]
Here $d\a_1$ stands for the Haar probability measure on $SO(n-d)$  and the measure  $d\a_2$ is normalized as in (\ref {invam}) with $G$ replaced by $SO_0(d,1)$, $\hn$ by $\bbh^d$, and $dx$ by the  Riemannian measure
$d\zeta$  on the hyperboloid  $\bbh^d$. Note that the choice of the hyperbolic rotation (\ref{hipr}) in (\ref{poi}) is not unique because of the $H_d$-invariance of the measure $dh$; cf. \cite [formula (2.19)]{BR1}.
\end{remark}

\subsection{$d$-Geodesics}

We denote by $\Gam_H (n,d)$, $0\le d\le n-1$,   the set of all $d$-dimensional totally geodesic submanifolds of $\hn$ ($d$-geodesics, for short).
 If $d=0$, then  $\Gam_H (n,d)$ is identified with $\hn$. Each $d$-geodesic is a cross-section of the hyperboloid $\hn$ by some $(d+1)$-dimensional plane passing through the origin $o$ of $\bbe^{n, 1}$.
The group $G=SO_0 (n,1)$ acts on $\Gam_H (n,d)$ transitively.
We consider the $d$-dimensional hyperboloid $\bbh^d=\hn \cap \bbe^{d, 1}$ as the origin (the base element) of $\Gam_H (n,d)$. The group $H_d$ in (\ref{ford}) is the
stabilizer of  $\bbh^d$ in $G$. Hence $\Gam_H (n,d)$ can be regarded as the quotient space $G/H_d$ and each $d$-geodesic
$\fr {t} \in \Gam_H (n,d)$ has the form $\fr {t} =g \bbh^d$ for some $g\in G$.

%$\mathrm{A}_n^d$

We denote by  $|| \fr {t}||$ the geodesic distance from $\fr {t}\in \Gam_H (n,d)$ to the origin $\tilde o=(0, \ldots, 0,1)$ of $\hn$. If
$\fr {t}=\a g_{n-d, n+1} (r) \bbh^d$, where $\a \in K$ and $ g_{n-d, n+1} (r)$ is the hyperbolic rotation (\ref{hipr}), then
\bea \label{aick}
 || \fr {t}||&=&d (\tilde o,\a g_{n-d, n+1} (r) \bbh^d)= d(g_{n-d, n+1} (r) \tilde o, \bbh^d)\nonumber\\
 &=& d (e_{n-d}\, \sh \,r +e_{n+1}\, \ch \,r,  \bbh^d)=r.\eea
Once the invariant measures $dg$ on $G$ and $dh$ on $H_d$ are
 fixed, we can define the invariant measure $d\fr {t} \!= \!d(gH_d)$ on
$\Gam_H (n,d)\!=\! G/H_d$ normalized by
\be\label {helg}
 \intl_G \Phi(g)dg = \intl_{G/H_d} d(gH_d)\intl_{H_d} \Phi(gh)\,dh;\ee
see \cite [p. 91]{H00}. The following lemma gives precise expression of  $d\fr {t}$ in the hyperbolic coordinates.

\begin{lemma}  If $\varphi \in L^1(\Gam_H (n,d))$, then
\be\label {jua}
 \intl_{\Gam_H (n,d)}\varphi(\fr {t})\,d\fr {t}=\sigma_{n-d-1}\intl^\infty_0 d\nu( r) \intl_K\varphi(\a g_{n-d, n+1} (r) \bbh^d)\,
 d\a, \ee
where $d\nu( r)=(\sinh r)^{n-d-1}(\cosh r)^d \, d r$, and $g_{n-d, n+1} (r)$ is the hyperbolic rotation (\ref{hipr}).
\end{lemma}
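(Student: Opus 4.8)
The plan is to pin down the invariant measure $d\fr{t}$ through the quotient integral formula (\ref{helg}) and then to reduce the whole computation to the point‑integration formula (\ref{poi}) that has just been established. Formula (\ref{helg}) says that for every $\Phi\in L^1(G)$ one has $\intl_G\Phi(g)\,dg=\intl_{\Gam_H(n,d)}d\fr{t}\intl_{H_d}\Phi(g_{\fr{t}}h)\,dh$, where $g_{\fr{t}}$ is any representative with $g_{\fr{t}}\bbh^d=\fr{t}$ and the inner integral depends only on the coset $\fr{t}=g_{\fr{t}}H_d$. Thus it suffices to exhibit \emph{one} explicit decomposition of the Haar measure $dg$ whose inner fibre is exactly $\intl_{H_d}(\cdots)\,dh$ and whose base is parametrized by $(\a,r)\in K\times[0,\infty)$ through $\fr{t}=\a\,g_{n-d,n+1}(r)\,\bbh^d$. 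By (\ref{aick}) this $\fr{t}$ satisfies $\|\fr{t}\|=r$, and as $\a$ ranges over $K$ and $r$ over $[0,\infty)$ every $d$-geodesic is reached.

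To produce such a decomposition I would start from the $G/K$ disintegration attached to (\ref{invam}), namely $\intl_G\Phi(g)\,dg=\intl_{\hn}F(y)\,dy$ with $F(y)=\intl_K\Phi(g_y k)\,dk$ (here $dk$ is the probability Haar measure on $K$ and $g_y\tilde o=y$, where $\tilde o=x_0$). Feeding the point‑measure formula (\ref{poi}) into this and choosing, for $y=h\,g_{n-d,n+1}(r)\,\tilde o$, the representative $g_y=h\,g_{n-d,n+1}(r)$ — legitimate since $k\in K$ fixes $\tilde o$, so $F$ is independent of the choice — yields
\[
\intl_G\Phi(g)\,dg=\sigma_{n-d-1}\intl_0^\infty d\nu(r)\intl_{H_d}dh\intl_K\Phi\bigl(h\,g_{n-d,n+1}(r)\,k\bigr)\,dk .
\]
In this identity the compact factor $K$ sits on the right and $H_d$ on the left, which is adapted to $\hn=G/K$ rather than to $\Gam_H(n,d)=G/H_d$. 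The decisive step is therefore to interchange the two sides. Applying the substitution $\Phi\mapsto\check\Phi$, $\check\Phi(g)=\Phi(g^{-1})$, using that $G=SO_0(n,1)$ is unimodular so $\intl_G\check\Phi\,dg=\intl_G\Phi\,dg$, and then replacing $h\mapsto h^{-1}$, $k\mapsto k^{-1}$ together with $g_{n-d,n+1}(r)^{-1}=g_{n-d,n+1}(-r)$, converts the display into $\intl_G\Phi\,dg=\sigma_{n-d-1}\intl_0^\infty d\nu(r)\intl_K dk\intl_{H_d}\Phi\bigl(k\,g_{n-d,n+1}(-r)\,h\bigr)\,dh$. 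The sign of $r$ is removed by conjugating $g_{n-d,n+1}(r)$ with a reflection $w\in SO(n-d)\subset K\cap H_d$ that flips $e_{n-d}$, so that $w\,g_{n-d,n+1}(r)\,w^{-1}=g_{n-d,n+1}(-r)$ and $w\bbh^d=\bbh^d$; the factor $w$ is then absorbed into $k$ and into $h$ by invariance of $dk$ and $dh$.

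Comparing the resulting identity $\intl_G\Phi\,dg=\sigma_{n-d-1}\intl_K d\a\intl_0^\infty d\nu(r)\intl_{H_d}\Phi(\a\,g_{n-d,n+1}(r)\,h)\,dh$ with (\ref{helg}) — both carry the same inner fibre $\intl_{H_d}\Phi(g_{\fr{t}}h)\,dh$ over the base point $\fr{t}=\a\,g_{n-d,n+1}(r)\,\bbh^d$ — forces $d\fr{t}=\sigma_{n-d-1}\,d\nu(r)\,d\a$, which is exactly (\ref{jua}). To pass from equality of the iterated integrals to equality of the measures on $\Gam_H(n,d)$ one uses that every $\varphi\in L^1(\Gam_H(n,d))$ arises as $\fr{t}\mapsto\intl_{H_d}\Phi(g_{\fr{t}}h)\,dh$ for a suitable $\Phi$. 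I expect the interchange of the $K$- and $H_d$-factors to be the main obstacle: the point‑integration formula only carries $H_d$ on the stabilizing side \emph{after} the inversion, and the accompanying reflection $r\mapsto-r$ must be supplied — a step requiring minor separate attention in the degenerate case $d=n-1$, where $SO(n-d)$ is trivial and $S^{n-d-1}=S^0$.
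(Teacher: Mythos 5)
Your argument is correct and is essentially the paper's proof in a different order: both reduce to the point-integration formula (\ref{poi}) via the quotient formula (\ref{helg}), use unimodularity of $G$ (the inversion $g\mapsto g^{-1}$) to swap the $K$- and $H_d$-factors, and rely on the surjectivity of $\Phi\mapsto\int_{H_d}\Phi(\cdot\, h)\,dh$ from $L^1(G)$ onto $L^1(\Gam_H(n,d))$ (Helgason's auxiliary-function trick) to descend from the iterated-integral identity to the measure identity. The only divergence is in removing the sign of $r$: the paper does this with a reflection $\a_0\in K$ alone, absorbed into the $K$-integral after the inner $H_d$-integration has already produced $\varphi(\a\, g^{-1}_{n-d,n+1}(r)\,H_d)$, which also covers the case $d=n-1$ that your $w\in K\cap H_d$ does not reach — exactly the edge case you correctly flag.
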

\begin{proof} This statement  is a slight modification of our previous result \cite [Lemma 1]{BR1}. We present the proof (with minor changes) for the sake of completeness.
Let $\Pi:G\mapsto G/H_d$ be the canonical projection, and let $f_1$ be
a positive function on $G$ satisfying
  \[\tilde f_1(g) \equiv \intl_{H_d} f_1(gh)dh <\infty \quad \, \forall g \in G \]
  (such a function  obviously exists).  To each $\varphi \in L^1(\Gam_H (n,d))$ we assign a function $\Phi$ on $G$
  defined by
\be
  \Phi(g) \, = \, \frac{(\varphi\circ\Pi)(g) f_1(g)}{\tilde f_1(g)}.
  \ee
  Then
\be \label{trick} \intl_{H_d}\Phi(gh)\,dh=\varphi(gH_d).\ee
The trick related to $f_1$ is
a slight modification of Helgason's idea (see \cite [p. 91]{H00}).  Thus
we have
\bea \intl_{\Gam_H (n,d)}\varphi(\fr {t})\,d\fr {t}&=& \intl_{G/H_d}
\varphi(gH_d) \, d(gH_d) \nonumber \\
&\buildrel {(\ref{trick})}\over =& \intl_{G/H_d} d(gH_d)\intl_{H_d} \Phi(gh) \, dh
\nonumber \\
& \stackrel{(\ref{helg})}{=}& \intl_G\Phi(g) \, dg =\intl_G\tilde\Phi(g) \, dg, \quad \tilde\Phi(g)=\intl_K\Phi(\a g^{-1})\,d\a.\nonumber \eea
The function $\tilde\Phi$ is  right $K$-invariant and can be
identified with a  function $\Psi$
 on $\hn$, so that $\tilde\Phi(g)=\Psi(g \tilde o)$.  Hence
\bea
 \intl_G\tilde\Phi(g) \, dg
 &\stackrel {(\ref{invam})}{=}& \intl_{\hn}\Psi(x) \, dx \nonumber \\ &\buildrel {(\ref{poi})}\over
 =&
 \sigma_{n-d-1}\intl^\infty_0
 d\nu( r)\intl_{H_d}\Psi(hg_{n-d, n+1} (r)  \tilde o) \, dh \nonumber
\eea
and we have
\bea \intl_{H_d}\Psi(hg_{n-d, n+1} (r)  \tilde o) \, dh
&=&\intl_{H_d}\tilde\Phi(hg_{n-d, n+1} (r)) \, dh \nonumber \\ &=&
\intl_{H_d} dh\,\intl_K\Phi(\a g^{-1}_{n-d, n+1} (r) h) \, d\a \nonumber \\
\label {oka}&\stackrel{\rm (\ref{trick})}{=}&  \,\intl_K\varphi(\a g^{-1}_{n-d, n+1} (r) H_d) \,
d\a.  \eea
Note that  $g^{-1}_{n-d, n+1} (r)$ can be replaced by $g_{n-d, n+1} (r)$. Indeed,  $g^{-1}_{n-d, n+1} (r)=g_{n-d, n+1} (-r)$, and therefore
\bea
g^{-1}_{n-d, n+1} (r) \tilde o &=& g_{n-d, n+1} (-r) \tilde o\nonumber\\
&=& -e_{n-d} \,\sh r +e_{n+1} \,\ch r=\a_0 (e_{n-d}\, \sh r +e_{n+1}\, \ch r)\nonumber\\
&=&\a_0 g_{n-d, n+1} (r)\nonumber\eea
for some $\a_0 \in K$, $\a_0 e_{n-d} =- e_{n-d}$. Changing variable in (\ref {oka}), we replace $g^{-1}_{n-d, n+1} (r)$  by $g_{n-d, n+1} (r)$. This gives the result.
\end{proof}

\begin{remark} The formula (\ref{jua}) was implicitly stated
by Ishikawa \cite [p. 155] {I97} as a particular case of a general
result of Schlichtkrull \cite [p. 149]{Schl}.  Our statement is
precise and its proof is self-contained.
\end{remark}

Every zonal (i.e., $K$-invariant) function $\vp (\fr {t})$ on $\Gam_H (n,d)$ is, in fact, a single-variable function of the distance $|| \fr {t}||$. The equality (\ref{jua}) yields the following result.
\begin{corollary} If  $\vp (\fr {t})\equiv \vp_0 (\ch || \fr {t}||)$, then
\bea
\label{aicke} \intl_{\Gam_H (n,d)}\!\!\!\!\varphi(\fr {t})\,d\fr {t}\!\!\!&=\!\!\!&\sigma_{n-d-1}\intl_0^\infty (\sinh\, r)^{n-d-1}(\cosh \,r)^d \, \vp_0 (\ch \,r)\, dr \qquad \\
\label{aick1a} \!\!\!&=&\!\!\!\sigma_{n-d-1}\intl_1^\infty (s^2 -1)^{(n-d)/2 -1} s^d \, \vp_0 (s)\, ds.\eea
Alternatively, if  $\vp (\fr {t})\equiv \vp_1 (\th || \fr {t}||)$, then
\be \label{aick2}
 \intl_{\Gam_H (n,d)} \varphi(\fr {t})\,d\fr {t}=\sigma_{n-d-1}\intl_0^1 \frac{s^{n-d-1}}{(1-s^2)^{(n+1)/2}} \, \vp_1 (s)\, ds.\ee
\end{corollary}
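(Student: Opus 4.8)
The plan is to obtain all three identities directly from the integration formula (\ref{jua}), exploiting the fact that a zonal function is constant on the $K$-orbit appearing there. First I would invoke (\ref{aick}): for every $\a\in K$ one has $\|\a\,g_{n-d,n+1}(r)\,\bbh^d\| = r$, independently of $\a$. Hence, if $\vp(\fr{t}) = \vp_0(\ch\|\fr{t}\|)$, the integrand in (\ref{jua}) is the $\a$-independent value $\vp_0(\ch r)$. Since $d\a$ is a probability measure on $K\cong SO(n)$, the inner $K$-integral disappears and (\ref{jua}) collapses to
\[
\intl_{\Gam_H (n,d)}\!\!\varphi(\fr {t})\,d\fr {t}=\sigma_{n-d-1}\intl_0^\infty (\sinh r)^{n-d-1}(\cosh r)^d \,\vp_0(\ch r)\, dr,
\]
which is exactly (\ref{aicke}).

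For (\ref{aick1a}) I would substitute $s=\ch r$, so that $ds=\sh r\,dr$ and $(\sinh r)^{n-d-1}\,dr = (s^2-1)^{(n-d)/2-1}\,ds$, while $(\cosh r)^d = s^d$ and the range $r\in(0,\infty)$ becomes $s\in(1,\infty)$. This yields the second displayed form immediately.

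The alternative representation (\ref{aick2}) follows by the same collapse applied to $\vp(\fr{t}) = \vp_1(\th\|\fr{t}\|)$, which turns the radial integral into $\sigma_{n-d-1}\int_0^\infty (\sinh r)^{n-d-1}(\cosh r)^d\,\vp_1(\th r)\,dr$. I would then substitute $s=\th r$, using $\ch r=(1-s^2)^{-1/2}$, $\sh r=s\,(1-s^2)^{-1/2}$, and $dr=(1-s^2)^{-1}\,ds$. The weight becomes $s^{n-d-1}(1-s^2)^{-(n+1)/2}\,ds$, because the powers of $(1-s^2)$ in the denominator add up to $(n-d-1)/2 + d/2 + 1 = (n+1)/2$, and $r\in(0,\infty)$ corresponds to $s\in(0,1)$.

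There is no genuine obstacle here; the only step demanding attention is the bookkeeping of the fractional powers of $(1-s^2)$ in the $\th$-substitution. The conceptual point—that zonality, via (\ref{aick}), allows the $K$-average to be dropped—is precisely what reduces the two-variable integral in (\ref{jua}) to the stated one-dimensional integrals.
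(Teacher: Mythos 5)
Your proposal is correct and follows exactly the route the paper intends: the corollary is read off from (\ref{jua}) by noting that zonality makes the $K$-integral trivial (via (\ref{aick}) and the normalization of $d\a$), and the forms (\ref{aick1a}) and (\ref{aick2}) are the substitutions $s=\ch r$ and $s=\th r$, which the paper records in one line. Your bookkeeping of the powers of $(1-s^2)$ and of $(s^2-1)$ is accurate, so there is nothing to add.
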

Here (\ref {aick2}) follows from  (\ref {aicke}) if we set $\th\, r =s$.

\subsection {The Beltrami-Klein Model}

The unit ball $B_n$ in $\rn$ with the relevant metric \cite [Section 7]{CFKP}
can be considered as the Beltrami-Klein model of the real $n$-dimensional hyperbolic space. The set
$\Gam_B (n,d)$ of all $d$-dimensional chords  in  $B_n$ (see Subsection \ref{mens}) forms the manifold of all $d$-geodesics in this model.
 On the other hand, every $d$-geodesic in the hyperboloid model $\hn$ is a cross-section of  $\hn$ by a $(d+1)$-dimensional plane passing through the origin $o=(0, \ldots, 0)$ of $\bbe^{n,1}$.
 To establish connection between geodesics in both models,
 we consider the $K$-equivariant map
 \be \label {mnqe}\Gam_B (n,d) \ni \t \xrightarrow {\quad \pi \quad }
\fr {t} =\hn \cup \{\t + e_{n+1}\} \in \Gam_H (n,d),\ee
where $ \{\t + e_{n+1}\}$ is the smallest linear subspace containing the shifted plane $\t + e_{n+1}$. It is clear that $\pi $ is bijective, in contrast to the similar map $\mu :   {\rm Gr} (n,d) \mapsto G_{n+1, d+1}$ from (\ref{nra}).

\begin{lemma}\label {lem1} If $f \in L^1 (\Gam_B (n,d))$, then
\be\label {kred}
\intl_{\Gam_B (n,d)} f(\t)\, d\t= \intl_{\Gam_H (n,d)} \frac{ (f\circ \pi^{-1})(\fr {t}) }{ (\ch ||\fr {t}||)^{n+1}}\, d\fr {t},\ee
where the measure $d\fr {t}$ is defined according to ( \ref{jua}); $||\fr {t}||=d (\fr {t}, \tilde o)$.
If $\Phi \in L^1 (\Gam_H (n,d))$, then
\be\label {kred1}
\intl_{\Gam_H (n,d)} \Phi (\fr {t})\, d\fr {t}=\intl_{\Gam_B (n,d)} \frac{ (\Phi\circ \pi)(\t)}{(1-|\t|^2)^{(n+1)/2}}\, d\t.\ee

\end{lemma}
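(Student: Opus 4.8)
The plan is to glue the two models along a common radial coordinate and then reduce the whole identity to zonal (radial) functions by exploiting the $K$-equivariance of $\pi$; here $K$ denotes the stabilizer of $\tilde o$ in $G$, which is isomorphic to $SO(n)$. I would prove (\ref{kred1}) first and deduce (\ref{kred}) from it, since the two are mutually inverse forms of a single change-of-variables relation.

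First I would make $\pi$ explicit on one representative of each $K$-orbit. Fix $\rho\in[0,1)$ and take the base $d$-chord at Euclidean distance $\rho$, namely $\t_\rho=B_n\cap(\bbr^d+\rho e_{n-d})$, so that $|\t_\rho|=\rho$. The shifted affine plane $\t_\rho+e_{n+1}$ spans the $(d+1)$-dimensional linear subspace $\bbr^d\oplus\bbr(\rho e_{n-d}+e_{n+1})$. Reading $g_{n-d,n+1}(r)$ off from (\ref{hipr}) gives $g_{n-d,n+1}(r)\bbe^{d,1}=\bbr^d\oplus\bbr(\sinh r\,e_{n-d}+\cosh r\,e_{n+1})$, and the two subspaces coincide exactly when $\rho=\th r$. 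Intersecting with $\hn$ and using that $g_{n-d,n+1}(r)\in G$ preserves $\hn$, I obtain $\pi(\t_\rho)=g_{n-d,n+1}(r)\bbh^d$, whose distance to $\tilde o$ equals $r$ by (\ref{aick}). Hence $\th\|\pi(\t)\|=|\t|$, equivalently $\ch\|\pi(\t)\|=(1-|\t|^2)^{-1/2}$, first on this representative and then on the whole orbit by the $K$-equivariance built into (\ref{mnqe}). I expect this to be the main obstacle, since it is the only place where the geometry of the two models is actually compared; everything downstream is bookkeeping.

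Granting this, both $d\fr{t}$ and the pushforward measure $\pi_*\big[(1-|\t|^2)^{-(n+1)/2}\,d\t\big]$ are $K$-invariant Radon measures on $\Gam_H(n,d)$: the former because it is the $G$-invariant measure constructed in (\ref{jua}), the latter because $d\t$ is $SO(n)$-invariant and $\pi$ is $K$-equivariant. Since $K$ acts transitively on the set of $d$-geodesics lying at a fixed distance from $\tilde o$, that distance is a complete invariant of the $K$-action, and a $K$-invariant measure on $\Gam_H(n,d)$ is determined by its values on zonal functions. It therefore suffices to verify (\ref{kred1}) for $\Phi(\fr{t})=\vp_1(\th\|\fr{t}\|)$.

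For such $\Phi$ the left-hand side of (\ref{kred1}) equals $\sigma_{n-d-1}\int_0^1 s^{n-d-1}(1-s^2)^{-(n+1)/2}\vp_1(s)\,ds$ by (\ref{aick2}). On the right-hand side the first step gives $(\Phi\circ\pi)(\t)=\vp_1(|\t|)$, so the integrand is the radial function $\vp_1(|\t|)(1-|\t|^2)^{-(n+1)/2}$; inserting it into (\ref{funcE}) and passing to polar coordinates in the $(n-d)$-dimensional space $\xi^\perp\cap B_n$ reproduces exactly the same expression, which proves (\ref{kred1}). Finally I would obtain (\ref{kred}) by substituting $\Phi=f\circ\pi^{-1}$ into (\ref{kred1}) and rewriting the weight through $\ch\|\fr{t}\|=(1-|\t|^2)^{-1/2}$; equivalently, (\ref{kred1}) and (\ref{kred}) are the two mutually inverse forms of the single relation $\pi_*(d\t)=(\ch\|\fr{t}\|)^{-(n+1)}\,d\fr{t}$, so either one yields the other.
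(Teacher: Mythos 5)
Your argument is correct, but it is organized differently from the paper's. The paper proves (\ref{kred}) directly for general $f$: it writes $\int_{\Gam_B(n,d)}f(\t)\,d\t$ in polar coordinates on $\xi^\perp\cap B_n$, substitutes $t=\th\,r$, $\xi=\a\bbr^d$, $\sig=\a\del e_{n-d}$, and recognizes the resulting expression as the right-hand side of (\ref{jua}) applied to $(f\circ\pi^{-1})(\fr t)\,(\ch\|\fr t\|)^{-n-1}$; the identity (\ref{kred1}) is then read off via $1/\ch^2\|\fr t\|=1-|\t|^2$. You instead isolate the geometric core --- the verification that $\pi(\t_\rho)=g_{n-d,n+1}(r)\bbh^d$ with $\rho=\th\,r$, hence $\th\|\pi(\t)\|=|\t|$, which is exactly the paper's relation (\ref{tat}) --- and then replace the general change of variables by an abstract uniqueness principle: both $d\fr t$ and $\pi_*\big[(1-|\t|^2)^{-(n+1)/2}d\t\big]$ are $K$-invariant, $K$ acts transitively on geodesics at fixed distance from $\tilde o$, and the standard averaging argument (for nonnegative $\Phi$, $\int\Phi\,d\mu=\int\Phi_{\rm ave}\,d\mu$ by Tonelli) shows two such measures agreeing on zonal functions coincide. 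The zonal case then follows from (\ref{aick2}) and (\ref{funcE}) by a one-line polar-coordinate computation. What your route buys is a shorter computation and a clean separation of the geometric identification from the measure-theoretic bookkeeping; what the paper's route buys is self-containedness (it does not invoke uniqueness of invariant measures, only the explicit formula (\ref{jua})) and it produces the pointwise parametrization $f(\a(\bbr^d+e_{n-d}\th r))=f(\a\pi^{-1}(g_{n-d,n+1}(r)\bbh^d))$ that is reused later in the paper. Your reversal of the order --- proving (\ref{kred1}) first and deducing (\ref{kred}) by the substitution $\Phi=(f\circ\pi^{-1})\cdot(\ch\|\cdot\|)^{-n-1}$ --- is harmless, since the two formulas are indeed mutually inverse forms of the single relation $\pi_*(d\t)=(\ch\|\fr t\|)^{-n-1}d\fr t$.
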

\begin{proof} We set $f(\t)=f(\xi+ u)$, $\xi \in G_{n,d}$, $ u \in \xi^\perp \cap B_n$, and transform the left-hand side of (\ref{kred}).
Passing to polar coordinates in $\xi^\perp$ and changing variables, we obtain
\bea
&&\intl_{\Gam_B (n,d)} f(\t)\, d\t= \intl_{G_{n,d}} d\xi \intl_{\xi^\perp \cap B_n} f(\xi + u) \,du\nonumber\\
&& =\intl_{G_{n,d}} d\xi \intl_0^1 t^{n-d-1}dt \intl_{S^{n-1} \cap \,\xi^\perp} f(\xi +\sig t)\, d\sig\nonumber\\
&&\;  \text {\rm (set $t=\th \,r$, $ \xi =\a \bbr^d, \; \a\in K$, $\;\sig =\a \del e_{n-d}, \; \del \in SO (n-d)\,$)}\nonumber\\
&&=\sigma_{n-d-1} \intl_{K} d\a \intl_0^\infty \frac{(\th \,r)^{n-d-1}}{(\ch \,r)^2} \,dr \!\!\intl_{SO (n-d)} \!\!\! f(\a ( \bbr^d +\del e_{n-d} \th\, r))\, d \del\nonumber\\
&&=\sigma_{n-d-1} \intl_0^\infty d\nu (r)\intl_{K} \frac{ f(\a ( \bbr^d + e_{n-d}\, \th\, r))}{(\ch \,r)^{n+1}}\, d\a,\nonumber\eea
where $d\nu( r)=(\sinh r)^{n-d-1}(\cosh r)^d$. Note that
\[
f(\a ( \bbr^d + e_{n-d} \,\th\, r))=f(\a \pi^{-1}(g_{n-d, n+1} (r) \bbh^d)),\]
where $g_{n-d, n+1} (r)$ is the hyperbolic rotation (\ref{hipr}) and   $\pi^{-1}: \Gam_H (n,d) \mapsto \Gam_B (n,d)$ is the inverse of (\ref{mnqe}). Thus, owing to $K$-equivariance of $\pi$,
\[
\intl_{\Gam_B (n,d)} f(\t)\, d\t=\sigma_{n-d-1} \intl_0^\infty d\nu (r)\intl_{K} \frac{(f\circ \pi^{-1})(\a g_{n-d, n+1} (r) \bbh^d)}{(\ch \,r)^{n+1}}\, d\a.\]
Setting $\fr {t}= \a g_{n-d, n+1} (r) \bbh^d \in \Gam_H (n,d)$, and noting that $d (\fr {t}, \tilde o)\equiv ||\fr {t}|| =r$ (cf. (\ref{aick}),  (\ref{jua})), we obtain (\ref{kred}).
The latter implies (\ref{kred1}) if we note that
\be\label {tat} 1/\ch^2 \,||\fr {t}|| =1/\ch^2 \,r =1- \th^2 r= 1- |\t|^2.\ee
\end{proof}

In addition to (\ref{tat}), we also note that
 \be\label {adt2T}
\cos \, 2|\t_0|\!= \! \frac{1\!-\!\tan^2 |\t_0|}{1\!+\!\tan^2 |\t_0|}= \frac{1\!-\!|\t|^2 }{1\!+\!|\t|^2}=\frac{1\!-\!\th^2 ||\fr {t}||}{1\!+\!\th^2 ||\fr {t}||}=\! \frac {1}{\ch \, 2||\fr {t}|||},\ee
where $\fr {t}=\pi (\t)$ and $\tau_0 =\mu (\t)$; cf.  (\ref{mnqe}), (\ref{nra}). These equalities will be used in forthcoming calculations.

\section {Higher-Rank Radon Transforms in the Hyperbolic Space}

\subsection {Setting of the Problems and Discussion}  For $0\le j<k\le n-1$, let $\Gam_H (n,j)$ and $\Gam_H (n,k)$ be a pair of the corresponding spaces of totally geodesic submanifolds of $\hn$.
   The case $j=0$ corresponds to points of $\hn$. Consider  the Radon  transforms
\be\label {larz} (R_H f)(\fr {z}) = \intl_{ \fr {t} \subset \fr {z}} f(\fr {t})\, d_{\fr {z}} \fr {t}, \qquad (R^*_H \vp)(\fr {t})  = \intl_{ \fr {z} \supset \fr {t}} \vp(\fr {z})\,d_{{}_{\fr {t}}} \fr {z},\ee
\[
\fr {t} \in \Gam_H (n,j), \qquad \fr {z} \in \Gam_H (n,k).\]

Our main objectives are

$\quad$ - precise definition  of these integrals;

$\quad$ -  existence  in Lebesgue spaces;

$\quad$ -  representation in the  Beltrami–-Klein and projective models;

$\quad$ - support theorems;

$\quad$ - inversion formulas.

The precise definition of $R_H$ and $R^*_H$ can be given in the group-theoretic terms in the framework of   Helgason's double fibration scheme
\be\label {kerx}
G/L_1  \longleftarrow  G/L  \longrightarrow G/L_2,  \qquad   L=L_1 \cap L_2.\ee
 In our case, $G=SO_0 (n,1)$, $L_1=H_j$ (the stabilizer of the sub-hyperboloid $\bbh^j$), $L_2=H_k$   (the stabilizer of the sub-hyperboloid $\bbh^k$);
 see (\ref{ford}) with $d=j,k$. According to (\ref{kerx}),
\be\label {Spec}
(R_H f)(\fr {z})\equiv (R_H f)(gL_2)=\intl_{L_2/L} f (gsL_1) ds_L, \ee
\be\label {Spec1} (R^*_H \vp)(\fr {t})\equiv (R^*_H \vp)(gL_1)= \intl_{L_1/L} \vp(gtL_2) dt_L,\ee
where $ds_L$ is the $L_2$-invariant measure on $L_2/L$ and $dt_L$ is the $L_1$-invariant measure on $L_1/L$; cf.  \cite[pp. 65, 68]{H11}, \cite[p. 237]{I20}.
This definition yields the duality relation
\be \label {wbcdhyp} \intl_{\Gam_H (n,k)} (R_H f)(\fr {z}) \,\vp (\fr {z})\, d \fr {z}=\intl_{\Gam_H (n,j)} f(\t)\, (R^*_H f)(\fr {t})\, d\fr {t} \ee
 for smooth compactly supported functions $f$ and $\vp$.

The afore-mentioned group-theoretic approach is applicable to a wide variety of Radon-like transforms and paves the way to many deep results.
 However, in our  case, it remains unclear (a) what  the minimal conditions are for  $f$ and $\vp$ under which the integrals (\ref{Spec}) and (\ref{Spec1}) exist in the standard Lebesgue sense,
  (b) how to use the the above  definitions to explicit  inversion of $R_H$ and $R^*_H$, and (c) what kind of support theorems for these operators are available.
Moreover,  the definition of smooth functions and the relevant differential operators on the  coset spaces needs a substantial
Lie-theoretic work. Below we develop an alternative,  functional-analytic approach, which relies on the integration formula (\ref{jua}).

 We make use of the $K$-equivariant map (\ref{mnqe}) (with $d$ replaced by $j$ and $k$),
 which establishes the one-to-one correspondence between chords in $B_n$ and geodesics in $\hn$.
If
\[\fr {t} \in \Gam_H (n,j), \quad \t=\pi^{-1} (\fr {t})\in \Gam_B (n,j) ; \]
\[ \fr {z}\in \Gam_H (n,k), \quad \z=\pi^{-1} (\fr {z})\in \Gam_B (n,k) ,\]
 then, clearly  $\fr {t} \subset \fr {z}$ if and only if $\t \subset \z$. It follows that all properties
of the chord transforms $R_B$ and $R^*_B$
have equivalent reformulation in terms of  $R_H$ and $R^*_H$.

Everywhere below, we will  keep the notation (\ref{krtg})-(\ref{krtg2}) for the coordinate subspaces
$\bbr^j$, $\bbr^k$, $\bbr^{n-j}$, $\bbr^{n-k}$.

\subsection {The Radon Transforms $R_H$ and $R^*_H$ } We recall that formally,  $(R_H f)(\fr {z}) = \int_{ \fr {t} \subset \fr {z}} f(\fr {t})\, d_{\fr {z}} \fr {t}$.
To obtain explicit analytic expression for this integral, let first   $d m (\fr {t})$ be the canonical measure on the space $\Gam_H (k,j)$ of $j$-geodesics in the
hyperboloid $\bbh^k$.   Then $d_{\fr {z}} \fr {t}$ can be thought of as the image of $d m (\fr {t})$ under the map $\gam \in G$ with the property $\gam \bbh^k=\fr {z}$; cf. \cite [p. 16]{Matt}. Specifically, the following statement holds.

\begin {lemma}\label {lzaq} Let $\fr {z}=\pi (\z)$, $\z \in \Gam_B (n,k)$. We write  $\z= \eta +v$,  $\eta \in G_{n,k}$, $v \in \eta^\perp \cap B_n$, and let $\gam \in K$ be a rotation satisfying
\be\label {iuae}
\gam:  \bbr^k \mapsto \eta, \qquad \gam : e_{n-k}\mapsto v/|v|.\ee
Denote
\be\label {iuae1}
f_{\fr {z}}(\fr {t})=f( \gam g_{n-k, n+1} (r) \fr {t}), \quad r= ||\fr {z}||= d(\fr {z}, \tilde o),\ee
where $g_{n-k, n+1} (r)$  is the hyperbolic rotation (\ref{hipr}) with $d$ replaced by $k$. Then
\bea\label {iu3e} (R_H f)(\fr {z})  &=& \intl_{\Gam_H (k,j)} f_{\fr {z}}(\fr {t}) \, d m (\fr {t})\qquad\\
\label {iu3e1}&=&\sigma_{k-j-1}\intl^\infty_0 d\nu_{k,j}(\rho) \intl_{SO(k)}f_{\fr {z}}(\a g_{n-j, n+1} (\rho) \bbh^j)\,
 d\a, \qquad \eea
\[
 d\nu_{k,j}(\rho)=(\sinh\, \rho)^{k-j-1}(\cosh \,\rho)^j \, d\rho, \qquad \rho = ||\fr {t}||= d(\fr {t}, \tilde o).\]
\end{lemma}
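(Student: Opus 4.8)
The plan is to read the first equality (\ref{iu3e}) straight off the definition of the sliced measure $d_{\fr z}\fr t$, and then to obtain (\ref{iu3e1}) by feeding the result into the integration formula (\ref{jua}). By construction $d_{\fr z}\fr t$ is the image of the canonical measure $dm$ on $\Gam_H (k,j)$ under \emph{any} $\gam'\in G$ satisfying $\gam'\bbh^k=\fr z$, so that $(R_H f)(\fr z)=\intl_{\Gam_H (k,j)} f(\gam'\fr t)\,dm(\fr t)$. Hence everything reduces to exhibiting one admissible $\gam'$ and to checking that the prescription is independent of its choice. I claim that $\gam'=\gam\, g_{n-k, n+1} (r)$ is admissible, which immediately turns $f(\gam'\fr t)$ into the function $f_{\fr z}(\fr t)$ of (\ref{iuae1}) and yields (\ref{iu3e}).

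First I would verify the geometric identity $\gam\, g_{n-k, n+1} (r)\,\bbh^k=\fr z$. Writing $\fr z=\pi(\z)$ with $\z=\eta+v$, the geodesic $\fr z$ is the section of $\hn$ by the $(k+1)$-plane $\{\z+e_{n+1}\}=\eta\oplus\bbr(v+e_{n+1})$, while $\bbh^k=\hn\cap\bbe^{k,1}$ is cut out by $\bbe^{k,1}=\bbr^k\oplus\bbr e_{n+1}$. The hyperbolic rotation $g_{n-k, n+1} (r)$ of (\ref{hipr}) fixes $\bbr^k$ pointwise and sends $e_{n+1}\mapsto e_{n-k}\,\sh r+e_{n+1}\,\ch r$; applying $\gam\in K$ with $\gam\bbr^k=\eta$ and $\gam e_{n-k}=v/|v|$ then carries $\bbe^{k,1}$ onto $\eta\oplus\bbr\big((v/|v|)\,\sh r+e_{n+1}\,\ch r\big)$. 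The spanning vector is proportional to $v+e_{n+1}$ exactly when $\sh r/|v|=\ch r$, i.e. $|v|=\th r$; and this is precisely the Beltrami--Klein relation, since $|v|=|\z|=\th\, ||\fr z||=\th r$ by (\ref{tat}). Thus the two $(k+1)$-planes coincide and $\gam\, g_{n-k, n+1} (r)\,\bbh^k=\fr z$, as needed.

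Next I would dispose of the choice-independence and then apply (\ref{jua}). Any two elements sending $\bbh^k$ to $\fr z$ differ on the right by the stabilizer $H_k$ of $\bbh^k$ in (\ref{ford}); since the $SO(n-k)$ factor fixes $\bbe^{k,1}$ and the $SO_0(k,1)$ factor acts as the full isometry group of $\bbh^k$, the measure $dm$ on $\Gam_H (k,j)$ is $H_k$-invariant, so the pushforward $d_{\fr z}\fr t$ does not depend on the representative. This legitimizes (\ref{iu3e}). For (\ref{iu3e1}) I would simply apply the integration formula (\ref{jua}) to $\vp=f_{\fr z}$ on $\Gam_H (k,j)$, i.e. with $n,d$ replaced by $k,j$ (valid whenever the integrals converge, e.g. for $f_{\fr z}\in L^1(\Gam_H (k,j))$): the outer constant becomes $\sigma_{k-j-1}$, the radial density becomes $d\nu_{k,j}(\rho)$, the compact averaging runs over the maximal compact $SO(k)$ of the isometry group of $\bbh^k$, and the internal hyperbolic rotation is the one moving the top of $\bbr^{k-j}$ toward the pole, namely $g_{n-j, n+1} (\rho)$ in the original labelling.

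The only genuinely nontrivial point is the geometric matching of the second paragraph, where one must track how the index $n-k$ (for the rotation taking $\bbh^k$ out to $\fr z$) and the index $n-j$ (for the rotation internal to $\bbh^k$) are read off from the coordinate splitting (\ref{krtg})--(\ref{aakrtg2}), and must invoke $|v|=\th r$ from (\ref{tat}) to pin down the hyperbolic parameter. Everything else is bookkeeping: the relabelling of $e_{n-k+1},\dots,e_{n+1}$ as a standard $\bbe^{k,1}$ identifies $\Gam_H (k,j)$ with the $j$-geodesics of a genuine $k$-dimensional hyperboloid and turns (\ref{jua}) into (\ref{iu3e1}) verbatim.
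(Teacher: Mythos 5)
Your proof is correct and follows essentially the same route as the paper: treat (\ref{iu3e}) as the definition of $(R_H f)(\fr {z})$ via the pushforward of $dm$, check that the prescription is well defined, and obtain (\ref{iu3e1}) from (\ref{jua}) with $n,d$ replaced by $k,j$. The only differences are minor: you verify explicitly that $\gam\, g_{n-k, n+1}(r)\,\bbh^k=\fr {z}$ (via the relation $|v|=\th\, r$), a point the paper leaves implicit, and you settle choice-independence by invoking the $H_k$-invariance of $dm$, whereas the paper addresses only the ambiguity in $\gam$ by writing $\b=(\gam'')^{-1}\gam'=\b_1\b_2$ with $\b_1\in SO(n-k)$, $\b_2\in SO(k)$ and using the $SO(k)$-invariance of $dm$ — your version is, if anything, slightly more complete.
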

\begin {proof} Formula (\ref{iu3e}) gives precise meaning to $(R_H f)(\fr {z})$ under the assumption that the  right-hand side exists in the Lebesgue sense.
The more detailed formula (\ref{iu3e1}) follows from (\ref{iu3e})
 if we apply (\ref{jua}) with obvious changes. To complete the proof, it remains to show that (\ref{iu3e}) is independent of the choice of $\gam$ in (\ref{iuae1}).

We first note that the measure  $d m (\fr {t})$  in (\ref{iu3e}) is  $SO(k)$-invariant. Further, let
  $f'_{\fr {z}}(\fr {t})=f( \gam' g_{n-k, n+1} (r) \fr {t})$ and  $f''_{\fr {z}}(\fr {t})=f( \gam'' g_{n-k, n+1} (r) \fr {t})$, $ \gam'$ and $ \gam''$ being arbitrary rotations satisfying (\ref{iuae}). Then
 \[
 \intl_{\Gam_H (k,j)} f'_{\fr {z}}(\fr {t}) \, d m (\fr {t})= \intl_{\Gam_H (k,j)}f( \gam'' \b g_{n-k, n+1} (r) \fr {t}) \, d m (\fr {t}),\]
where $ \b=(\gam'')^{-1} \gam'$  preserves both $\bbr^k$ and $e_{n-k}$, and therefore commutes with $g_{n-k, n+1} (r)$.  Hence the right-hand side is
\[
\intl_{\Gam_H (k,j)}f( \gam''  g_{n-k, n+1} (r) \b\fr {t}) \, d m (\fr {t}).\]
We can write $\b$ in the form $\b=\b_1\b_2$ where $\b_1\in SO(n-k)$, $\b_2 \in SO(k)$, so that $\b\fr {t}=\b_2\fr {t}$. The latter allows us to write the above integral as
\bea
\intl_{\Gam_H (k,j)} \!\!\!f( \gam''  g_{n-k, n+1} (r) \b_2\fr {t}) \, d m (\fr {t})&=&\!\!\intl_{\Gam_H (k,j)}\!\!\!f( \gam''  g_{n-k, n+1} (r)\fr {t}) \, d m (\fr {t})\nonumber\\
&=&
\!\!\intl_{\Gam_H (k,j)} \!\!\!f''_{\fr {z}}(\fr {t}) \, d m (\fr {t}),\nonumber\eea
as desired. Here the first equality holds by the $SO(k)$-invariance of the measure  $d m (\fr {t})$ in $\Gam_H (k,j)$.
\end{proof}

Let us  connect representations of  $ R_H f$ in the hyperboloid model and the Beltrami-Klein one. We denote
\bea\label {lity}
(Mf)(\t)\!\!&=&\!\!(1\!-\!|\t|^2)^{-(k+1)/2} \,(f\circ \pi)(\t), \qquad \t \in \Gam_B (n,j), \quad\\
\label {lity1} (N\vp)(\fr {z})\!\!&=&\!\!  (\ch ||\fr{z} ||)^{-j-1} \,(\vp\circ \pi^{-1})(\fr {z}),\qquad \, \fr {z} \in \Gam_H (n,k),\quad\eea
where the map $\pi$ is defined by (\ref{mnqe}).
The corresponding inverse operators have the form
\bea \label {vlity} (M^{-1} F)(\fr{t})&=&(\ch \, ||\fr{t} ||)^{-k-1} (F\circ \pi^{-1})(\fr {t}), \qquad \fr{t} \in \Gam_H (n,j), \qquad\\
 \label {vlity1} (N^{-1} \Phi)(\z) &=& (1\!-\!|\z|^2)^{-(j+1)/2} \, (\Phi\circ \pi)(\z), \qquad \z \in \Gam_B (n,k).\qquad\eea

\begin{theorem}\label {afor} We have
\be\label {lity2}
R_H f =NR_B Mf,  \qquad  R_B F =N^{-1}R_H M^{-1} F,\ee
provided that  integrals in these equalities exist in the Lebesgue sense.
\end{theorem}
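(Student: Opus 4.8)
Both identities in (\ref{lity2}) are equivalent: substituting $f=M^{-1}F$ in the first and using that $MM^{-1}$ and $N^{-1}N$ are the identity (immediate from (\ref{tat}) and the definitions (\ref{lity})--(\ref{vlity1})) yields the second. So the plan is to prove $R_H f=NR_B Mf$. I would fix $\fr{z}\in\Gam_H(n,k)$, put $\z=\pi^{-1}(\fr{z})\in\Gam_B(n,k)$ and $r=||\fr{z}||$, so that $|\z|=\th r$ and $1-|\z|^2=1/\ch^2 r$ by (\ref{tat}). Since both $R_H$ and $R_B$ integrate over the $j$-objects lying inside a fixed $k$-object, the aim is to show that each side reduces to one and the same integral over the $j$-chords of the \emph{standard} unit ball $B_k$.

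For the left-hand side I would start from Lemma \ref{lzaq}, formula (\ref{iu3e}): $(R_H f)(\fr{z})=\intl_{\Gam_H(k,j)}f_{\fr{z}}(\fr{t})\,dm(\fr{t})$ with $f_{\fr{z}}(\fr{t})=f(\gamma\,g_{n-k,n+1}(r)\fr{t})$, an integral over the $j$-geodesics of the standard sub-hyperboloid $\bbh^k$. Applying Lemma \ref{lem1} in dimension $k$ (that is, (\ref{kred1}) with $(n,d)$ replaced by $(k,j)$ and its analogous map $\pi_k$) rewrites this as
\[
(R_H f)(\fr{z})=\intl_{\Gam_B(k,j)}\frac{(f_{\fr{z}}\circ\pi_k)(\t')}{(1-|\t'|^2)^{(k+1)/2}}\,d\t'.
\]

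For the right-hand side I would expand $(NR_B Mf)(\fr{z})=(\ch r)^{-j-1}(R_B Mf)(\z)$ through the relation $R_B f=\rho_B R_A\tilde f$ (see (\ref{lchyasz})) and the affine formula (\ref{lasz1}) taken with the rotation $\gamma$ of (\ref{iuae}); then $\z$ occupies the standard position $\bbr^k+(\th r)e_{n-k}$, a Euclidean ball of radius $1/\ch r$. For a $j$-plane $\t=\gamma\gamma'(\bbr^j+(\th r)e_{n-k}+z)$ inside it one has $|\t|^2=\th^2 r+|z|^2$, hence $1-|\t|^2=(1-|z|^2\ch^2 r)/\ch^2 r$, and by (\ref{tat}) the weight in $M$ is $(Mf)(\t)=(1-|\t|^2)^{-(k+1)/2}f(\fr{t})$. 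The substitution $z=w/\ch r$ sends this sub-ball to $B_k$, produces the Jacobian $(\ch r)^{-(k-j)}$, and replaces $1-|\t|^2$ by $(1-|w|^2)/\ch^2 r$. Collecting the three sources of $\ch r$ — the factor $(\ch r)^{k+1}$ from $(1-|\t|^2)^{-(k+1)/2}$, the Jacobian $(\ch r)^{-(k-j)}$, and the factor $(\ch r)^{-(j+1)}$ from $N$ — everything cancels since $(k+1)-(k-j)-(j+1)=0$, and writing the $SO(k)$- and $w$-integration as integration over $\Gam_B(k,j)$ turns the right-hand side into $\intl_{\Gam_B(k,j)}(1-|\t'|^2)^{-(k+1)/2}\,f(\fr{t})\,d\t'$, where $\t'=\gamma'(\bbr^j+w)\in\Gam_B(k,j)$ and $\fr{t}=\pi\big(\gamma\gamma'(\bbr^j+(\th r)e_{n-k}+w/\ch r)\big)$.

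The step that needs the most care — and the main obstacle — is matching the two integrands, i.e. checking $(f_{\fr{z}}\circ\pi_k)(\t')=f(\fr{t})$, which amounts to the identity
\[
\gamma\,g_{n-k,n+1}(r)\,\pi_k(\t')=\pi\big(\gamma\gamma'(\bbr^j+(\th r)\,e_{n-k}+w/\ch r)\big),\qquad \t'=\gamma'(\bbr^j+w),
\]
between $j$-geodesics. I would establish it from the explicit description of $\pi$ in (\ref{mnqe}), the form (\ref{hipr}) of $g_{n-k,n+1}(r)$, and the $K$-equivariance of $\pi$ and $\pi_k$, reducing it to the elementary trigonometric relations (\ref{tat}), (\ref{adt2T}) together with the hyperbolic Pythagorean identity $\ch||\fr{t}||=\ch r\,\ch s$ (with $s$ the intrinsic distance of $\fr{t}$ inside $\fr{z}$); equivalently $1/\ch^2||\fr{t}||=(1-|\t'|^2)/\ch^2 r$, which follows directly from $|\t|^2=\th^2 r+|z|^2$ and $|\t'|=|w|$. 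Once this compatibility is in place the two reductions coincide termwise and $R_H f=NR_B Mf$ follows; the Lebesgue proviso is used only to interchange the fiber integration with these pointwise manipulations.
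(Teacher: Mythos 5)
Your proposal is correct and follows essentially the same route as the paper's proof: the same rescaling $z=w/\ch r$ (the paper performs the inverse substitution $y=z\ch r$), the same cancellation $(k+1)-(k-j)-(j+1)=0$ of the powers of $\ch r$, and the same key identity obtained by pushing $g_{n-k,n+1}(r)$ through the linear span $\{\bbr^j+y+e_{n+1}\}$. The only organizational difference is that you meet in the middle at a common integral over $\Gam_B(k,j)$, reusing Lemma \ref{lem1} in dimension $k$ to package the radial--$SO(k-j)$ computation that the paper carries out by hand starting from (\ref{iu3e1}).
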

\begin{proof}
This theorem is an alternative version of Proposition 3.2 of Ishikawa \cite{I20}, which was given  in the group-theoretic  terms  for compactly supported smooth functions.
Our proof is different in principle, and the assumptions for functions are minimal.
We write (\ref{iu3e1}) as
\[
(R_H f)(\fr {z})=\sigma_{k-j-1}\intl^\infty_0 d\nu_{k,j}(\rho) \!\!\!\intl_{SO(k)}\!\!\! f_{\fr {z}} \left(\a [\hn \cap \{\bbr^j+ e_{n-j}\,\th \rho+ e_{n+1}\}]\right  )\,d\a\]
(cf. notation in (\ref{krtg1})), where $ \{\ldots \}$ stands for the smallest linear subspace containing the set in braces.
Let us  transform the interior integral by changing variable   $\a \mapsto \a\b$, $\b \in SO(k-j)$, where $SO(k-j)$ is the orthogonal
group of the subspace $\bbr^{k -j}= \bbr e_{n-k+1}\oplus\cdots\oplus  \bbr e_{n-j}$.
Integrating in $\b$, we obtain
\bea
\intl_{SO(k)}(...)&=& \intl_{SO(k)}\!\!\!d\a \!\!\!\intl_{SO(k-j)} \!\!\!f_{\fr {z}}(\a\b\, [\hn \cap \{\bbr^j+ e_{n-j}\,\th \rho + e_{n+1}\}])\,d\b\nonumber\\
&=& \intl_{SO(k)}\!\!\!d\a \!\!\!\intl_{SO(k-j)}\!\!\! f_{\fr {z}}(\a\, [\hn \cap \{\bbr^j + \b e_{n-j}\,\th \rho + e_{n+1}\}])\,d\b\nonumber\\
&=& \frac{1}{\sigma_{k-j-1}} \intl_{SO(k)}\!\!\!d\a \!\!\!\intl_{S^{k-j-1}}\!\!\! f_{\fr {z}}(\a\, [\hn \cap \{\bbr^j + \sig\, \th \rho + e_{n+1}\}])\,d\sig.\nonumber\eea
Hence
\bea
&&(R_H f)(\fr {z})=\intl^\infty_0 \!\!d\nu_{k,j}(\rho) \!\!\!\intl_{SO(k)}\!\!\! d\a\!\!\!\intl_{S^{k-j-1}}\!\!\!  f_{\fr {z}}(\a \,[\hn \cap \{\bbr^j + \sig \th \rho + e_{n+1}\}])\,d\sig\nonumber\\
&&=\intl^\infty_0 \left (\frac{\sh \, \rho}{\ch \, \rho}\right )^{k-j-1} (\ch \, \rho)^{k-1}\, d\rho
 \!\!\!\intl_{SO(k)}\!\!\! d\a\!\!\!\intl_{S^{k-j-1}}\!\!\!f_{\fr {z}}(...)\,d\sig\nonumber\\
&& \text {\rm (set $\th \rho =s$)}\nonumber\\
&&=\intl_0^1 \frac{s^{k-j-1}\, ds}{(1-s^2)^{(k+1)/2}}\!\!\!\intl_{SO(k)}\!\!\! d\a\!\!\!\intl_{S^{k-j-1}}\!\!\! f_{\fr {z}}(\a\, [\hn \cap \{\bbr^j + \sig s + e_{n+1}\}])\,d\sig\nonumber\\
&&=\intl_{SO(k)}\!\!\! d\a\!\!\!\intl_{B_{k-j}} \frac{f_{\fr {z}}(\a \,[\hn \cap \{\bbr^j + y + e_{n+1}\}])}{(1-|y|^2)^{(k+1)/2}}\, dy, \qquad B_{k-j}\!=\!B_n \cap \bbr^{k-j}.
\nonumber\eea
By (\ref{iuae1}),
\bea
f_{\fr {z}}(...)&=&f( \gam g_{n-k, n+1} (r) \a \,[\hn \cap \{\bbr^j + y + e_{n+1}\}])\nonumber\\
&=&f( \gam [\hn \cap \{\a g_{n-k, n+1} (r) (\bbr^j + y + e_{n+1})\}])\nonumber\\
&=&f( \gam [\hn \cap \{\a (\bbr^j + y + g_{n-k, n+1} (r) e_{n+1})\}])\nonumber\\
&=&f( \gam \a [\hn \cap \{\bbr^j + y + e_{n-k}\, \sh \,r + e_{n+1} \ch \,r\}]).\nonumber\eea
This gives
\bea
&&(R_H f)(\fr {z})=\intl_{SO(k)}\!\!\! d\a\nonumber\\
&&\times \!\!\intl_{B_{k-j}}\!\!\!\frac{ f( \gam \a [\hn \cap \{\bbr^j + y + e_{n-k}\, \sh \,r + e_{n+1} \ch \,r\}])\,dy}{(1-|y|^2)^{(k+1)/2}}\nonumber\\
&&  \text {\rm (set $y= z\,\ch \, r$) }\nonumber\\
&&=\frac{1}{(\ch \,r)^{j+1}}\intl_{SO(k)}\!\!\! d\a\!\!\!\intl_{B_{k-j} (1/\ch \,r)}\!\!\! \frac{(f\circ \pi)(\gam \a \,[\bbr^j +  e_{n-k}\, \th \,r +z])\,dz}{(1-\th^2 r -|z|^2)^{(k+1)/2}},\nonumber\eea
where
$B_{k-j} (1/\ch \,r)=\{z \in B_{k-j}:\, |z|<1/\ch \,r\}$.

We recall that $\fr {z} =\pi (\z)$, $\z=\eta +v\in \Gam_B (n,k)$, where $v\in \eta^\perp$ and $|v| =|\z|=\th \,r$.
Denote
\[B_{k} =B_n \cap \bbr^k, \quad B_{k} (1/\ch \,r)=\{z \in B_{k}:\, |z|<1/\ch \,r\}.\]
 In this notation, the above expression can be written in the Grassmannian language as
\[
(R_H f)(\fr {z})\!=\!(1-|v|^2)^{(j+1)/2}\intl_{G_{k,j}}\!\! d\sig\!\!\intl_{\sig^\perp \cap B_{k} (1/\ch \,r)}\!\!\!\!\!\frac{(f\circ \pi)(\gam [\sig +  e_{n-k} |v| +z])}{(1-|v|^2-|z|^2)^{(k+1)/2}}\, dz.\]
Set $\t=\sig +  e_{n-k} |v| +z$, $\;\xi=\gam \sig \subset \gam \bbr^k=\eta$, $\;\gam e_{n-k}=v/|v|$; cf. the definition of $\gam$ in (\ref{iuae}). Then $|\t|^2= |v|^2 +|z|^2$, and we have
\[
(R_H f)(\pi (\z))\!=\!(1\!-\!|\z|^2)^{(j+1)/2}\intl_{\xi\subset \eta} d_\eta \xi \intl_{\xi^\perp \cap \,\eta} \psi (\xi +v+z)\, dz,\]
\[ \psi (\t) =\frac{(f\circ \pi)(\t)}{(1-|\t|^2)^{(k+1)/2}}=(Mf)(\t);\]
cf. (\ref {lity}), (\ref{lasz}).  This gives the result.
\end{proof}

\subsubsection {The Dual Transform}  Our next objective is an analogue of Theorem \ref{afor} for the dual Radon transform $R_H^*$. The latter was defined by (\ref{Spec1}) in the group-theoretic terms.
Below we proceed in a different way. Specifically, we take  the dual chord transform $R_B^*$ (see  (\ref{lch5yasz})) as a point of departure
 and arrive at the same duality (\ref{wbcdhyp}), as in the group-theoretic terms, but under minimal assumptions for functions.

\begin{theorem}\label {aford} Let
\bea\label {lityd}
(P\vp)(\z)\!\!&=&\!\! (1\!-\!|\z|^2)^{(j-n)/2} \, \,(\vp\circ \pi)(\z), \qquad \z \in \Gam_B (n,k),\qquad\\
\label {lity1d} (Q\psi)(\fr{t})\!\!&=&\!\!(\ch \, ||\fr{t} ||)^{k-n} \,(\psi\circ \pi^{-1})(\fr{t}),\qquad \fr{t} \in \Gam_H (n,j).\qquad\eea
We set
\be\label {lity2q}
R_H^*\vp =QR^*_B P \vp.\ee
Then, as in (\ref{wbcdhyp}),
\be\label {wxade2}
 \intl_{\Gam_H (n,k)}\!\! \! (R_H f)(\fr {z})\,  \vp (\fr {z})\, d\fr {z}=\intl_{\Gam_H (n,j)} \!\! \! f(\fr {t})\, (R_H^*\vp)(\fr {t})\, d\fr {t},\ee
 provided that either side of  this equality is finite if $f$ and $\vp$ are replaced by $|f|$ and $|\vp|$, respectively.
\end{theorem}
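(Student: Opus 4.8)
The plan is to transport the claimed duality (\ref{wxade2}) to the Beltrami--Klein model, where the corresponding duality (\ref{ctew}) for the chord transforms $R_B$ and $R^*_B$ is already available, and then carry the result back to the hyperboloid model via Lemma \ref{lem1}. The factorization $R_H f = N R_B M f$ from (\ref{lity2}) together with the definition $R_H^*\vp = Q R^*_B P\vp$ in (\ref{lity2q}) is engineered precisely so that all the Jacobian weights telescope. The single geometric identity driving the bookkeeping is (\ref{tat}), namely $(\ch \, ||\fr {t}||)^{-2} = 1 - |\t|^2$ whenever $\fr {t} = \pi(\t)$, which converts every power of $\ch \, ||\cdot||$ appearing in $N$, $Q$, and in the measure formulas (\ref{kred}), (\ref{kred1}) into a matching power of $1 - |\cdot|^2$.

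First I would rewrite the left-hand side of (\ref{wxade2}) by applying (\ref{kred1}) with $d=k$ to the product $\Phi = (R_H f)\,\vp$, obtaining an integral over $\Gam_B (n,k)$ carrying the weight $(1-|\z|^2)^{-(n+1)/2}$. Inserting $R_H f = N R_B M f$ and evaluating the factor $(\ch \, ||\fr {z}||)^{-j-1}$ coming from $N$ (see (\ref{lity1})) through (\ref{tat}) turns $((R_H f)\circ\pi)(\z)$ into $(1-|\z|^2)^{(j+1)/2}(R_B M f)(\z)$. Collecting the resulting powers of $(1-|\z|^2)$ and recognizing $(1-|\z|^2)^{(j-n)/2}(\vp\circ\pi)(\z) = (P\vp)(\z)$ from (\ref{lityd}), the left-hand side becomes exactly $\int_{\Gam_B (n,k)} (R_B M f)(\z)\,(P\vp)(\z)\, d\z$.

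At this stage I would invoke the chord duality (\ref{ctew}) with $f$ replaced by $M f$ and $\vp$ by $P\vp$, which moves the integral to $\Gam_B (n,j)$ as $\int_{\Gam_B (n,j)} (M f)(\t)\,(R^*_B P\vp)(\t)\, d\t$. Then I would reverse the procedure: substitute $(M f)(\t) = (1-|\t|^2)^{-(k+1)/2}(f\circ\pi)(\t)$ from (\ref{lity}) and apply (\ref{kred}) with $d=j$ to return to $\Gam_H (n,j)$. There the weight $(\ch \, ||\fr {t}||)^{-(n+1)}$ supplied by (\ref{kred}) combines with $(1-|\t|^2)^{-(k+1)/2} = (\ch \, ||\fr {t}||)^{k+1}$ to give $(\ch \, ||\fr {t}||)^{k-n}$, which is precisely the weight defining $Q$ in (\ref{lity1d}). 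Hence $(\ch \, ||\fr {t}||)^{k-n}\,((R^*_B P\vp)\circ\pi^{-1})(\fr {t}) = (Q R^*_B P\vp)(\fr {t}) = (R_H^*\vp)(\fr {t})$, and the right-hand side of (\ref{wxade2}) emerges.

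The only real obstacle is justifying the two applications of Lemma \ref{lem1} and the single application of (\ref{ctew}) under the stated hypothesis that merely one side converges absolutely. I would handle this by running the entire chain first with $f$ and $\vp$ replaced by $|f|$ and $|\vp|$: since every weight factor above is strictly positive, the identities hold verbatim for the absolute values, so finiteness of one side forces finiteness of every intermediate integral and legitimizes the change-of-model formulas and the Fubini-type interchange implicit in (\ref{ctew}). Once absolute convergence is secured, the signed identity follows from the very same computation with the moduli removed.
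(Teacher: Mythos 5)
Your proposal is correct and follows essentially the same route as the paper: both arguments rest on the chord duality (\ref{ctew}), the transfer formulas (\ref{kred})--(\ref{kred1}) of Lemma \ref{lem1}, the factorization $R_H f = N R_B M f$ from (\ref{lity2}), and the identity (\ref{tat}), with identical weight bookkeeping. The only difference is direction of travel — the paper starts from (\ref{ctew}) in the ball model and pushes both sides up to the hyperboloid, while you start from the hyperboloid-side integral and descend to the ball — which is an immaterial reordering of the same computation.
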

\begin{proof}
Applying (\ref{ctew}), we have
 \[
 \intl_{\Gam_B (n,k)} (R_B f_1)(\z) \,\vp_1(\z)\, d \z=\intl_{\Gam_B (n,j)} f_1(\t)\, (R^*_B \vp_1)(\t)\, d\t,\]
 assuming that    the above integrals exist in the Lebesgue sense. By Lemma \ref {lem1}, this equality can be written as
\be\label {lide2} \intl_{\Gam_H (n,k)}\!\! \! \frac{ (R_B f_1)(\pi^{-1}(\fr {z})) \,\vp_1 (\pi^{-1}(\fr {z}))}{ (\ch \,||\fr {z}||)^{n+1}}\, d\fr {z}=\!\!
\intl_{\Gam_H (n,j)} \!\! \!\frac{ f_1(\pi^{-1}(\fr {t}))\, (R^*_B \vp_1)(\pi^{-1}(\fr {t}))}{ (\ch \,||\fr {t}||)^{n+1}}\, d\fr {t}.\ee
By (\ref{lity2}),
\bea
&&(R_B f_1)(\pi^{-1}(\fr {z}))=(N^{-1}R_H M^{-1} f_1)(\pi^{-1}(\fr {z}))\nonumber\\
&&=(1\!-\!|\pi^{-1}(\fr {z})|^2)^{-(j+1)/2} \,(R_H M^{-1} f_1)(\fr {z})= (\ch \,||\fr {z}||)^{j+1} (R_H f)(\fr {z}),\nonumber\eea
where
\be\label {lswde2}
f(\fr {t})=  (M^{-1} f_1)(\fr {t})=  (\ch \,||\fr {t}||)^{-k-1} f_1 (\pi^{-1}(\fr {t})).\ee
Hence the left-hand side of (\ref{lide2}) can be written as
\be\label {lswde3}
 \intl_{\Gam_H (n,k)}\!\! \! (R_H f)(\fr {z})\,  \vp (\fr {z})\, d\fr {z}, \qquad \vp (\fr {z})= \frac{ \vp_1 (\pi^{-1}(\fr {z}))}{ (\ch \,||\fr {z}||)^{n-j}}.\ee

For the right-hand side of (\ref{lide2}), owing to (\ref  {lswde2})  and (\ref  {lswde3}), we have
\bea
&&\intl_{\Gam_H (n,j)} \!\! \! \frac{f(\fr {t})\,(R^*_B \vp_1)(\pi^{-1}(\fr {t}))}
 {(\ch \,||\fr {t}||)^{n-k}}\, d\fr {t} \nonumber\\
&&\label {lswde4} =\intl_{\Gam_H (n,j)} \!\! \!f(\fr {t})\, \frac{(R^*_B [(\vp \circ \pi)(\z) (1-|\z|^2)^{(j-n)/2}])(\pi^{-1}(\fr {t}))}
 {(\ch \,||\fr {t}||)^{n-k}}\, d\fr {t}.\nonumber\eea
Thus, setting
\be\label {lswde4}
(R_H^*\vp)(\fr {t})= \frac{(R^*_B [(\vp \circ \pi)(\z) (1-|\z|^2)^{(j-n)/2}])(\pi^{-1}(\fr {t}))}
 {(\ch \,||\fr {t}||)^{n-k}},\ee
 we can write (\ref{lide2}) in the standard duality form  (\ref{wxade2}).
\end{proof}

 \subsection {Radon Transforms of Zonal Functions}

%Theorems of the previous subsection enable us to obtain simple formulas and a series of related results for Radon transforms of zonal functions on geodesics in $\hn$.
We recall that
a function $f$ on $\Gam_H (n, d)$  is called zonal if it is $K$-invariant. Every  such function
 is represented as a  single-variable function of the geodesic distance from the origin $\tilde o$ to the given $d$-geodesic.
 By $K$-invariance, the Radon transforms  $R_H f$ and $R^*_H \vp$  of zonal functions are zonal, too.
  Theorems \ref{afor} and \ref{aford}, combined with Lemma \ref {iant} allow us to obtain explicit expressions
  for these transforms in terms of  Erd\'{e}lyi-Kober type
fractional integration operators $I^{\a}_{\pm, 2}$; see Appendix.

\begin{lemma}\label {nsfo} {\rm (cf. \cite[Lemma 2.3]{Ru04d})}  Suppose that $\fr {t} \in \Gam_H (n,j)$, $\fr {z} \in \Gam_H (n,k)$, $0\le j<k\le n-1$, and let
$f(\fr {t})=f_1 (\ch \, ||\fr {t}||)$,  $\vp(\fr {z})=\vp_1 (\sh \, ||\fr {z}||)$.
 Then $(R_H f)(\fr {z})=F_1 (\ch \, ||\fr {z}||)$, $(R_H^*\vp)(\fr {t})=\Phi_1 (\sh \, ||\fr {t}||)$,
 where
\bea
 F_1 (s)&=&\frac {\sig_{k-j-1}}{s^{k-1}} \intl_{s}^\infty \! f_1(r)\,(r^2 -s^2)^{(k-j)/2 -1}  \,r^j \,dr\nonumber\\
 \label {zqwasz} &=& \pi^{(k-j)/2} s^{1-k} (I^{(k-j)/2}_{-,2} r^{j-1} f_1)(s),\eea
\bea\Phi_1 (r)&=&\frac{\sig_{k-j-1} \,
\sig_{n-k-1}}{\sig_{n-j-1}\, r^{n-j-2}} \intl_0^{r} \!\vp_1 (s)
(r^2\!-\!s^2)^{(k-j)/2 -1} s^{n-k-1} ds\nonumber\\
\label {qswasz}&=&\frac{\pi^{(k-j)/2} \,\sig_{n-k-1}}{\sig_{n-j-1}} \, r^{j+2-n}  (I^{(k-j)/2}_{+,2} s^{n-k-2} \vp_1)(r).\eea
It is assumed  that the corresponding
integrals on the right-hand sides exist in the Lebesgue sense.
\end{lemma}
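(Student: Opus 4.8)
The plan is to reduce everything to the chord transforms $R_B$ and $R_B^*$ on radial functions, for which the explicit one-dimensional formulas are already available in Proposition \ref{iant}, and then to transport the result back to the hyperboloid model by the transition identities of Theorems \ref{afor} and \ref{aford}. The essential input is the dictionary between the geodesic distance in $\hn$ and the Euclidean norm of the associated chord. Writing $\fr {t}=\pi(\t)$, relation (\ref{tat}) gives $|\t|=\th\,\|\fr {t}\|$, hence $\ch\,\|\fr {t}\|=(1-|\t|^2)^{-1/2}$ and $\sh\,\|\fr {t}\|=|\t|\,(1-|\t|^2)^{-1/2}$. These identities convert a zonal function on $\Gam_H (n,d)$ into a radial function on $\Gam_B (n,d)$ and back, and they dictate the only nontrivial change of variables in the computation.

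For $R_H f$ I would first apply $R_H f=NR_B Mf$ from Theorem \ref{afor}. Since $f(\fr {t})=f_1(\ch\,\|\fr {t}\|)$, the definition (\ref{lity}) of $M$ together with the identities above shows that $Mf$ is the radial function $(Mf)(\t)=(1-|\t|^2)^{-(k+1)/2}f_1((1-|\t|^2)^{-1/2})$. Applying formula (\ref{nfty}) of Proposition \ref{iant} gives $R_B Mf$ as a single integral, and multiplying by the weight $(\ch\,\|\fr {z}\|)^{-j-1}$ from (\ref{lity1}) produces $(R_H f)(\fr {z})$, with the inner $R_B$-integral evaluated at $s=|\pi^{-1}(\fr {z})|=\th\,\|\fr {z}\|$. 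The substitution $r=\th\rho$, recast with $\ch\rho$ as the new integration variable (whose lower limit $\ch\,\|\fr {z}\|$ matches the target), turns every factor $(1-r^2)^{\pm}$ into a power of $\cosh$ and collapses the remaining powers to yield precisely $F_1(\ch\,\|\fr {z}\|)$ as in (\ref{zqwasz}).

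For the dual transform the argument is parallel, using $R_H^*\vp=QR_B^* P\vp$ from Theorem \ref{aford}. Here $\vp(\fr {z})=\vp_1(\sh\,\|\fr {z}\|)$ becomes, via (\ref{lityd}), the radial function $(P\vp)(\z)=(1-|\z|^2)^{(j-n)/2}\vp_1(|\z|\,(1-|\z|^2)^{-1/2})$; formula (\ref{nfty1}) then gives $R_B^* P\vp$, and the weight $(\ch\,\|\fr {t}\|)^{k-n}$ from (\ref{lity1d}) supplies $(R_H^*\vp)(\fr {t})$, with $r=|\pi^{-1}(\fr {t})|=\th\,\|\fr {t}\|$. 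This time the correct substitution is $s=\th\sigma$, recast with $\sh\sigma$ as the integration variable, so that the upper limit becomes $\sh\,\|\fr {t}\|$. The argument of $\vp_1$ simplifies cleanly because $|\z|\,(1-|\z|^2)^{-1/2}$ equals exactly $\sh\sigma$, and one verifies that the accumulated powers of $(1+s'^2)$ and of $(1+r^2)$ cancel identically, leaving $\Phi_1(\sh\,\|\fr {t}\|)$ as in (\ref{qswasz}).

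I expect the main obstacle to be purely computational: confirming that after the hyperbolic change of variables all the extraneous weights—the Klein-model Jacobians $(1-|\cdot|^2)^{\pm}$, the normalizing factors built into $M,N,P,Q$, and the Jacobian of the substitution—cancel to leave exactly the kernels $r^j(r^2-s^2)^{(k-j)/2-1}$ and $s^{n-k-1}(r^2-s^2)^{(k-j)/2-1}$. The bookkeeping is delicate, but the cancellation is forced by the structure and presents no conceptual difficulty once the distance--norm dictionary is fixed. Finally, the second equalities in (\ref{zqwasz}) and (\ref{qswasz}), which recast the results through the Erd\'elyi--Kober operators $I^{(k-j)/2}_{\pm,2}$, are immediate from the definitions of these operators in the Appendix, using $\sig_{k-j-1}=2\pi^{(k-j)/2}/\Gam((k-j)/2)$.
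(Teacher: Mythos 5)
Your proposal is correct and follows essentially the same route as the paper: reduce to the radial chord transforms via $R_H f=NR_BMf$ and $R_H^*\vp=QR_B^*P\vp$, apply Proposition \ref{iant}, and then change variables (the paper writes these substitutions as $1-r^2=1/r_1^2$ and $1-s^2=1/(1+s_1^2)$, which are exactly your $\ch\rho$ and $\sh\sigma$ variables). The bookkeeping you anticipate does cancel as claimed, and the Erd\'elyi--Kober identification via $\sig_{k-j-1}=2\pi^{(k-j)/2}/\Gam((k-j)/2)$ is likewise how the paper concludes.
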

\begin{proof}
 Because $f$ is zonal  on $\Gam_H (n,j)$,  $f\circ \pi$ is radial on $\Gam_B(n,j)$, and therefore $(f\circ \pi)(\t) = f_0 (|\t|) $ for some function $f_0$. Hence (see (\ref{lity2}))
\[(Mf)(\t)=(1\!-\!|\t|^2)^{-(k+1)/2} f_0 (|\t|), \] and  (\ref{nfty}) yields
\[
(R_B Mf)(\z)=\sig_{k-j-1}\intl_s^1 \!f_0(r)\, \frac {(r^2 -s^2)^{(k-j)/2 -1}  \,r dr}{(1-r^2)^{(k+1)/2}},\qquad s=|\z|.\]
This gives
\bea
&&(R_H f)(\fr {z})=(NR_B Mf)(\fr {z}) \nonumber\\
&&= \sig_{k-j-1}(1\!- \!\th^2 ||\fr{z} ||)^{(j+1)/2}\intl_s^1 \!f_0(r)\, \frac {(r^2 -s^2)^{(k-j)/2 -1}  \,r dr}{(1-r^2)^{(k+1)/2}}\nonumber\eea
with $s=\th ||\fr{z} ||$. Changing variables
\[
1-r^2 = 1/r_1^2,  \qquad 1-s^2 = 1/s_1^2 \]
and noting that $1\!- \!\th^2 ||\fr{z} ||=  1/\ch^2  ||\fr{z} ||$, we obtain the result in slightly different notation.

Let us prove the second statement.
Because $\vp$ is zonal  on $\Gam_H (n,k)$, $\vp \circ \pi$ is radial on $\Gam_B(n,k)$ and $(\vp \circ \pi)(\z) = \vp_0 (|\z|)$ for some function $\vp_0$. Hence by (\ref{lity2q}),
\[(P \vp)(\z)=(1\!-\!|\z|^2)^{(j-n)/2} \vp_0 (|\z|), \]
and therefore, by  (\ref{nfty1}),
\[
(R^*_B P \vp)(\t)=\frac{c}{r^{n-j-2}}\intl_0^r \!\vp_0 (s)
\frac {(r^2\!-\!s^2)^{(k-j)/2 -1} s^{n-k-1} ds}{(1-s^2)^{(n-j)/2}},\]
 where $r=|\t|$, $c= \sig_{k-j-1} \,\sig_{n-k-1}/\sig_{n-j-1} $ . This gives
\bea
(R_H^*\vp)(\fr {t})\!&=&\!\!(QR^*_B P \vp)(\fr {t}) \nonumber\\
\label {qse3sz} \!&=& \!\! \frac{c\,(1-r^2)^{(n-k)/2}}{r^{n-j-2}} \intl_0^r \!\vp_0 (s)
\frac {(r^2\!-\!s^2)^{(k-j)/2 -1} s^{n-k-1} ds}{(1-s^2)^{(n-j)/2}}\qquad \eea
with $r=\th ||\fr{t} ||$. Then we  change variables
\[
1-r^2 = 1/(1+r_1^2),  \qquad 1-s^2 = 1/(1+s_1^2), \]
and note that $\th^2 r = \sh^2 r / (1+\sh^2 r)$. It remains to adjust the notation.
\end{proof}

\begin{remark}\label {nusfo5}
 The integral (\ref{zqwasz}) is finite for almost all $s>0$ provided that
\be\label {vdffp0} \intl_a^\infty |f_1 (r)|\, r^{k-2} dr <\infty\ee
 for all $a>0$.
  The integral (\ref{qswasz}) is finite for almost all $r>0$ if
 $s \mapsto s^{n-k-1} \vp_1(s)$ is a locally integrable function on $\bbr_{+}$. Both assumptions are sharp; see, e.g.,
 \cite [Lemma 2.42]{Ru15}.
\end{remark}

\begin{example}\label {nusfo51}
Let $a>1$,
\[f(\fr {t})= \frac{(a^2 - \ch^2 \, ||\fr {t}||)_+^{\a/2 -1}}{\ch^{\a +k-1} \, ||\fr {t}||},  \qquad   Re \, \a >0.\]
  Then, by (\ref{zqwasz}) with $s= \ch \, ||\fr {z}||$,
\bea
(R_H f)(\fr {z})&=&\frac {\sig_{k-j-1}}{s^{k-1}} \intl_{s}^a  \frac{(a^2 - r^2)^{\a/2 -1} (r^2 -s^2)^{(k-j)/2 -1}}{r^{\a+k-j-1}} \,dr\nonumber\\
&=&\frac {c_\a (a^2 - \ch^2 \, ||\fr {z}||)^{(\a+k-j)/2 -1}}   {\ch^{\a +k-1} \, ||\fr {z}||}, \quad c_\a=\frac{\pi^{(k-j)/2} \Gam (\a/2)}{a^{k-j} \Gam ((\a+k-j)/2)}; \nonumber\eea
use, e.g., \cite [2.2.6 (2)] {PBM1}.
Now we set $\a=2$, $a= \ch \,b$, and use the duality (\ref{wxade2}). This gives

\be\label{yuyu}
\intl_{ ||\fr {t}|| < b } \!\! \! \frac{(R_H^*\vp)(\fr {t})}{\ch^{k+1} \, ||\fr {t}||} \, d \fr {t} =
c_2 \intl_{ ||\fr {z}|| < b } \!\! \!  \vp (\fr {z})\, \frac{(\ch^2 b - \ch^2 \, ||\fr {z}||)^{(k-j)/2}}{\ch^{k+1} \, ||\fr {z}||}\,  d\fr {z}.\ee
\[ c_2=\frac{\pi^{(k-j)/2}}{\Gam ((k-j)/2 +1)\, \ch^{k-j} b}.\]
\end{example}

\subsection {Existence in the Lebesgue Sense. The General Case}

By Theorem \ref  {afor},  $R_H f$  exists in the Lebesgue sense if and only if so does $R_B Mf$.
The following statement contains precise information about  the behavior  $R_H f$ and its existence.

\begin{lemma} \label {rolla} Let $\fr {z}\in \Gam_H (n,k)$, $\fr {t}\in \Gam_H (n,j)$, $0\le j<k\le n-1$.  Denote $|| \fr {z} ||=d (\fr {z}, \tilde o)$, $|| \fr {t} ||=d (\fr {t}, \tilde o)$. Then
\be\label {ulas}
\intl_{\Gam_H (n,k)} \!\!\! (R_H f)(\fr {z})\, d\fr {z}= \!\!\intl_{\Gam_H (n,j)} \!\!\! f(\fr {t})\, d\fr {t},\ee
\be\label {ulas1}
\intl_{\Gam_H (n,k)} \!\frac{(R_H f)(\fr {z})\,d\fr {z}}{  (\ch\, || \fr {z} ||)^{n-j}}=\intl_{\Gam_H (n,j)} \! \frac{f(\fr {t})\,d\fr {t}}{  (\ch\, || \fr {t} ||)^{n-k}},\ee
provided that  integrals on the right-hand side exist in the Lebesgue sense. More generally, if
\[
 u(\fr {z})=\frac{(\th\, || \fr {z} ||)^{\a+k-n}}{(\ch\, || \fr {z} ||)^{n-j}}, \qquad  v(\fr {t})=\frac{(\th\, || \fr {t} ||)^{\a+k-n}}{(\ch\, || \fr {t} ||)^{n-k}},  \qquad Re \,\a> 0,\]
then
\be\label {tulasa}
\intl_{\Gam_H (n,k)} (R_H f)(\fr {z})\, u(\fr {z})\, d\fr {z}=\lam_2 \intl_{\Gam_H (n,j)} f(\fr {t})\, v(\fr {t})\, d\fr {t},\ee
\[
\lam_2 =\frac{ \Gam (\a/2) \, \Gam ((n-j)/2)}{\Gam ((\a+k -j)/2) \, \Gam ((n-k)/2)}. \]
\end{lemma}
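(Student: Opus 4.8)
The plan is to transfer all three identities from the hyperboloid model to the Beltrami--Klein model via Lemma \ref{lem1} and Theorem \ref{afor}, reduce them to the chord duality relations (\ref{ally}), (\ref{uwaff}), (\ref{waff}) already established in Subsection \ref{mens}, and then transfer back. The geometric dictionary used throughout is contained in (\ref{tat}): for $\fr {t}=\pi(\t)$ one has $\th\,||\fr {t}||=|\t|$ and $(\ch\,||\fr {t}||)^{-2}=1-|\t|^2$, hence $\ch\,||\pi(\t)||=(1-|\t|^2)^{-1/2}$. Under the standing hypothesis that the right-hand sides converge absolutely, every interchange below is justified by Tonelli's theorem in the nonnegative case and by linearity in general.

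I would prove the weighted identity (\ref{tulasa}) first, since the others are variants of the same computation. Applying (\ref{kred1}) to $\Phi(\fr {z})=(R_H f)(\fr {z})\,u(\fr {z})$ rewrites the left-hand side of (\ref{tulasa}) as $\intl_{\Gam_B (n,k)}(1-|\z|^2)^{-(n+1)/2}(\Phi\circ\pi)(\z)\,d\z$. Into this I substitute $R_H f=NR_B Mf$ from Theorem \ref{afor}: by the definition of $N$ and the dictionary, $(R_H f)(\pi(\z))=(1-|\z|^2)^{(j+1)/2}(R_B Mf)(\z)$, while $u(\pi(\z))=|\z|^{\a+k-n}(1-|\z|^2)^{(n-j)/2}$. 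The key point is that the two exponents $(j+1)/2$ and $(n-j)/2$ add up to $(n+1)/2$, so the factor $(1-|\z|^2)^{(n+1)/2}$ cancels the Jacobian of (\ref{kred1}) exactly, and the left-hand side collapses to $\intl_{\Gam_B (n,k)}(R_B Mf)(\z)\,|\z|^{\a+k-n}\,d\z$. Now the chord identity (\ref{waff}), applied with $Mf$ in place of $f$, turns this into $\lam_2\intl_{\Gam_B (n,j)}(Mf)(\t)\,|\t|^{\a+k-n}\,d\t$.

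It remains to transfer the last integral back to $\Gam_H (n,j)$. Writing out the definition of $M$ and applying (\ref{kred}) to $g(\t)=(1-|\t|^2)^{-(k+1)/2}(f\circ\pi)(\t)\,|\t|^{\a+k-n}$, the dictionary gives $(g\circ\pi^{-1})(\fr {t})=(\ch\,||\fr {t}||)^{k+1}f(\fr {t})\,(\th\,||\fr {t}||)^{\a+k-n}$; dividing by the factor $(\ch\,||\fr {t}||)^{n+1}$ from (\ref{kred}) reduces the power of $\ch$ to $k-n$, which is exactly $v(\fr {t})$. This yields (\ref{tulasa}). Identity (\ref{ulas1}) is then simply the special case $\a=n-k$ (legitimate since $n-k\ge1$), for which $|\z|^{\a+k-n}\equiv1$ and $\lam_2=1$.

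Finally, (\ref{ulas}) follows by running the same scheme with $u\equiv1$: (\ref{kred1}) produces $\intl_{\Gam_B (n,k)}(R_B Mf)(\z)\,(1-|\z|^2)^{(j-n)/2}\,d\z$, the chord duality (\ref{uwaff}) converts it to $\intl_{\Gam_B (n,j)}(Mf)(\t)\,(1-|\t|^2)^{(k-n)/2}\,d\t=\intl_{\Gam_B (n,j)}(1-|\t|^2)^{-(n+1)/2}(f\circ\pi)(\t)\,d\t$, and one last application of (\ref{kred1}), now read from right to left with $\Phi=f$, identifies this with $\intl_{\Gam_H (n,j)}f(\fr {t})\,d\fr {t}$. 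I do not expect a genuine obstacle here: the entire argument is driven by the transfer formulas, and the only point demanding care is the exponent bookkeeping, in particular the cancellation of $(1-|\z|^2)^{(n+1)/2}$ against the measure-transfer factor of Lemma \ref{lem1}. It is precisely this cancellation that aligns the conjugating weights of $M$, $N$, $u$, $v$ with the chord dualities, and verifying it in each of the three cases is the substance of the proof.
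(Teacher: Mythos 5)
Your proposal is correct and follows essentially the same route as the paper: both reduce the three identities to the chord dualities (\ref{ally}), (\ref{uwaff}), (\ref{waff}) by means of the transfer formulas of Lemma \ref{lem1} and the factorization $R_H f=NR_BMf$ of Theorem \ref{afor}, with the same exponent cancellations. The only cosmetic difference is that you obtain (\ref{ulas1}) as the special case $\a=n-k$ of (\ref{tulasa}) (a valid and slightly tidier observation), whereas the paper derives it directly from (\ref{ally}).
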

\begin{proof}
By (\ref{ally}), for any $\psi\in L^1 (\Gam_B (n,j))$,
\[\intl_{\Gam_B (n,k)}\,  (R_B \psi)(\z)\,  d \z=\intl_{\Gam_B (n,j)}  \psi(\t)\, d\t.\]
Hence, by (\ref{kred}),
\[\intl_{\Gam_H (n,k)} \frac{ (R_B \psi)(\pi^{-1}(\fr {z})) \,d\fr {z} }{ (\ch \,||\fr {z}||)^{n+1}}=\intl_{\Gam_H (n,j)} \frac{\psi (\pi^{-1}(\fr {t})) \,d\fr {t} }{ (\ch\, ||\fr {t}||)^{n+1}}.\]
By (\ref{lity2}), setting  $\psi =Mf$, we have
\[
\frac{(R_B \psi)(\pi^{-1}(\fr {z}))}{(\ch\, ||\fr {z}||)^{n+1}}=\frac{(N^{-1} R_H M^{-1} \psi)(\pi^{-1}(\fr {z}))}{(\ch\, ||\fr {z}||)^{n+1}}=\frac{(R_H f)(\fr {z})}{(\ch\, ||\fr {z}||)^{n-j}}.\]
 Further, by (\ref {lity}),
\[\frac{\psi (\pi^{-1}(\fr {t}))}{ (\ch \,||\fr {t}||)^{n+1}}=(1-\th^2\,  ||\fr {t}||)^{(n-k)/2} f(\fr {t})=\frac{ f(\fr {t})}{(\ch \,||\fr {t}||)^{n-k}}.\]
This gives   (\ref{ulas1}). The equalities (\ref{ulas}) and  (\ref{tulasa})  follow from  (\ref {uwaff}) and (\ref {waff}) in a  similar way.
\end{proof}

 Note that  powers of $\ch\, || \fr {z} ||$  and  $\ch\, || \fr {t} ||$ in the above equalities give precise
 information about  the behavior of the corresponding functions when $|| \fr {z} ||$ and $|| \fr {t} ||$ tend to  infinity.
 Similarly,  powers of $\th\, || \fr {z} ||$  and  $\th\, || \fr {t} ||$  reflect  the behavior of functions for small $|| \fr {z} ||$ and $|| \fr {t} ||$.

We denote
 \be\label {ented}
 L^1_\lam (\Gam_H (n,d))=\Big\{ f: ||f||_{1,\lam}\equiv \intl_{\Gam_H (n,d)}\!\! \!  \frac{|f (\fr {t})|}{ (\ch\, ||\fr {t}||)^{\lam}}\, d\fr {t}<\infty \Big\}.\ee
In this notation, (\ref{ulas1}) yields the following statement.
\begin{theorem} Let  $0\le j<k\le n-1$. Then $R_H$ is a linear bounded operator from  $L^1_{n-k} (\Gam_H (n,j))$  to $L^1_{n-j} (\Gam_H (n,k))$.
 In particular, for $f\in L^1_{\lam} (\Gam_H (n,j))$, the Radon transform
$(R_H f)(\fr {z})$ exists for almost all $\fr {z} \in  \Gam_H (n,k)$  in the Lebesgue sense provided that $\lam \ge n-k$, where the bound $n-k$ is sharp.
\end{theorem}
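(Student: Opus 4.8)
The plan is to read both assertions off the weighted equality (\ref{ulas1}), which already encodes the sharp balance between the source weight $(\ch||\fr{t}||)^{-(n-k)}$ and the target weight $(\ch||\fr{z}||)^{-(n-j)}$. For boundedness I would apply (\ref{ulas1}) to the nonnegative function $|f|$; this is legitimate precisely when its right-hand side is finite, which is exactly the hypothesis $f\in L^1_{n-k}(\Gam_H(n,j))$. This yields
\[
\intl_{\Gam_H(n,k)} \frac{(R_H|f|)(\fr{z})}{(\ch||\fr{z}||)^{n-j}}\, d\fr{z}=\intl_{\Gam_H(n,j)} \frac{|f(\fr{t})|}{(\ch||\fr{t}||)^{n-k}}\, d\fr{t}=||f||_{1,n-k}.
\]
Since $R_H$ is integration against the nonnegative measure $dm(\fr{t})$ in (\ref{iu3e}), we have the pointwise domination $|(R_H f)(\fr{z})|\le (R_H|f|)(\fr{z})$, so the left-hand side above is at least $||R_Hf||_{1,n-j}$. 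Hence $||R_Hf||_{1,n-j}\le ||f||_{1,n-k}$, i.e. $R_H:L^1_{n-k}(\Gam_H(n,j))\to L^1_{n-j}(\Gam_H(n,k))$ is bounded with norm at most $1$.

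Almost-everywhere existence is then an immediate corollary: for $f\in L^1_{n-k}(\Gam_H(n,j))$ the nonnegative function $R_H|f|$ lies in $L^1_{n-j}(\Gam_H(n,k))$ and is therefore finite for almost every $\fr{z}$, which is precisely the statement that the integral defining $(R_Hf)(\fr{z})$ converges absolutely for almost all $\fr{z}$. Because $\ch||\fr{t}||\ge 1$ yields the inclusion $L^1_\lam(\Gam_H(n,j))\subseteq L^1_{n-k}(\Gam_H(n,j))$ whenever $\lam\le n-k$, the same conclusion holds for every such $\lam$, and $n-k$ is the critical index.

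For sharpness I would work with zonal functions, where everything reduces to the one-dimensional fractional integral (\ref{zqwasz}). Take $f(\fr{t})=f_1(\ch||\fr{t}||)$ with $f_1(s)=s^{-(k-1)}$ for $s\ge 2$ and $f_1=0$ otherwise. By the zonal integration formula (\ref{aick1a}),
\[
||f||_{1,\lam}=\sig_{n-j-1}\intl_1^\infty (s^2-1)^{(n-j)/2-1}\,s^{j-\lam}\,f_1(s)\, ds,
\]
whose integrand is comparable to $f_1(s)\,s^{n-2-\lam}$ as $s\to\infty$; for $\lam=n-k+\e$ this equals $s^{-1-\e}$, so $f\in L^1_{n-k+\e}(\Gam_H(n,j))$ for every $\e>0$. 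On the other hand $\intl_a^\infty f_1(r)\,r^{k-2}\,dr=\intl_a^\infty r^{-1}\,dr=\infty$, so the one-sided existence criterion (\ref{vdffp0}) fails; as $f_1\ge 0$, the right-sided integral in (\ref{zqwasz}) then diverges for every $s$ by Remark \ref{nusfo5}, whence $(R_Hf)(\fr{z})\equiv\infty$. This shows the bound $n-k$ cannot be relaxed.

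I expect the only step with genuine content to be sharpness: boundedness and a.e. existence are one-line consequences of (\ref{ulas1}) combined with the positivity of $R_H$, whereas the counterexample requires simultaneously checking, through the exponentially growing measure $d\fr{t}$, that the chosen zonal $f$ lands in $L^1_{n-k+\e}$ while violating (\ref{vdffp0}), so that Remark \ref{nusfo5} forces $R_Hf\equiv\infty$.
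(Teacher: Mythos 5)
Your argument is correct and, for the main assertion, coincides with the paper's: the theorem is stated there as an immediate consequence of the weighted identity (\ref{ulas1}), which is exactly how you obtain boundedness (apply (\ref{ulas1}) to $|f|$, use positivity of the measure $dm(\fr{t})$ in (\ref{iu3e}) to dominate $|R_Hf|$ by $R_H|f|$) and then a.e.\ existence. What you add, and the paper does not spell out, is a proof of sharpness: your zonal example $f_1(s)=s^{-(k-1)}$ for $s\ge 2$, $f_1=0$ otherwise, is checked correctly against the zonal integration formula (\ref{aick1a}) and the divergence criterion of Remark \ref{nusfo5}; it lies in $L^1_{n-k+\e}(\Gam_H(n,j))$ for every $\e>0$ while $R_Hf\equiv\infty$, which is the exact hyperbolic analogue of the counterexample quoted in Proposition \ref{ther}. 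One point is worth flagging explicitly. With the definition (\ref{ented}) one has $L^1_{\lam_1}(\Gam_H(n,j))\subseteq L^1_{\lam_2}(\Gam_H(n,j))$ for $\lam_1\le\lam_2$, since $\ch ||\fr{t}||\ge 1$; consequently the a.e.-existence clause can only hold for $\lam\le n-k$ --- which is what you actually prove --- and your own counterexample shows it fails for every $\lam>n-k$. The theorem's wording ``provided that $\lam\ge n-k$'' (and the embedding invoked in the proof of Theorem \ref{iste5}) has the inequality oriented oppositely to what (\ref{ented}) forces; your reading is the internally consistent one, so no deduction on that account, but you should state that you are interpreting the hypothesis as $\lam\le n-k$ rather than silently switching it.
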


The following statement characterizes the existence of $R_H f$ on $L^p$ functions.
\begin {proposition}\label {waff2x}
If $f \in L^p (\Gam_H (n,j)), \; 1 \le p < (n-1)/(k -1)$,
then $(R_H f)(\fr {z})$ is finite for almost all $\fr {z} \in \Gam_H  (n,k)$.
\end{proposition}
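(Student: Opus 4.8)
The plan is to push the problem into the Beltrami--Klein model and invoke the existence results already proved for the chord transform. By Theorem \ref{afor}, $R_H f = N R_B M f$, where $N$ is multiplication by the everywhere finite, strictly positive factor $(\ch\,||\fr {z}||)^{-j-1}$. Moreover, by (\ref{kred1}) the measure $d\fr {z}$ on $\Gam_H (n,k)$ corresponds under the diffeomorphism $\pi$ to $(1-|\z|^2)^{-(n+1)/2}\,d\z$ on $\Gam_B (n,k)$, a density that is positive and finite for $|\z|<1$; hence a set is null in $\Gam_B (n,k)$ iff its image is null in $\Gam_H (n,k)$. It therefore suffices to show that $(R_B M f)(\z)$ is finite for almost all $\z \in \Gam_B (n,k)$, and for this I shall verify the hypothesis (\ref{ensi}) of Lemma \ref{ctally}. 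Writing $g = f\circ \pi$, the assumption $f\in L^p(\Gam_H (n,j))$ combined with (\ref{kred1}) applied to $|f|^p$ gives
\[
\intl_{\Gam_B (n,j)} \frac{|g(\t)|^p}{(1-|\t|^2)^{(n+1)/2}}\, d\t=\|f\|_{L^p(\Gam_H (n,j))}^p<\infty,
\]
while $(Mf)(\t)=(1-|\t|^2)^{-(k+1)/2}\, g(\t)$ by (\ref{lity}).

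When $p=1$ the claim is immediate: since $k\le n-1$ we have $(1-|\t|^2)^{(n-k)/2}\le 1$ on $B_n$, so for every $a\in(0,1)$,
\[
\intl_{a<|\t|<1}|(Mf)(\t)|\, d\t=\intl_{a<|\t|<1}\frac{(1-|\t|^2)^{(n-k)/2}|g(\t)|}{(1-|\t|^2)^{(n+1)/2}}\, d\t\le \|f\|_{L^1(\Gam_H (n,j))}<\infty.
\]
For $1<p<(n-1)/(k-1)$ I split the weight as $(1-|\t|^2)^{-(k+1)/2}=(1-|\t|^2)^{-(n+1)/(2p)}(1-|\t|^2)^{\b}$ with $\b=\frac{n+1}{2p}-\frac{k+1}{2}$ and apply H\"older's inequality with exponents $p$ and $p'=p/(p-1)$:
\[
\intl_{a<|\t|<1}|(Mf)(\t)|\, d\t\le\Big(\intl_{a<|\t|<1}\frac{|g|^p}{(1-|\t|^2)^{(n+1)/2}}\, d\t\Big)^{1/p}\Big(\intl_{a<|\t|<1}(1-|\t|^2)^{\b p'}\, d\t\Big)^{1/p'}.
\]
The first factor is at most $\|f\|_{L^p(\Gam_H (n,j))}$. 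The second integrand is radial, so by the polar form of (\ref{funcE}) the integral is a constant multiple of $\int_a^1(1-\rho^2)^{\b p'}\rho^{n-j-1}\, d\rho$, which converges near $\rho=1$ exactly when $\b p'>-1$; since $\b p'=\frac{n+1-(k+1)p}{2(p-1)}$, this is equivalent to $p<(n-1)/(k-1)$.

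Therefore (\ref{ensi}) holds for all $a\in(0,1)$, Lemma \ref{ctally} yields $(R_B Mf)(\z)<\infty$ for almost all $\z\in\Gam_B (n,k)$, and the reduction above gives $(R_H f)(\fr {z})<\infty$ for almost all $\fr {z}\in\Gam_H (n,k)$. The one delicate point is the choice of the weight split: the exponent $(n+1)/(2p)$ is dictated by the Beltrami--Klein Jacobian in (\ref{kred1}), and it is precisely the balance of powers of $1-|\t|^2$ near the boundary sphere $|\t|=1$ that reproduces the sharp threshold $(n-1)/(k-1)$. The behaviour near the origin is harmless, reflecting the right-sided structure of $R_B$.
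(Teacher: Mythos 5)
Your proof is correct, and it reaches the same analytic core as the paper's by a different route. The paper applies H\"older's inequality directly to the right-hand side of the weighted identity (\ref{ulas1}), $\int_{\Gam_H(n,k)}(R_Hf)(\fr{z})\,(\ch||\fr{z}||)^{j-n}\,d\fr{z}=\int_{\Gam_H(n,j)}f(\fr{t})\,(\ch||\fr{t}||)^{k-n}\,d\fr{t}$, checks via the zonal formula (\ref{aick1a}) that the weight $(\ch||\fr{t}||)^{k-n}$ belongs to $L^{p'}(\Gam_H(n,j))$ exactly when $p<(n-1)/(k-1)$ (a one-line Beta-function computation), and concludes immediately that the left-hand side, hence $R_Hf$, is finite a.e. You instead transfer to the Beltrami--Klein model via Theorem \ref{afor} and verify hypothesis (\ref{ensi}) of Lemma \ref{ctally} for $Mf$; under the substitution $|\t|=\th\,||\fr{t}||$ your weight split $(1-|\t|^2)^{-(k+1)/2}=(1-|\t|^2)^{-(n+1)/(2p)}(1-|\t|^2)^{\b}$ is precisely the paper's pairing of $f$ against $(\ch||\fr{t}||)^{k-n}$, and your condition $\b p'>-1$ is the same threshold in disguise. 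Your route costs some extra bookkeeping (the null-set correspondence under $\pi$, the separate $p=1$ case, and reliance on Lemma \ref{ctally}, whose proof itself passes through the radial average and Lemma \ref{lrrb1}); what it buys is that only local integrability of $Mf$ away from the origin is needed, which makes the right-sided structure of the transform explicit, whereas the paper's global identity (\ref{ulas1}) absorbs this silently and, as a bonus, yields the quantitative weighted $L^1$ bound on $R_Hf$ in one step. Both arguments are sound and produce the same sharp exponent.
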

\begin{proof} By H\"older's inequality, the right-hand side of (\ref  {ulas1}) does not exceed $c \, ||f||_p$, where
\[ c^{p'}=\intl_{\Gam_H (n,j)} \! \frac{d\fr {t}}{  (\ch\, || \fr {t} ||)^{(n-k)p'}}, \qquad 1/p +1/p' =1.\]
By (\ref{aick1a}),
\bea c^{p'}&=&\sigma_{n-j-1}\intl_1^\infty (s^2 -1)^{(n-j)/2 -1} s^{j -(n-k)p'}\,  ds\nonumber\\
&=& \frac{\sigma_{n-j-1}}{2} B \left (\frac{n-1 -p\, (k-1)}{2\,(p-1)}, \frac{n-j}{2}\right )<\infty\nonumber\eea
if $1 \le p < (n-1)/(k -1)$.
\end{proof}

\begin{remark} The bound for $p$  in Proposition \ref {waff2x} is sharp.  For instance, if $p \ge (n-1)/(k -1)$, then
 \be\label {atem} f(\fr {t})= (\ch\, || \fr {t} ||)^{(1-n)/p}\, (\log (1 +\ch\, || \fr {t} ||))^{-1} \ee
 belongs to $L^p (\Gam_H (n,j))$ but $(R_H f)(\fr {z}) \equiv \infty$.
\end{remark}

The next statement gives a simple sufficient condition of the pointwise convergence of the integral   $(R_H f)(\fr {z})$.

\begin {proposition}\label {wareqs}  Let $0\le j<k\le n-1$. If
\be\label {rgess} A_h \equiv {\rm ess} \!\!\!\!\sup\limits_{ \fr {t} \in \Gam_H (n,j)} (\ch  ||\fr {t}||)^\lam |f(\fr {t})|<\infty \quad \text {for some $\lam > k-1$},\ee
 then  $(R_H f)(\fr {z})$ is finite for  all $\fr {z} \in \Gam_H  (n,k)$. The condition  $\lam > k-1 $  is sharp.
\end{proposition}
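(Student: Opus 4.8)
The plan is to reduce the claim to the existence result for the chord transform $R_B$ established in Proposition~\ref{taffy}, using the transition formula $R_H f=NR_B Mf$ from Theorem~\ref{afor}. The first step is to translate the growth hypothesis (\ref{rgess}) into a growth condition on $Mf$. Writing $\fr{t}=\pi(\t)$ with $\t\in\Gam_B(n,j)$ and invoking (\ref{tat}) in the form $\ch\,||\fr{t}||=(1-|\t|^2)^{-1/2}$, the bound $|f(\fr{t})|\le A_h(\ch\,||\fr{t}||)^{-\lam}$ becomes $|(f\circ\pi)(\t)|\le A_h\,(1-|\t|^2)^{\lam/2}$. Hence, by the definition (\ref{lity}) of $M$,
\[
|(Mf)(\t)|=(1-|\t|^2)^{-(k+1)/2}\,|(f\circ\pi)(\t)|\le A_h\,(1-|\t|^2)^{(\lam-k-1)/2}.
\]

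The second step is to match this with the hypothesis of Proposition~\ref{taffy}. Setting $\a=\lam-k+1$, the exponent $(\lam-k-1)/2$ equals $\a/2-1$, and the requirement $\a>0$ is precisely $\lam>k-1$. A direct computation then gives $(1-|\t|^2)^{1-\a/2}\,|(Mf)(\t)|\le A_h$, so $Mf$ satisfies (\ref{rweges}) with this $\a>0$. By Proposition~\ref{taffy}, $(R_B Mf)(\z)$ is finite for every $\z\in\Gam_B(n,k)$. Since the operator $N$ of (\ref{lity1}) merely multiplies by the finite factor $(\ch\,||\fr{z}||)^{-j-1}$ and precomposes with $\pi^{-1}$, the identity $R_H f=NR_B Mf$ of Theorem~\ref{afor} yields
\[
(R_H f)(\fr{z})=(\ch\,||\fr{z}||)^{-j-1}\,(R_B Mf)(\pi^{-1}(\fr{z})),
\]
which is finite for all $\fr{z}\in\Gam_H(n,k)$, as claimed.

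For sharpness, the third step is to exhibit the borderline zonal counterexample at $\lam=k-1$. Take $f(\fr{t})=(\ch\,||\fr{t}||)^{1-k}$, which is bounded and satisfies (\ref{rgess}) with $\lam=k-1$ and $A_h=1$. Here $f=f_1(\ch\,||\fr{t}||)$ with $f_1(r)=r^{1-k}$, so by Lemma~\ref{nsfo} one has $(R_H f)(\fr{z})=F_1(\ch\,||\fr{z}||)$ with $F_1$ given by (\ref{zqwasz}). The integrand in (\ref{zqwasz}) behaves like $r^{-1}$ as $r\to\infty$, equivalently $\int_a^\infty r^{1-k}\,r^{k-2}\,dr=\infty$; hence, by Remark~\ref{nusfo5}, $F_1(s)=\infty$ for all $s$ and $(R_H f)(\fr{z})\equiv\infty$. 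This shows that $\lam=k-1$ does not suffice, so the condition $\lam>k-1$ is sharp.

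No genuine obstacle is expected, since each step is a routine change of variable once Theorem~\ref{afor} and Proposition~\ref{taffy} are available. The only point demanding care is the bookkeeping of exponents under $M$ and $N$, together with the observation that the borderline value $\lam=k-1$ corresponds exactly to the non-integrable tail $r^{-1}$ in the zonal integral (\ref{zqwasz}), which is what pins down the sharpness.
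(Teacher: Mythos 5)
Your proof is correct, but it takes a different route from the paper. The paper works entirely in the hyperboloid model: it majorizes $|f|$ by the zonal function $f_1(\fr{t})=(\ch ||\fr{t}||)^{-\lam}$ and evaluates $(R_H f_1)(\fr{z})$ directly via the zonal formula (\ref{zqwasz}) of Lemma \ref{nsfo}, obtaining $c\,s^{-\lam}<\infty$ for $\lam>k-1$ with $s=\ch||\fr{z}||$; the divergence of the same one-dimensional integral at $\lam=k-1$ is what makes the bound sharp. You instead transport the problem to the Beltrami--Klein model through $R_H f=NR_B Mf$, check that the hypothesis (\ref{rgess}) turns exactly into the hypothesis (\ref{rweges}) of Proposition \ref{taffy} with $\a=\lam-k+1$, and then quote that proposition. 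The two arguments rest on the same underlying Abel-type computation --- your majorant $(1-|\t|^2)^{(\lam-k-1)/2}$ is precisely $(\ch||\fr{t}||)^{-\lam}$ transported by $M$ --- but yours reuses already-proved ball-model results as black boxes, while the paper's is self-contained and one line shorter. A modest advantage of your version is that you make the sharpness explicit with the borderline zonal example $f(\fr{t})=(\ch||\fr{t}||)^{1-k}$ and Remark \ref{nusfo5}, whereas the paper leaves this implicit in the divergence of the displayed integral. One small point worth stating when you pass from (\ref{rgess}) to (\ref{rweges}) is that the essential suprema over $\Gam_H(n,j)$ and $\Gam_B(n,j)$ correspond under $\pi$ because, by Lemma \ref{lem1}, the two measures are mutually absolutely continuous with positive densities; this is routine but should be said.
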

\begin{proof} Using (\ref{rgess}) and Lemma \ref{nsfo},  for $f_1(\fr {t})= (\ch  ||\fr {t}||)^{-\lam}$ we have
\bea
|(R_H f)(\fr {z})| &\le& A_h (R_H f_1)(\z)|= \frac {A_h \,\sig_{k-j-1}}{s^{k-1}} \intl_{s}^\infty \! r^{j-\lam}\,(r^2 -s^2)^{(k-j)/2 -1}\,dr\nonumber\\
&=& c\, s^{-\lam}<\infty, \qquad   \lam > k-1,  \quad   s=\ch ||\fr {z}||.\nonumber\eea
This gives the result.
\end{proof}

Let us discuss the existence of the  dual transform  $(R^*_H \vp)(\fr {t})$.

\begin{theorem}\label {lrrbxW} Let  $ 0 \le j<k\le n-1$. The dual transform  $(R^*_H \vp)(\fr {t})$ is finite a.e.
 for every locally integrable function $\vp $ on $\Gam_H  (n,k)$ and represents a locally integrable function on $\Gam_H  (n,j)$. Further,
  if
 \be\label {rges7}  B_h \equiv {\rm ess} \!\!\!\!\sup\limits_{ \fr {z} \in \Gam_H  (n,k)} (\sh ||\fr {z}||)^\del |\vp(\fr {z})|<\infty, \qquad \del < n- k,\ee
   then  $(R^*_H \vp)(\fr {t} )$ is finite for  all $||\fr {t}||>0$. The condition  $\del < n- k$  is sharp.
 \end{theorem}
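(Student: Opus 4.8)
The plan is to reduce everything to the Beltrami--Klein (chord) model through the factorization $R_H^*\vp = QR^*_B P\vp$ of Theorem \ref{aford}, and then invoke the chord-model results already established. Throughout I will use the dictionary $|\t| = \th\,||\fr {t}||$ coming from (\ref{tat}): a bounded geodesic ball $\{\fr {t}: ||\fr {t}|| < \om\}$ in $\Gam_H (n,j)$ corresponds to the set $\{\t: |\t| < a\}$ with $a = \th\,\om \in (0,1)$, and on such a set every weight of the form $(\ch\,||\fr {t}||)^{\pm s}$ or $(1-|\t|^2)^{\pm s}$ is bounded above and below by positive constants. This observation is the engine that makes the local statements transfer verbatim, and it means no genuinely deep obstacle arises: the content is imported from Proposition \ref{ctally1} and Lemma \ref{nsfo}.

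For the first assertion, suppose $\vp$ is locally integrable on $\Gam_H (n,k)$, fix $\om > 0$, and put $a = \th\,\om$. Set $\psi = P\vp$, so $\psi(\z) = (1-|\z|^2)^{(j-n)/2}(\vp\circ\pi)(\z)$. First I would restrict the measure identity (\ref{kred1}) (with $d=k$) to $\{||\fr {z}|| < \om\}$: since the factor $(1-|\z|^2)^{-(n+1)/2}$ is bounded on $\{|\z| < a\}$, local integrability of $\vp$ forces $\vp\circ\pi$, and hence $\psi$ (the weight $(1-|\z|^2)^{(j-n)/2}$ being likewise bounded there), to be integrable over $\{\z\in\Gam_B (n,k): |\z| < a\}$. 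Proposition \ref{ctally1} then gives integrability of $R^*_B\psi$ over $\{\t: |\t| < a\}$. Transferring back through (\ref{kred}) (with $d=j$) and absorbing the factor $(\ch\,||\fr {t}||)^{k-n}$ supplied by $Q$, which is bounded on $\{||\fr {t}|| < \om\}$, I obtain $\intl_{||\fr {t}|| < \om} |(R_H^*\vp)(\fr {t})|\,d\fr {t} < \infty$. As $\om$ is arbitrary, $R_H^*\vp$ is locally integrable on $\Gam_H (n,j)$, in particular finite a.e.

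For the second assertion I would argue by domination with a zonal majorant, mimicking the affine proof of Proposition \ref{lrrbx}. Hypothesis (\ref{rges7}) reads $|\vp(\fr {z})| \le B_h\,\vp_1(\fr {z})$ with $\vp_1(\fr {z}) = (\sh\,||\fr {z}||)^{-\del}$, and since $R_H^*$ integrates against a positive measure we have $|(R_H^*\vp)(\fr {t})| \le B_h\,(R_H^*\vp_1)(\fr {t})$. As $\vp_1$ is zonal, Lemma \ref{nsfo} applies: formula (\ref{qswasz}) with $\vp_1(s) = s^{-\del}$ reduces $(R_H^*\vp_1)(\fr {t})$ to the single integral $\intl_0^r (r^2 - s^2)^{(k-j)/2 - 1}\,s^{n-k-\del-1}\,ds$ with $r = \sh\,||\fr {t}||$. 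This is a Beta integral: it converges at $s = r$ because $k > j$, and at $s = 0$ precisely when $n-k-\del > 0$, i.e. $\del < n-k$; its evaluation yields $(R_H^*\vp_1)(\fr {t}) = c\,(\sh\,||\fr {t}||)^{-\del}$, finite for every $||\fr {t}|| > 0$. Hence $(R_H^*\vp)(\fr {t})$ is finite for all $||\fr {t}|| > 0$.

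Sharpness of $\del < n-k$ is read off the very same integral: taking the zonal $\vp(\fr {z}) = (\sh\,||\fr {z}||)^{-(n-k)}$, the borderline $\del = n-k$ makes the integrand behave like $s^{-1}$ near $s = 0$, so the integral diverges and $R_H^*\vp \equiv \infty$; this is exactly the borderline flagged in Remark \ref{nusfo5}. The only real care I expect to be needed is the bookkeeping of weight exponents when passing between the two models in the first part, namely verifying that each auxiliary factor is genuinely bounded on the relevant ball, while the second part is a clean one-dimensional computation once the zonal reduction of Lemma \ref{nsfo} is in hand.
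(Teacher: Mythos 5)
Your proposal is correct and follows essentially the paper's own route: the second assertion is proved exactly as in the paper (dominate by the zonal majorant $(\sh\,||\fr {z}||)^{-\del}$ and evaluate the Beta-type integral of Lemma \ref{nsfo}, which converges precisely when $\del<n-k$). For the first assertion the paper applies the weighted duality identity (\ref{yuyu}) directly in $\Gam_H(n,k)$, whereas you route through the chord model via $R_H^*\vp=QR_B^*P\vp$ and Proposition \ref{ctally1}; since that proposition is itself the chord-model incarnation of the same duality computation, the two arguments are interchangeable and your bookkeeping of the bounded weights on geodesic balls is sound.
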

\begin{proof}
The first statement follows from (\ref{yuyu}). Indeed,  for any $b>0$,
\bea\label{zyuyu}
&&\intl_{ ||\fr {t}|| < b } \!\! \! |(R_H^*\vp)(\fr {t})| \, d \fr {t} \le (\ch \, b)^{k+1} \intl_{ ||\fr {t}|| < b } \!\! \! \frac{(R_H^*|\vp|)(\fr {t})}{(\ch \, ||\fr {t}||)^{k+1}} \, d \fr {t}\nonumber\\
&& =c_2 \,(\ch \, b)^{k+1}  \intl_{ ||\fr {z}|| < b } \!\! \!  |\vp (\fr {z})|\, \frac{(\ch^2 b - \ch^2 \, ||\fr {z}||)^{(k-j)/2}}{(\ch \, ||\fr {z}||)^{k+1}}\,  d\fr {z} \nonumber\\
&&\le \frac{\pi^{(k-j)/2} \, (\ch \, b)^{k+1}}{\Gam ((k-j)/2 +1)}   \intl_{ ||\fr {z}|| < b } \!\! \!  |\vp (\fr {z})|\, d\fr {z}. \nonumber\eea
If $\vp$ satisfies  (\ref{rges7}), then by (\ref{qswasz}), for $r= \sh ||\fr {t}||$ and $\vp_1 (\fr {z})= (\sh ||\fr {z}||)^{-\del}$  we have
\[ |(R^*_H \vp)(\fr {t})| \le B_h (R^*_H \vp_1)(\fr {t})= \frac{c\, B_h}{r^{n-j-2}}\intl_0^r  (r^2-s^2)^{(k-j)/2 -1} s^{n-k-\del-1} \,ds<\infty\]
 if $\del <n-k$, $r>0$.
\end{proof}

The following statement gives additional information about  the existence of $(R^*_H \vp)(\fr {t})$.

\begin{proposition} Let $\fr {z}\in \Gam_H (n,k)$, $\fr {t}\in \Gam_H (n,j)$, $0\le j<k\le n-1$, $Re \, \a >0$.  Then
\be\label {exis1}  \intl_{\Gam_H (n,j)} \!\! \! \frac{(R^*_H \vp)(\fr {t})}
{(\ch \,||\fr {t}||)^{k-1+\a}}\,  d\fr {t} = \lam_1 \! \!\intl_{\Gam_H (n,k)}\!\! \!  \frac{\vp (\fr {z})}{ (\ch\, ||\fr {z}||)^{k-1+\a}}\, d\fr {z},\ee
\be\label {exis} \intl_{\Gam_H (n,j)} \! \!\!\! \! (R^*_H \vp)(\fr {t})\,\frac{(\th \,||\fr {t}||)^{j-k-\a}}{(\ch \,||\fr {t}||)^{k+1}}\, d\fr {t} = \lam_1 \!\! \! \!\intl_{\Gam_H (n,k)}\!\! \!\! \!
 \vp (\fr {z}) \,\frac{(\th \,||\fr {z}||)^{-\a}}{(\ch \,||\fr {z}||)^{j+1}} \,  d\fr {z},\ee
\[ \lam_1 =\frac{\pi^{(k-j)/2} \, \Gam (\a/2)}{\Gam ((\a+k -j)/2)}. \]
These equalities hold provided that either side of them exists in the Lebesgue sense.
\end{proposition}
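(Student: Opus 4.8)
The plan is to obtain both identities by transporting the chord–transform identities \eqref{vwaff1} and \eqref{vwaff1D} to the hyperboloid model, in exact parallel with the way Lemma \ref{rolla} converts \eqref{ally}, \eqref{uwaff}, \eqref{waff} into \eqref{ulas}--\eqref{tulasa}. The two ingredients are the factorization $R_H^*\vp = QR_B^*P\vp$ from Theorem \ref{aford} -- equivalently, in pointwise form, $(R_B^*P\vp)(\t) = (\ch\,||\fr t||)^{n-k}\,(R_H^*\vp)(\fr t)$ for $\fr t = \pi(\t)$ -- together with the measure–transition formulas \eqref{kred}, \eqref{kred1}, applied in tandem with the elementary dictionary $|\t| = \th\,||\fr t||$ and $1-|\t|^2 = (\ch\,||\fr t||)^{-2}$ recorded in \eqref{tat}. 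The constant $\lam_1$ is the same one that already appears in \eqref{vwaff1}, \eqref{vwaff1D}, so it will simply be carried along.

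Concretely, for \eqref{exis1} I would substitute the test function $\phi = P\vp = (1-|\z|^2)^{(j-n)/2}(\vp\circ\pi)$ into \eqref{vwaff1} taken at $a=1$. On the right I combine the two powers of $1-|\z|^2$ and then apply \eqref{kred} to the $\Gam_B(n,k)$–integral, rewriting it as an integral over $\Gam_H(n,k)$ against $\vp$; on the left I replace $R_B^*P\vp$ by $(\ch\,||\fr t||)^{n-k}R_H^*\vp$ and invoke \eqref{kred} again, after which the surviving weight $(1-|\t|^2)^{\a/2-1}$ collapses to the asserted power of $\ch\,||\fr t||$. Identity \eqref{exis} is produced in precisely the same manner, now starting from \eqref{vwaff1D}: the factor $|\t|^{-(\a+k-j)}$ passes over to $(\th\,||\fr t||)^{\,j-k-\a}$ verbatim, while the several powers of $1-|\cdot|^2$ reassemble into the stated powers of $\ch\,||\cdot||$.

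An equally good and arguably more transparent route, in the spirit of Example \ref{nusfo51}, runs through the duality \eqref{wxade2}. Here one takes $f$ equal to the weight standing against $R_H^*\vp$ on the left, computes $R_H f$ from the zonal formula \eqref{zqwasz}, and checks that the resulting right–sided Erd\'elyi--Kober integral collapses to a multiple of the right–hand weight; feeding this back into \eqref{wxade2} gives the identity for arbitrary $\vp$. The single computation that powers everything is $\int_s^\infty (r^2-1)^{(j-k-\a)/2}(r^2-s^2)^{(k-j)/2-1} r\,dr$, which under the substitution $u=(r^2-s^2)/(r^2-1)$ turns into $\tfrac12(s^2-1)^{-\a/2}\,B\!\big(\tfrac{k-j}{2},\tfrac{\a}{2}\big)$ times a pure power of $s$; since $\tfrac12\sig_{k-j-1}B\big(\tfrac{k-j}{2},\tfrac{\a}{2}\big)=\lam_1$, this supplies both the sharp $s$–power and the constant $\lam_1=\pi^{(k-j)/2}\Gam(\a/2)/\Gam((\a+k-j)/2)$ at once.

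The only genuine labor is exponent bookkeeping, and the delicate point -- also the natural self-check -- is that the powers of $\ch\,||\cdot||$ emerging on the two sides be consistent; I would verify the final $\ch$–exponents by specializing to $\a=2$ and comparing with the $b\to\infty$ form of the local relation \eqref{yuyu} established earlier. For the Lebesgue–sense proviso I would run the entire argument first with $|f|$ and $|\vp|$ in place of $f$ and $\vp$: every manipulation is either a change of variables or a Tonelli interchange of nonnegative integrands, so finiteness of one side forces absolute convergence of the other and legitimizes the signed identities. The hypothesis $\Re\,\a>0$ is exactly the condition for the Beta integral above to converge, so no further restrictions are required.
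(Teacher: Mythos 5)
Your primary route is exactly the paper's proof: substitute $\psi=P\vp$ into the chord identities (\ref{vwaff1}) (at $a=1$) and (\ref{vwaff1D}), transport both sides to $\Gam_H$ via (\ref{kred}), and replace $(R^*_B P\vp)(\pi^{-1}\fr{t})$ by $(\ch ||\fr{t}||)^{n-k}(R^*_H\vp)(\fr{t})$ using (\ref{lity2q}). For (\ref{exis1}) this is correct and the exponents check out ($(1-|\t|^2)^{\a/2-1}(\ch)^{n-k}(\ch)^{-n-1}=(\ch)^{-(k-1+\a)}$ on the left, $(1-|\z|^2)^{(j-n)/2+(\a+k-j)/2-1}(\ch)^{-n-1}=(\ch)^{-(k-1+\a)}$ on the right). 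Your treatment of the Lebesgue proviso via Tonelli is also fine.

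The one place where your write-up would break down is the assertion, for (\ref{exis}), that ``the several powers of $1-|\cdot|^2$ reassemble into the stated powers of $\ch\,||\cdot||$.'' If you actually carry out the bookkeeping from (\ref{vwaff1D}) you get, on the left, $(\ch||\fr{t}||)^{(n-k)+(2-\a)-(n+1)}=(\ch||\fr{t}||)^{-(k+\a-1)}$ and, on the right, $(\ch||\fr{z}||)^{(n-k-\a+2)-(n+1)}=(\ch||\fr{z}||)^{-(k+\a-1)}$ --- i.e.\ the weight $(\ch||\cdot||)^{-(k+\a-1)}$ appears on \emph{both} sides, not $(\ch||\fr{t}||)^{-(k+1)}$ and $(\ch||\fr{z}||)^{-(j+1)}$ as printed in (\ref{exis}). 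The printed exponents agree with the computed ones only at $\a=2$ on the left and never on the right (unless $j=k$); a zonal test with (\ref{qswasz}) and (\ref{jua}) confirms that the identity holds with $k+\a-1$ on both sides and fails as printed. So the discrepancy is a defect of the stated formula, not of your method; but since you claim your reassembly lands on the stated powers, that claim is unverified and false as written. Notably, your own alternative route already detects this: the Beta integral $\int_s^\infty(r^2-1)^{(j-k-\a)/2}(r^2-s^2)^{(k-j)/2-1}r\,dr=\tfrac12(s^2-1)^{-\a/2}B(\tfrac{k-j}{2},\tfrac{\a}{2})$ is precisely $(\sig_{k-j-1})^{-1}s^{k-1}(R_Hf)(\fr{z})$ for $f(\fr{t})=(\th||\fr{t}||)^{j-k-\a}(\ch||\fr{t}||)^{-(k+\a-1)}$ (not for the printed weight with $(\ch)^{-(k+1)}$), and it yields $R_Hf=\lam_1(\th||\fr{z}||)^{-\a}(\ch||\fr{z}||)^{-(k+\a-1)}$. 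You should finish the exponent computation explicitly and state the corrected version of (\ref{exis}) rather than matching it to the printed one.
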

\begin{proof}
  By (\ref{vwaff1}),
 \[ \intl_{\Gam_B (n,j)} \!\!(R^*_B \psi)(\t) (1\! - \! |\t|^2)^{\a/2-1}\, d\t = \lam_1 \!\intl_{\Gam_B (n,k)}\!\!\! \psi (\z) (1 \! - \! |\z|^2)^{(\a+k
-j)/2 -1} d \z.\]
 Hence, by (\ref{kred}),
  \[\intl_{\Gam_H (n,j)} \!\! \! \frac{ (R^*_B \psi)(\pi^{-1}(\fr {t}))}
{(\ch \,||\fr {t}||)^{n-1+\a}}\,  d\fr {t} = \lam_1 \! \!\intl_{\Gam_H (n,k)}\!\! \!  \frac{\psi (\pi^{-1}(\fr {z}))}{ (\ch\, ||\fr {z}||)^{n-1+\a +k -j}}\, d\fr {z}.\]
By (\ref {lity2q}), the expression under the sign of the integral on the left-hand side is
\[\frac{(Q^{-1} R^*_H P^{-1} \psi)(\pi^{-1}(\fr {t}))}
{(\ch \,||\fr {t}||)^{n-1+\a}}=\frac{(R^*_H \vp)(\fr {t})}{ (\ch \,||\fr {t}||)^{k-1+\a}}\]
 where $\vp=P^{-1} \psi$. Setting $\psi =P\vp$ in the right-hand side, we obtain (\ref{exis1}).
The proof of (\ref{exis}) is similar and relies on (\ref{vwaff1D}).
\end{proof}

\subsection {Injectivity}

\begin{theorem}\label {iste5} If  $ 0 \le j<k\le n-1$,  $ j+k \le n-1$, then
the Radon transform $R_H$ is injective on  $ L^1_{\lam} (\Gam_H (n,j))$ for all  $\lam \ge n-k$ .
%In particular, $R_H$ is injective on the class of functions satisfying (\ref{rgess}).
\end{theorem}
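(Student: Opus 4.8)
The plan is to move the problem into the Beltrami--Klein model, where injectivity of the chord transform is already in hand, and to use the explicit intertwining of Theorem \ref{afor}. Suppose $f\in L^1_\lam(\Gam_H(n,j))$ with $\lam\ge n-k$ and $(R_H f)(\fr{z})=0$ for almost all $\fr{z}\in\Gam_H(n,k)$. Set $F=Mf$, a function on $\Gam_B(n,j)$, with $M$ as in (\ref{lity}). By the first formula in (\ref{lity2}) we have $R_H f=N R_B F$; since $(N\vp)(\fr{z})=(\ch\,||\fr{z}||)^{-j-1}(\vp\circ\pi^{-1})(\fr{z})$ is multiplication by a nowhere-vanishing factor composed with the bijection $\pi^{-1}$ (see (\ref{mnqe}), Lemma \ref{lem1}), the hypothesis $R_H f=0$ forces $(R_B F)(\z)=0$ for almost all $\z\in\Gam_B(n,k)$.

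The central step is to locate $F=Mf$ in the space on which $R_B$ is known to be injective. Using the identity $1/\ch^2||\fr{t}||=1-|\t|^2$ from (\ref{tat}), the weight in (\ref{lity}) is $(1-|\t|^2)^{-(k+1)/2}=(\ch\,||\fr{t}||)^{k+1}$, so that $(|F|\circ\pi^{-1})(\fr{t})=(\ch\,||\fr{t}||)^{k+1}|f(\fr{t})|$. Substituting this into the change-of-variables formula (\ref{kred}) (with $d=j$) gives
\[
\intl_{\Gam_B(n,j)}|F(\t)|\,d\t=\intl_{\Gam_H(n,j)}\frac{(\ch\,||\fr{t}||)^{k+1}|f(\fr{t})|}{(\ch\,||\fr{t}||)^{n+1}}\,d\fr{t}=\intl_{\Gam_H(n,j)}\frac{|f(\fr{t})|}{(\ch\,||\fr{t}||)^{n-k}}\,d\fr{t}=||f||_{1,n-k}.
\]
Hence $F\in L^1(\Gam_B(n,j))$ is equivalent to the finiteness of $||f||_{1,n-k}$, and this is exactly what the standing hypothesis on the weight is meant to secure; the exponent $n-k$ is the one that governs the behaviour as $|\t|\to1$, i.e. as $||\fr{t}||\to\infty$, which is why $n-k$ is the sharp threshold here.

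With $F\in L^1(\Gam_B(n,j))$ and $R_B F=0$ a.e., the assumption $j+k\le n-1$ lets me invoke the injectivity of the chord transform, Theorem \ref{tally}(i), to conclude $F=0$ a.e. on $\Gam_B(n,j)$. Finally $M$ is itself injective, since it multiplies $f\circ\pi$ by the positive factor $(1-|\t|^2)^{-(k+1)/2}$ and $\pi$ is a bijection, so $Mf=0$ yields $f=0$ a.e. on $\Gam_H(n,j)$. I expect the main obstacle to be precisely the $L^1$-membership of $F=Mf$: everything rests on the displayed identity identifying $||f||_{1,n-k}$ with the $L^1(\Gam_B(n,j))$-norm of $Mf$, so the delicate point is the boundary regime $|\t|\to1$ (equivalently $||\fr{t}||\to\infty$), matched to the sharp weight $n-k$. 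For weights $\lam>n-k$ one must additionally exploit the right-sided (Volterra/Abel-type) structure of $R_B$ recorded in Proposition \ref{iant}, reconstructing $F$ from the boundary inward on each bounded region $\{|\t|\le c\}$ where $F$ is integrable, and passing to the limit $c\to1$.
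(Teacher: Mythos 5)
Your argument is essentially the paper's own proof: the paper likewise passes to the Beltrami--Klein model via $R_Hf=NR_BMf$ from (\ref{lity2}), identifies $\int_{\Gam_B(n,j)}|(Mf)(\t)|\,d\t$ with $||f||_{1,n-k}$ by means of (\ref{kred}) and (\ref{tat}), and then applies Theorem \ref{tally}(i); for the critical weight $\lam=n-k$ the two proofs coincide line by line. The only divergence concerns $\lam>n-k$, which you rightly single out as the delicate point. The paper disposes of it in one sentence by asserting that $L^1_{\lam}(\Gam_H(n,j))$ is embedded in $L^1_{n-k}(\Gam_H(n,j))$ when $\lam>n-k$; note, however, that with the definition (\ref{ented}) and $\ch\,||\fr {t}||\ge 1$ the inclusion actually runs the other way ($L^1_{n-k}\subset L^1_{\lam}$ for $\lam>n-k$), so your instinct that the larger weights require a genuine additional argument rather than a formal embedding is sound. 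Your proposed repair is only a sketch, though: the chords entering $(R_BMf)(\z)$ always reach the boundary $|\t|=1$, so restricting to $\{|\t|\le c\}$ and letting $c\to 1$ does not by itself decouple the problem from the boundary; one would still have to show that the a.e.\ finiteness of $R_B(M|f|)$ (implicit in the hypothesis $R_Hf=0$ a.e.) forces integrability of $Mf$ up to $|\t|=1$ -- for zonal $f$ this follows from the sharpness statement in Remark \ref{nusfo5}, but the general case is not immediate. So for $\lam=n-k$ your proof is exactly the published one, and for $\lam>n-k$ neither your sketch nor the paper's one-line reduction is complete as written.
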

\begin{proof} Let $f \!\in \! L^1_{\lam} (\Gam_H (n,j))$, and let $(R_H f)(\fr {z})\!= \!0$ for almost all $\fr {z} \!\in \!\Gam_H (n,k)$.
Because  $ L^1_{\lam} (\Gam_H (n,j))$ is embedded in  $ L^1_{n-k} (\Gam_H (n,j))$ when  $\lam > n-k$, it suffices to assume $f \!\in \! L^1_{n-k} (\Gam_H (n,j))$. By Theorem \ref{afor}, we have
$(R_B Mf)(\z)=0$  for almost all $\z\!\in \!\Gam_B (n,k)$. Hence, if $Mf \in L^1(\Gam_B (n,j))$, then, by Theorem \ref {tally} (i),
it follows that
\[ (Mf)(\t)=(1-|\t|^2)^{-(k+1)/2} \,(f\circ \pi)(\t)=0\; \text {\rm    for almost all}\;  \t\in  \Gam_H (n,j), \]and therefore $f=0$ a.e. on $\Gam_H (n,j)$.
To complete the proof, it remains to note that $Mf \in L^1(\Gam_B (n,j))$ because, by (\ref{kred}) and (\ref{tat}),
\[
\intl_{\Gam_B (n,j)}  |(Mf)(\t)|\, d\t=\intl_{\Gam_H (n,j)}  \frac{|f (\fr {t})|}{ (\ch\, ||\fr {t}||)^{n-k}}\, d\fr {t}<\infty.\]
\end{proof}

An analogue of Theorem \ref{iste5}  for  $R^*_H$ reads as follows.
\begin{theorem}\label {iste51} If  $ 0 \le j<k\le n-1$,  $ j+k \ge n-1$, then
the dual Radon transform $R^*_H$ is injective on  $ L^1_{\del} (\Gam_H (n,k))$ for all  $\del \ge j+1$.
\end{theorem}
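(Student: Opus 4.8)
The plan is to mirror the argument for Theorem \ref{iste5}, transferring injectivity from the chord transform $R^*_B$ to $R^*_H$ through the transition formula (\ref{lity2q}). First I would take $\vp \in L^1_\del (\Gam_H (n,k))$ with $(R^*_H \vp)(\fr {t})=0$ for almost all $\fr {t} \in \Gam_H (n,j)$, and reduce to the critical weight: since $L^1_\del (\Gam_H (n,k))$ is embedded in $L^1_{j+1} (\Gam_H (n,k))$ when $\del > j+1$, it suffices to treat $\vp \in L^1_{j+1} (\Gam_H (n,k))$.

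The decisive step is to use (\ref{lity2q}), namely $R^*_H \vp = Q R^*_B P\vp$, where $Q$ is multiplication by the nowhere-vanishing factor $(\ch ||\fr {t}||)^{k-n}$; hence the vanishing of $R^*_H \vp$ is equivalent to $(R^*_B P\vp)(\t)=0$ for almost all $\t \in \Gam_B (n,j)$. To invoke the injectivity of $R^*_B$ on $L^1 (\Gam_B (n,k))$ furnished by Theorem \ref{tally}(ii) — which applies precisely because $j+k\ge n-1$ — I must first check that $P\vp \in L^1 (\Gam_B (n,k))$. Using (\ref{tat}) to rewrite $(1-|\z|^2)^{(j-n)/2}=(\ch ||\fr {z}||)^{n-j}$ with $\fr {z}=\pi (\z)$, and then the change of measure (\ref{kred}) in dimension $k$, I would compute
\[
\intl_{\Gam_B (n,k)} |(P\vp)(\z)|\, d\z = \intl_{\Gam_H (n,k)} \frac{|\vp (\fr {z})|}{(\ch ||\fr {z}||)^{j+1}}\, d\fr {z}=||\vp||_{1,j+1}<\infty,
\]
so that $P\vp$ is integrable on $\Gam_B (n,k)$ exactly when $\vp \in L^1_{j+1} (\Gam_H (n,k))$; this is what fixes the critical exponent $j+1$ in the statement.

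With $P\vp \in L^1 (\Gam_B (n,k))$ and $R^*_B P\vp =0$ almost everywhere, Theorem \ref{tally}(ii) yields $P\vp =0$ a.e. on $\Gam_B (n,k)$. Since $(P\vp)(\z)=(1-|\z|^2)^{(j-n)/2}(\vp\circ \pi)(\z)$ carries a nonvanishing prefactor and $\pi$ is a bijection between $\Gam_B (n,k)$ and $\Gam_H (n,k)$, this forces $\vp =0$ almost everywhere on $\Gam_H (n,k)$, which completes the argument.

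The main obstacle I anticipate is purely one of bookkeeping rather than of genuine difficulty: ensuring that the transition identities (\ref{lity2q}), (\ref{kred}), (\ref{tat}) are applied in the Lebesgue sense on the correct spaces and in the correct dimension ($d=k$ throughout the dual reduction), so that the weight exponents collapse to the exact value $j+1$. It is worth noting that this threshold coincides with the existence threshold for $R^*_H$ and is matched to the hypothesis $j+k\ge n-1$ under which $R^*_B$ is injective, which is what makes the whole reduction go through cleanly.
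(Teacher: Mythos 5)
Your argument is essentially identical to the paper's proof: the same reduction to the critical exponent $\del=j+1$, the same use of the factorization $R_H^*\vp=QR_B^*P\vp$ from (\ref{lity2q}) together with the nonvanishing of the factors in $Q$ and $P$, the same verification via (\ref{kred}) and (\ref{tat}) that $\vp\in L^1_{j+1}(\Gam_H(n,k))$ forces $P\vp\in L^1(\Gam_B(n,k))$, and the same appeal to Theorem \ref{tally}(ii) under $j+k\ge n-1$. The one caveat, which you share with the paper's own treatment of Theorem \ref{iste5}, is that since $\ch\,||\fr{z}||\ge 1$ the inclusion actually runs $L^1_{j+1}(\Gam_H(n,k))\subset L^1_{\del}(\Gam_H(n,k))$ for $\del>j+1$, so the reduction from general $\del\ge j+1$ to $\del=j+1$ deserves a second look.
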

\begin{proof}   It suffices to consider the case $\del=j+1$.  We recall that by Theorem \ref {aford}, $R_H^*\vp =QR^*_B P \vp$, where
\[(P\vp)(\z)= (1-|\z|^2)^{(j-n)/2} \, \,(\vp\circ \pi)(\z),\quad (Q\psi)(\fr{t})= (\ch \, ||\fr{t} ||)^{k-n} \,(\psi\circ \pi^{-1})(\fr{t}).\]
If $\vp \!\in \! L^1_{j+1} (\Gam_H (n,k))$, then $P\vp \in L^1(\Gam_B (n,k))$ because by (\ref{kred}) and (\ref{tat}),
\[
\intl_{\Gam_B (n,k)}  |(P\vp)(\z)|\, d\z=\intl_{\Gam_H (n,k)}  \frac{|\vp (\fr {z})|}{ (\ch\, ||\fr {z}||)^{j+1}}\, d\fr {z}<\infty.\]
Suppose that $(R^*_H \vp)(\fr {t})\!= \!0$ for almost all $\fr {t} \!\in \!\Gam_H (n,j)$. Then $(R^*_B P \vp)(\t)\!= \!0$ for almost all $\t \!\in \!\Gam_B (n,j)$,
and therefore, by Theorem \ref {tally} (ii), $(P\vp)(\z)= 0$   for almost all $\z\!\in \!\Gam_B (n,k)$. The latter means that  $\vp (\fr {z})= 0$  a.e. on $\Gam_H (n,k)$.
This gives the result.

%If $\vp$ satisfies  (\ref{rgesst}), then $ \vp \in L^1_{j+1} (\Gam_H (n,k))$, which can be easily checked as we did  in the proof of the previous proposition. Hence the result follows.
\end{proof}

\subsection {Support Theorems}\label {ider1}

 The following statement is a hyperbolic analogue of Theorem  \ref{thrt4}.

\begin{theorem} \label {Crf4h} Let  $\fr {z}\in \Gam_H (n,k)$, $\fr {t}\in \Gam_H (n,j)$, $0\le j<k\le n-1$, $j+k\le n-2$, $r>0$.
 The implication
\be\label{Ikvv1}
(R_H f)(\fr {z})=0\; \forall \, ||\fr {z}|| > r\; \Longrightarrow \; f(\fr {t})=0\; \forall \, ||\fr {t}|| > r\ee
 holds for any
  continuous function $f$ satisfying (\ref{rgess}).
 \end{theorem}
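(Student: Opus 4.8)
The plan is to transplant the localization argument from the proof of the affine support theorem (Theorem~\ref{thrt4}) to the hyperbolic setting, using the hyperbolic injectivity of Theorem~\ref{iste5} on a suitable totally geodesic sub-hyperboloid in place of the affine injectivity. First I would fix an arbitrary $(j+k+1)$-dimensional totally geodesic submanifold $\fr{T}\subset\hn$ with $||\fr{T}||=d(\tilde o,\fr{T})=t>r$; such a $\fr{T}$ exists because the hypothesis $j+k\le n-2$ gives $j+k+1\le n-1$. Since $\fr{z}\subset\fr{T}$ forces $||\fr{z}||\ge||\fr{T}||=t>r$, the assumption yields $(R_H f)(\fr{z})=0$ for every $k$-geodesic $\fr{z}\subset\fr{T}$. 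In the defining integral $(R_H f)(\fr{z})=\int_{\fr{t}\subset\fr{z}}f(\fr{t})\,d_{\fr{z}}\fr{t}$ the variable $\fr{t}$ runs only over $j$-geodesics $\fr{t}\subset\fr{z}\subset\fr{T}$ and $d_{\fr{z}}\fr{t}$ is intrinsic to $\fr{z}$; hence, identifying $\fr{T}$ by isometry with a copy of $\bbh^{n_1}$, $n_1=j+k+1$, this says precisely that the intrinsic hyperbolic Radon transform of the restriction $h(\fr{t}):=f(\fr{t})$, $\fr{t}\subset\fr{T}$, vanishes on all $k$-geodesics of $\bbh^{n_1}$.

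The heart of the argument is to place $h$ in the space to which Theorem~\ref{iste5} applies. Intrinsically the indices satisfy $0\le j<k\le n_1-1$ with $j+k=n_1-1$, the borderline case, so Theorem~\ref{iste5} asserts injectivity of $R_H$ on $L^1_\lam(\Gam_H(n_1,j))$ for every $\lam\ge n_1-k=j+1$. To verify $h\in L^1_{j+1}(\Gam_H(n_1,j))$ I would let $o'$ be the foot of the perpendicular from $\tilde o$ to $\fr{T}$ and set $\rho=d_{\fr{T}}(o',\fr{t})$ for the intrinsic distance of a $j$-geodesic $\fr{t}\subset\fr{T}$. Because the geodesic $\tilde o\,o'$ is orthogonal to $\fr{T}$, the hyperbolic Pythagorean theorem gives $\ch||\fr{t}||=\ch t\,\ch\rho$, and the growth hypothesis (\ref{rgess}) then yields $|h(\fr{t})|\le A_h(\ch||\fr{t}||)^{-\lam}=A_h(\ch t)^{-\lam}(\ch\rho)^{-\lam}$ with $\lam>k-1$. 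Applying the intrinsic integration formula (\ref{aick1a}) in dimension $n_1$ with $d=j$ and $s=\ch\rho$ (so that $\sig_{n_1-j-1}=\sig_k$ and the exponent equals $(k-1)/2$), the weighted integral of $|h|$ reduces to $\sig_k\int_1^\infty (s^2-1)^{(k-1)/2}\,s^{-1}\,\Omega(s)\,ds$, where $\Omega(s)$ denotes $|h|$ written as a function of $s$. This integrand is $O(s^{k-2-\lam})$ at infinity, integrable since $\lam>k-1$, while near $s=1$ it is controlled by $(s^2-1)^{(k-1)/2}$, which is integrable; the boundary behaviour is harmless because $h$ is continuous, hence bounded, near $\rho=0$. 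Thus $h\in L^1_{j+1}(\Gam_H(n_1,j))$, and Theorem~\ref{iste5} forces $h=0$ a.e.; continuity of $f$ upgrades this to $f(\fr{t})=0$ for every $j$-geodesic $\fr{t}\subset\fr{T}$.

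It remains to dispense with the auxiliary $\fr{T}$. Given any $j$-geodesic $\fr{t}$ with $||\fr{t}||=t_0>r$, let $p$ be its nearest point to $\tilde o$, so that the geodesic $\tilde o\,p$ meets $\fr{t}$ orthogonally at $p$; extending $\fr{t}$ inside the $(n-1)$-dimensional totally geodesic submanifold through $p$ orthogonal to $\tilde o\,p$ produces a $(j+k+1)$-geodesic $\fr{T}\supset\fr{t}$ whose nearest point to $\tilde o$ is again $p$, hence $||\fr{T}||=t_0>r$. This extension is possible exactly when $j+k+1\le n-1$, i.e.\ $j+k\le n-2$, which is the hypothesis. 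The previous step then gives $f(\fr{t})=0$, and since such $\fr{t}$ was arbitrary the asserted implication follows. I expect the main difficulty to be geometric rather than analytic: one must justify the Pythagorean identity $\ch||\fr{t}||=\ch t\,\ch\rho$, the identification of the restricted transform with the intrinsic one, and the fact that the reduced problem lands on the boundary $j+k=n_1-1$, which is still inside the range $j+k\le n_1-1$ where Theorem~\ref{iste5} guarantees injectivity.
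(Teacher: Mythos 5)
Your proposal is correct and follows essentially the same route as the paper's proof: restrict to an arbitrary $(j+k+1)$-geodesic at distance greater than $r$, observe that the hypothesis kills $R_Hf$ on all $k$-geodesics inside it, identify the restricted problem with the intrinsic $j$-to-$k$ transform on $\bbh^{j+k+1}$ (the borderline case $j+k=n_1-1$), verify the $L^1_{j+1}$ condition from (\ref{rgess}) via (\ref{aick1a}), and invoke the injectivity of Theorem \ref{iste5}. Your explicit use of the hyperbolic Pythagorean identity $\ch||\fr t||=\ch t\,\ch\rho$ and the final covering step merely spell out details the paper leaves implicit in its hyperbolic-rotation parametrization.
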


\begin{proof} The proof follows the same idea as in  Theorem  \ref{thrt4}.  We
fix any $(k+j+1)$-geodesic  $Z$ in $\hn$ at a distance $s>r$ from the origin $\tilde o$. Consider another $(k+j+1)$-geodesic, having the form
\[
Z_t= g_{n-k-j-1, n+1} (s)\,\bbh^{k+j+1} \]
where $g_{n-k-j-1, n+1} (s)$ is defined by (\ref{hipr}) (with $d=k+j+1$) and
\[\bbh^{k+j+1}\!=\hn \cap \, \bbe^{k+j+1,1},  \quad \bbe^{k+j+1,1} \!=\L (e_{n-j-k}, \ldots, e_{n+1})\!=\! (\bbr^{n-j-k-1})^\perp.\]
Let
$\rho_Z \in G$ be a hyperbolic rotation that sends $Z_t$ to $Z$.
Suppose that a $k$-geodesic $\fr {z}$ lies in $Z$ and denote
 $\fr {z}_1=\rho_Z^{-1} \fr {z} \subset Z_s$, $\; f_Z (\fr {t})=f(\rho_Z \fr {s})$.
By the assumption, $(R_{H} f)(\fr {z})=0$ for all $\fr {z} \subset Z$, and therefore
$(R_{H} f)(\rho_Z \fr {z}_1) =(R_{H} f_Z)(\fr {z}_1)=0$ for all $k$-geodesics $\fr {z}_1$ in $Z_s$.
Every $\fr {z}_1$ has the form $\fr {z}_1=g_{n-k-j-1, n+1} (s)\fr {z}_2 $, where $\fr {z}_2$ is a $k$-geodesic  in $\bbh^{k+j+1}$.
Hence, for any
$\fr {z}_2$  in $ \bbh^{k+j+1}$,
\[
0=(R_{H} f_Z)(g_{n-k-j-1, n+1} (s)\fr {z}_2)=\intl_{\fr {z}_2} f_Z (g_{n-k-j-1, n+1} (s)\fr {t})\, d\fr {t}= (R_{H} \tilde f)(\fr {z}_2).\]
The latter  is the geodesic $j$-to-$k$
 transform of the function
 \[\tilde f (\fr {t})= f_Z (g_{n-k-j-1, n+1} (s)\fr {t})\]
  on the hyperbolic space $\bbh^{n_1}$,  $n_1=j+k+1$.
 Because
   $j+k=n_1 -1$, by Theorem  \ref {iste5} it follows that $R_H$ is injective on  $L^1_{n_1-k} (\Gam_H(n_1,j))$, i.e., whenever
\[
 I=\intl_{\Gam_H (n_1,j)}  \frac{| \tilde f (\fr {t})|}{ (\ch\, ||\fr {t}||)^{n_1-k}}\, d\fr {t}=\intl_{\Gam_H (j+k+1,j)}  \frac{| \tilde f (\fr {t})|}{ (\ch\, ||\fr {t}||)^{j+1}}\, d\fr {t}<\infty. \]
   By (\ref{rgess}) and (\ref{aick1a}),
 \[
 I\le  c\intl_{\Gam_H (j+k+1,j)}  \frac{d\fr {t}}{ (\ch\, ||\fr {t}||)^{j+1+\lam}} =c_1 \intl_1^\infty  (s^2 -1)^{(k-1)/2}\, s^{-\lam -1}  < \infty\]
 if $\lam>k-1$, as in (\ref{rgess}).
Hence, by injectivity,  
\[\tilde f  (\fr {t})= f_Z (g_{n-k-j-1, n+1} (s)\fr {t}) =0\quad \text {\rm for all}\quad  \fr {t} \subset \bbh^{j+k+1}.\]
 Because $Z$ and $s>r$ are arbitrary, the result follows.
 \end{proof}

To the best of our knowledge, an analogue of Theorem \ref {Crf4h} for the case  $j+k= n-1$, $j>0$,  is unknown. The following conjecture looks plausible.
\begin{definition}  \label {abba1}
A $C^\infty $ function $f$ on $\Gam_H (n,d)$ is called rapidly decreasing if
 the quantity $(\ch ||\fr {t}||)^{m}  |f(\fr {t})|$ is bounded for all $m>0$.
 We denote by
  $S (\Gam_H (n,d))$ the space of all functions $f$ on $\Gam_H (n,d)$ which are rapidly decreasing together with all their derivatives.
%The space of all continuous functions on  $\Gam_H (n,d)$  is denoted by $C (\Gam_H (n,d))$.
\end{definition}

\begin{conjecture} \label {eed4h}  {\rm Let $0\le j<k \le n-1$, $j+k= n-1$,  $r>0$.
 If $f\in S (\Gam_H (n,d))$ and  $(R_H f)(\fr {z})=0$ for all $||\fr {z}|| > r$, then $f(\fr {t})=0$ for all $||\fr {t}|| > r$.}
\end{conjecture}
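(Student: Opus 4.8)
The plan is to transfer the conjecture to the affine (Euclidean) setting, where the support theorem is already available in the balanced range $j+k\le n-1$ through the work of Gonzalez and Kakehi \cite[Theorem 6.2, Corollary 7.1]{GK06}, and then pull the conclusion back by the transition formula of Theorem~\ref{afor}. First I would use the identity $R_H f =NR_B Mf$ together with the facts that $N$ is multiplication by the nonvanishing factor $(\ch ||\fr {z}||)^{-j-1}$ and that $\pi$ is a bijection with $|\pi^{-1}(\fr {z})|=\th ||\fr {z}||$. This converts the hypothesis $(R_H f)(\fr {z})=0$ for all $||\fr {z}||>r$ into $(R_B Mf)(\z)=0$ for all $|\z|>\rho$, where $\rho=\th r$ and $\z=\pi^{-1}(\fr {z})\in \Gam_B (n,k)$.

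Next I would pass from the chord transform to the affine transform via $R_B (Mf)=\rho_B R_A \widetilde{Mf}$, where $\widetilde{Mf}$ is the extension of $Mf$ by zero to all of ${\rm Gr} (n,j)$. For $k$-planes $\z$ with $|\z|\ge 1$ the vanishing of $(R_A \widetilde{Mf})(\z)$ is automatic: any $\t\subset \z$ satisfies $|\t|\ge |\z|\ge 1$, so $\widetilde{Mf}(\t)=0$. Combined with the previous step (and continuity of $R_A \widetilde{Mf}$ across $|\z|=1$), this yields $(R_A \widetilde{Mf})(\z)=0$ for \emph{all} $|\z|>\rho$. Applying the Gonzalez--Kakehi support theorem, whose range $j+k\le n-1$ covers the critical case $j+k=n-1$, to $\widetilde{Mf}$ then gives $\widetilde{Mf}(\t)=0$ for $|\t|>\rho$; unravelling the definitions of $M$ and $\pi$ turns $\widetilde{Mf}(\t)=0$ for $\rho<|\t|<1$ into $(f\circ \pi)(\t)=0$ there, i.e. $f(\fr {t})=0$ for all $||\fr {t}||>r$, as desired.

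The hard part will be verifying that $\widetilde{Mf}$ lies in the Schwartz space $S({\rm Gr} (n,j))$ required by \cite{GK06}. Since $Mf$ is supported in $\{|\t|<1\}$, the extension $\widetilde{Mf}$ is compactly supported, so the issue is purely the $C^\infty$-regularity across the tangency shell $\{|\t|=1\}$, which corresponds to the infinity $||\fr {t}||\to\infty$ of the hyperboloid. Here one must show that $Mf=(1-|\t|^2)^{-(k+1)/2}(f\circ\pi)(\t)$ and all of its derivatives vanish to infinite order as $|\t|\to 1^-$. The rapid decay of $f\in S(\Gam_H (n,j))$ forces $|(f\circ\pi)(\t)|\le C_m (1-|\t|^2)^{m/2}$ for every $m$, which beats the singular factor $(1-|\t|^2)^{-(k+1)/2}$; the delicate point is that each differentiation in the affine coordinates produces, through the Jacobian of $\pi$ near the boundary, extra factors of order $(1-|\t|^2)^{-1}$ (since $\th\,||\fr {t}||=|\t|$ gives $\partial ||\fr {t}||/\partial |\t|=(1-|\t|^2)^{-1}$), and one has to control all tangential and radial mixed derivatives uniformly to conclude that $\widetilde{Mf}\in C_c^\infty({\rm Gr} (n,j))\subset S({\rm Gr} (n,j))$. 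Carrying out this boundary-regularity estimate rigorously, comparing the hyperbolic and affine smooth structures near $|\t|=1$, is the principal obstacle; should it resist treatment for the full Schwartz class, a fallback is to reproduce the Gonzalez--Kakehi argument intrinsically on $\hn$ rather than transferring it through the Beltrami--Klein model.
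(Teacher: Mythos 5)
This statement is labeled as a \emph{conjecture} and is explicitly left open in the paper (the preceding sentence states that the case $j+k=n-1$, $j>0$, is unknown), so there is no proof of it in the paper to compare against. Your outline is nevertheless a sensible and genuinely new line of attack: the reduction $R_Hf=NR_BMf$, the observation that $(R_A\widetilde{Mf})(\z)$ vanishes automatically for $|\z|\ge1$ because $\t\subset\z$ forces $|\t|\ge|\z|$, and the appeal to the Gonzalez--Kakehi support theorem (which does cover $j+k= n-1$ for Schwartz functions on ${\rm Gr}(n,j)$) are all correct as stated. It also differs in kind from the method the paper uses in the adjacent case $j+k\le n-2$ (Theorem \ref{Crf4h}), which slices by $(k+j+1)$-geodesics and invokes injectivity on $\bbh^{j+k+1}$ --- a device that degenerates exactly at $j+k=n-1$. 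Your key point, that Schwartz decay on $\Gam_H(n,j)$ turns into compact support inside the chord space after passage to the Beltrami--Klein model (in contrast with the elliptic situation of Remark \ref{owl}, where $M_0^{-1}F$ decays only polynomially), is precisely the reason one can hope to borrow the Euclidean Schwartz-class theorem here.

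As a proof, however, the proposal is incomplete, and the gap sits exactly where you place it: the claim $\widetilde{Mf}\in C_c^\infty({\rm Gr}(n,j))\subset S({\rm Gr}(n,j))$ is asserted, not established, and it carries the entire weight of the argument. Two things need real work. First, Definition \ref{abba1} controls ``all derivatives'' of $f$ without specifying the differential structure; you must fix an interpretation (say, invariant derivatives coming from the $G$-action) and then prove that every coordinate derivative of $(1-|\t|^2)^{-(k+1)/2}(f\circ\pi)(\t)$ is bounded by a product of a factor growing only polynomially in $(1-|\t|^2)^{-1}$ (from the Jacobian of $\pi$ and its higher derivatives, in the fibre directions of the Grassmannian bundle as well as the radial one) and a factor decaying faster than any power of $1-|\t|^2$ (from the rapid decay of $f$ and its derivatives). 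Second, even granting that all derivatives of $Mf$ tend to $0$ as $|\t|\to1^-$, one still has to verify that the extension by zero is $C^\infty$ across the tangency shell $\{|\t|=1\}$. Until this lemma --- roughly, ``$M$ maps $S(\Gam_H(n,j))$ into $C_c^\infty({\rm Gr}(n,j))$ with infinite-order vanishing at $|\t|=1$'' --- is written down and checked, the conjecture remains open; with it, your argument would indeed settle it.
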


The following support theorem for $R^*_H$  is a consequence of the corresponding Theorem \ref {Ddt4}  for $R^*_B$.
We recall that by (\ref{lity2q}), $R_H^*\vp =QR^*_B P \vp$, where
\[ (P\vp)(\z) \!= \!(1-|\z|^2)^{(j-n)/2} (\vp\circ \pi)(\z), \quad (Q\psi)(\fr{t})\!=\!(\ch \, ||\fr{t} ||)^{k-n} (\psi\circ \pi^{-1})(\fr{t}),\]
\[\z \in \Gam_B (n,k), \qquad \fr{t} \in \Gam_H (n,j).\]

\begin{theorem} \label {Ddt4h}  Let  $0\le j<k \le n-1$, $j+k\ge n$,
 $\fr {t} \in  \Gam_H (n,j)$,  $\fr {z} \in \Gam_H(n,k)$, $r>0$.
 Then the implication
 \be\label{Ddkvv1h}
(R^*_H \vp)(\fr {t})=0\; \forall \, ||\fr {t}|| \in (0,r) \; \Longrightarrow \; \vp(\fr {z})=0\; \forall \,  ||\fr {z}|| \in (0,r)\ee
  holds for all   functions  $\vp$  which are  continuous on the set $\{ \fr {z}: 0<||\fr {z}||<r\}$ and satisfy
  \be\label {otaBh}
B_* \equiv \sup\limits_{0<||\fr {z}||<r} (\th ||\fr {z}||)^\del  (\ch ||\fr {z}||)^{n-j}    |\vp(\fr {z})|<\infty \quad \text {for some} \quad\del < n- k.\ee
\end{theorem}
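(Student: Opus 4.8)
The plan is to transfer the problem to the Beltrami--Klein chord model by means of the factorization $R_H^*\vp = Q R_B^* P\vp$ recorded in (\ref{lity2q}), and then to invoke the already-established support theorem for the dual chord transform, Theorem \ref{Ddt4}. First I would note that both outer operators act as multiplication by strictly positive, nowhere-vanishing factors: by (\ref{lity1d}) one has $(Q\psi)(\fr {t}) = (\ch\,||\fr {t}||)^{k-n}(\psi\circ\pi^{-1})(\fr {t})$, and by (\ref{lityd}) one has $(P\vp)(\z) = (1-|\z|^2)^{(j-n)/2}(\vp\circ\pi)(\z)$, with $|\z|<1$. Hence $(R_H^*\vp)(\fr {t}) = (\ch\,||\fr {t}||)^{k-n}(R_B^*\psi)(\t)$ with $\psi = P\vp$ and $\t = \pi^{-1}(\fr {t})$, so that the vanishing of $R_H^*\vp$ at $\fr {t}$ is equivalent to the vanishing of $R_B^*\psi$ at $\t$; likewise the vanishing of $\psi = P\vp$ is equivalent to the vanishing of $\vp\circ\pi$.

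The second ingredient is the dictionary between the two radii. From (\ref{tat}) we have $|\t| = \th\,||\fr {t}||$ and $1-|\t|^2 = 1/\ch^2\,||\fr {t}||$, and since $\tanh$ is an increasing bijection of $(0,\infty)$ onto $(0,1)$, the condition $||\fr {t}||\in(0,r)$ translates exactly into $|\t|\in(0,r_1)$ with $r_1 = \th\, r\in(0,1)$. Thus, assuming $(R_H^*\vp)(\fr {t})=0$ for all $||\fr {t}||\in(0,r)$, the equivalence above gives $(R_B^*\psi)(\t)=0$ for all $|\t|\in(0,r_1)$. It then remains to check that $\psi$ satisfies the hypotheses of Theorem \ref{Ddt4} at radius $r_1$. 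Continuity of $\psi$ on $\{0<|\z|<r_1\}$ follows from the assumed continuity of $\vp$ on $\{0<||\fr {z}||<r\}$, the continuity of $\pi$, and the fact that $(1-|\z|^2)^{(j-n)/2}$ is continuous and positive for $|\z|<1$. For the growth bound, using $|\z|=\th\,||\fr {z}||$ and $(1-|\z|^2)^{(j-n)/2}=(\ch\,||\fr {z}||)^{n-j}$, I would compute
\[
|\z|^{\del}\,|\psi(\z)| = (\th\,||\fr {z}||)^{\del}\,(\ch\,||\fr {z}||)^{n-j}\,|\vp(\fr {z})| \le B_*,
\]
so that $\psi$ satisfies the growth condition of Theorem \ref{Ddt4} with precisely the same exponent $\del < n-k$ furnished by the hypothesis (\ref{otaBh}).

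Applying Theorem \ref{Ddt4} then yields $\psi(\z)=0$ for all $|\z|\in(0,r_1)$; since the multiplier defining $P$ never vanishes, this forces $(\vp\circ\pi)(\z)=0$, i.e.\ $\vp(\fr {z})=0$ for the corresponding $\fr {z}=\pi(\z)$, which by the radius dictionary is exactly the range $||\fr {z}||\in(0,r)$, giving the desired conclusion (\ref{Ddkvv1h}). The proof is therefore largely bookkeeping, and the only point demanding genuine care — the closest thing to an obstacle — is the faithful matching of the weighted growth conditions through the maps $P$ and $\pi$: the weight $(\ch\,||\fr {z}||)^{n-j}$ and the exponent $\del<n-k$ appearing in (\ref{otaBh}) are exactly those produced by $P$ under the substitution $|\z|=\th\,||\fr {z}||$, which is precisely why the hypothesis is stated in this particular weighted form rather than in the simpler form (\ref{rges7}).
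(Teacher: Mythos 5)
Your proposal is correct and follows essentially the same route as the paper's own proof: factor $R_H^*\vp = QR_B^*P\vp$ via (\ref{lity2q}), convert the radius through $|\z|=\th\,||\fr {z}||$, observe that (\ref{otaBh}) is exactly the hypothesis of Theorem \ref{Ddt4} for $P\vp$ because $(1-|\z|^2)^{(j-n)/2}=(\ch\,||\fr {z}||)^{n-j}$, and conclude. Your write-up merely spells out a few more of the bookkeeping details (positivity of the multipliers, continuity of $P\vp$) than the paper does.
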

\begin{proof} Suppose that $(R^*_H \vp)(\fr {t})=0$ whenever $||\fr {t}|| \in (0,r)$. It follows that $(R^*_B P \vp)(\t)=0$ for all $|\t|\in (0, \th \,r)$. Hence, by Theorem \ref {Ddt4},
$(P \vp)(\z)=0$ for all  $|\z|\in (0,\th \,r)$ provided that
\be\label{mdfrd}
\sup\limits_{0<|\z|<r} |\z|^\del |(P \vp)(\z)|<\infty \quad \text {for some} \quad\del < n- k.\ee
However, (\ref{mdfrd}) is equivalent to (\ref{otaBh}), because $|\z|=\th ||\fr {z}||$ and $1\!-\!|\z|^2= 1/\ch^2 ||\fr {z}||$. This completes the proof.
\end{proof}

Note that the factor $(\th ||\fr {z}||)^\del$ in (\ref{otaBh}) reflects admissible behavior of $\vp(\fr {z})$ as $||\fr {z}|| \to 0$ and $(\ch ||\fr {z}||)^{n-j}$ plays a similar  role for $||\fr {z}|| \to \infty$.

\subsection {Connection with  Rank-One Transforms}

Below, for the sake of convenience,  we  write $R_{j,k}$ in place of  $R_H$. The case $j=0$
corresponds to the  totally geodesic transform  $R_k : h \mapsto \int_\z h$ that  takes functions  on $\hn$ to functions on $\Gam_H (n,k)$;
cf. \cite {BC92, BCK, H11, I97, Ku94, Ru02c, Str81}. The corresponding dual transform is defined by
\be\label {concesw}
(R^*_k \psi)(x)=\intl_K \psi (r_x \gam \bbh^k)\, d\gam,\ee
where $r_x\in G=SO_0 (n,1)$, $\;r_x \tilde o =x$.

\begin{lemma} \label {esgu} Let $0\le j<k\le n-1$. Then
\be\label {conce2}
R_{j,k} R_j h =R_k h,\ee
 provided that the integral on the right-hand side exists in the Lebesgue sense.
\end{lemma}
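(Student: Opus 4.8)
The plan is to reduce the identity to the rank-one normalization formula (\ref{ulas}), read inside a single $k$-geodesic. First I would unwind the definitions. By Lemma \ref{lzaq} (formula (\ref{iu3e1})), writing $\fr{z}=\pi(\z)$ with $\z=\eta+v\in\Gam_B(n,k)$ and choosing $\gam\in K$ as in (\ref{iuae}), one has
\[
(R_{j,k} R_j h)(\fr{z}) = \intl_{\Gam_H (k,j)} (R_j h)(g_0 \fr{t})\, dm(\fr{t}), \qquad g_0 = \gam\, g_{n-k,n+1}(r),\quad r=||\fr{z}||,
\]
where $g_0\in G$ is an isometry with $g_0\bbh^k = \fr{z}$, and $dm$ is the canonical measure on the space $\Gam_H (k,j)$ of $j$-geodesics of the sub-hyperboloid $\bbh^k$ — precisely the measure whose explicit form is (\ref{jua}) with $n$ replaced by $k$ and $d$ by $j$, as exhibited in (\ref{iu3e1}).

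Next I would exploit that $g_0$ is an isometry of $\hn$ preserving the canonical Riemannian measure on every $j$-geodesic. Hence $(R_j h)(g_0\fr{t}) = \intl_{g_0\fr{t}} h = \intl_{\fr{t}} (h\circ g_0)$, and since $\fr{t}\subset \bbh^k$ this equals $\intl_{\fr{t}} H$, where $H = (h\circ g_0)|_{\bbh^k}$. Recognizing $\intl_{\fr{t}} H$ as the value $(R_{0,j} H)(\fr{t})$ of the rank-one totally geodesic $j$-transform computed inside $\bbh^k$, I obtain
\[
(R_{j,k} R_j h)(\fr{z}) = \intl_{\Gam_H (k,j)} (R_{0,j} H)(\fr{t})\, dm(\fr{t}).
\]

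Now I would invoke (\ref{ulas}) of Lemma \ref{rolla}, read in the ambient hyperbolic space $\bbh^k$ (i.e.\ with $n\to k$) and with source dimension $0$ and target dimension $j$; this is legitimate since $0\le 0 < j \le k-1$ (the case $j=0$ being trivial, as then $R_j$ is the identity). Because the canonical measure on $\Gam_H (k,0)$ is exactly the Riemannian measure $dx$ on $\bbh^k$ — this is (\ref{poi}) with $d=0$ — formula (\ref{ulas}) yields
\[
\intl_{\Gam_H (k,j)} (R_{0,j} H)(\fr{t})\, dm(\fr{t}) = \intl_{\bbh^k} H(x)\, dx = \intl_{\bbh^k} (h\circ g_0)(x)\, dx = \intl_{g_0\bbh^k} h = \intl_{\fr{z}} h = (R_k h)(\fr{z}),
\]
which is the asserted equality.

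The only delicate points — and the main potential obstacle — are the bookkeeping ones: one must confirm that the measure $dm$ furnished by Lemma \ref{lzaq} coincides \emph{exactly} (with no stray constant) with the canonical measure of (\ref{jua}) in dimension $k$, and that the normalizations on $\Gam_H (k,0)$ and $\Gam_H (k,j)$ are precisely those for which (\ref{ulas}) holds. Both are automatic by construction, since (\ref{iu3e1}) is literally (\ref{jua}) transported to $\bbh^k$ and (\ref{poi}) with $d=0$ identifies $\Gam_H(k,0)$ with $(\bbh^k,dx)$. Convergence is handled by first running the argument for $|h|$ in place of $h$, so that all integrals are well defined precisely when the right-hand side $\intl_{\fr{z}}h$ exists in the Lebesgue sense, as assumed.
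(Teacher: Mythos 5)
Your proposal is correct and follows essentially the same route as the paper's own proof: unwind $(R_{j,k}R_j h)(\fr{z})$ via Lemma \ref{lzaq}, pull the isometry $\gam\, g_{n-k,n+1}(r)$ inside $R_j$ by equivariance, and then apply (\ref{ulas}) with $j=0$ and $n$ replaced by $k$ to collapse the iterated integral to $\intl_{\bbh^k}(h\circ g_0)$, which is $(R_k h)(\fr{z})$. The extra care you take with the normalization of $dm$ and with running the argument for $|h|$ first is sound and only makes explicit what the paper leaves implicit.
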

\begin{proof} By (\ref{iuae})-(\ref{iu3e}),
\bea
 (R_{j,k} R_j h )(\fr {z})&=& \intl_{\Gam_H (k,j)} (R_j h)(\gam g_{n-k, n+1} (r) \fr {t}) \, d\fr {t} \nonumber\\
&=& \intl_{\Gam_H (k,j)} (R_j [h\circ \gam g_{n-k, n+1} (r)])( \fr {t}) \,d\fr {t}\nonumber\\
&=&{\text {\rm (use, e.g., (\ref{ulas}) with $j=0$ and $n$ replaced by $k$)}} \nonumber\\
&=&\intl_{\bbh^k } h( \gam g_{n-k, n+1} (r) \,x)\, dx=(R_k h)(x),\nonumber\eea
as desired.
\end{proof}

\subsubsection {Inversion Formulas}
To formulate the  inversion results,   we introduce  the following meromorphic family of dual hyperbolic sine transforms generalizing (\ref{concesw}):
\be\label {conce5}
(\overset {*} {R}_k {}^{\!\!\a} \varphi)(x) = \gamma_{n,k} (\a)  \intl_{\Gam_H (n,k)}\!\!\! \varphi (\xi)\, \big(\sinh \, d(x, \xi)\big)^{\a+k-n} d\xi,
\ee
\[
\gamma_{n, k} (\a)=\frac{2^{-\a-k} \, \Gamma ((n-\a-k)/2) \,
\Gamma ((n-k)/2)}{\pi^{n/2}\Gamma (n/2) \, \Gamma (\a/2)}, \qquad  Re \a >0, \]
\[\a + k -n\neq 0, 2, 4, \dots \, .\]
One can show  \cite[Lemma 4.1]{Ru02c} that if $\vp$ is a compactly supported continuous function, then
\[
\lim\limits_{\a\to 0} \overset {*} {R}_k {}^{\!\!\a} \varphi= c\,R_k^*\vp, \qquad  c\,=\,\frac{2^{-k}\Gamma \big( (n-k)/2\big)}{\pi^{k/2} \Gamma(n/2)}.\]
Thus the dual transform    (\ref{concesw}) can be formally included in (\ref{conce5}). Let $\Del$ be the Beltrami-Laplace operator
on $\hn$; see, e.g. \cite{Bray94}, \cite [p. 377]{Ru15}. We denote
\[
P_m(\Delta) = \prod\limits^m_{i=1} \big[-\Delta + (2i-n)(2i-1)\big]. \]
  Then Theorem A from  \cite {Ru02c} implies the following statement.

\begin{theorem} \label{qqwwa}  If $f \!=\! R_j h$, for some $h\in C_c^\infty (\hn)$,
then $f$ can be reconstructed from $\vp=R_{j,k}f$ by the formula
\be\label{ktyhu88} f=  R_j D_m \vp,  \ee
where the operator $D_m$ is defined as follows:

{\rm (i)}  \ If $n$ is odd, then

\be\label{kt8} D_m \vp =P_m(\Delta) \overset {*} {R}_k {}^{\!\! 2m-k} \vp\quad \forall m \ge k/2.\ee

{\rm (ii)} \ If $n$ is even, then (\ref{kt8}) is applicable only if $k/2\le m \le n/2-1$, and another  formula  holds:
\bea
 (D_m \vp)(x)\! &=&\!\! -P_{n/2} (\Delta) \Bigg[  \frac{ 2^{1-n}}{\pi^{n/2} \Gamma(n/2)}  \intl_{\Gam_H (n,k)}  \!\!\!\vp( \fr {z})\log \big(\sinh \, d(x,\fr {z})\big)\,d\fr {z}\Bigg]  \nonumber\\
&+&\frac{ (-1)^{n/2} \Gamma\big((n+1)/2\big)}{\pi^{(n+1)/2}} \intl_{\Gam_H (n,k)} \!\!\! \vp(\fr {z}) \ d\fr {z}.\nonumber\eea
\end{theorem}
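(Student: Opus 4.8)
The plan is to reduce the inversion problem for $R_{j,k}$, restricted to functions of the form $f=R_j h$, to the already established inversion of the rank-one totally geodesic transform $R_k$. The essential tool is the composition identity $R_{j,k} R_j h = R_k h$ from Lemma \ref{esgu}, which collapses the two-step higher-rank integration over the range of $R_j$ into a single rank-one transform.

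First I would compute the data $\vp = R_{j,k} f$. Since $f = R_j h$ with $h\in C_c^\infty(\hn)$, Lemma \ref{esgu} gives
\[
\vp = R_{j,k} f = R_{j,k} R_j h = R_k h,
\]
all integrals being finite because $h$ is smooth with compact support, so that $R_j h$ and its subsequent integration are well controlled. Thus the data $\vp$ is nothing but the rank-one totally geodesic transform of the unknown function $h$.

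Next I would invoke Theorem A of \cite{Ru02c}, which supplies, for $h\in C_c^\infty(\hn)$, the reconstruction $D_m (R_k h)=h$, where $D_m$ is precisely the operator recorded in (i) for odd $n$ and in (ii) for even $n$; the splitting into the two cases, the meromorphic dual sine transform $\overset {*} {R}_k {}^{\!\!2m-k}$, and the polynomial $P_m(\Delta)$ in the Beltrami--Laplace operator all originate from that theorem. Consequently $D_m\vp = h$. Applying $R_j$ to both sides then yields
\[
f = R_j h = R_j D_m\vp,
\]
which is exactly the asserted formula (\ref{ktyhu88}).

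The mathematical substance is carried entirely by the rank-one inversion of \cite{Ru02c}; the only new ingredient is the reduction step via Lemma \ref{esgu}. The one point deserving care is the matching of domains along the chain of operators: $\vp$ is a function on $G_H(n,k)$, the dual sine transform $\overset {*} {R}_k {}^{\!\!2m-k}$ (and, in the even case, the logarithmic kernel together with the additive mean term) returns a function on $\hn$, on which $P_m(\Delta)$ acts, and the outer $R_j$ then sends the result back to $G_H(n,j)$, where $f$ lives. Because $h\in C_c^\infty(\hn)$, each step meets the convergence hypotheses of Lemma \ref{esgu} and of Theorem A, so no further estimates are required and the Lebesgue-sense existence demanded throughout is automatic.
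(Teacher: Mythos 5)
Your argument is exactly the paper's: reduce via Lemma \ref{esgu} to $\vp = R_k h$, invert by Theorem A of \cite{Ru02c} to recover $h = D_m\vp$, and apply $R_j$. The proposal is correct and matches the paper's proof, with the added (harmless) commentary on domain matching.
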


\begin{proof} By (\ref{conce2}),    $\vp =R_{j,k}f = R_{j,k} R_j h =R_k h$. Hence, by  Theorem A from  \cite {Ru02c}, $h= D_m \vp$, and therefore,
$f=  R_j h=R_j D_m \vp$.
\end{proof}

A similar inversion result for Radon transforms on affine Grassmannians was obtained in \cite {RW21}.

\subsubsection {Support Theorem}
Lemma \ref{esgu} allows us to specify the support theorem in Subsection \ref{ider1} and Conjecture \ref{eed4h}
for $R_H f\equiv R_{j,k}f$ under the assumption that $f$ belongs to the range of $ R_j$.

\begin{lemma} \label {actu} Let $0\le j<k\le n-1$, $\fr {t} \in \Gam_H (n,j)$,  $\fr {z} \in \Gam_H (n,k)$. If
 $ \fr {t}  \subset  \fr {z}$ and $||\fr {z}|| >a>0$, then $||\fr {t}||>a$.
\end{lemma}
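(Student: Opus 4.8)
The plan is to reduce the statement to the elementary monotonicity of the distance function under set inclusion, so the proof will be very short. By definition, $||\fr {t}||$ is the geodesic distance from the point $\tilde o$ to the submanifold $\fr {t}$, that is,
\[
||\fr {t}|| = \dist(\tilde o,\fr {t}) = \inf_{x\in\fr {t}} d(\tilde o,x),
\]
and likewise $||\fr {z}|| = \inf_{x\in\fr {z}} d(\tilde o,x)$, where $d(\cdot,\cdot)$ is the geodesic distance on $\hn$.

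First I would observe that the containment $\fr {t}\subset\fr {z}$ of a $j$-geodesic in a $k$-geodesic is a set-theoretic inclusion of the corresponding point sets in $\hn$. Consequently the set over which the infimum defining $||\fr {t}||$ is taken is a subset of the one defining $||\fr {z}||$. Taking the infimum of the same nonnegative function $x\mapsto d(\tilde o,x)$ over a smaller set can only increase its value, so
\[
||\fr {t}|| = \inf_{x\in\fr {t}} d(\tilde o,x) \ge \inf_{x\in\fr {z}} d(\tilde o,x) = ||\fr {z}||.
\]
Combining this with the hypothesis $||\fr {z}|| > a$ yields $||\fr {t}|| \ge ||\fr {z}|| > a$, which is exactly the desired conclusion.

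There is essentially no obstacle here; the only point to verify is the infimum characterization of the point-to-submanifold distance, which is immediate from the definition of $||\cdot||$ recorded in Section 3. I would note in passing that in the hyperbolic setting this infimum is in fact attained at the unique foot of the perpendicular dropped from $\tilde o$ onto the totally geodesic submanifold, but this refinement is not needed: the monotonicity of the infimum under inclusion already gives $||\fr {t}|| \ge ||\fr {z}||$, and hence the lemma follows at once.
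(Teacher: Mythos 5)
Your argument is correct, but it follows a different route from the paper's. You work intrinsically in the hyperboloid model: since $\fr{t}\subset\fr{z}$ is a genuine inclusion of point sets in $\hn$ and $\|\cdot\|$ is the point-to-set distance $\inf_{x}d(\tilde o,x)$, shrinking the set over which the infimum is taken can only increase the infimum, so $\|\fr{t}\|\ge\|\fr{z}\|>a$. The paper instead transfers the statement to the Beltrami--Klein model via the map $\pi^{-1}$: writing the corresponding chords as $\t=\t(\xi,u)$ and $\z=\z(\eta,v)$, the inclusion $\t\subset\z$ forces $u=v+y$ with $y\in\xi^\perp\cap\eta$, whence $|\t|=\sqrt{|v|^2+|y|^2}\ge|\z|$, and the monotone relation $|\t|=\tanh\|\fr{t}\|$ converts this back to the hyperbolic distances. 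Your approach is more elementary and more general --- it uses nothing but the definition of distance to a set and would work verbatim in any metric space, with no need for the chord parametrization --- while the paper's computation is consistent with its overall strategy of reducing every hyperbolic statement to an explicit Euclidean one in $B_n$ and, as a by-product, exhibits the exact relation $|\t|^2=|\z|^2+|y|^2$ between the two distances. The only point you should make explicit is the one you already flag: that the quantity $\|\fr{t}\|$ introduced in Section 3 (via $\|\fr{t}\|=d(\tilde o,\a g_{n-d,n+1}(r)\bbh^d)=r$) is indeed the infimum of $d(\tilde o,x)$ over $x\in\fr{t}$; this is immediate from $[x,y]=\cosh r\cdot y_{n+1}\ge\cosh r$ for $y\in\bbh^d$, with equality at $y=\tilde o$.
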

\begin{proof} This statement is intuitively obvious. Below we prove it for the sake of completeness.
Let $\t =\pi^{-1} (\fr {t}) \in  \Gam_B (n,j)$, $\z =\pi^{-1} (\fr {z}) \in  \Gam_B (n,k)$. If   $ \fr {t}  \subset  \fr {z}$, then
$ \t  \subset  \z$, and the inequalities $||\fr {z}|| >a $ and $||\fr {t}||>a$ are equivalent to $|\z| > \th \,a $ and  $|\t| > \th \,a $, respectively. We recall that
the parametrization $\t= \t(\xi, u)$ and $ \z= \z(\eta, v)$ (see (\ref {which}), (\ref {which1}))
implies that $ \t  \subset  \z$ if and only if $\xi \subset \eta$ and $u=v+y$ for some $y \in \xi^\perp \cap  \, \eta$. Hence
\[|\t|\equiv |u|=\sqrt {|v|^2+ |y|^2}\ge|v|\equiv |\z|.\]
It follows that $ \t  \subset  \z$ implies $|\t| \ge |\z|$. Hence, if $|\z|> \th \,a $, then   $|\t| > \th \,a $, which gives the result.
\end{proof}
\begin{corollary} \label{qya1}  Let $0\le j<k\le n-1$, $a>0$. Suppose that  $h$ is a function on $\hn$ satisfying
\be\label{kwu88}  \intl_{||x|| >a} |h (x)|\, \frac{dx}{x_{n+1}^{n-j}}<\infty,    \qquad ||x||= d (x, \tilde o). \ee
If $(R_j h)(\fr {t})=0$ for almost all $\fr {t}$ in the domain $ ||\fr {t}|| >a$, then
\[(R_{j,k} R_j h)(\fr {z}) =0\] for almost all $\fr {z}$ satisfying $ ||\fr {z}|| >a$.
\end{corollary}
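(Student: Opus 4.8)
The plan is to avoid arguing fibrewise---an almost-everywhere hypothesis on $\Gam_H(n,j)$ carries no information about the fixed null fiber $\{\fr{t}\subset\fr{z}\}$ over a single $\fr{z}$---and instead to extract the conclusion from the duality relation (\ref{wxade2}) combined with the geometric comparison of Lemma \ref{actu}. Set $f=R_j h$; condition (\ref{kwu88}) guarantees that $R_j h$ is finite almost everywhere, so $f$ is a genuine measurable function on $\Gam_H(n,j)$, and by hypothesis $f(\fr{t})=0$ for almost all $\fr{t}$ with $||\fr{t}||>a$. It suffices to prove that $\int_{\Gam_H(n,k)}(R_{j,k}f)(\fr{z})\,\vp(\fr{z})\,d\fr{z}=0$ for every nonnegative $\vp\in C_c(\{\fr{z}:||\fr{z}||>a\})$, since letting $\vp$ range over such test functions (and splitting a general test function into its nonnegative pieces) forces $(R_{j,k}f)(\fr{z})=0$ for almost all $\fr{z}$ with $||\fr{z}||>a$.

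The key step is to locate the support of the dual transform. For $\fr{t}\in\Gam_H(n,j)$ the value $(R^*_H\vp)(\fr{t})=\int_{\fr{z}\supset\fr{t}}\vp(\fr{z})\,d_{\fr{t}}\fr{z}$ is obtained by integrating $\vp$ over the $k$-geodesics $\fr{z}$ containing $\fr{t}$; it can be nonzero only if some such $\fr{z}$ has $\vp(\fr{z})\neq0$, hence $||\fr{z}||>a$. Applying Lemma \ref{actu} to the pair $\fr{t}\subset\fr{z}$ gives $||\fr{t}||>a$. Thus both $R^*_H\vp$ and $R^*_H|\vp|$ vanish on $\{\fr{t}:||\fr{t}||\le a\}$.

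I would then apply (\ref{wxade2}) with $R_H=R_{j,k}$, this $f$, and this $\vp$. Its hypothesis is met: the right-hand side formed with $|f|$ and $|\vp|$ equals $\int_{\Gam_H(n,j)}|f(\fr{t})|\,(R^*_H|\vp|)(\fr{t})\,d\fr{t}$, and this is $0$ because $(R^*_H|\vp|)(\fr{t})=0$ when $||\fr{t}||\le a$ while $|f(\fr{t})|=0$ for almost all $\fr{t}$ with $||\fr{t}||>a$; in particular the finiteness clause of (\ref{wxade2}) holds and $R_{j,k}f$ is integrable against $\vp$. The very same vanishing of the integrand then yields $\int_{\Gam_H(n,k)}(R_{j,k}f)(\fr{z})\,\vp(\fr{z})\,d\fr{z}=\int_{\Gam_H(n,j)}f(\fr{t})\,(R^*_H\vp)(\fr{t})\,d\fr{t}=0$, as required. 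By Lemma \ref{esgu} the operator under study also equals $R_k h$, so this is at the same time a support statement for the rank-one transform.

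The one genuine obstacle is exactly the passage from the fibrewise vanishing of $R_j h$ to the almost-everywhere vanishing of $R_{j,k}R_j h$, and the whole point of the argument is that duality converts the fiber integration into an honest integral over $\Gam_H(n,j)$ against the nonnegative kernel $R^*_H|\vp|$, whose support Lemma \ref{actu} pins down. The remaining verifications---that (\ref{kwu88}) makes $R_j h$ well defined and that testing against all nonnegative compactly supported $\vp$ on $\{||\fr{z}||>a\}$ recovers almost-everywhere vanishing---are routine.
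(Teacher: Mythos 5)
Your proof is correct. The paper offers no written proof of Corollary \ref{qya1}: it is presented as an immediate consequence of Lemma \ref{actu}, the implicit argument being that for $||\fr {z}||>a$ the whole fiber $\{\fr {t}\subset \fr {z}\}$ lies in $\{||\fr {t}||>a\}$, where $R_jh$ vanishes a.e., so the fiber integral is zero. You correctly identify the gap in that one-line reading --- a null set in $\Gam_H(n,j)$ could a priori meet a fixed fiber in a set of positive fiber measure --- and you close it by testing against nonnegative $\vp\in C_c(\{||\fr {z}||>a\})$ and invoking the duality (\ref{wxade2}), with Lemma \ref{actu} now used to show $R^*_H|\vp|$ is supported in $\{||\fr {t}||>a\}$. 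The right-hand side of (\ref{wxade2}) formed with $|f|$ and $|\vp|$ then vanishes, which both licenses the duality and forces $(R_{j,k}|f|)\,\vp=0$ a.e., hence $(R_{j,k}f)(\fr {z})=0$ for a.e. $||\fr {z}||>a$. This is essentially the Fubini-type argument the author leaves tacit (one could equally apply the integration identity (\ref{ulas}) to the indicator of the null set $\{||\fr {t}||>a,\ f(\fr {t})\neq 0\}$); your duality formulation buys a fully rigorous passage from ``a.e.\ on the ambient Grassmannian'' to ``a.e.\ on almost every fiber,'' at the cost of one routine verification you rightly flag: that the conclusion is best drawn from the absolute-value identity $\int (R_{j,k}|f|)\,|\vp|=0$ rather than from the signed one, since local integrability of $R_{j,k}f$ is not known in advance.
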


Note that the assumption (\ref {kwu88}) is sufficient for a.e. convergence of the Radon transform $R_j h$. It is also necessary if $h$ is zonal; cf.  \cite [Theorem 4.5] {Ru13}.

\begin{theorem} \label{qqwwa1} {\rm (Support Theorem)} Let $0\le j<k\le n-1$, and suppose that  $f \!=\! R_j h$, for some function $h$ on $\hn$ satisfying (\ref {kwu88}).
%\be\label{kwu88}  \intl_{d (x, x_0)>a} |h (x)|\, \frac{dx}{x_{n+1}^{n-j}}<\infty.\ee
If $(R_{j,k}f)(\fr {z}) =0 $ for almost all  $\, \fr {z} \in \Gam_H (n,k)$ satisfying  $ ||\fr {z}|| >a$, then $f(\fr {t})=0$ for almost all $\fr {t} \in \Gam_H (n,j)$ with $ ||\fr {t}|| >a$.
\end{theorem}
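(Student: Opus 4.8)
The plan is to reduce the statement to the support theorem for the rank-one totally geodesic transform $R_k$ and then transport the conclusion back to $f$. The point of departure is Lemma \ref{esgu}, by which $R_{j,k} f = R_{j,k} R_j h = R_k h$, the last equality holding in the Lebesgue sense because (\ref{kwu88}) guarantees the absolute convergence of $R_j h$ and places $f = R_j h$ in the weighted class on which $R_{j,k}$ is defined (this is exactly the mechanism already exploited in Corollary \ref{qya1}). Consequently, the hypothesis $(R_{j,k} f)(\fr {z}) = 0$ for almost all $\fr {z}$ with $||\fr {z}|| > a$ is nothing but the assertion that the rank-one transform $R_k h$ vanishes on every $k$-geodesic lying at geodesic distance greater than $a$ from the origin $\tilde o$.

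The key step is then to invoke the support theorem for $R_k$ (of Helgason--Kurusa type; see \cite{H11, Ku94}, and cf.\ \cite{Ru02c, Ru13} for the analytic setting adapted to the decay (\ref{kwu88})): if $h$ satisfies (\ref{kwu88}) and $(R_k h)(\fr {z}) = 0$ for almost all $k$-geodesics $\fr {z}$ with $||\fr {z}|| > a$, then $h(x) = 0$ for almost all $x \in \hn$ with $||x|| = d(x, \tilde o) > a$. I expect this to be the main obstacle of the argument: one must ensure that the available support theorem for $R_k$ is applicable under the bare integrability condition (\ref{kwu88}) rather than under smoothness or rapid decay. When $h$ is not assumed regular, the cleanest route is presumably to pass through the zonal reduction of Lemma \ref{nsfo} (taken with $j=0$, so that $R_k$ becomes an Abel-type fractional integral in the variable $\ch\,||\cdot||$) combined with the $SO(n)$-averaging device used elsewhere in the paper, thereby turning the vanishing of $R_k h$ into the vanishing of a one-sided fractional integral and reading off the support of $h$ from its right-sided structure.

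Granting that $h = 0$ almost everywhere on $\{x : ||x|| > a\}$, the conclusion for $f$ follows by pushing the information forward along $R_j$. Since $||\fr {t}|| = d(\fr {t}, \tilde o)$ is the minimal distance from $\tilde o$ to a point of $\fr {t}$, Lemma \ref{actu} (applied with the lower-dimensional geodesic taken to be a single point) shows that any $j$-geodesic $\fr {t}$ with $||\fr {t}|| > a$ is entirely contained in $\{x : ||x|| > a\}$, so $h$ vanishes along such $\fr {t}$. To promote this to the desired almost-everywhere statement I would disintegrate the Riemannian measure $dx$ on $\{||x|| > a\}$ along the family of $j$-geodesics by means of the integration formula (\ref{jua}): the exceptional set $E = \{x : ||x|| > a,\ h(x) \neq 0\}$ is $dx$-null, so by Fubini's theorem $E \cap \fr {t}$ is null inside $\fr {t}$ for almost every $\fr {t}$ with $||\fr {t}|| > a$, whence $f(\fr {t}) = (R_j h)(\fr {t}) = \int_{\fr {t}} h = 0$. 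Because $a$ is fixed and the whole chain is measure-theoretic, no regularity of $f$ itself is required; this is precisely the gain from the range assumption $f = R_j h$, which removes the dimensional restriction $j + k \le n - 2$ imposed in Theorem \ref{Crf4h}.
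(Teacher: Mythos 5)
Your opening reduction $R_{j,k}f=R_{j,k}R_jh=R_kh$ via Lemma \ref{esgu} is the same as the paper's, and your closing step (from $h=0$ a.e.\ on $\{\Vert x\Vert>a\}$ to $f=R_jh=0$ a.e.\ on $\{\Vert\fr{t}\Vert>a\}$) is fine. The gap is in the middle: you need a support theorem for the rank-one transform $R_k$ with $k$ of \emph{arbitrary} codimension, valid under the bare integrability hypothesis (\ref{kwu88}), and you do not actually supply one. You correctly flag this as ``the main obstacle,'' but the workaround you sketch --- zonal reduction via Lemma \ref{nsfo} plus the $SO(n)$-averaging device --- does not close it: averaging over $SO(n)$ replaces $h$ by $h_{ave}$, and the vanishing of the resulting one-sided Abel integral only yields $h_{ave}=0$ for $\Vert x\Vert>a$, i.e.\ information about the spherical means of $h$ about the origin, not about $h$ itself. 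For non-zonal $h$ this proves nothing, and the classical support theorems for $R_k$ with $k<n-1$ (Helgason, Kurusa) are stated under exponential decay or smoothness, not under (\ref{kwu88}).

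The paper circumvents exactly this difficulty by integrating once more: for any $(n-1)$-geodesic $Z$ it forms
$\int_{\fr{z}\subset Z}(R_{j,k}f)(\fr{z})\,dm(\fr{z})=\int_{\fr{z}\subset Z}(R_kh)(\fr{z})\,dm(\fr{z})=(R_{n-1}h)(Z)$,
using Lemma \ref{esgu} a second time with $(j,k)$ replaced by $(k,n-1)$, and Corollary \ref{qya1} to propagate the vanishing from $\{\Vert\fr{z}\Vert>a\}$ to $\{\Vert Z\Vert>a\}$. This reduces everything to the \emph{codimension-one} support theorem, \cite[Theorem 6.54]{Ru15}, which is available in the non-smooth setting under precisely this kind of integrability condition. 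That extra composition is the idea your argument is missing; without it, your step 2 rests on an unproved (and, at this level of generality, unavailable) support theorem for intermediate-dimensional totally geodesic transforms.
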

\begin{proof} The result is a consequence of  the support theorem for   Radon transforms over $(n-1)$-geodesics; see \cite [Theorem 1.6, p. 119] {H11},   \cite [Theorem 6.54] {Ru15}.
 Indeed, by Lemma \ref{esgu},
\[
(R_{j,k}f)(\fr {z})= (R_{j,k} R_j h)(\fr {z}) =(R_k h)(\fr {z}),\qquad \fr {z}\in \Gam_H (n,k). \]
Hence, for any $(n-1)$-geodesic $Z$,
\be\label{k2u88}  \intl_{\fr {z} \subset  Z} (R_{j,k}f)(\fr {z})\, d m(\fr {z}) \!=\!  \intl_{\fr {z} \subset  Z} (R_k h)(\fr {z}) \, d m (\fr {z}) \!=\! (R_{n-1} h)(Z),\ee
where $ d m (\fr {z})$ is the relevant canonical measure. Here the last equality  holds by Lemma \ref{esgu} with $j$ replaced by $k$ and $k$ by $n-1$. If $(R_{j,k}f)(\fr {z}) =0 $
 a.e. when  $ ||\fr {z}|| >a$, then $(R_k h)(\fr {z})$  enjoys this property, too, and, by Corollary \ref{qya1}  (again,  with $j$ replaced by $k$ and $k$ by $n-1$), we obtain that $(R_{n-1} h)(Z)=0$
 for almost all  $ Z$ with $||Z || >a$.
By the support theorem for $R_{n-1}$  (see \cite [Theorem 6.54] {Ru15}),  it follows that
$h(x) =0$  for almost all $x$ satisfying $||x || >a$. But then $f  (\fr {t})= (R_j h)  (\fr {t})=0 $ for almost all $\fr {t} $ with $||\fr {t}||>a$.
\end{proof}

\section {The Projective Model}

In this section we briefly discuss one more realization of the real hyperbolic space, which was not considered in \cite {CFKP} but is definitely  worth being investigated.
We call it the {\it projective model} by analogy with many other similar concepts in geometry.

Let $\nu$ denote a one-to-one map that assigns to each point $x\in \hn$ a one-dimensional linear subspace of $\bbe^{n,1}$ containing this point. The set
 $\Pi_n=\nu (\hn)$ can be regarded as a  model of the $n$-dimensional real hyperbolic space or a projective modification of $\hn$.
The map $\nu$ obviously extends to totally geodesic submanifolds of $\hn$. We denote
\bea\label {assi} \Gam_\Pi (n,d)\!\!\!&=&\!\!\!\nu (\Gam_H (n,d))\\
\!\!\!&=&\!\!\!\{ \t_0 \in G_{n+1, d+1}:  \t_0 =\nu (\fr {t}) \; \text{\rm for some}\;\fr {t} \in \Gam_H (n,d)\}. \nonumber\eea
The latter coincides with the ``Grassmannian ball''
 \be\label {assit1}
\{ \t_0 \in G_{n+1, d+1}:\,  |\t_0|<\pi/4\};\ee
 cf. (\ref{mrrtr}).
 If $\t_0\in \Gam_\Pi (n,d)$,  then the notation  $d\t_0$  will be used for the restriction onto $\Gam_\Pi (n,d)$ of the standard Haar probability measure on $G_{n+1, d+1}$.

 The connection between the spaces of $d$-geodesics
 in the  three models ($\hn$, $B_n$, and $\Pi_n$) is illustrated by the following diagram:
\be\label{necti}
\xymatrix{
\t \in \Gam_B (n,d) \ar[r]^{\pi} \ar[dr]_{\mu} &   \Gam_H (n,d) \ni \fr {t} \ar[d]^{\nu} \\
 & \; \Gam_\Pi (n,d) \ni \t_0
}
\ee
Here the map $\pi: \t \mapsto \fr {t}$ is defined by (\ref{mnqe}), $\mu$ by (\ref{nra}), and $\nu$ by (\ref{assi}).

\begin{lemma} Let $|\t_0|$ and $||t||$ be the geodesic distances (\ref{nra1}) and (\ref{aick}), respectively.
Then
 \be\label {assit2}
\intl_{\Gam_H (n,d)}\!\!\! \Phi  (\fr {t})\, d\fr {t}=
 \frac{\sig_n}{\sig_d} \intl_{\Gam_\Pi (n,d)} \!\! \frac {(\Phi \circ \nu^{-1}) (\t_0)}{ (\cos \, 2|\t_0|)^{(n+1)/2}}\, d\t_0,\ee
 \be\label {assit2T}
 \intl_{\Gam_\Pi (n,d)} \!\! \Psi (\t_0) \, d\t_0 = \frac{\sig_d}{\sig_n} \intl_{\Gam_H (n,d)}\!\!\!  \frac{ (\Psi\circ \nu)(\fr {t})\, d\fr {t}}{ (\ch \, 2||t||)^{(n+1)/2}},\ee
provided that either   integral in the corresponding equality exists in the Lebesgue sense.
\end{lemma}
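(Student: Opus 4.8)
The plan is to exploit the commutative diagram (\ref{necti}), which encodes the relation $\mu=\nu\circ\pi$ among the three transition maps, and to chain together the integration formulas already established for the pairs $(\Gam_B,\Gam_H)$ and $(\Gam_B,\Gam_\Pi)$. The passage to sub-domains will be harmless: since $|\t|=\tan |\t_0|$ by (\ref{nra2}), the condition $|\t|<1$ defining $\Gam_B (n,d)$ corresponds exactly to $|\t_0|<\pi/4$ defining $\Gam_\Pi (n,d)$ in (\ref{assit1}), while $\pi$ carries $\Gam_B (n,d)$ onto $\Gam_H (n,d)$. Thus Lemma \ref{anra}, applied to functions supported on these sub-domains, yields the restricted identities with $\mu^{-1}$ landing in $\Gam_\Pi (n,d)$; the non-injectivity of $\mu$ noted after (\ref{nra}) causes no trouble here.

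To prove (\ref{assit2}), I would start from the left-hand side and apply (\ref{kred1}) to pass to an integral over $\Gam_B (n,d)$, obtaining
\[
\intl_{\Gam_H (n,d)}\!\Phi(\fr {t})\, d\fr {t}=\intl_{\Gam_B (n,d)} \frac{(\Phi\circ \pi)(\t)}{(1-|\t|^2)^{(n+1)/2}}\, d\t.
\]
Then I apply the restricted form of (\ref{nra3X}) to this integrand. Because $\mu=\nu\circ\pi$, one has $(\Phi\circ\pi)\circ\mu^{-1}=\Phi\circ\nu^{-1}$, so the numerator becomes $(\Phi\circ\nu^{-1})(\t_0)$. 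The weights then combine via the identity behind (\ref{adt2T}): writing $1-|\t|^2=1-\tan^2|\t_0|=\cos 2|\t_0|/\cos^2|\t_0|$, the factor $(1-|\t|^2)^{-(n+1)/2}$ contributes $(\cos|\t_0|)^{n+1}(\cos 2|\t_0|)^{-(n+1)/2}$, which exactly cancels the $(\cos|\t_0|)^{-(n+1)}$ weight from (\ref{nra3X}). What survives is the weight $(\cos 2|\t_0|)^{-(n+1)/2}$ together with the constant $\sig_n/\sig_d$, giving (\ref{assit2}).

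For (\ref{assit2T}) I would simply feed $\Phi=(\Psi\circ\nu)\,(\ch 2||\fr {t}||)^{-(n+1)/2}$ into the already proved formula (\ref{assit2}). By the last equality in (\ref{adt2T}), $\ch 2||\fr {t}||=1/\cos 2|\t_0|$, so $(\Phi\circ\nu^{-1})(\t_0)=\Psi(\t_0)(\cos 2|\t_0|)^{(n+1)/2}$, which cancels the weight $(\cos 2|\t_0|)^{-(n+1)/2}$ on the right of (\ref{assit2}) and produces $(\sig_n/\sig_d)\int_{\Gam_\Pi}\Psi\,d\t_0$; rearranging yields (\ref{assit2T}). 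Alternatively, (\ref{assit2T}) can be obtained directly by the same mechanism applied to (\ref{nra3}) and (\ref{kred}), now using $|\t|=\th||\fr {t}||$ from (\ref{tat}) and $1+|\t|^2=\ch 2||\fr {t}||/\ch^2||\fr {t}||$.

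The main point of the argument is not a deep obstacle but the trigonometric bookkeeping: one must check that the $(\cos|\t_0|)^{n+1}$ factors coming from Lemma \ref{anra} cancel cleanly against the Beltrami--Klein weights and reassemble into the single power of $\cos 2|\t_0|$ (equivalently $\ch 2||\fr {t}||$) appearing in the statement, and that the global identities of Lemma \ref{anra} restrict correctly under $\mu\colon\Gam_B (n,d)\to\Gam_\Pi (n,d)$ despite $\mu$ not being one-to-one.
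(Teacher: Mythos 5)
Your argument is correct and follows essentially the same route as the paper: pass from $\Gam_H(n,d)$ to $\Gam_B(n,d)$ via (\ref{kred1}), then to $\Gam_\Pi(n,d)$ via (\ref{nra3X}), combine the weights through $1-\tan^2|\t_0|=\cos 2|\t_0|/\cos^2|\t_0|$ as in (\ref{adt2T}), and obtain (\ref{assit2T}) by the substitution $\Phi=(\Psi\circ\nu)\,(\ch 2\|\fr{t}\|)^{-(n+1)/2}$ in (\ref{assit2}). Your extra care about the restriction of Lemma \ref{anra} to the sub-domains $|\t|<1$ and $|\t_0|<\pi/4$ is a harmless refinement of what the paper does implicitly.
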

\begin{proof} The calculations below rely on (\ref{adt2T}).
 The  equality (\ref{assit2}) follows from (\ref{kred1}) and (\ref{nra3X}):
 \bea
\intl_{\Gam_H (n,d)}\!\!\! \Phi  (\fr {t})\, d\fr {t}&=&\!\!\! \intl_{\Gam_B (n,d)} \frac{ (\Phi\circ \pi)(\t)}{(1-|\t|^2)^{(n+1)/2}}\, d\t\nonumber\\
&=&\frac{\sig_n}{\sig_d} \intl_{\Gam_\Pi (n,d)} \!\! \frac {(\Phi\circ \pi)(\mu^{-1}\t_0)\, d\t_0 }{(\cos \,|\t_0|)^{n+1} (1-\tan^2 |\t_0|)^{(n+1)/2}},\nonumber\eea
which gives the result. To prove (\ref{assit2T}), it suffices to set  \[\Psi (\t_0)= (\Phi \circ \nu^{-1}) (\t_0)/(\cos \, 2|\t_0|)^{(n+1)/2}\] in (\ref {assit2}) and change variables.
\end{proof}

Let us proceed with the Radon  transforms.
 Given a function $F$ on  $\Gam_\Pi (n,d)$,  we  define its extension to all $ \t_0  \in G_{n+1, d+1} $ by setting $\tilde F (\t_0) =F (\t_0)$ if
 $|\t_0 |<\pi/4$ and  $\tilde F (\t_0) =0$  if  $|\t|\ge \pi/4$.  For $ 0 \le j<k\le n-1$, the corresponding {\it projective chord transforms} will be defined by

\be\label {Plchyasz} (R_\Pi F)(\z_0)  = \intl_{\t_0 \subset \z_0} f(\t)\, d_{\z_0} \t_0\equiv (\rho_\Pi R_0 \tilde F)(\z_0), \quad  \z_0 \in \Gam_\Pi (n,k), \ee
\be\label {Plch5yasz} (R^*_\Pi \Phi)(\t_0)  = \intl_{\z_0 \supset \t_0} \Phi(\z_0) \,d_{\t_0} \z_0\equiv (\rho_\Pi R^*_0 \tilde \Phi)(\t_0),\quad  \t_0 \in \Gam_\Pi (n,j), \ee
where $R_0$ and $R^*_0$ are the Grassmannian Radon transforms  (\ref{trou}) and (\ref{trou1}), and   $\rho_\Pi$ stands for the restriction to the corresponding  subspaces  meeting $B_n$.

The following statement mimics Theorem \ref{trou6}.
\begin{theorem} \label {Prtrou6}  Let $ M_0$, $ N_0$, $P_0$, and $Q_0$ have the same meaning as in (\ref{trou2})-(\ref{trou5}), respectively. Then
 \be \label {Prtrou7}
R_B F = N_0  R_{\Pi} M_0 F, \qquad R_{B}^* \Phi =Q_0 R_{\Pi}^* P_0 \Phi,\ee
provided that these integrals exist in the Lebesgue sense.
\end{theorem}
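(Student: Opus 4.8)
The plan is to reduce Theorem \ref{Prtrou6} to the already-established connection formulas of Proposition \ref{trou6}, exploiting the fact that the projective chord transforms $R_\Pi$, $R_\Pi^*$ are simply the Grassmannian Funk--Radon transforms $R_0$, $R_0^*$ restricted to the ``Grassmannian ball'' (\ref{assit1}), just as the chord transforms $R_B$, $R_B^*$ are the affine transforms $R_A$, $R_A^*$ restricted to planes meeting $B_n$. First I would fix the geometric dictionary: under the map $\mu$ of (\ref{nra}), a $d$-plane $\t$ meets $B_n$ (i.e.\ $|\t|<1$) if and only if $\tan|\t_0|<1$, that is $|\t_0|<\pi/4$, by (\ref{nra2}); so $\mu$ carries $\Gam_B(n,d)$ onto $\Gam_\Pi(n,d)$ and the containment relation $\t\subset\z$ is preserved, matching $\t_0\subset\z_0$. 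Consequently the restriction operator $\rho_\Pi$ in (\ref{Plchyasz}), (\ref{Plch5yasz}) corresponds under $\mu$ to the restriction $\rho_B$ in (\ref{lchyasz}), (\ref{lch5yasz}), and one has the operator identities $R_\Pi(F\circ\mu^{-1}\!\!\mid) = \rho_\Pi R_0\widetilde{F}$ mirroring $R_B = \rho_B R_A\widetilde{\,\cdot\,}$.

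Next I would verify that the auxiliary operators $M_0,N_0,P_0,Q_0$ of (\ref{trou2})--(\ref{trou5}) respect these supports, so that the global identities of Proposition \ref{trou6} restrict cleanly to the balls. This is the crux of the bookkeeping: $M_0$ multiplies $f$ by a power of $\cos|\t_0|$ and composes with $\mu^{-1}$, hence it maps functions supported on $\Gam_B(n,j)$ (equivalently on $\{|\t_0|<\pi/4\}$) to functions supported on $\Gam_\Pi(n,j)$, and similarly for the others; none of the four operators moves the ``radial'' variable $|\t|=\tan|\t_0|$, so each preserves the threshold $|\t|<1 \Leftrightarrow |\t_0|<\pi/4$. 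Granting this, I would start from the first equation of (\ref{trou77}), namely $R_0 F = N_0^{-1} R_A M_0^{-1} F$, apply it to the zero-extension $\widetilde{F}$ of a function $F$ on $\Gam_\Pi(n,j)$, and then restrict both sides via $\rho_\Pi$; the left side becomes $R_\Pi F$ by definition (\ref{Plchyasz}), while on the right the restriction commutes with $N_0^{-1}$ (a pointwise multiplier) and turns $R_A$ into $R_B$ by (\ref{lchyasz}). Solving the resulting relation for $R_B F$ (equivalently composing with $N_0$, using $N_0N_0^{-1}=\id$ on the relevant support) yields $R_B F = N_0 R_\Pi M_0 F$, which is the first identity in (\ref{Prtrou7}).

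The dual identity $R_B^*\Phi = Q_0 R_\Pi^* P_0\Phi$ is obtained by the identical argument applied to the second relation in (\ref{trou77}), $R_0^*\Phi = Q_0^{-1} R_A^* P_0^{-1}\Phi$: extend $\Phi$ by zero to $\widetilde\Phi$, restrict through $\rho_\Pi$, and use (\ref{lch5yasz}) together with the support-preservation of $P_0,Q_0$ to replace $R_A^*$ by $R_B^*$. Here I would also invoke the observation (\ref{funcE1}) for $R_B^*$, which states that restricting the extension gives the same thing as restricting the function itself, to confirm the two definitions of the dual projective transform agree on the ball. The main obstacle I anticipate is not any hard analysis but the careful justification that the restriction map $\rho_\Pi$ genuinely commutes with the multiplier operators and interacts correctly with the zero-extension at the boundary $|\t_0|=\pi/4$, together with tracking the Lebesgue-existence hypotheses so that ``either side exists'' is inherited from Proposition \ref{trou6}; once those are in place the conclusion is a formal rearrangement. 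I would therefore phrase the proof as: ``Both equalities follow from Proposition \ref{trou6} by restriction to the balls (\ref{cte}) and (\ref{assit1}), using the support-compatibility of $M_0,N_0,P_0,Q_0$ with $\mu$ and the definitions (\ref{lchyasz})--(\ref{lch5yasz}), (\ref{Plchyasz})--(\ref{Plch5yasz}),'' and then supply the one-line computation above for each case.
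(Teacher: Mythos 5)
Your argument is correct and is exactly the justification the paper intends: Theorem \ref{Prtrou6} is stated there without proof, merely with the remark that it ``mimics'' Proposition \ref{trou6}, and the implicit reasoning is precisely your restriction argument --- the bijection $|\t|<1\Leftrightarrow|\t_0|<\pi/4$ under $\mu$ from (\ref{nra2}), the compatibility of the multipliers $M_0,N_0,P_0,Q_0$ with zero-extension and with the restrictions $\rho_B,\rho_\Pi$, and the definitions (\ref{lchyasz})--(\ref{lch5yasz}), (\ref{Plchyasz})--(\ref{Plch5yasz}). The only nit is a notational slip midway where you momentarily place $F$ on $\Gam_\Pi(n,j)$ rather than $\Gam_B(n,j)$, but the final identities are stated with the correct domains, so the argument stands.
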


Using this theorem, properties of $R_B$-transforms from Subsection \ref{mens}  and $R_0$-transforms from Subsection \ref{plic}  can be reformulated in terms of $R_{\Pi}$-transforms and vice versa.
For instance, Theorem \ref{tally} yields the following statement.

\begin{theorem}\label {tally1}  If $ 0 \le j<k\le n-1$, $j+k\le n-1$, then the operator $R_{\Pi}$ is injective on $L^1 (\Gam_\Pi (n,j))$.
 \end{theorem}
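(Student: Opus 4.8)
The plan is to deduce injectivity of $R_\Pi$ from the already-established injectivity of the chord transform $R_B$ (Theorem \ref{tally}(i)) via the transition formula $R_B F = N_0 R_\Pi M_0 F$ of Theorem \ref{Prtrou6}. Concretely, I would start from a function $G \in L^1(\Gam_\Pi(n,j))$ with $(R_\Pi G)(\z_0)=0$ for almost all $\z_0 \in \Gam_\Pi(n,k)$, and set $F = M_0^{-1} G$, where $M_0^{-1}$ is given by (\ref{Itrou2}). Since $M_0 F = G$, the first identity in (\ref{Prtrou7}) reads $R_B F = N_0 R_\Pi M_0 F = N_0 R_\Pi G$; because $N_0$ is merely multiplication by a positive factor composed with the bijection $\mu$, it sends the zero function to the zero function, whence $R_B F = 0$ almost everywhere on $\Gam_B(n,k)$. (That the equality in (\ref{Prtrou7}) is legitimate in the Lebesgue sense will follow once $F$ is known to be integrable, since then $R_B F$ is finite a.e. by Lemma \ref{ctally}, while the right-hand side is identically zero.)

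The decisive step is to verify that $F = M_0^{-1} G$ really lies in $L^1(\Gam_B(n,j))$, so that Theorem \ref{tally}(i) can be invoked. For this I would use that $\mu$ restricts to a bijection $\Gam_B(n,j) \to \Gam_\Pi(n,j)$: by (\ref{nra2}) one has $|\t| = \tan|\t_0|$, so the condition $|\t|<1$ corresponds to $|\t_0| < \pi/4$, which is exactly the ball (\ref{assit1}). Writing $(M_0^{-1}G)(\t) = (\sig_j/\sig_k)(1+|\t|^2)^{-(k+1)/2}(G\circ\mu)(\t)$, using $1+\tan^2|\t_0| = (\cos|\t_0|)^{-2}$, and applying the change of variables (\ref{nra3X}) (restricted to these subdomains), I obtain
\[
\intl_{\Gam_B(n,j)} |(M_0^{-1}G)(\t)|\, d\t = \frac{\sig_n}{\sig_k}\intl_{\Gam_\Pi(n,j)}(\cos|\t_0|)^{k-n}\,|G(\t_0)|\, d\t_0 .
\]
On $\Gam_\Pi(n,j)$ one has $|\t_0| < \pi/4$, hence $\cos|\t_0| \ge 2^{-1/2}$; since $k-n<0$, the weight $(\cos|\t_0|)^{k-n}$ is bounded above by $2^{(n-k)/2}$, and the right-hand side does not exceed $2^{(n-k)/2}(\sig_n/\sig_k)\,\|G\|_{L^1(\Gam_\Pi(n,j))} < \infty$. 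Thus $F \in L^1(\Gam_B(n,j))$.

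With this in hand the conclusion is immediate: since $j+k \le n-1$, Theorem \ref{tally}(i) gives injectivity of $R_B$ on $L^1(\Gam_B(n,j))$, so $R_B F = 0$ forces $F = 0$ almost everywhere, and then $G = M_0 F = 0$ almost everywhere on $\Gam_\Pi(n,j)$. I expect the only genuine obstacle to be the bookkeeping in the integrability estimate above — specifically, confirming that the potentially dangerous factor $(\cos|\t_0|)^{k-n}$ stays bounded precisely because the projective model lives inside $\{|\t_0| < \pi/4\}$, where $\cos|\t_0|$ is bounded away from zero. Everything else is a direct transcription of the argument already used for $R_B$ in Theorem \ref{tally}(i), now routed through $M_0,N_0$ instead of the extension-by-zero map.
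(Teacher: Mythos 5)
Your argument is correct. The paper itself gives essentially no written proof: it first remarks that Theorem \ref{tally} ``yields'' the statement, but then says that ``in fact'' the result is a consequence of Theorem \ref{Ntrou6}, to be proved by imitating the proof of Theorem \ref{tally}. That intended route is the extension-by-zero one: extend $G$ to $\tilde G$ on all of $G_{n+1,j+1}$ (trivially still in $L^1$, since the Haar measure is finite and there is no weight to control), observe via (\ref{Plchyasz}) that $R_\Pi G=\rho_\Pi R_0\tilde G$, note that $(R_0\tilde G)(\z_0)$ vanishes automatically for $|\z_0|\ge\pi/4$ because $\t_0\subset\z_0$ forces $|\t_0|\ge|\z_0|$ (the compact analogue of Lemma \ref{actu}), so that $R_0\tilde G=0$ on all of $G_{n+1,k+1}$, and then invoke the injectivity of $R_0$ on $L^1(G_{n+1,j+1})$ for $j+k\le n-1$. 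You instead flesh out the first route: pull $G$ back to the ball via $F=M_0^{-1}G$, check $F\in L^1(\Gam_B(n,j))$ by the change of variables (\ref{nra3X}), and quote Theorem \ref{tally}(i). Your integrability computation is right, and the key observation --- that the weight $(\cos|\t_0|)^{k-n}$ is harmless precisely because $\Gam_\Pi(n,j)$ sits inside $\{|\t_0|<\pi/4\}$ --- is exactly the point that makes this reduction work. The trade-off is that your route leans on the transition formula of Theorem \ref{Prtrou6}, which the paper states without proof, and requires the weight bookkeeping, whereas the paper's route needs only the definition (\ref{Plchyasz}) and Theorem \ref{Ntrou6} and avoids any weight estimate; on the other hand, your route stays entirely within results whose proofs are actually written out in the paper. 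Both are sound, and in either case one should (as you implicitly do, and as the paper does via Lemma \ref{actu} in the hyperbolic setting) note the monotonicity $|\t_0|\ge|\z_0|$ for $\t_0\subset\z_0$ if the extension-by-zero route is taken.
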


 In fact, this statement is a consequence of Theorem \ref{Ntrou6}; cf. the proof of Theorem \ref{tally}.

Let us return to the hyperboloid model $\hn$.
We recall that by (\ref{afor}) and (\ref{Prtrou7}), $R_H f =NR_B Mf$ and $R_B F = N_0  R_{\Pi} M_0 F$. Combining these formulas, we obtain
$R_H f =N N_0  R_{\Pi} M_0Mf$ or $R_H f = N_1  R_{\Pi} M_1 f$, where
\[ M_1 =M_0M, \qquad N_1=N N_0.\]
 Operators $M_1$ and $N_1$ can be easily computed using the corresponding formulas for $M,N,M_0, N_0$; see
 (\ref{lity}), (\ref{lity1}), (\ref{trou2}),  (\ref{trou3}).

Similar formulas for the dual transforms, namely,  $R_H^*\vp =QR^*_B P \vp$ and
  $R_{B}^* \Phi =Q_0 R_{\Pi}^* P_0 \Phi$ (see (\ref{lity2q}), (\ref{Prtrou7})), yield
 $R_H^*\vp =QQ_0 R_{\Pi}^* P_0 P \vp$ or $R_H^*\vp =Q_1 R_{\Pi}^* P_1 \vp$, where
 \[P_1=P_0 P, \qquad Q_1=QQ_0.\]
  The expressions for $P,Q,P_0, Q_0$ can be found in
 (\ref{lityd}), (\ref{lity1d}), (\ref{trou4}), and (\ref{trou5}), respectively. Hence
  elementary calculations yield the following result.

 \begin{theorem} \label {PrXde}  Let
 \bea \label {Etrou2}
 (M_1 f)(\t_0)&=& \frac{\sig_k}{\sig_j}\, (\cos \, 2|\t_0|)^{-(k+1)/2} (f\circ \nu^{-1})(\t_0), \\
    \label {Etrou3} (N_1 \vp)(\fr {z})&=& (\ch \,2 ||\fr {z}||)^{-(j+1)/2} (\vp \circ \nu)(\fr {z}), \\
 \label {Etrou4} (P_1 \vp)(\z_0)&=&(\cos \, 2|\z_0|)^{(j-n)/2}  (\vp\circ \nu^{-1})(\z_0), \\
 \ \label {Etrou5} (Q_1 \Phi)(\fr {t})&=& (\ch \,2 ||\fr {t}||)^{(k-n)/2}(\Phi \circ \nu)(\fr {t}). \eea
Then
  \be \label {EPrtrou7}
R_H f = N_1  R_{\Pi} M_1 f, \qquad R_H^*\vp =Q_1 R_{\Pi}^* P_1 \vp,\ee
provided that the right-hand sides exist in the Lebesgue sense.
\end{theorem}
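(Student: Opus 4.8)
The plan is to read off (\ref{EPrtrou7}) by composing the two transition identities already at our disposal, and then to identify the resulting weight operators by a direct computation. For the forward transform I would insert $R_B F = N_0 R_\Pi M_0 F$ (Theorem \ref{Prtrou6}) with $F = Mf$ into $R_H f = N R_B M f$ (Theorem \ref{afor}); this yields $R_H f = N N_0 R_\Pi M_0 M f = N_1 R_\Pi M_1 f$ with $M_1 = M_0 M$, $N_1 = N N_0$, exactly as noted in the text preceding the statement. For the dual transform I would argue symmetrically, combining $R_H^* \vp = Q R_B^* P \vp$ (Theorem \ref{aford}) with $R_B^* \Phi = Q_0 R_\Pi^* P_0 \Phi$ (Theorem \ref{Prtrou6}) to get $R_H^* \vp = Q Q_0 R_\Pi^* P_0 P \vp = Q_1 R_\Pi^* P_1 \vp$, where $P_1 = P_0 P$, $Q_1 = Q Q_0$. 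Each of these identities inherits the Lebesgue-existence proviso from its component identities, so only the closed forms (\ref{Etrou2})--(\ref{Etrou5}) remain to be verified.

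To do so I would first use the commuting diagram (\ref{necti}), which encodes $\mu = \nu \circ \pi$ and hence $\nu^{-1} = \pi \circ \mu^{-1}$, to fuse the two consecutive pullbacks in each composition into a single one along $\nu$ (resp. $\nu^{-1}$); for example $(Mf)(\mu^{-1}\t_0) = (1-|\t|^2)^{-(k+1)/2} (f \circ \nu^{-1})(\t_0)$ with $\t = \mu^{-1}\t_0$. After this step the entire content sits in the scalar weights, each a function of the single distance variable $|\t_0|$ or $||\fr{t}||$. The decisive tool is the half-angle identity (\ref{adt2T}), which I would use in the two equivalent forms $1 - \tan^2|\t_0| = \cos 2|\t_0| / \cos^2|\t_0|$ and $1 + \th^2||\fr{t}|| = \ch 2||\fr{t}|| / \ch^2||\fr{t}||$, together with $|\t| = \tan|\t_0|$ from (\ref{nra2}) and $|\t| = \th||\fr{t}||$ from (\ref{tat}).

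A representative computation is $M_1 = M_0 M$: substituting (\ref{trou2}) and (\ref{lity}) and collapsing the pullbacks gives the prefactor $(\sig_k/\sig_j)(\cos|\t_0|)^{-k-1}(1 - |\t|^2)^{-(k+1)/2}$, and since $1 - |\t|^2 = 1 - \tan^2|\t_0| = \cos 2|\t_0|/\cos^2|\t_0|$ the two powers of $\cos|\t_0|$ cancel, leaving $(\sig_k/\sig_j)(\cos 2|\t_0|)^{-(k+1)/2}$, which is (\ref{Etrou2}). The other three go the same way: $P_1 = P_0 P$ produces $(\cos|\z_0|)^{j-n}(1 - \tan^2|\z_0|)^{(j-n)/2} = (\cos 2|\z_0|)^{(j-n)/2}$, giving (\ref{Etrou4}); while $N_1 = N N_0$ and $Q_1 = Q Q_0$, using the hyperbolic form of the identity, collapse $(\ch||\fr{z}||)^{-j-1}(1 + \th^2||\fr{z}||)^{-(j+1)/2}$ to $(\ch 2||\fr{z}||)^{-(j+1)/2}$ and $(\ch||\fr{t}||)^{k-n}(1 + \th^2||\fr{t}||)^{(k-n)/2}$ to $(\ch 2||\fr{t}||)^{(k-n)/2}$, giving (\ref{Etrou3}) and (\ref{Etrou5}).

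I do not expect a genuine obstacle: every component operator and both composition identities are already proved, so the only real work is the clean cancellation above, which is immediate once (\ref{adt2T}) is written in half-angle form. The one point that warrants care is the bookkeeping of domains and of the direction of each pullback, so that the three distance parameters $|\t_0|$, $|\t|$, and $||\fr{t}||$ are matched consistently through (\ref{necti}) before the weights are multiplied.
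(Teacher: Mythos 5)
Your proposal is correct and follows essentially the same route as the paper: the author likewise obtains (\ref{EPrtrou7}) by composing $R_H f = N R_B M f$ with $R_B F = N_0 R_\Pi M_0 F$ (and the dual pair), setting $M_1 = M_0 M$, $N_1 = N N_0$, $P_1 = P_0 P$, $Q_1 = Q Q_0$, and then evaluating the weights via $|\mu^{-1}(\t_0)| = \tan|\t_0|$ and the half-angle identity (\ref{adt2T}), carrying out the computation explicitly only for $M_1$ and noting the other three are similar. Your cancellation computations for all four operators match the paper's.
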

\begin{proof} Let us prove (\ref{Etrou2}). The proof of (\ref{Etrou3})-(\ref{Etrou5}) is similar. By (\ref{lity}) and  (\ref {trou2}),
\bea (Mf)(\t)&=&(1-|\t|^2)^{-(k+1)/2} \,(f\circ \pi)(\t), \nonumber\\
 (M_0 F)(\t_0) &=& c\, (\cos |\t_0|)^{-k-1} (F\circ \mu^{-1})(\t_0), \qquad c=\sig_k / \sig_j. \nonumber\eea
 Hence
\bea
 (M_1 f)(\t_0) &=& (M_0 M f)(\t_0)=  c\, (\cos |\t_0|)^{-k-1} ((M f)( \mu^{-1}(\t_0))\nonumber\\
&=&  c\, (\cos |\t_0|)^{-k-1}   (1\!-\!| \mu^{-1}(\t_0)|^2)^{-(k+1)/2} (f\circ \pi)( \mu^{-1}(\t_0))\nonumber\\
&{}& \text {\rm (recall that by (\ref{nra2}),  $|\mu^{-1}(\t_0)| =|\t| =\tan  |\t_0|$)}\nonumber\\
&=&  c\, (\cos |\t_0|)^{-k-1} (1-\tan^2 |\t_0|) ^{-(k+1)/2} (f\circ \nu^{-1})(\t_0)\nonumber\\
&=& c\,(\cos \, 2|\t_0|)^{-(k+1)/2} (f\circ \nu^{-1})(\t_0).\nonumber\eea
\end{proof}

The next theorem is the converse of the previous one.  We observe that
 \bea \label {Etrou2a}
 (M_1^{-1} F)(\fr {t})&=& \frac{\sig_j}{\sig_k}\, (\ch \,2 ||\fr {t}||)^{-(k+1)/2} (F\circ \nu)(\fr {t}), \\
    \label {Etrou3a} (N_1^{-1} \Phi)(\z_0)&=& (\cos \, 2|\z_0|)^{-(j+1)/2} (\Phi \circ \nu^{-1})(\z_0), \\
     \label {Etrou4a} (P_1^{-1} \Phi)(\fr {z})&=&(\ch \,2 ||\fr {z}||)^{(j-n)/2}  (\Phi\circ \nu)(\fr {z}), \\
 \label {Etrou5a} (Q_1^{-1} \Phi)(\fr {t})&=&               (\cos \, 2|\t_0|)^{(k-n)/2}(\vp \circ \nu^{-1})(\t_0). \eea
These equalities can be easily obtained using (\ref{adt2T}).

 \begin{theorem} \label {PrXdea} We have
  \be \label {EPrtrou7a}
R_{\Pi} F = N_1^{-1} R_H M_1^{-1} F, \qquad R_{\Pi}^* \Phi = Q_1^{-1}R_H^*P_1^{-1} \Phi,\ee
provided that the right-hand sides exist in the Lebesgue sense.
\end{theorem}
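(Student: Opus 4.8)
The plan is to read the two identities in (\ref{EPrtrou7a}) as the formal algebraic inverses of the relations already established in Theorem \ref{PrXde}, namely $R_H f = N_1 R_{\Pi} M_1 f$ and $R_H^*\vp = Q_1 R_{\Pi}^* P_1 \vp$. Since each of $M_1, N_1, P_1, Q_1$ is a pointwise multiplication composed with the diffeomorphisms $\nu$ and $\nu^{-1}$ (see (\ref{Etrou2})--(\ref{Etrou5})), all four are invertible, and the candidate inverses are precisely those listed in (\ref{Etrou2a})--(\ref{Etrou5a}). First I would confirm that these really are the inverses. For $M_1$, for example, one computes
\[
(M_1^{-1} M_1 f)(\fr {t}) = (\ch \, 2||\fr {t}||)^{-(k+1)/2}\,(\cos \, 2|\nu(\fr {t})|)^{-(k+1)/2}\, f(\fr {t}),
\]
and the identity (\ref{adt2T}), which along the diagram (\ref{necti}) (where $\t_0 = \nu(\fr {t})$) gives $\cos \, 2|\t_0| = 1/\ch \, 2||\fr {t}||$, collapses the product of the two weights to $1$. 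The same one-line check, invoking (\ref{adt2T}) in each case, disposes of $N_1 N_1^{-1} = \id$, $P_1 P_1^{-1} = \id$, and $Q_1 Q_1^{-1} = \id$.

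Granting these inverse relations, the two equalities in (\ref{EPrtrou7a}) follow by direct substitution. For the first, I would set $f = M_1^{-1} F$ in $R_H f = N_1 R_{\Pi} M_1 f$; then $M_1 f = F$, so $R_H M_1^{-1} F = N_1 R_{\Pi} F$, and applying $N_1^{-1}$ yields $R_{\Pi} F = N_1^{-1} R_H M_1^{-1} F$. For the dual transform, I would put $\vp = P_1^{-1}\Phi$ in $R_H^*\vp = Q_1 R_{\Pi}^* P_1 \vp$; then $P_1 \vp = \Phi$, whence $R_H^* P_1^{-1}\Phi = Q_1 R_{\Pi}^*\Phi$, and applying $Q_1^{-1}$ gives $R_{\Pi}^*\Phi = Q_1^{-1} R_H^* P_1^{-1}\Phi$.

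The only genuine point to watch is the Lebesgue-existence clause attached to every identity of this kind in the paper: the manipulation above is purely formal unless the relevant integrals converge. Here this causes no difficulty, because the weights are finite, positive, and transported along the diffeomorphism $\nu$, so multiplying by them does not affect absolute convergence; thus whenever the right-hand side of either equality in (\ref{EPrtrou7a}) exists in the Lebesgue sense, so does the corresponding side of (\ref{EPrtrou7}), and the substitution is legitimate. I expect the verification of the four inverse formulas via (\ref{adt2T}) to be the most computation-heavy—though entirely routine—step; everything after that is bookkeeping.
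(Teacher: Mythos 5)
Your proposal is correct and matches the paper's (implicit) argument: the paper states Theorem \ref{PrXdea} without a separate proof, presenting it as the converse of Theorem \ref{PrXde} obtained by inverting $M_1,N_1,P_1,Q_1$ via the identity (\ref{adt2T}), which is exactly the substitution you carry out. Your added care about the Lebesgue-existence clause is consistent with how the paper treats all such transition formulas.
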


\section{Conclusion and Open Problems}

The aim of the paper was to establish connections between  higher-rank geodesic Radon transforms in Euclidean, elliptic, and hyperbolic spaces and increase our knowledge about these transforms in the non-smooth setting.
The list of topics studied in the paper is far from being complete, and many interesting developments are expected in the future. Some open
problems have been stated in Remark \ref{owl}, Conjectures  \ref{don} and  \ref{eed4h}.
 It might be worth studying higher-rank Radon transforms in the framework of the Poincar\'e ball model and  the half-space model of the hyperbolic space. Probably some researchers will be inspired to try their hand at higher-rank horospherical (or horocycle) transforms.

Another possible research direction is related to analytic families of higher-rank cosine and sine transforms in the hyperbolic space. These transforms  in the rank-one case were introduced in \cite{Ru02c}. A theory of such transforms in the higher-rank setting is an intriguing and  challenging open problem.  References to such transforms on Grassmann and Stiefel manifolds can be found in \cite{Ru20} and \cite{Zh}. This topic is intimately connected with developments in \cite {GR} and \cite{Zha1} related to explicit inversion of Radon transforms on Grassmannians in terms of G$\mathring{\rm a}$rding-Gindikin integrals. We conjecture that similar theories can be developed in the hyperbolic setting.

\section {Appendix. The Erd\'{e}lyi--Kober Type
Fractional Integrals}\label {kuku}

 We recall some elementary facts from Fractional Calculus \cite[Subsection 2.6.2]{Ru15}, \cite{SKM}.
The following Erd\'{e}lyi--Kober type
fractional integrals on $\bbr_+ =(0, \infty)$ of order $\a >0$ arise in numerous integral-geometric considerations:
\bea
\label{as34b12}%
(I^{\a}_{+, 2} f)(t)
&=&\frac{2}{\Gam
(\a)}\intl_{0}^{t} (t^{2} -r^{2})^{\a-1}f (r) \, r\, dr,\nonumber\\
\label{eci}
(I^{\a}_{-, 2} f)(t)
&=&\frac{2}{\Gam
(\a)}\intl_{t}^{\infty}(r^{2} - t^{2})^{\a-1}f (r) \, r\,
dr.\quad
\nonumber\eea

\begin{lemma}
\label{lifa2}  {\rm \cite[p. 65]{Ru15}} Let    $\a >0$.\

\textup{(i)} The integral $(I^{\a}_{+, 2} f)(t)$ is absolutely
convergent for almost all $t>0$ whenever $r\mapsto rf(r)$ is a locally
integrable function on $\bbr_{+}$.

\textup{(ii)} If
\begin{equation} %
\label{for10z}
\intl_{a}^{\infty}|f(r)|\, r^{2\a-1}\, dr <\infty,\qquad a>0,
\end{equation}
then $(I^{\a}_{-, 2} f)(t)$ is finite for almost all $t>a$. If
$f$ is non-negative, locally integrable on $[a,\infty)$, and
(\ref{for10z}) fails, then $(I^{\a}_{-, 2} f)(t)=\infty$ for every
$t\ge a$.
\end{lemma}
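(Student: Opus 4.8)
The plan is to reduce both operators to the classical one-dimensional Riemann--Liouville fractional integrals by the substitution that trivializes the quadratic kernel. First I would set $u=r^2$, $x=t^2$, and $g(u)=f(\sqrt u)$, so that $r\,dr=du/2$ and
\[
(I^{\a}_{+, 2} f)(t)=\frac{1}{\Gam (\a)}\intl_0^{x}(x-u)^{\a-1}g(u)\,du, \qquad
(I^{\a}_{-, 2} f)(t)=\frac{1}{\Gam (\a)}\intl_{x}^{\infty}(u-x)^{\a-1}g(u)\,du,
\]
the left- and right-sided fractional integrals of $g$ evaluated at $x=t^2$. Under this change of variable the hypothesis in (i) becomes local integrability of $g$ on $\bbr_+$, since $\intl_0^{A}|r f(r)|\,dr=\tfrac12\intl_0^{A^2}|g(u)|\,du$; and, because $|r^{2\a-1}|=r^{2\Re\a-1}$ gives $\intl_a^\infty |f(r)|\,r^{2\Re\a-1}\,dr=\tfrac12\intl_{a^2}^\infty |g(u)|\,u^{\Re\a-1}\,du$, the hypothesis in (ii) becomes $\intl_{a^2}^\infty |g(u)|\,u^{\Re\a-1}\,du<\infty$. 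Thus the whole lemma is equivalent to the corresponding statements for the standard operators $I^\a_{0+}$ and $I^\a_{-}$, and it suffices to establish those.

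For part (i), I would integrate $|I^\a_{0+}g|$ over a bounded interval $(0,X)$ and apply Tonelli's theorem:
\[
\intl_0^X \Big(\intl_0^x (x-u)^{\Re\a-1}|g(u)|\,du\Big)\,dx
=\intl_0^X |g(u)|\,\Big(\intl_u^X (x-u)^{\Re\a-1}\,dx\Big)\,du
=\frac{1}{\Re\a}\intl_0^X |g(u)|\,(X-u)^{\Re\a}\,du,
\]
which is finite because $g\in L^1(0,X)$ and $(X-u)^{\Re\a}$ is bounded there. Hence the inner integral is finite for a.e.\ $x\in(0,X)$, and since $X>0$ is arbitrary, $(I^\a_{+,2}f)(t)$ is absolutely convergent for a.e.\ $t>0$.

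For the first assertion of part (ii) I would argue similarly after splitting $(x,\infty)=(x,2x)\cup(2x,\infty)$: on the bounded piece Tonelli applies as above, while on $(2x,\infty)$ the elementary bound $(u-x)^{\Re\a-1}\le c\,u^{\Re\a-1}$ combined with the assumed integrability of $u^{\Re\a-1}g(u)$ at infinity yields convergence, so $(I^\a_{-,2}f)(t)$ is finite for a.e.\ $t>a$. For the sharpness --- divergence everywhere when $f\ge0$ (hence $\a$ real) and the integral condition fails --- I would use a pointwise lower bound on the kernel: for $r\ge\sqrt2\,t$ one has $r^2/2\le r^2-t^2\le r^2$, so $(r^2-t^2)^{\a-1}\ge c_\a\, r^{2\a-2}$ with $c_\a=\min(1,2^{1-\a})>0$, whence
\[
(I^{\a}_{-, 2} f)(t)\ge \frac{2c_\a}{\Gam(\a)}\intl_{\sqrt2\,t}^\infty f(r)\,r^{2\a-1}\,dr.
\]
Since $f$ is locally integrable, the contribution of $[a,\sqrt2\,t]$ to $\intl_a^\infty f(r)r^{2\a-1}\,dr$ is finite, so divergence of the full integral forces the tail to be infinite, giving $(I^\a_{-,2}f)(t)=\infty$ for every $t\ge a$. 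The main obstacle I anticipate is precisely this sharpness step: one must bound the kernel uniformly for $\a$ on both sides of $1$, where $(r^2-t^2)^{\a-1}$ switches from decreasing to increasing in $r$, and verify that the near-diagonal behaviour as $r\downarrow t$ does not interfere --- both of which the clean cut at $r=\sqrt2\,t$ resolves.
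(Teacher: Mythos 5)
Your proof is correct. Note that the paper itself offers no argument for this lemma --- it is quoted verbatim from \cite[p.~65]{Ru15} --- so there is no in-text proof to compare against; your write-up is a complete, self-contained justification along the standard lines. The substitution $u=r^2$, $x=t^2$ does reduce $I^{\a}_{\pm,2}$ exactly to the Riemann--Liouville integrals $I^{\a}_{0+}$ and $I^{\a}_{-}$ of $g(u)=f(\sqrt u)$, and it correctly transports both hypotheses (local integrability of $rf(r)$ becomes local integrability of $g$; condition (\ref{for10z}) becomes $\int_{a^2}^\infty |g(u)|u^{\Re\a-1}\,du<\infty$). The Tonelli computation for (i) is valid for $\Re\a>0$, the split at $u=2x$ in (ii) correctly separates the near-diagonal part (handled by Tonelli) from the tail (handled by $(u-x)^{\Re\a-1}\le c\,u^{\Re\a-1}$ for $u\ge 2x$), and the sharpness step is sound: the two-sided bound $r^2/2\le r^2-t^2\le r^2$ for $r\ge\sqrt2\,t$ gives the uniform lower bound $(r^2-t^2)^{\a-1}\ge\min(1,2^{1-\a})\,r^{2\a-2}$ regardless of whether $\a\gtrless 1$, and local integrability of $f$ ensures the finite piece over $[a,\sqrt2\,t]$ cannot absorb the divergence. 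The only cosmetic remark is that the a.e.\ statements in $t$ and in $x=t^2$ are equivalent because $t\mapsto t^2$ is a diffeomorphism preserving null sets; you use this implicitly and it is harmless.
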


 The corresponding Erd\'{e}lyi--Kober fractional derivatives are defined as the
left inverses ${\Cal D^{\a}_{\pm, 2} = (I^{\a}_{\pm,
2})^{-1}}$. For example, if
$\alpha= m + \alpha_{0}$, $0 \le\alpha_{0} < 1$, $m = \lfloor \alpha \rfloor$, the integer part of $\a$,
then, formally,
\begin{equation} %
\label{frr+z}
\Cal D^{\a}_{\pm, 2} \vp=(\pm D)^{m +1}\, I^{1 - \alpha_{0}}_{
\pm, 2}\vp, \qquad D=\frac{1}{2t}\,\frac{d}{dt}.
\end{equation}
More precisely,  the following statements hold.

\begin{theorem} \label{78awqe555}{\rm (cf. \cite [formula (2.6.22)]{Ru15})}  Let $\vp= I^{\a}_{+, 2} f$, where $rf(r)$ is locally integrable on $\bbr_{+}$.
Then $f(t)= (\Cal D^{\a}_{+, 2} \vp)(t)$ for
almost all $t\in\bbr_{+}$, as in (\ref{frr+z}).
\end{theorem}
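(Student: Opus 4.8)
The plan is to reduce the Erd\'elyi--Kober operators $I^\a_{+,2}$ and $\Cal D^\a_{+,2}$ to the classical left-sided Riemann--Liouville fractional integral and derivative on the half-line by means of the quadratic substitution $x=t^2$, and then to quote the known inversion theorem for the Riemann--Liouville operators (see \cite{SKM}, \cite[Subsection 2.6.2]{Ru15}). The virtue of this route is that the genuinely analytic content --- the absolute continuity and a.e.\ differentiation arguments underlying the inversion --- is already packaged in the classical statement, so the work here is confined to verifying that the substitution carries each ingredient of the hypothesis and of the operator $\Cal D^\a_{+,2}$ into its Riemann--Liouville counterpart.

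First I would substitute $x=t^2$, $y=r^2$ in the definition of $I^\a_{+,2}$. Since $dy=2r\,dr$, setting $g(y)=f(\sqrt y)$ and $\psi(x)=\vp(\sqrt x)$ turns the identity $\vp=I^\a_{+,2} f$ into $\psi=I^\a_{0+} g$, where $I^\a_{0+}$ is the standard Riemann--Liouville integral $(I^\a_{0+} g)(x)=\Gam(\a)^{-1}\intl_0^x (x-y)^{\a-1} g(y)\,dy$. The local integrability hypothesis transfers cleanly: for each $a>0$ one has $\intl_0^{a^2}|g(y)|\,dy=2\intl_0^a |f(r)|\,r\,dr<\infty$, so $r\mapsto rf(r)$ locally integrable is exactly equivalent to $g$ being locally integrable on $\bbr_+$, which is the hypothesis demanded by the classical inversion. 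Applying the same computation with $\a$ replaced by $1-\a_0$ shows $I^{1-\a_0}_{+,2}\vp$, viewed as a function of $t$, equals $(I^{1-\a_0}_{0+}\psi)(t^2)$.

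Next I would identify the differential operator. The key observation is that $D=\frac{1}{2t}\,\frac{d}{dt}$ acts as $d/dx$ under the substitution: if $u(t)=U(t^2)$ then $Du(t)=U'(t^2)$, and hence $D^{m+1}u(t)=U^{(m+1)}(t^2)$. Combined with the previous step, this gives $\Cal D^\a_{+,2}\vp=(D^{m+1}I^{1-\a_0}_{+,2}\vp)(t)=(\Cal D^\a_{0+}\psi)(t^2)$, where $\Cal D^\a_{0+}=(d/dx)^{m+1}I^{1-\a_0}_{0+}$ is the Riemann--Liouville derivative and $\a=m+\a_0$ as in (\ref{frr+z}). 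The classical inversion theorem then yields $g(x)=(\Cal D^\a_{0+}\psi)(x)$ for almost all $x>0$; translating back through $x=t^2$ gives $f(t)=g(t^2)=(\Cal D^\a_{0+}\psi)(t^2)=(\Cal D^\a_{+,2}\vp)(t)$ a.e., as claimed. I expect the only delicate point to be ensuring that the iterated operator $D^{m+1}$ genuinely corresponds to $(d/dx)^{m+1}$ in the a.e.\ (not merely pointwise-smooth) sense, so that the chain of equalities survives the weak regularity available; this is exactly where the absolute continuity structure guaranteed by the Riemann--Liouville inversion theorem must be invoked, and it is the one step that cannot be dismissed as bookkeeping.
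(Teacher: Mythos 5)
Your argument is correct. The paper itself gives no proof of this theorem --- it is quoted from \cite[formula (2.6.22)]{Ru15} --- and your reduction of $I^{\a}_{+,2}$ and $\Cal D^{\a}_{+,2}$ to the left-sided Riemann--Liouville operators via the substitution $x=t^2$ (under which $rf(r)\in L^1_{loc}$ becomes ordinary local integrability and $D=\frac{1}{2t}\frac{d}{dt}$ becomes $\frac{d}{dx}$) is precisely the standard route behind the cited formula, with the only genuinely analytic point (the a.e.\ validity of the $(m+1)$-fold differentiation) correctly delegated to the absolute-continuity content of the classical Riemann--Liouville inversion theorem.
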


\begin{theorem}
\label{78awqe}  {\rm \cite[Theorem 2.44]{Ru15}} If $f$ satisfies (\ref{for10z})
for every $a>0$ and
$\vp\!= \!I^{\a}_{-, 2} f$, then $f(t)= (\Cal D^{\a}_{-, 2} \vp)(t)$ for
almost all $t\in\bbr_{+}$, where $\Cal D^{\a}_{-, 2} \vp$ can be represented as follows.

\noindent
\textup{(i)} If $\a=m$ is an integer, then
\begin{equation} %
\label{90bedr}
\Cal D^{\a}_{-, 2} \vp=(- D)^{m} \vp,
\qquad D=\frac{1}{2t}\,\frac{d}{dt}.
\end{equation}

\noindent
\textup{(ii)} If $\alpha= m +\alpha_{0}, \; m = \lfloor \alpha \rfloor, \; 0 <
\alpha_{0} <1$, then
%
%e2.22 #&#
\begin{equation} %
\label{frr+z33}
\Cal D^{\a}_{-, 2} \vp= t^{2(1-\a+m)} (- D)^{m +1}
t^{2\a}\psi, \quad\psi=I^{1-\a+m}_{-,2} \,t^{-2m-2}\,
\vp.
\end{equation}
In particular, for $\a=k/2$, $k$ odd,
\begin{equation} %
\label{frr+z3}
\Cal D^{k/2}_{-, 2} \vp= t\,(- D)^{(k+1)/2} t^{k}I^{1/2}_{-,2}
\,t^{-k-1}\,\vp.
\end{equation}
\end{theorem}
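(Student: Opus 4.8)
The plan is to reduce the whole statement to the classical right-sided Riemann--Liouville fractional calculus by the substitution $u=t^{2}$, $v=r^{2}$. Writing $h(v)=f(\sqrt v)$ and using $r\,dr=\tfrac12\,dv$, this change of variable turns the operator $I^{\a}_{-, 2}$ into the ordinary Riemann--Liouville integral: one checks at once that
\[ (I^{\a}_{-, 2} f)(t)=\frac{1}{\Gam(\a)}\intl_{u}^{\infty}(v-u)^{\a-1}h(v)\,dv, \qquad u=t^{2}, \]
which is $(I^{\a}_{-}h)(u)$, the right-sided Riemann--Liouville integral of order $\a$. Since $g(t)=G(t^{2})$ gives $(Dg)(t)=G'(t^{2})$, the operator $D=\tfrac{1}{2t}\tfrac{d}{dt}$ corresponds under $t\mapsto t^{2}$ to ordinary differentiation $\tfrac{d}{du}$, and $I^{\b}_{-, 2}$ corresponds to $I^{\b}_{-}$ for every order $\b$. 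Thus the entire assertion is the pull-back, under conjugation by $t\mapsto t^{2}$, of the standard fact that the Riemann--Liouville fractional derivative is a left inverse of $I^{\a}_{-}$; see \cite{SKM} and \cite[Subsection 2.6.2]{Ru15}. Under this dictionary the hypothesis (\ref{for10z}) becomes precisely the decay at infinity needed for $I^{\a}_{-}h$ to be defined almost everywhere.

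For the integer case $\a=m$ in (\ref{90bedr}) I would argue directly in the variable $t$. A one-line computation gives $(I^{1}_{-, 2} f)(t)=2\intl_{t}^{\infty}f(r)\,r\,dr$, and hence $-D\,I^{1}_{-, 2} f=-\tfrac{1}{2t}\tfrac{d}{dt}\bigl(2\intl_{t}^{\infty}f(r)\,r\,dr\bigr)=f$, so that $-D$ is a left inverse of $I^{1}_{-, 2}$. The semigroup law $I^{\a}_{-, 2}I^{\b}_{-, 2}=I^{\a+\b}_{-, 2}$, which follows from the corresponding Riemann--Liouville identity (or from a direct Beta-integral evaluation), yields $I^{m}_{-, 2}=(I^{1}_{-, 2})^{m}$; iterating the one-step inversion then gives $(-D)^{m}I^{m}_{-, 2}=\id$, which is (\ref{90bedr}).

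For the genuinely fractional case $\a=m+\a_{0}$, $0<\a_{0}<1$, the same semigroup law in the form $I^{1-\a_{0}}_{-, 2}I^{\a}_{-, 2}=I^{m+1}_{-, 2}$ (valid because $(1-\a_{0})+\a=m+1$), combined with the integer case at order $m+1$, gives formally $f=(-D)^{m+1}I^{1-\a_{0}}_{-, 2}\vp$ for $\vp=I^{\a}_{-, 2}f$. The role of the weighted arrangement in (\ref{frr+z33}) is to make this rigorous: under the mere assumption (\ref{for10z}) the plain integral $I^{1-\a_{0}}_{-, 2}\vp$ need not converge at infinity, whereas the factor $t^{-2m-2}$ sharpens the decay enough that $\psi=I^{1-\a+m}_{-, 2}[t^{-2m-2}\vp]$ is absolutely convergent, the compensating powers $t^{2\a}$ and $t^{2(1-\a+m)}$ restoring the correct homogeneity. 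I would confirm, by transporting (\ref{frr+z33}) to the $u$-variable and invoking the weighted representation of $\Cal D^{\a}_{-}$ in \cite{SKM}, that on the range of $I^{\a}_{-, 2}$ this regularized operator coincides with $(-D)^{m+1}I^{1-\a_{0}}_{-, 2}$ and hence returns $f$. Finally, (\ref{frr+z3}) is the special case $m=(k-1)/2$, $\a_{0}=\half$, rearranged.

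The hard part will be the convergence bookkeeping in case (ii): verifying absolute convergence of $\psi$ under (\ref{for10z}), justifying the interchange of the $(m+1)$-fold differentiation with the fractional integral, and checking that the power-weight rearrangement does not alter the value on the relevant range. All of this is a routine but delicate fractional-calculus matter, handled exactly as in \cite[Theorem 2.44]{Ru15}; the substitution $u=t^{2}$ reduces each point to a statement already recorded in \cite{SKM}.
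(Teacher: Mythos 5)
First, a point of reference: the paper does not prove this statement at all --- it is quoted verbatim from \cite[Theorem 2.44]{Ru15} --- so your proposal can only be measured against the source's argument and its own internal correctness. Your reduction via $u=t^{2}$, $v=r^{2}$ is correct ($I^{\a}_{-,2}$ becomes the right-sided Riemann--Liouville integral, $D=\frac{1}{2t}\frac{d}{dt}$ becomes $\frac{d}{du}$, and (\ref{for10z}) becomes $\intl_{a}^{\infty}|h(v)|\,v^{\a-1}dv<\infty$, the sharp condition for a.e.\ convergence), and your treatment of the integer case (i) --- one-step inversion $-D\,I^{1}_{-,2}f=f$ a.e.\ plus the semigroup law --- is sound, modulo the routine remark that each intermediate function $I^{m-i}_{-,2}f$ is locally absolutely continuous, so the iterated a.e.\ differentiations are legitimate.

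The genuine gap is in case (ii), and it is the heart of the theorem. Having observed --- correctly --- that $I^{1-\a_{0}}_{-,2}\vp$ need not converge under (\ref{for10z}) alone, you then propose to justify (\ref{frr+z33}) by checking that the weighted operator ``coincides with $(-D)^{m+1}I^{1-\a_{0}}_{-,2}$ on the range of $I^{\a}_{-,2}$ and hence returns $f$.'' This is internally inconsistent: on that range the unweighted operator is in general undefined (which is precisely why the weights are there), so there is nothing to coincide with, and no statement ``already recorded in \cite{SKM}'' delivers the specific representation (\ref{frr+z33}) after the substitution $u=t^{2}$. Nor does the semigroup law help directly: the multiplier $t^{-2m-2}$ sits \emph{between} $I^{1-\a_{0}}_{-,2}$ and $I^{\a}_{-,2}$ and destroys the composition $I^{1-\a_{0}}_{-,2}I^{\a}_{-,2}=I^{m+1}_{-,2}$; Fubini produces the kernel $\intl_{u}^{w}(v-u)^{-\a_{0}}(w-v)^{\a-1}v^{-m-1}\,dv$, which is not a pure power of $w-u$. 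What is actually needed is either (a) a direct computation --- Fubini under (\ref{for10z}), a Beta/hypergeometric evaluation of this kernel, then the $(m+1)$-fold differentiation with the compensating powers --- or (b) the conjugation $t\mapsto 1/t$, which is the route of \cite{Ru15}: setting $r=1/\rho$, $t=1/s$ one verifies
\[
(I^{\a}_{-,2}f)(1/s)=s^{2-2\a}\,(I^{\a}_{+,2}g)(s), \qquad g(\rho)=\rho^{-2\a-2}f(1/\rho),
\]
and (\ref{for10z}) becomes exactly local integrability of $\rho\, g(\rho)$ up to $\rho=0$; applying the ``$+$''-inversion (Theorem \ref{78awqe555}) and transporting back through $t\mapsto 1/t$ yields precisely the weighted formula (\ref{frr+z33}), with (\ref{frr+z3}) the special case $m=(k-1)/2$, $\a_{0}=\half$, as you noted. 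Until (a) or (b) is carried out, case (ii) in your proposal is asserted rather than proved.
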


In the above theorem, powers of $t$ are interpreted as the corresponding multiplication operators.

Theorems \ref{78awqe555} and \ref{78awqe} can be used for explicit inversion of diverse Radon-like transforms of radial (or zonal) functions.

%\section {References}

%\let\Section\section
%\def\section*#1{\Section{#1}}
%\bibliographystyle{plain}
%\bibliography{komoedie}

\begin{thebibliography}{10}

\bibitem  {BC92} C. A.  Berenstein,  E. Casadio Tarabusi,  On the Radon and  Riesz transforms  in real hyperbolic spaces,  Contemp. Math.
 140 (1992),  1--21.

\bibitem  {BCK} C. A. Berenstein,  E. Casadio Tarabusi,  A. Kurusa,
Radon transform on spaces of constant curvature, Proc.  Amer. Math. Soc. 125 (1997), 455--461.

\bibitem  {BR1} C. A. Berenstein, B. Rubin. Totally geodesic Radon transform of $L^p$-functions on real hyperbolic space,  in: Fourier Analysis and
Convexity, Appl. Numer. Harmon. Anal. Birkh\"auser, Boston,  2004, pp. 37--58.

\bibitem {Bo} H. Boerner, Representations of groups, North-Hollan, Amsterdam, 1963.

\bibitem  {Bray94} W. O. Bray, Aspects of harmonic analysis on real hyperbolic space, in:  Fourier Analysis: analytic and geometric aspects,
ed. by W. O. Bray, P. S. Milojevic, and $\check{\rm C}$.  V. Stanojevi\'{c}, Lect. Notes Pure Appl. Math.  157,  Marcel Dekker, 1994, pp. 77--102.


%\bibitem  {Gr} E. L. Grinberg,  \textit{Radon transforms on higher Grassmannians}, J. Differential Geom. \textbf{24} (1986), 53--68.


\bibitem  {CFKP} J. W. Cannon, W. J. Floyd,  R. Kenyon, W. R. Parry, Hyperbolic geometry, in: Flavors of geometry, Math. Sci. Res. Inst. Publ., 31, Cambridge Univ. Press,
Cambridge, 1997, pp. 59--115.


\bibitem {Dav} M. E. Davison,  A singular value decomposition for the Radon transform in n-dimensional Euclidean space, Numer. Funct. Anal. Optim. 3 (1981), 321--340.


%\bibitem {F} B. Fuglede.  An integral formula. \textit{Math. Scand.} \textbf{6} (1958), 207--212.

%\bibitem  {Gnom} Gnomonic projection, Wikipedia, https://en.wikipedia.org/wiki/Gnomonic_projection.


\bibitem  {GGG} I. M. Gelfand, S. G. Gindikin,  M. I. Graev, Selected topics in integral geometry, Translations of Mathematical Monographs, AMS,
Providence, Rhode Island, 2003.

\bibitem  {GGR}  I. M. Gelfand, M. I. Graev, R. Rosu, The problem of integral geometry
and intertwining operators for a pair of real Grassmannian
manifolds, J. Operator Theory 12 (1984), 339--383.

\bibitem {GGV} I. M. Gelfand, M. I. Graev,  N. J. Vilenkin, Generalized Functions, Vol 5. Integral geometry and representation theory, Academic Press,  1966.

\bibitem  {Go90}  F. B. Gonzalez, Invariant differential operators and the range of the Radon $d$-plane transform, Math. Ann. 287 (1990),  627--635.

%\bibitem  {Go96} F. Gonzalez, Range theorems and inversion formulas for Radon transforms on Grassmann manifolds, Preprint,  1996.

\bibitem {GK03} F. B. Gonzalez, T. Kakehi, Pfaffian systems and Radon transforms on affine Grassmann manifolds, Math. Ann. 326 (2003), 237--273.

\bibitem {GK04} F. B. Gonzalez, T. Kakehi, Dual Radon transforms on affine Grassmann manifolds. Trans. Amer. Math. Soc. 356 (2004), 4161--4180.

\bibitem {GK06} F. B. Gonzalez, T. Kakehi,  Moment conditions and support theorems for Radon transforms on affine Grassmann manifolds, Adv. Math. 201 (2006), 516--548.

\bibitem {GW} P. Goodey, W. Weil,  Centrally symmetric convex bodies and Radon transforms on higher order Grassmannians, Mathematika 38 (1991),  117--133.

\bibitem {Gr85} E.L. Grinberg, On images of Radon transforms, Duke Math. J. 52 (1985), 939--972.

\bibitem  {Gr86} E.L. Grinberg, Radon transforms on
higher rank  Grassmannians, J. Differential Geometry  24
(1986), 53--68.

\bibitem {GR}  E. Grinberg,  B. Rubin.  Radon inversion on Grassmannians via G$\mathring{\rm a}$rding-Gindikin fractional integrals. \textit{Annals of Math.} \textbf{159} (2004), 809--843.

%\bibitem {H80}  S.  Helgason, Support of Radon transforms, Adv. in Math. 38 (1980),  91–-100.

\bibitem {H00}  S.  Helgason,  Groups and geometric analysis: integral geometry,
invariant differential operators, and spherical functions,  Academic Press,   2000.

\bibitem {H11}  S.  Helgason,  Integral geometry and Radon transform,   Springer, New York-Dordrecht-Heidelberg-London, 2011.

\bibitem {I97} S. Ishikawa,  The range characterizations of the totally geodesic Radon transform on
the real hyperbolic space,  Duke Math. Journal, 90 (1997),  149--203.

\bibitem {I20} S. Ishikawa, The range of the Radon transform on the real hyperbolic Grassmann manifold, Acta Sci. Math. (Szeged) 86 (2020),  225--264.


\bibitem {I21} S. Ishikawa, The  Radon transform for double fibrations of semisimple symmetric spaces,  Acta Sci. Math. (Szeged) 87 (2021), 121--162.


%\bibitem {97},  T. Kakehi,  Range theorems and inversion formulas for Radon transforms on Grassmann manifolds, Proc. Japan Acad. Ser. A Math. Sci. 73 (1997), no. 5, 89–-92.


\bibitem {Ka} T. Kakehi, Integral geometry on Grassmann manifolds and calculus of invariant differential operators, J. Funct. Anal. 168 (1999), 1--45.

\bibitem {Ke} F. Keinert,  Inversion of $k$-plane transforms and applications in computer tomography,  SIAM Review 31 (1989), 273--289.

\bibitem {Ku94} \'A. Kurusa,  Support theorems for totally geodesic Radon transforms
 on constant curvature spaces, Proc. Amer. Math. Soc. 122  (1994), 429--435.

 \bibitem {Lou} A. Louis, Orthogonal function series expansions and the null space of the Radon
transform, SIAM J. Math. Anal. 15 (1984), 621--633.


\bibitem  {Maa} P. Maass, The x-ray transform: singular value decomposition and resolution, Inverse Problems 3 (1987), 729--741.

\bibitem {Matt} P. Mattila, Geometry of Sets and Measures in Euclidean  Spaces, Cambridge University Press, 1995.

\bibitem  {Na1} F. Natterer, The mathematics of computerized tomography, SIAM, Philadelphia, 2001.

\bibitem {Pa}  V. Palamodov,  Reconstructive integral geometry, Monographs in Mathematics,  98, Basel, Birkh\"auser Verlag,  2004.

\bibitem  {P1} E.E. Petrov, The Radon transform in
spaces of matrices and in Grassmann manifolds, Dokl. Akad. Nauk
SSSR 177, No. 4 (1967), 1504--1507.

\bibitem  {PBM1} A. P. Prudnikov, Y. A. Brychkov,  O. I. Marichev, Integrals and series: elementary functions,  Gordon and Breach Sci. Publ., New York-London, 1986.

\bibitem {Ra} J. G. Ratcliffe, Foundations of hyperbolic manifolds, Second ed., Springer, 2006.


\bibitem {Ri} F. Richter,  On the $k$-dimensional Radon-transform of rapidly decreasing functions, Lect. Notes in Math., 1209,  243--258,  Springer, Berlin, 1986.

\bibitem  {Ru02b}  B. Rubin, Inversion formulas for the spherical  Radon transform and the generalized cosine transform,  Adv. in Appl. Math. 29 (2002), 471--497.


\bibitem {Ru02c} B. Rubin, Radon, cosine, and sine transforms on real hyperbolic space, Adv. in Math. 170 (2002), 206--223.

\bibitem  {Ru04b} B. Rubin, Reconstruction of functions from their integrals over
 $k$-dimensional planes, Israel J. of Math. 141 (2004), 93--117.


\bibitem  {Ru04d}  B. Rubin,  Radon transforms on affine Grassmannians,
 Trans.  Amer. Math. Soc. 356 (2004), 5045--5070.


\bibitem{Ru13} B. Rubin, On the Funk-Radon-Helgason inversion method in integral geometry, Contemp. Math. 599 (2013), 175--198.

%\bibitem{Ru19} B. Rubin, The Vertical Slice Transform in Spherical Tomography, Fract. Calc. Appl. Anal. vol. 22 (4) (2019), 899--917.


\bibitem{Ru15} B. Rubin,  Introduction to Radon transforms (with elements of fractional calculus and harmonic analysis), Encyclopedia of Mathematics and Its Applications, 160, Cambridge University Press, 2015.

%\bibitem{Ru20} B.~Rubin,  The Fourier transform approach to inversion of lambda-cosine and Funk transforms on the unit sphere,   arXiv:2005.03607v2 [math.FA], 2020.

\bibitem  {Ru20} B. Rubin, The $\lambda$-Cosine Transforms, Differential Operators, and Funk Transforms on Stiefel and Grassmann Manifolds,  Advances in Math. 392(2):108022
    DOI: 10.1016/j.aim.2021.108022.


\bibitem{Ru21} B. Rubin, On the spherical slice transform, Analysis and Applications (to appear), arXiv:2101.06783.

\bibitem  {RW17} B. Rubin, Y.  Wang, On  Radon transforms between lines and hyperplanes,   	 Internat. J. Math. 28 (13) (2017),  1750093, 18 pp.

\bibitem  {RW18} B. Rubin, Y.  Wang, New inversion formulas for Radon transforms on affine  Grassmannians,  J. Funct. Anal.  274 (2018),  2792--2817.

\bibitem  {RW21} B. Rubin, Y.  Wang, Riesz potentials and  orthogonal Radon transforms on affine Grassmannians, Fract. Calc. Appl. Anal. 24 (2) (2021),  376--392.


%\bibitem{Ru02} B. Rubin,  \textit{Inversion formulas for the spherical {R}adon transform and the
%  generalized cosine transform},   Adv. in Appl. Math. {\bf  29} (2002), no. 3,  471--497.

%\bibitem{Ru13} B. Rubin,  \textit{Funk, cosine, and sine transforms on Stiefel and Grassmann
%  manifolds}, J. of Geom. Anal. {\bf 23} (2013), no. 3,  1441--1497.


\bibitem {SKM}  S. G. Samko, A. A. Kilbas, O. I. Marichev, Fractional integrals and derivatives. Theory and applications,
   Gordon and Breach Sc. Publ., New York, 1993.

\bibitem {Schl} H. Schlichtkrull,  Hyperfunctions and harmonic analysis
 on Symmetric Spaces, Progr. Math. 49, Birkh\"auser, Boston, 1994.

 \bibitem  {SW}  E. M. Stein, G. Weiss,  Generalization of the Cauchy-Riemann equations and representations of the rotation group, Amer. J. Math. 90 (1968) 163--196.

\bibitem   {Str75} R. S. Strichartz,   The explicit Fourier decomposition of $L^2(SO(n)/SO(n-m))$, Canad. J. Math. 27 (1975), 294--310.

\bibitem   {Str77} R. S. Strichartz,  Bochner identities for Fourier transforms, Trans. Amer. Math. Soc. 228 (1977), 307--327.


  \bibitem  {Str81} R. S. Strichartz,   $L^p$-estimates for Radon transforms in Euclidean  and non-euclidean spaces, Duke Math. J. 48
(1981),  699--727.

\bibitem   {Str86}  R. S. Strichartz,  Harmonic analysis on Grassmannian bundles, Trans. Amer. Math. Soc. 296 (1986), 387--409.


\bibitem  {VK} N. Ja. Vilenkin,  A. V. Klimyk, Representations of Lie groups and special functions,
Vol. 2,  Kluwer Academic publishers,  Dordrecht, 1993.

\bibitem   {We} H. Weyl,  The classical groups, Princeton, 1939.


%\bibitem  {Wik1} Wikipedia, Beltrami--Klein model, \\ https://en.wikipedia.org/wiki/Beltrami\%E2\%80\%93Klein${}_{-}$model.

%\bibitem {Wik2} Wikipedia, Poincar\'e disk model, https://en.wikipedia.org/wiki/Poincar\'e${}_{-}$disk${}_{-}$model.


%\bibitem {Hyp}  Wikipedia, Hyperbolic geometry,  https://en.wikipedia.org/wiki/Hyperbolic${}_{-}$geometry.

\bibitem  {Xu} Yuan Xu,  Reconstruction from Radon projections and orthogonal expansion on a ball, J. Phys. A. 40 (2007), 7239--7253.

\bibitem  {Zha1} Genkai Zhang, Radon transform on real, complex, and quaternionic Grassmannians, Duke Math. J. 138 (2007),  137--160.

\bibitem  {Zh}  Genkai Zhang,  Radon, cosine and sine transforms on Grassmannian manifolds,  Int. Math. Res. Not. IMRN 10 (2009), 1743--1772.



\end{thebibliography}

\end{document}